\newcommand{\CC}{{\mathbb{C}}}
\newcommand{\FF}{{\mathbb{F}}}
\newcommand{\ZZ}{{\mathbb{Z}}}
\newcommand{\fA}{{\mathfrak{A}}}
\newcommand{\fS}{{\mathfrak{S}}}
\newcommand{\bG}{{\mathbf{G}}}
\newcommand{\bT}{{\mathbf{T}}}
\newcommand{\cE}{{\mathcal{E}}}
\newcommand{\ad}{{\operatorname{ad}}}
\newcommand{\avg}{{\operatorname{avg}}}
\newcommand{\End}{{\operatorname{End}}}
\newcommand{\Irr}{{\operatorname{Irr}}}
\newcommand{\SC}{{\operatorname{sc}}}
\newcommand{\Id}{{\operatorname{Id}}}
\newcommand{\St}{{\operatorname{St}}}
\newcommand{\tr}{{\operatorname{tr}}}
\newcommand{\GL}{{\operatorname{GL}}}
\newcommand{\PGL}{{\operatorname{PGL}}}
\newcommand{\PSL}{{\operatorname{L}}}
\newcommand{\SL}{{\operatorname{SL}}}
\newcommand{\PGU}{{\operatorname{PGU}}}
\newcommand{\PSU}{{\operatorname{U}}}
\newcommand{\GU}{{\operatorname{GU}}}
\newcommand{\SU}{{\operatorname{SU}}}
\newcommand{\PCSp}{{\operatorname{PCSp}}}
\newcommand{\Sp}{{\operatorname{Sp}}}
\newcommand{\PSp}{{\operatorname{S}}}
\newcommand{\OO}{{\operatorname{O}}}
\newcommand{\SO}{{\operatorname{SO}}}
\newcommand{\GO}{{\operatorname{GO}}}
\newcommand{\PCO}{{\operatorname{PCO}}}
\newcommand{\Spin}{{\operatorname{Spin}}}
\newcommand{\Chevie}{{\sf Chevie}}
\def\pmod#1{~({\rm mod}~#1)}
\newcommand{\tw}[1]{{}^#1\!}
\newcommand\Lem{\text{Lemma}}
\def\skipa{\vspace{-1.5mm} & \vspace{-1.5mm} & \vspace{-1.5mm}\\}
\let\eps=\epsilon
\let\la=\lambda
\newtheorem{thm}{Theorem}[section]
\newtheorem{lem}[thm]{Lemma}
\newtheorem{cor}[thm]{Corollary}
\newtheorem{prop}[thm]{Proposition}
\theoremstyle{definition}
\newtheorem{exmp}[thm]{Example}
\theoremstyle{remark}
\newtheorem{rem}[thm]{Remark}
\begin{document}

\title{Products of conjugacy classes and fixed point spaces}

\date{May 19, 2011; revised January 8, 2011}

\author{Robert Guralnick}
\address{3620 S. Vermont Ave, Department of Mathematics, University of
Southern California, Los Angeles, CA 90089-2532, USA.}
\makeatletter
\email{guralnic@usc.edu}
\makeatother
\author{Gunter Malle}
\address{FB Mathematik, TU Kaiserslautern,
Postfach 3049, 67653 Kaisers\-lautern, Germany.}
\makeatletter
\email{malle@mathematik.uni-kl.de}
\makeatother

\thanks{The first author was partially supported by  NSF grants
  DMS 0653873 and 1001962.}

\begin{abstract}
We prove several results on products of conjugacy classes in finite simple
groups. The first result is that for any finite non-abelian simple
groups,  there  exists a triple of conjugate elements with product $1$ which
generate the group.
This result and other ideas are used to solve a 1966 conjecture of
Peter Neumann about the existence of elements in an irreducible linear group
with small fixed space. We also show that there always exist two conjugacy
classes in a finite non-abelian simple group whose product contains every
nontrivial element of the group. We use this to show that every element in
a non-abelian finite simple group can be written as a product of two $r$th
powers for any prime power $r$ (in particular, a product of two squares
answering a conjecture of Larsen, Shalev and Tiep).
\end{abstract}

\maketitle


\section{Introduction} \label{sec:intro}

Our first main result is the following:

\begin{thm} \label{thm:main}
 Let $G$ be a finite non-abelian simple group.  There exists a conjugacy
 class $C$ of $G$ such that:
 \begin{enumerate}
  \item there is a triple of elements in $C$ which have product $1$ and
   generate $G$ unless $G=\PSL_2(7)$, the projective two linear
   group over the field of $7$ element, and
  \item there exist $x,y \in C$ such that $xy$ is conjugate to $x^2$ and
  generate $G$, unless $G=\PSL_2(q)$ with $q$ even..
 \end{enumerate}
\end{thm}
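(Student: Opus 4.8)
The plan is to handle both statements with a single character-theoretic count, selecting in each simple group one class $C$ of regular semisimple elements that serves for both parts. The workhorse is the Frobenius class-multiplication formula: for classes $C_1,C_2,C_3$ with representatives $g_i$, the number of triples $(x_1,x_2,x_3)\in C_1\times C_2\times C_3$ with $x_1x_2x_3=1$ is
\[
 \frac{|C_1|\,|C_2|\,|C_3|}{|G|}\sum_{\chi\in\Irr(G)}\frac{\chi(g_1)\,\chi(g_2)\,\chi(g_3)}{\chi(1)}.
\]
For part~(1) I would apply this to $C_1=C_2=C_3=C$; for part~(2), letting $D$ denote the class of $g^2$ for $g\in C$, I would count solutions of $xy=z$ with $x,y\in C$ and $z\in D$ by applying the formula to $C,C,D^{-1}$. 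In either case the principal character contributes the main term, of size $|C|^3/|G|$ (resp. $|C|^2|D|/|G|$), which is enormous as soon as the centralizer $C_G(g)$ is small.

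The two things to establish are (a) that the nonprincipal characters contribute only a lower-order error, and (b) that almost none of the resulting triples (resp. pairs) lie in a proper subgroup, so that a generating configuration survives the subtraction. For (a) I would take $g$ regular semisimple inside a maximal torus $T$ of a carefully chosen type---cyclic or nearly so, of order close to $q^r$ where $r$ is the rank. By Deligne--Lusztig theory a nonprincipal $\chi$ with $\chi(g)\neq 0$ must occur in some $R_T^\theta$, so its value satisfies $|\chi(g)|=O(1)$ while its degree is of order $[G:T]_{p'}\sim q^{\dim\bG-r}$; hence $\sum_{\chi\neq 1}|\chi(g)|^3/\chi(1)=o(1)$, and likewise for part~(2) since $g^2$ lies in the same torus and is again regular for generic $g$. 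For (b) the decisive structural input is that an element generating a torus of this type lies in very few maximal subgroups---read off from Aschbacher's theorem and the classification of maximal subgroups, together with primitive-prime-divisor arguments---so that the triples (resp. pairs) contained in proper subgroups are negligible against the main term.

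The alternating and sporadic groups I would treat apart: explicit cycle-type constructions for $\fA_n$ (using the coincidences $\fA_5\cong\PSL_2(5)$, $\fA_6\cong\PSL_2(9)$, $\fA_8\cong\PSL_4(2)$ to reconcile the small cases), and direct reading of the relevant structure constants from the character tables, via \Chevie and the GAP library, for the finitely many sporadic groups and for the small groups of Lie type excluded from the generic estimates.

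The hard part will be the low-rank, small-field range, where the error bound in (a) and the subgroup count in (b) are weakest and where the genuine exceptions $\PSL_2(7)$ and $\PSL_2(q)$ ($q$ even) surface. For these I would fall back on exact structure constants combined with Dickson's list of subgroups of $\PSL_2$, verifying the relation $xy\sim x^2$ and generation directly; the characteristic-$2$ obstruction in part~(2) must be identified here by hand. Threading one and the same class $C$ through both (1) and (2) uniformly across all families---so that the chosen regular semisimple class simultaneously yields a generating triple of product~$1$ and a generating pair with $xy\sim x^2$---is the delicate bookkeeping on which the whole argument rests.
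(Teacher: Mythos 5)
Your proposal follows essentially the same route as the paper: the structure-constant (Frobenius) formula applied to a class of regular semisimple elements generating a carefully chosen (nearly) cyclic maximal torus, Deligne--Lusztig bounds showing nonprincipal characters contribute only a lower-order error, Zsigmondy-prime classification of the maximal overgroups to control triples lying in proper subgroups, and separate explicit treatment (cycle constructions, \Chevie/GAP character tables, matrix computations) for the alternating, sporadic, small-rank and $\PSL_2(q)$ cases where the genuine exceptions arise. This is exactly the paper's strategy in Sections~\ref{sec:GPPS}--\ref{sec:spor}, so there is nothing substantive to add.
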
 

The exceptions in the theorem are real exceptions.  Indeed, if a group
satisfies (2), it can have no nontrivial representations of dimension
at most $2$.  

We show that this has the following consequence.

\begin{cor}   \label{cor:main}
 Let $G$ be a finite non-abelian simple group and $k$ an algebraically closed
 field. Let $V$ be a finite dimensional $kG$-module such that $V$ has no trivial
 submodules or quotients and no two dimensional composition
 factors. There exists $g \in G$ such that every eigenspace of $g$ on $V$ has
 dimension at most $(1/3) \dim V$.
\end{cor}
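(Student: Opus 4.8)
The plan is to deduce the corollary from part~(2) of Theorem~\ref{thm:main} together with Scott's lemma on fixed spaces. By Theorem~\ref{thm:main}(2) (applicable as long as $G\neq\PSL_2(q)$ with $q$ even, a case I defer to the end) choose a class $C$ and elements $x,y\in C$ with $\langle x,y\rangle=G$ and $xy$ conjugate to $x^2$. Set $z=(xy)^{-1}$, so that $xyz=1$ and $z$ is conjugate to $x^{-2}$. The key device is to turn an arbitrary eigenspace of $g:=x$ into a \emph{fixed} space by twisting the relation $xyz=1$ by scalars: for $\lambda\in k^\times$ the elements $\lambda^{-1}x,\ \lambda^{-1}y,\ \lambda^2 z$ again have product $1$, since the exponents $-1,-1,2$ sum to $0$ \emph{independently of $\lambda$}. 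It is precisely the conjugacy $xy\sim x^2$ that makes the third exponent $2$ rather than $1$, and this is what lets one reach every eigenvalue rather than only cube roots of unity.

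I would then apply Scott's lemma to the triple $(\lambda^{-1}x,\lambda^{-1}y,\lambda^2 z)$ acting on $V$. Writing $m_\mu$ for the dimension of the $\mu$-eigenspace of $x$ on $V$ and $n=\dim V$, the fixed space of $\lambda^{-1}x$ is the $\lambda$-eigenspace of $x$, of rank $n-m_\lambda$; the same holds for $\lambda^{-1}y$ since $y$ is conjugate to $x$. For the third term, $z\sim x^{-2}$ shows that the $\lambda^{-2}$-eigenspace of $z$ has dimension $m_\lambda+m_{-\lambda}$, hence rank $n-m_\lambda-m_{-\lambda}$. Summing and invoking Scott's inequality yields $3m_\lambda+m_{-\lambda}\le n+\dim V^H+\dim V_H$, where $H=\langle\lambda^{-1}x,\lambda^{-1}y\rangle$. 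Discarding the nonnegative term $m_{-\lambda}$, it remains only to make the error terms vanish in order to conclude $m_\lambda\le n/3$ for every $\lambda$ simultaneously, that is, for the single element $g=x$.

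The crux is that $\dim V^H=\dim V_H=0$. Here I would argue that $V^H$, the space of common $\lambda$-eigenvectors of $x$ and $y$, is invariant under $x$ and $y$ and hence under $G=\langle x,y\rangle$; on it both $x$ and $y$ act by the scalar $\lambda$, so all of $G$ acts by scalars. As $G$ is perfect the resulting homomorphism $G\to k^\times$ is trivial, which forces $\lambda=1$ unless $V^H=0$; and when $\lambda=1$ we have $V^H=V^G=0$ because $V$ has no trivial submodule. The dual argument, using that $V$ has no trivial quotient, gives $V_H=0$. This is exactly where the hypotheses on $V$ enter, and I note that no reduction to composition factors is needed, since eigenvalue multiplicities are additive along any composition series.

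The main obstacle is the family excluded from Theorem~\ref{thm:main}(2), namely $G=\PSL_2(q)$ with $q$ even, for which no class with $xy\sim x^2$ is available. For these I would fall back on part~(1) of the theorem, which controls the eigenvalue~$1$ eigenspace directly and, after the cube-root twist, the cube-root-of-unity eigenspaces, and then settle the remaining eigenvalues by a direct analysis of the (few) relevant modules. This is precisely where the assumption that $V$ has no two-dimensional composition factor is needed, in order to exclude the natural module of $\SL_2(q)$ on which the bound genuinely fails. Checking that one element handles all eigenvalues in these small cases is the delicate point, but it concerns only a bounded list of low-dimensional representations.
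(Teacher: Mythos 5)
Your main-case argument is the paper's own: the scalar twist $(\theta^{-1}x,\,\theta^{-1}y,\,\theta^{2}z)$, which works precisely because $xy\sim x^{2}$ makes the exponents sum to zero, followed by Scott's Lemma, with the absence of trivial submodules and quotients (together with perfectness of $G$) forcing $\dim V^{H}=\dim V_{H}=0$, is exactly Lemma~\ref{lem:dim1/3}; combining it with Theorem~\ref{thm:main}(2) is exactly how the paper deduces the corollary (Corollary~\ref{cor:propE}).

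Where you fall short of a complete proof is the excluded family $G=\PSL_2(q)$ with $q$ even. Your structural outline there (Theorem~\ref{thm:main}(1) plus Scott's Lemma for the eigenvalue~$1$, applied to $V$ itself; reduction to composition factors for the other eigenvalues; the hypothesis on $2$-dimensional composition factors excluding the natural module) again matches the paper's, but your claim that what remains is ``a bounded list of low-dimensional representations'' is not accurate: this is an infinite family, and one needs a uniform analysis of all nontrivial irreducible $k\PSL_2(2^f)$-modules for every $f$. The paper supplies this as Lemma~\ref{sl2q even}: taking $x$ of order $q-1$, in characteristic $\ne2$ every nontrivial irreducible has dimension $q-1+e$ with $e\in\{0,1,2\}$ and restricts to $\langle x\rangle$ as a free module of rank one plus a summand of dimension $e$, so eigenspaces have dimension at most $1+e$; in characteristic~$2$ the irreducibles are tensor products of Frobenius twists of the natural module, on which eigenspaces of $x$ are at most lines except for the Steinberg module, whose largest eigenspace has dimension $2<q/3$. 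Two smaller points: the reduction to composition factors rests on subadditivity of eigenspace dimensions (they are not additive in general, contrary to your aside, though subadditivity is all one needs), and it is the eigenvalue~$1$ that cannot be pushed to composition factors, since trivial factors are permitted by the hypotheses --- which is why that eigenvalue must be handled by Scott's Lemma on $V$ directly.
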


Note that the hypothesis that $V$ has no two dimensional composition factors
is vacuous if the characteristic of $k$ is not $2$. For our intended subsequent
application it is critical that the $g\in G$ that we choose does not depend
on $V$. We also prove a variant of the previous corollary for direct products
of finite simple groups.   

If $V$ is a $G$-module, let $C_V(g)$ denote the fixed space for $g \in G$.
We use the previous result together with some recent results of the first
author and Mar\'oti \cite{GuMa} as well as an improvement in the solvable
case to answer a conjecture of P. Neumann \cite{Nthesis}:

\begin{thm}  \label{thm:main2}
 Let $G$ be a nontrivial irreducible subgroup of $\GL(V)$ where $V$ is a
 finite dimensional vector space. There exists $g \in G$ with
 $\dim C_V(g) \le (1/3) \dim V$.
\end{thm}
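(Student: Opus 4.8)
The plan is to reduce to finite $G$ (Neumann's conjecture concerns finite irreducible linear groups) and to prove, by induction on $\dim V$ and $|G|$, the formally stronger statement that there is a single $g\in G$ every one of whose eigenspaces on $V$ has dimension at most $\frac13\dim V$. This strengthening --- controlling all eigenspaces rather than only $C_V(g)=\ker(g-1)$ --- together with the fact that the element produced by Corollary~\ref{cor:main} does not depend on $V$, is exactly what lets the reductions below close. As preliminary normalizations, since $\dim C_V(g)$ and the eigenspace dimensions of a fixed element $g$ are unchanged under extension of scalars and agree on each Galois conjugate of an absolutely irreducible constituent, I may assume $k$ algebraically closed and $V$ a faithful irreducible $kG$-module.

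Two Clifford-theoretic operations drive the induction, and the point is that the eigenspace bound behaves well under both. First, if $V=V_1\otimes V_2$ with $G\le G_1\times G_2$ acting factorwise and some $g_i\in G_i$ has all eigenspaces on $V_i$ of relative dimension at most $\frac13$, then for every eigenvalue $\nu$ of $g_1\otimes g_2$ the relative multiplicity is $\sum_\lambda a_\lambda b_{\nu/\lambda}\le(\max_\mu b_\mu)\sum_\lambda a_\lambda\le\frac13$, so a single good tensor factor already suffices. Second, in the imprimitive case $V=\bigoplus_{i=1}^m V_i$ with $G$ transitive on the $m\ge2$ blocks, I choose $g\in G$ inducing a derangement $\sigma$ of the blocks (one exists by Jordan's theorem); on a $\sigma$-cycle $O$ the eigenvalues of $g$ are the $|O|$-th roots of those of the return map $T_O$ on a block, with the same multiplicities, so $O$ contributes at most $e(T_O)$ to any eigenspace of $g$, where $e(\cdot)$ denotes the largest eigenspace dimension. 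Since a derangement has at most $m/2$ cycles and, by induction on $|G|$ applied to the block stabilizer, the return maps may be taken with $e(T_O)\le\frac13\dim V_1$, the resulting $g$ has all eigenspaces of relative dimension at most $\frac16$.

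It remains to treat the primitive case, which is where the simple-group input enters. Passing to the generalized Fitting subgroup $F^*(G)$: if $F^*(G)$ is quasisimple, then $V$ restricted to it is a multiple of an irreducible module to which Corollary~\ref{cor:main} applies --- the hypotheses on trivial and two-dimensional constituents hold because $V$ is faithful and primitive, apart from the genuine small exceptions --- and the eigenspace-controlling element of the simple quotient lifts into $G$ without changing relative eigenspace dimensions. If $F^*(G)$ is a central product of several quasisimple components, then $V$ is correspondingly a tensor product and the direct-product variant of Corollary~\ref{cor:main} feeds into the tensor lemma above. The remaining primitive configurations, namely $G$ solvable or $F^*(G)$ of extraspecial/symplectic type (so that $\dim V$ is a prime power and $G$ modulo its centre embeds in a symplectic group over a small field), are handled by the improved solvable bound together with direct verification for the finitely many small modules; for the generic situation one may also invoke the Guralnick--Mar\'oti bounds on the average of $\dim C_V(g)$, which produce some $g$ with $\dim C_V(g)\le\frac13\dim V$ outside an explicit list.

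The main obstacle is that the constant $\frac13$ is sharp, so the near-extremal low-dimensional cases cannot be absorbed into soft averaging and must be settled by exact computation. These are precisely the configurations built from $2$-dimensional composition factors and $\PSL_2$-type groups --- the very exceptions appearing in Theorem~\ref{thm:main} and excluded in the hypotheses of Corollary~\ref{cor:main}. Matching this, the delicate bookkeeping is to ensure that one element $g$ simultaneously controls \emph{all} eigenspaces on \emph{every} tensor factor and in \emph{every} block; the eigenspace strengthening and the $V$-independence of $g$ are exactly the features that render the tensor and imprimitive reductions compatible with the simple-group results.
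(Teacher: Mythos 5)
Your induction is built on the ``formally stronger statement'' that some $g\in G$ has \emph{every} eigenspace on $V$ of dimension at most $(1/3)\dim V$, and that statement is simply false for general finite irreducible linear groups: any group with a faithful irreducible $2$-dimensional module is a counterexample. For $G=\fS_3$ acting on its $2$-dimensional complex module, every $g\ne1$ is diagonalizable with two distinct eigenvalues, hence has an eigenspace of dimension $1=(1/2)\dim V$; the same holds for dihedral groups and for $\SL_2(q)$ in dimension~$2$. This is exactly why the paper's Corollary~\ref{cor:main} must exclude two-dimensional composition factors and why property (E) is established only for simple groups (and their products), never for arbitrary finite groups. So your induction cannot start, and the imprimitive step inherits the defect: for the dihedral group of order~$8$ with two $1$-dimensional blocks, the block-swapping elements have both eigenspaces of dimension $(1/2)\dim V$. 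There is a second, independent problem in that step: you ask that the return maps $T_O$ of your chosen derangement satisfy $e(T_O)\le(1/3)\dim V_1$ ``by induction applied to the block stabilizer,'' but the return maps are determined by the derangement you already picked --- the inductive hypothesis produces \emph{some} good element of the block stabilizer, and there is no way to impose it as the return map of a fixed-point-free block permutation. Without that control, a derangement only yields the bound $1/2$, even for fixed spaces (take $g$ swapping two blocks with $g^2=1$).

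Two further gaps. First, the theorem allows infinite $G$; you assert a reduction to finite groups, but this needs an argument, which the paper supplies in Theorem~\ref{linear groups}: pass to a finitely generated subgroup realized over a finitely generated ring $R$, choose a maximal ideal $M$ with the image in $\GL_n(R/M)$ still absolutely irreducible (so $R/M$ is finite by the Nullstellensatz), and lift using that $\mathrm{rank}(I-\bar g)\ge(2/3)n$ is preserved under lifting. Second, the configuration you dismiss with ``the improved solvable bound'' plus the Guralnick--Mar\'oti average is precisely the hard case: when a minimal normal subgroup $N$ is an elementary abelian $2$-group, the results of \cite{IKMM} and \cite{GuMa} only give bounds of the form $(1/p)\dim V=(1/2)\dim V$, not $(1/3)\dim V$, so there is nothing to quote. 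The paper's genuinely new idea in the proof of Theorem~\ref{peter} lives exactly here: choose $g$ acting without fixed points on the homogeneous components of $N$ and bound the \emph{average} fixed-space dimension over the entire coset $gN$, the delicate case being the $2$-cycles of $g$, where the elementary abelian structure forces $\avg(gN,W)\le(1/4)\dim W$ because $(gz_i)^2=g^2z$ and $g^2z$, $g^2$ have no common fixed vectors. Nothing in your proposal replaces this averaging argument, and note that the paper runs the whole general-group induction on fixed spaces only --- all-eigenspace control is used solely for (products of) simple groups, where it comes from Theorem~\ref{thm:main}, which is what makes the tensor and direct-product reductions legitimate there.
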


This is Theorem~\ref{linear groups}. See Remark~\ref{rem:history} for a short
history of this problem. The example $G=\SO_3(k)$ on its natural module
shows that $1/3$ is best possible. However, if the
dimension of $V$ is large enough, it seems likely that the bound of~$1/3$ can
be improved. Indeed, we prove (Theorem \ref{thm:main3}) that if $\epsilon>0$,
$G$ is finite simple
and $V$ is an irreducible $\CC G$-module of sufficiently large dimension, then
there exist $g \in G$ with all eigenspaces of dimension at most
$\epsilon \dim V$. On the other hand we give examples for suitable non-simple
groups $G$ of irreducible $\CC G$-modules $V$ of arbitrarily large dimension
such that $\dim C_V(g) \ge (1/9) \dim V$ for all $g \in G$ and even with $V$
primitive and $\dim C_V(g) > (1/50) \dim V$ for all $g \in G$, see
Examples~\ref{exmp:A5} and~\ref{exmp:A4}.
\vskip 1pc

We also extend some results of Malle--Saxl--Weigel \cite{MSW}
and Larsen-Shalev-Tiep \cite{LST}
to prove another result about products of
conjugacy classes.

\begin{thm}   \label{thm:main5}
 Let $G$ be a finite non-abelian simple group. 
  There exist conjugacy classes $C_1,C_2$ in $G$ with
   $G=C_1C_2\cup\{1\}$.  Moreover, aside from $G = \PSL_2(q), q =7$
   or $17$, we can assume that each $C_i$ consists of elements of order prime
   to $6$. 
\end{thm}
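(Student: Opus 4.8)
The plan is to prove the covering statement $G=C_1C_2\cup\{1\}$ by the usual character-theoretic criterion and to build the coprime-to-$6$ refinement into the choice of the two classes. Fixing representatives $c_i\in C_i$, the Frobenius class-multiplication formula gives that the number of ways of writing $g=x_1x_2$ with $x_i\in C_i$ equals
\[
\frac{|C_1||C_2|}{|G|}\sum_{\chi\in\Irr(G)}\frac{\chi(c_1)\,\chi(c_2)\,\overline{\chi(g)}}{\chi(1)},
\]
in which the principal character contributes the positive main term $1$. Hence $g\in C_1C_2$ for \emph{every} nontrivial $g$ as soon as
\[
\sum_{1\neq\chi\in\Irr(G)}\frac{|\chi(c_1)|\,|\chi(c_2)|\,|\chi(g)|}{\chi(1)}<1 .
\]
So the whole problem reduces to choosing $C_1,C_2$ whose elements have uniformly small character ratios $|\chi(c_i)|/\chi(1)$, and then controlling the remaining sum over $\Irr(G)$.

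For the generic case, namely groups of Lie type of large rank or over large fields, I would take $c_1,c_2$ to be \emph{regular semisimple} elements lying in maximal tori whose orders are divisible by primitive prime divisors (Zsigmondy primes) of $q^e-1$ for suitable $e$; such tori have small normalizers, so the ratios $|\chi(c_i)|/\chi(1)$ are controlled by the character bounds of Malle--Saxl--Weigel \cite{MSW} and Larsen--Shalev--Tiep \cite{LST}, which are precisely the inputs we are extending. Combining a bound of the shape $|\chi(c_i)|\le\chi(1)^{\sigma}$ with $\sigma$ small against the Liebeck--Shalev type estimate $\sum_{\chi\neq1}\chi(1)^{-t}\to 0$, together with a Cauchy--Schwarz split that pairs the two small-ratio classes against $\sum_{\chi\neq1}|\chi(g)|^2\le|C_G(g)|$, drives the displayed sum below $1$ once $|G|$ is large enough. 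The coprime-to-$6$ requirement is then imposed at the level of torus orders: it suffices to exhibit, in the two chosen tori, elements of order prime to $6$, which is possible as soon as the relevant cyclotomic factors admit a prime divisor $\ge5$.

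The small and bounded cases must be treated separately. For the alternating groups $A_n$ I would select explicit classes of small cycle type (for instance an $n$- or $(n-1)$-cycle, or a product of a few long cycles) whose orders are arranged to be prime to $6$ and whose character values are governed by the Murnaghan--Nakayama rule, again feeding the same inequality. The sporadic groups, the Tits group, and the finitely many Lie-type groups of small rank over small fields are handled by direct inspection of the known character tables via {\Chevie} and the ATLAS. The two genuine exceptions $\PSL_2(q)$ with $q=7,17$ emerge here: in these groups the only nontrivial elements of order prime to $6$ are, respectively, of order $7$ and $17$, so no pair of coprime-to-$6$ classes has enough spread to cover $G\setminus\{1\}$, although other (non-coprime) pairs still do, which is exactly the content of the ``aside from'' clause.

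The main obstacle I anticipate is not the generic Lie-type estimate, which follows fairly mechanically from \cite{MSW,LST} once the tori are fixed, but rather the uniform \emph{simultaneous} guarantee of coprime-to-$6$ regular classes with good ratios across all families. Concretely, the delicate points are: (i) the low-rank groups over small fields, where Zsigmondy primes may fail to exist or the available tori are too large to give usable ratios, forcing a hands-on analysis of the character tables of $\PSL_2(q)$, $\PSL_3(q)$, $\PSU_3(q)$, ${}^2B_2(q)$ and the like; and (ii) pinning down that $q=7,17$ are the \emph{only} exceptions, which requires checking that in every other small $\PSL_2(q)$ a coprime-to-$6$ pair of semisimple classes of orders dividing $(q-1)/d$ and $(q+1)/d$ exists and covers, using the explicit $\PSL_2(q)$ character table.
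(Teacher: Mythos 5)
Your overall architecture (Frobenius structure constants, regular semisimple classes of Zsigmondy order, explicit inspection of sporadic and small groups, and the correct identification of $\PSL_2(7)$ and $\PSL_2(17)$ as the exceptions) is the same as the paper's, but the central estimate you propose has a genuine gap. You want to force
\[
\sum_{1\neq\chi\in\Irr(G)}\frac{|\chi(c_1)|\,|\chi(c_2)|\,|\chi(g)|}{\chi(1)}<1
\]
uniformly in $g$ by combining ratio bounds $|\chi(c_i)|\le\chi(1)^{\sigma}$ with zeta-type convergence and a Cauchy--Schwarz pairing against $\sum_{\chi}|\chi(g)|^2\le|C_G(g)|$. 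This cannot work for the hard targets: when $g$ is a transvection or a long root element, $|C_G(g)|$ is close to $|G|$ in order of magnitude, and the factor $|C_G(g)|^{1/2}$ swamps everything else (for $\PSL_n(q)$ with $g$ a transvection and $c_1,c_2$ in Zsigmondy tori, the Cauchy--Schwarz bound comes out of size roughly $q^{(n-1)(n-2)/2}$, nowhere near $1$). Moreover the $\chi(1)^{\sigma}$-type bounds of \cite{LST} are intrinsically asymptotic, so even where your scheme applies it would prove the statement only for sufficiently large $|G|$, whereas the content of Theorem~\ref{thm:main5} is precisely that it holds for \emph{every} simple group.

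The missing idea is to make the character sum \emph{finite} rather than small term-by-term: choose $C_1,C_2$ inside two maximal tori whose orders are divisible by Zsigmondy primes attached to two \emph{different} exponents $e_1\neq e_2$, arranged so that no nontrivial semisimple element of the dual group has centralizer order divisible by both primes. Then, by the Lusztig-series argument of Lemma~\ref{lem:Lvalues}, every irreducible character vanishes on $C_1$ or on $C_2$ except for a handful of unipotent characters (the trivial and Steinberg characters and at most two cuspidal ones; see Table~\ref{tab:excclass2}). Semisimple targets $g$ are then disposed of by Gow's theorem \cite{Gow}; for non-semisimple $g$ the Steinberg character vanishes, and the one or three surviving characters are killed by the crude bound $|\chi(g)|\le|C_G(g)|^{1/2}<\chi(1)/3$, which is available exactly because only $O(1)$ characters survive and each has enormous degree. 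This simultaneous-vanishing trick, not ratio bounds, is the engine of Theorems~\ref{thm:O4n} and~\ref{thm:E7} and of the Malle--Saxl--Weigel covering results \cite{MSW} which the paper quotes wholesale for most classical families (your proposal re-proves, less robustly, what Theorem~\ref{thm:MSW} already supplies). Two smaller divergences: for the alternating groups the paper does not estimate characters via Murnaghan--Nakayama at all, but invokes Bertram's combinatorial theorem \cite{Be} that a product of two $\ell$-cycles with $3n/4\le\ell\le n-2$ covers $\fA_n$, choosing two such odd coprime $\ell$; and the $\PSL_2(q)$ analysis is done by elementary matrix/trace arguments rather than the character table, with the coprime-to-$6$ clause failing for $q\in\{7,17\}$ exactly because the tori there have orders $3,4$ and $8,9$.
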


This immediately implies that any element in a finite non-abelian simple
group is the product of two $m$th powers for $m$  a power of $6$ (answering
a conjecture of Larsen-Shalev-Tiep \cite{LST} for squares -- the  result on squares
has also been obtained independently by Liebeck, O'Brien, Shalev
and Tiep by other methods).   Combining
our methods with  the main result of Chernousov--Ellers--Gordeev \cite{CEG},
we obtain:

\begin{cor}   \label{cor:squares}
 Let $G$ be a finite non-abelian simple group. Let $m$   be either  a prime power
 or a power of $6$. Then every element of $G$ is a product of two $m$th powers.
\end{cor}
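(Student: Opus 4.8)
The plan is to obtain the corollary from Theorem~\ref{thm:main5} by arranging that the two classes consist of $m$th powers. The governing elementary observation is that any $x\in G$ whose order is coprime to $m$ is an $m$th power: if $km\equiv1\pmod{|x|}$ then $x=(x^k)^m$. As $h\mapsto h^m$ commutes with conjugation, the set of $m$th powers is a union of conjugacy classes, so it suffices to produce two classes $C_1,C_2$ consisting of elements of order coprime to $m$ with $G=C_1C_2\cup\{1\}$. Every nontrivial element is then a product of two $m$th powers, and $1=1^m\cdot1^m$ is one as well.

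When $m$ is a power of $6$ its only prime divisors are $2$ and $3$, so an element of order prime to $6$ has order coprime to $m$; Theorem~\ref{thm:main5} then supplies $C_1,C_2$ for every $G$ except $\PSL_2(7)$ and $\PSL_2(17)$, which I would settle by inspecting their class multiplication coefficients. The identical argument disposes of the prime power case $m=\ell^a$ with $\ell\in\{2,3\}$, since order prime to $6$ once more forces order coprime to $m$.

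The real work is the prime power case $m=\ell^a$ with $\ell\ge5$, where an element of order prime to $6$ can still have order divisible by $\ell$ and Theorem~\ref{thm:main5} no longer applies. For $G$ of Lie type I would use the covering theorem coming from the Gauss decomposition with prescribed semisimple part of Chernousov--Ellers--Gordeev \cite{CEG}: over a field large enough, the product of two suitable regular semisimple classes meets every noncentral element. It then remains only to choose these classes $\ell$-regular. If $\ell$ is the defining characteristic $p$, every semisimple element already has order coprime to $p=\ell$, so any regular semisimple class serves. If $\ell\ne p$, I would take maximal tori whose orders --- built from the values $\Phi_d(q)$ of cyclotomic polynomials --- are coprime to the single prime $\ell$, which can be arranged outside finitely many cases; the resulting classes are again $\ell$-regular. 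In each situation both factors have order coprime to $m$, hence are $m$th powers.

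The remaining groups --- the alternating and sporadic groups, together with the finitely many groups of Lie type over fields too small for \cite{CEG} and the cases $\PSL_2(7),\PSL_2(17)$ --- I would handle by producing $\ell$-regular classes directly, re-running the methods behind Theorem~\ref{thm:main5} under the stronger requirement of order coprime to $\ell$, or by computation in the finitely many small groups. I expect the genuine obstacle to be exactly this $\ell\ge5$ case: one must confirm that the covering of \cite{CEG} survives the restriction to an $\ell$-regular prescribed semisimple part, and one must verify that $\ell$-regular classes can still be chosen large enough to cover $G\setminus\{1\}$ in the alternating groups and in the small exceptions left uncovered by \cite{CEG}.
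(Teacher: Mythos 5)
Your reduction (elements of order prime to $m$ are $m$th powers, and the set of $m$th powers is a union of classes) and your treatment of $m$ a power of $2$, $3$ or $6$ via Theorem~\ref{thm:main5}, with a separate check of $\PSL_2(7)$ and $\PSL_2(17)$, match the paper; your explicit attention to those two exceptions is in fact a point the paper passes over silently. The genuine gap is in the case $m=\ell^a$ with $\ell\ge5$, and it begins with a misstatement of the key input. The theorem of \cite{CEG} that the paper uses is a Gauss decomposition statement: every element of $G$ is conjugate to an element of $U^-B$, where $B=TU$ is a Borel subgroup and $U^-$ the opposite unipotent radical (the $T$-component can even be prescribed). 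It is \emph{not} the assertion that a product of two suitable regular semisimple classes covers the noncentral elements; that covering statement is essentially the Ellers--Gordeev theorem on Thompson's conjecture \cite{EG}, and deducing it from the Gauss decomposition requires writing the prescribed torus element as a product of two \emph{regular} elements of the \emph{split} torus --- elements which fail to exist once the rank is large compared to $q$. So the field-size threshold you need is rank-dependent, and your residual set, ``the finitely many groups of Lie type over fields too small for \cite{CEG}'', is actually infinite: for each small $q$ the rank is unbounded, so the fallback by computation cannot close the argument. For the same reason your plan for $\ell\ne p$ --- regular classes in maximal tori of order prime to $\ell$ --- does not connect to \cite{CEG} at all, since the Gauss decomposition controls only the split torus; covering by classes from other maximal tori is precisely the character-theoretic work of Theorem~\ref{thm:main5} and \cite{MSW} that you propose to ``re-run'' without specifying how.

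The paper's dichotomy shows how to avoid all of this. If $\gcd(m,|B|)=1$, conjugate $g$ into $U^-B$ and group as $g\sim u\cdot b$ with $u\in U^-$, $b\in B$: both factors have order dividing $|B|$, hence prime to $m$, so $g$ is a product of two $m$th powers --- no covering theorem, no regularity, and no restriction on $q$ or the rank enter. If instead $\ell$ divides $|B|$ (which subsumes your case $\ell=p$), then the covering classes already constructed for Theorem~\ref{thm:main5} --- via \cite{MSW}, Theorems~\ref{thm:2B2} and~\ref{thm:3D4}, and elements of order dividing $q+1$, resp.\ $q^2+1$, for $\PSL_2(2^f)$, resp.\ $\PSp_4(2^f)$ --- consist of elements of order prime to $|B|$, hence automatically prime to $\ell$; they therefore work for every such $\ell$ simultaneously, with nothing to re-choose and no small-field exceptions beyond those already handled there. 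Your $\ell\ne p$ case could be repaired by prescribing the semisimple part in \cite{CEG} to have order prime to $\ell$ and grouping as above, but your $\ell=p$ case genuinely needs the second horn of this dichotomy, and as written your proposal does not supply it.
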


The main result of 
 \cite{LST} is  a similar (but asymptotic) result for arbitrary words.  

There is a related conjecture of Thompson that in fact $G=CC$ for some class
$C$. Thompson's conjecture is known to hold in many cases. See Ellers--Gordeev
\cite{EG}.
We show that if $G$ is a finite simple group of Lie type of rank $1$, then
a very strong version of Thompson's conjecture holds (see Theorem~\ref{thm:2B2}
for a precise statement). This shows that a version of the previous corollary
holds for most words for the rank $1$-groups.   See  Theorem \ref{rank1} for
a very strong result for  the Suzuki- and Ree-groups.

Next, we extend some results of Breuer--Guralnick--Kantor \cite{BGK} about
generation of finite simple groups. The proof depends upon knowledge
of maximal subgroups as well as the ideas from the proof of Theorem 
\ref{thm:main5}.

An easy  consequence (see Corollary \ref{getz}) answers a question of Jayce
Getz \cite{getz} in regard to an application to non-solvable base change for
automorphic representations of $\GL_2$:

\begin{thm}   \label{thm:main4}
 Let $S$ be a finite non-abelian simple group other than $\OO^+_8(2)$. There
 exists a conjugacy class $C$ of $S$ consisting of elements whose order is
 prime to~$6$ such that if $1\ne s \in S$, then $S = \langle g,s\rangle$ for
 some $g\in C$.
\end{thm}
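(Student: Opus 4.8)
The plan is to prove this by the probabilistic generation method of Guralnick--Kantor as refined in \cite{BGK}, strengthened so that the generating class may be taken to consist of elements of order prime to $6$. Fix a non-abelian finite simple group $S\neq\OO_8^+(2)$, and let $x\in S$ be an element of order coprime to $6$ whose class $C=x^S$ will be specified below. For a nontrivial $s\in S$, write $P(s)$ for the proportion of $g\in C$ with $\langle g,s\rangle\neq S$. Since $\langle g,s\rangle\neq S$ forces $g$ and $s$ into a common maximal subgroup, and since $N_S(M)=M$ for every maximal $M$ of the simple group $S$, a routine double count gives
\begin{equation*}
 P(s)\ \le\ \sum_{\mathcal M}[S:M]\,\operatorname{fpr}(s,S/M)\,\operatorname{fpr}(x,S/M),
\end{equation*}
where $\mathcal M$ runs over the $S$-classes of maximal subgroups, $M$ is a representative, and $\operatorname{fpr}(y,S/M)=|y^S\cap M|/|y^S|$ is the fixed point ratio; here $[S:M]\operatorname{fpr}(s,S/M)$ counts the conjugates of $M$ containing $s$. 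It suffices to exhibit a single class $C$ of elements of order prime to $6$ for which the right-hand side is $<1$ for every $1\neq s\in S$, since then some $g\in C$ generates $S$ together with $s$.

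First I would choose $x$. For $S$ of Lie type I take $x$ to be a regular semisimple element of order prime to $6$ divisible by a primitive prime divisor of $q^e-1$ for a suitable $e$ (a Coxeter-type or maximal-torus element); the availability of such classes of order coprime to $6$ is exactly the kind of input produced by the construction underlying Theorem~\ref{thm:main5}, whose proof I would reuse here. The point of such $x$ is that its overgroups are severely restricted: by the classification of subgroups of classical (and exceptional) groups containing a primitive prime divisor element, the maximal $M$ with $x^S\cap M\neq\emptyset$ form a short, explicit list (certain torus normalizers, imprimitive and subfield subgroups, and a handful of almost simple ones), and for these the fixed point ratios $\operatorname{fpr}(x,S/M)$ are bounded by a small power of $q^{-1}$ by the estimates already invoked in \cite{BGK}. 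For the alternating and sporadic groups I instead pick $C$ directly: a class of elements of order prime to $6$ lying in few maximal subgroups, read off from the known maximal subgroup structure and, where needed, verified from the character table libraries in \Chevie{} and \textsf{GAP}.

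With $x$ fixed, the displayed sum is dominated by the terms in which $s$ is \emph{large}, that is, $s$ lies in a maximal subgroup of small index or has large centralizer (long root elements and their analogues), for then $\operatorname{fpr}(s,S/M)$ is close to $1$. The main obstacle is to control precisely these terms, bounding $\sum_{\mathcal M}[S:M]\,\operatorname{fpr}(s,S/M)\,\operatorname{fpr}(x,S/M)$ uniformly over all nontrivial $s$ by showing that the product of the two fixed point ratios decays fast enough in the rank and field size. For groups of large rank or over large fields the Liebeck--Shalev type bounds make the sum geometrically small; the delicate cases are the classical groups of small rank and the groups over very small fields, where the fixed point ratio estimates are weakest and where classes of order prime to $6$ are comparatively scarce. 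These I expect to settle by a finer analysis of the actual overgroup lattice together with direct computation, $\OO_8^+(2)$ being the genuine exception, where the triality-enlarged outer structure and the small field defeat the bound. A final appeal to Theorem~\ref{thm:main5} and its proof guarantees that $C$ can be chosen throughout with elements of order prime to $6$, giving the statement and hence the consequence answering Getz's question.
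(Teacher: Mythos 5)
Your framework is the probabilistic one of \cite{BGK}, which the paper itself credits as proving exactly this statement \emph{without} the coprimality condition; so the entire content of the theorem is whether the machine can be run on classes of order prime to~$6$, and that is precisely where your proposal has a genuine gap. The cases you defer to ``finer analysis of the actual overgroup lattice together with direct computation'' are not a fringe of small exceptions: for several infinite families the natural prime-to-$6$ classes are ones for which the inputs your method needs do not exist in the sources you invoke. The sharpest instance is $E_7(q)$. The self-normalizing cyclic torus of Table~\ref{tab:exctorus} has order $\Phi_2\Phi_{18}/(2,q-1)$, which is divisible by $2$ or $3$ whenever $q\equiv3\pmod4$ or $q\equiv2\pmod3$, so its generators cannot serve as your class; the available prime-to-$6$ class consists of elements of order $\Phi_7(q)$, and those lie in maximal parabolic subgroups (the $A_6$-Levi contains a torus of order $q^7-1$) and in subsystem subgroups of type $A_7$ --- overgroups covered neither by \cite{GPPS} (which treats subgroups of $\GL_n(q)$ only) nor by Proposition~\ref{prop:excelt}, so that both the overgroup classification and the fixed point ratio estimates your sum $\sum_{\mathcal M}[S:M]\operatorname{fpr}(s,S/M)\operatorname{fpr}(x,S/M)$ requires would have to be created from scratch. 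Similar problems arise for $\PSL_n(q)$ with $n$ odd, $\PSU_n(q)$, $\OO_{2n}^-(q)$ and $\PSL_2(q)$, where the paper never establishes (and does not need) your displayed inequality for the prime-to-$6$ class.

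What the paper does instead, and what is missing from your proposal, are two mechanisms that bypass the uniform fixed point ratio bound. First, Lemma~\ref{lem:2max}: a nontrivial class of a simple group is never contained in the union of \emph{two} proper subgroups; this disposes of every case where the chosen element lies in at most two maximal subgroups, with no bound on $\operatorname{fpr}(s,S/M)$ needed (your inequality would instead require $\operatorname{fpr}(s,S/M)<1/2$ for each of the two subgroups, which is not automatic for involutions or root elements). Second, and decisively, the structure-constant trick used throughout the proof of Theorem~\ref{thm:generation}: choose the prime-to-$6$ class $C_2$ together with an \emph{auxiliary} class $C_3$ such that (i) the character-theoretic structure constant $n(C,C_2,C_3)$ is nonzero for every nontrivial class $C$, and (ii) no maximal subgroup meets both $C_2$ and $C_3$ (enforced by Zsigmondy divisibility). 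Then $xyz=1$ with $y\in C_2$, $z\in C_3$ gives $\langle x,y\rangle\supseteq\langle y,z\rangle=S$, i.e.\ generation by $s$ and an element of $C_2$, \emph{without ever knowing the maximal overgroups of $C_2$}. This is exactly how $E_7(q)$ is handled (via Theorem~\ref{thm:E7}: $C_2$ of order $\Phi_7$, $C_3$ the torus class whose unique maximal overgroup $\tw2E_6(q)_\SC.D_{q+1}$ has order prime to Zsigmondy divisors of $\Phi_7$), and likewise the linear, unitary and minus-type orthogonal families via \cite{MSW}-style vanishing arguments. Your closing appeal to Theorem~\ref{thm:main5} cannot substitute for this: that theorem asserts $G^\#\subseteq C_1C_2$, which says nothing about generation unless one adds exactly condition (ii) above, a condition you never formulate.
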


The results in \cite{BGK} show there is a conjugacy class $C$ as above (but
the elements in $C$ may not have order prime to $6$). It is proved there that
the probability that a random element of $C$ generates with $s$ is typically
large.   If  $S = \OO^+_8(2)$, then one can take $C$ to consist of elements
of order $15$ (and so with no exceptions we can take $C$ to be a class of 
elements of odd order).


We also show how our methods can be used to answer a conjecture of
Bauer--Catanese--Grunewald \cite{bcg1,bcg2} regarding Beauville structures
(this is related to the existence of certain free actions of a finite group
acting on a product of two smooth projective curves -- see \cite{shelly} for
background).
The conjecture is that all non-abelian simple groups other than $\fA_5$ admit
an unmixed Beauville structure. It has recently been proved by
Garion--Larsen--Lubotzky \cite{GLL} to hold at least asymptotically.
In particular, we prove Theorem~\ref{thm:beau} that the groups $E_8(q)$ admit
an unmixed Beauville-structure.  In a sequel \cite{GMa}, we   use our methods to
solve the conjecture in the affirmative.
\vskip 1pc

We now describe some of the main ingredients in the proofs of our main
results. There usually are two parts: first, we need to show that triples
of elements from specified conjugacy classes $C_i$ and with product~1 exist in
a given group $G$. There is a well-known character formula which counts the
number of such triples, involving values of the complex irreducible characters
of $G$ on the $C_i$. In our situation, we choose the $C_i$ such that only few
irreducible characters vanish simultaneously on all three classes. Then we
may estimate the structure constant using Deligne--Lusztig theory, or for
some small rank groups compute it from known character tables.
\par
Secondly, we need to argue that some of these triples do generate $G$. So we
will choose conjugacy classes $C_i$ whose elements are contained in few
maximal subgroups of $G$, and for which the character formula allows to
estimate the structure constant to be larger than the possible contributions
from maximal subgroups. 
The most delicate case occurs in Theorem~\ref{thm:main} where there is
only one class to be chosen. 
\vskip 1pc

The paper is organized as follows. In Section~\ref{sec:GPPS}, which may be of
independent interest, we classify the maximal subgroups of the simple groups
of Lie type containing certain elements with large irreducible submodules.
In Section~\ref{sec:Lie type}, we prove Theorem \ref{thm:main} for groups of
Lie type. In Section~\ref{sec:spor}, we
complete the proof of that result for alternating and sporadic groups. 
\par
In Section~\ref{peterconj}, we prove Neumann's conjecture Theorem
\ref{thm:main2}. In Section~\ref{char0}, we show a much stronger
(asymptotic) version of the theorem for finite simple groups in characteristic
zero. We also give examples to show that there are large dimensional
examples (in all characteristics) where all fixed spaces are larges.
In Section~\ref{sec:cover}, we prove Theorem \ref{thm:main5} and
Corollary~\ref{cor:squares}. We also derive stronger results for the low rank
groups of Lie type and a very strong result for the image of word maps for
Suzuki and Ree groups. In the final section, we prove Theorem~\ref{thm:main4}
and some corollaries including the application needed for non-solvable base
change for $\GL_2$. The final result is a proof that $E_8(q)$ admits a
Beauville structure for all $q$.
\par
We will use standard notation for the finite simple groups: for the exceptional
groups (and their twisted analogs) we use the Lie notation;  for the simple
classical groups, we prefer to use the notation $\PSL$, $\PSU$, $\PSp$ and
$\OO^{\epsilon}$ for the (projective special) linear, unitary, symplectic
and orthogonal groups.

\section{Zsigmondy primes}  \label{sec:GPPS}

Here we collect some results on subgroups of groups of Lie type containing
large Zsigmondy prime divisors. 
If $q$ is a prime power and $e >2$ is a positive integer, we let 
$\Phi_e^*(q)$ be the largest divisor of $q^e-1$ that is relatively
prime to $q^m-1$ for all $1\le m<e$.  Note that every prime divisor
of $\Phi_e^*(q)$ is congruent to $1$ modulo $e$.   By Zsigmondy's theorem,
$\Phi_e^*(q)>1$ unless $n=6$ and $q=2$ (and indeed, this is true for $e=2$
as well unless $q$ is a Mersenne prime). In particular, aside from that case,
$\Phi_e^*(q) \ge e +1$. 

Hering \cite{He} showed that usually $\Phi_e^*(q)$ is reasonably large (see
Bamberg--Penttila \cite[Lemma 6.1]{BP} for the version we are stating below
and see Feit \cite{F} as well).

\begin{lem}   \label{lem:smallZsig}
 Let $q$ be a power of the prime $p$ and $e > 2$ an integer.
 \begin{enumerate}
  \item[\rm(a)] If  $\Phi_e^*(q)=1$, then $q=2$ and $e=6$.
  \item[\rm(b)] If $\Phi_e^*(q)=e+1$, then $q=p$  and
   $q^e=2^4, 2^{10}, 2^{12}, 2^{18}, 3^4, 3^6$ or $5^6$.
  \item[\rm(c)] If  $\Phi_e^*(q)=2e+1$, then either $q=2$ and
   $e=3, 8,$ or $20$, or $q=4$ and $e=3$ or $6$.
 \end{enumerate}
\end{lem}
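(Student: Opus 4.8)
The plan is to reduce the three statements to the arithmetic of the cyclotomic polynomial value $\Phi_e(q)$, and then to a finite computation. First I would record the structural relation between $\Phi_e^*(q)$ and $\Phi_e(q)$. Writing $q^e-1=\prod_{d\mid e}\Phi_d(q)$, a prime $\ell$ dividing $\Phi_e(q)$ either satisfies $\operatorname{ord}_\ell(q)=e$ (a primitive, i.e. Zsigmondy, divisor) or else $\ell\mid e$, in which case $\ell$ is the largest prime factor of $e$ and divides $\Phi_e(q)$ to the first power only. Since $\Phi_e^*(q)$ is obtained from $q^e-1$ by deleting exactly the non-primitive part, this yields the key inequality $\Phi_e^*(q)\ge\Phi_e(q)/P$, where $P$ is the largest prime factor of $e$; more precisely $\Phi_e^*(q)=\Phi_e(q)/\gcd(\Phi_e(q),e)$ with $\gcd(\Phi_e(q),e)\in\{1,P\}$. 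With this in hand, part~(a) is nothing but Zsigmondy's theorem: $\Phi_e^*(q)=1$ exactly means $q^e-1$ has no primitive prime divisor, which for $e>2$ forces $(q,e)=(2,6)$ (cf. \cite{F}).

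For (b) and (c) I would first exploit that every prime divisor of $\Phi_e^*(q)$ is congruent to $1\pmod e$, hence is at least $e+1$. Consequently any value of $\Phi_e^*(q)$ smaller than $(e+1)^2$ must be either $1$ or a single such prime. Since $e+1<(e+1)^2$ and $2e+1<(e+1)^2$ for all $e\ge1$, the hypotheses $\Phi_e^*(q)=e+1$ and $\Phi_e^*(q)=2e+1$ already force $e+1$, respectively $2e+1$, to be prime (and to divide $q^e-1$ with the correct order). This turns (b) and (c) into the problem of locating all pairs $(q,e)$ for which $\Phi_e^*(q)$ takes one of these two small prescribed values.

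The finiteness of that search comes from a growth estimate. Combining $\Phi_e^*(q)\ge\Phi_e(q)/P$ with the elementary lower bound $\Phi_e(q)\ge(q-1)^{\phi(e)}$ and $P\le e$, one gets $\Phi_e^*(q)\ge(q-1)^{\phi(e)}/e$, which exceeds $2e+1$ once $q$ or $e$ is large; this confines the candidates to a finite, explicitly bounded range of $(q,e)$. For each surviving pair I would simply evaluate $\Phi_e^*(q)=\Phi_e(q)/\gcd(\Phi_e(q),e)$ and read off when it equals $e+1$ or $2e+1$, which produces the listed values $q^e\in\{2^4,2^{10},2^{12},2^{18},3^4,3^6,5^6\}$ in (b) and the pairs in (c); in particular one checks directly in this finite list that the equality $\Phi_e^*(q)=e+1$ occurs only for $q=p$ prime.

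I expect the genuine obstacle to be the case $q=2$, where the bound $(q-1)^{\phi(e)}=1$ collapses and gives no information. Here one must replace it by a sharper estimate such as $\Phi_e(2)\ge 2^{\phi(e)-1}$ together with a lower bound on $\phi(e)$ (for instance $\phi(e)\gg\sqrt{e}$), so as to again force $\Phi_e^*(2)>2e+1$ outside a controllable range of $e$; the small values of $e$ and the few remaining small $q$ are then dispatched by direct evaluation of cyclotomic values. Establishing the precise constant in the divisibility relation $\gcd(\Phi_e(q),e)\in\{1,P\}$ and making the $q=2$ growth bound explicit enough to keep the finite check short are the two points requiring the most care; everything else is bookkeeping over a finite table. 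This recovers Hering's lemma in the form stated (cf. \cite{F,BP}).
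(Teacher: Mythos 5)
The paper contains no proof of this lemma at all: it is quoted from Hering \cite{He}, in the form given by Bamberg--Penttila \cite[Lemma 6.1]{BP} (see also Feit \cite{F}), so there is no internal argument to compare yours against; your sketch is in effect a reconstruction of the standard proof from those sources. The structural facts you invoke are correct under the standing hypothesis $e>2$: a prime divisor of $\Phi_e(q)$ is either a Zsigmondy divisor or else the largest prime factor $P$ of $e$, and in the latter case it divides $\Phi_e(q)$ exactly once (this exact-division claim fails for $e\le2$, so you should flag where $e>2$ enters); hence $\Phi_e^*(q)=\Phi_e(q)/\gcd(\Phi_e(q),e)\ge\Phi_e(q)/e$, and since Zsigmondy divisors are $\equiv1\pmod e$, the values $e+1$ and $2e+1$ are forced to be prime. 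Part (a) is indeed exactly Zsigmondy's theorem, and one checks easily that your machinery reproduces the listed exceptions, e.g.\ $\Phi_{18}(2)=57=3\cdot19$ gives $\Phi_{18}^*(2)=19$ and $\Phi_{20}(2)=205=5\cdot41$ gives $\Phi_{20}^*(2)=41$. Two points need care before this is a complete proof. First, the inequality $\Phi_e(2)\ge2^{\phi(e)-1}$ is not obviously true as stated; the safe version, obtained from $\log_2\Phi_e(2)=\phi(e)+\sum_{d\mid e}\mu(e/d)\log_2(1-2^{-d})$ by bounding the error sum geometrically, is $\Phi_e(2)\ge2^{\phi(e)-2}$, which together with $\phi(e)\ge\sqrt{e}$ (valid for all $e>2$ except $e=6$, itself already an exception in the lemma) still forces $\Phi_e^*(2)>2e+1$ outside a bounded range, so your strategy survives with the weaker constant. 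Second, the residual finite verification is genuinely large (roughly all $e$ up to a few hundred and, for each, all prime powers $q\le 2e+2$), which is presumably why the authors cite the literature rather than reprove the lemma. In summary: your route is correct and is essentially the classical Hering/Feit argument; what it buys relative to the paper's treatment is self-containedness, at the price of an explicit finite computation that the paper outsources to its references.
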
   

The main result of \cite{GPPS} was the classification of all subgroups of
$\GL_n(q)$ whose order is divisible by some prime divisor of
$\Phi_e^*(q)$ with $e > n/2$.   The list becomes much shorter if we insist
that this prime divisor is larger than $2e+1$.  The previous lemma indicates
that almost always such a prime divisor exists.

\begin{thm}   \label{overgroups}
 Let $G=\GL(V)=\GL_n(q)$ where $q=p^a$ with $p$ prime.  Assume that $n > 2$.  
 Let $r$ either be a Zsigmondy prime divisor of $q^e -1$ with $e>n/2$ and
 $r>2e+1$ or a product of two (not necessarily distinct) Zsigmondy prime
 divisors of $q^e-1$. Suppose that $H$ is an irreducible subgroup of $G$
 containing an element of order $r$. Then one of the following holds:
 \begin{enumerate}
  \item  $H$ contains $\SL(V)$, $\SU(V)$, $\Omega^{(\pm)}(V)$ or $\Sp(V)$; 
  \item  $H$ preserves an extension field structure on $V$ (of degree $f$
   dividing $\gcd(n,e)$);  
  \item $H$ normalizes $\GL_n(p^b)$ for some $b$ properly dividing $a$; or 
  \item $H$ normalizes the subgroup $H_0$ given in Table~\ref{tab:zsig}.
 \end{enumerate}
\end{thm}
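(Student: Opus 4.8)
The plan is to deduce this as a refinement of the main classification theorem of \cite{GPPS} by filtering its conclusion through the stronger arithmetic hypothesis on $r$. That theorem classifies all irreducible subgroups $H \le \GL_n(q)$ whose order is divisible by some Zsigmondy prime divisor $r_0$ of $q^e-1$ with $e > n/2$. In both of our cases the hypothesis supplies exactly such an element: either $r$ is itself a single Zsigmondy prime $> 2e+1$, or $r = r_1 r_2$ is a product of two Zsigmondy primes, in which case $|H|$ is divisible by one of the factors. Hence $H$ appears on the \cite{GPPS} list, and the task reduces to showing that, under the extra constraint $r > 2e+1$ (respectively, $r$ a product of two Zsigmondy primes), every entry of that list is either one of the geometric configurations (1)--(3) or one of the finitely many cases recorded in Table~\ref{tab:zsig}, which is exactly conclusion (4).

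First I would dispose of the generic geometric members of the list. Since $r_0$ is prime to $p$, the corresponding element is semisimple, and because $e > n/2$ the inequality $2e > n$ prevents two constituents of dimension $e$; thus it acts on $V$ with a unique irreducible $\FF_q$-constituent of dimension $e$, the complement having dimension $n - e < e$. This single large constituent forces the classical, extension-field and subfield members of the list into conclusions (1), (2) and (3) respectively, the divisibility $f \mid \gcd(n,e)$ in (2) coming from compatibility of the $e$-dimensional constituent with the $\FF_{q^f}$-structure (so $f \mid e$) together with $f \mid n$. The imprimitive and tensor-decomposable members are incompatible with a unique constituent of dimension exceeding $n/2$ except in bounded dimension, and those bounded exceptions are absorbed into Table~\ref{tab:zsig}.

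The substance of the argument lies in the remaining, essentially almost simple, members of the \cite{GPPS} list (its class $\mathcal{S}$ together with the extraspecial-normaliser examples). Here the strengthened hypothesis is precisely what cuts the list down. Each such entry comes with an explicit simple group $S$ in a fixed dimension $n$ over a fixed field, together with the Zsigmondy primes $r_0$ that can divide $|S|$. For almost all of them the only available Zsigmondy prime satisfies $r_0 = e+1$ or $r_0 = 2e+1$, so the hypothesis $r > 2e+1$ excludes them outright; and since such an $S$ is too small to contain an element whose order involves two Zsigmondy primes of $q^e-1$ (each being $\equiv 1 \pmod{e}$ and hence at least $e+1$), the product-of-two alternative excludes them as well. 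Lemma~\ref{lem:smallZsig} is exactly the tool that certifies when $\Phi_e^*(q) \in \{1,\, e+1,\, 2e+1\}$, i.e.\ precisely when no Zsigmondy prime exceeding $2e+1$ exists; this pins down the short list of pairs $(q,e)$ that must be examined by hand.

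The main obstacle I anticipate is this last case-by-case audit. One must run through every almost simple and extraspecial example surviving in \cite{GPPS}, read off the orders of its semisimple elements (from known character tables or from maximal-torus data), and test them against both the $r > 2e+1$ bound and the two-Zsigmondy-prime condition, retaining exactly those that genuinely admit such an $r$ and collecting them into Table~\ref{tab:zsig}. The delicate points are the borderline small-field configurations flagged by Lemma~\ref{lem:smallZsig}(b),(c), where a Zsigmondy prime just above $2e+1$ can indeed exist and one must check carefully whether it is actually realised by an element inside the given $S$.
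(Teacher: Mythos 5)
Your proposal is correct and follows essentially the same route as the paper: both reduce to the main classification of \cite{GPPS} (its Examples 2.1--2.9), identify the classical, subfield and extension-field examples with conclusions (1)--(3), and eliminate the imprimitive, extraspecial-normalizer and nearly simple examples via the hypothesis that $r>2e+1$ or that $r$ is a product of two Zsigmondy primes, the few survivors constituting Table~\ref{tab:zsig}. The paper's proof is merely terser, citing the \cite{GPPS} example numbers directly instead of re-deriving the geometric cases from the unique $e$-dimensional constituent.
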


\begin{proof}
Under our assumptions, $q$ has order greater than $n/2$ modulo $r$. The main
result of \cite{GPPS} is that Examples 2.1--2.9 are all possibilities.
We go through them one at a time.

The groups in Example 2.1 are those in cases (1) and~(3). Example 2.2 are
reducible groups.
Examples 2.3 and 2.5 are excluded by the hypothesis that $r > 2e+1$ or $r^2$
divides $q^e-1$. Example 2.4 is case (2).  

The rest of the possibilities are nearly simple groups. All the examples in
\cite{GPPS} other than those in Table~\ref{tab:zsig} are eliminated by the
hypotheses.
\end{proof}

\begin{table}[htbp]
\caption{Subgroups with large Zsigmondy primes}   \label{tab:zsig}
\[\begin{array}{|c|c||c||c|c|}
\hline
 n&  e& H_0         & \text{condition}& \text{classical overgroup}\\ \skipa \hline \hline 
 4&  4& \tw2B_2(q) &  q=2^{2k+1}&   \Sp_4  \\ \hline
 6&  6& G_2(q)     &  p=2       &   \Sp_6  \\ \hline
 7&  6& G_2(q)     &  p \ne 2   &   \OO_7  \\ 
  &   & ^2G_2(q)   &  q=3^{2k+1}&   \OO_7  \\ 
  &   & \PSU_3(q)  &  p = 3     &   \OO_7  \\ \hline
 8&  6& \PSU_3(q)  &  p \ne 3   &   \OO_8  \\    
  &   & \SL_2(q^3) & \text{always}& \Sp_8\ (\OO_8 \text{ if $q$ is even})\\ 
  &   &  \Spin_7(q)& \text{always}&   \OO_8  \\  \hline        
 9&  6&  \SL_3(q^2)& q \ne 2    &   \SL_9  \\  \hline
\end{array}\]
\end{table}

Note that the condition on $r$ is precisely that $r > 2e+1$. If $r=2e+1$,
there is a relatively short list of extra possibilities but for $r=e+1$
there are many more examples. 
 
We make this explicit in the 
following sections.   Moreover, we will work with each of the classical groups
and pick a specific $e$ (depending upon the type of group and the dimension). 

We also use the results of Hiss--Malle \cite{HM} and the Atlas \cite{Atl} to
help verify what subgroups embed in each classical group we consider.

\subsection{The special linear groups $\SL_n(q)$}

Set $q=p^a$ and let $G=\SL_n(q)$. We record the following results
which  are immediate consequences of results above and in \cite{GPPS}.  
 
\begin{lem}   \label{lem:maxSLn}
 Let $G=\SL_n(q)$, $n>2$, and set $e=n$. Let $C$ be a conjugacy class of
 elements of order~$r$ dividing $\Phi_{e}^*(q)$. Assume that $r > 2e+1$
 and $r$ is divisible by some prime divisor of $\Phi_{ae}^*(p)$. Then the only
 possible maximal subgroups containing an element of $C$ are:
 \begin{enumerate}
  \item  the normalizer of $\SU_n(q^{1/2})$ (if $n$ is odd and $q$ is a
   square); 
  \item the normalizer of $\Omega^{-}_n(q)$ (if $n$ is even and $q$ is odd);
  \item  the normalizer of $\Sp_n(q)$ (if $n$ is even); and
  \item  the normalizer of $\GL_{n/f}(q^f) \cap G$ for $f$ a prime
   divisor of $n$.
 \end{enumerate} 
 Moreover, such a class exists unless $(n,q)$ is one of  
 (6,2), (4,2), (10,2), (12,2), (18,2), (4,3), (6,3), (6,5), (3,2), (8,2),
 (20,2), (3,4) or (6,4).
\end{lem}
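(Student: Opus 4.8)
The plan is to treat the classification of overgroups and the existence of the class separately. For the classification I would first observe that an element $x\in C$ of order $r$ acts irreducibly on $V=\FF_q^n$: every prime divisor $s$ of $r$ is a primitive prime divisor of $q^n-1$, so the Frobenius orbit of a primitive $s$-th root of unity has length $e=n$ and $x$ has minimal polynomial of degree $n$. Hence any subgroup $H\le G$ with $x\in H$ is irreducible, and since $r>2e+1$ either $r$ is a prime exceeding $2e+1$ or, $r$ being composite with all its prime factors Zsigmondy divisors of $q^n-1$, a suitable power of $x$ has order a product of two such primes; in either case the hypotheses of Theorem~\ref{overgroups} apply to $H$ inside $\GL(V)$. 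I would then run through its four conclusions. Conclusion~(1) forces $H$ to contain one of $\SL(V),\SU(V),\Sp(V),\Omega^{(\pm)}(V)$; the first gives $H=G$, and the others yield items (1)--(3) of the lemma once one records which are maximal in $\SL_n(q)$ and actually meet $C$. Conclusion~(2) is the extension-field case, giving item~(4) with $f\mid\gcd(n,e)=n$, reduced to $f$ prime for maximality. Because $e=n$, only the rows $(n,e)=(4,4)$ and $(6,6)$ of Table~\ref{tab:zsig} occur in conclusion~(4), and there $H_0=\tw2B_2(q)$ or $G_2(q)$ lies in the symplectic overgroup $\Sp_n(q)$, so its normalizer is absorbed into item~(3).

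The hypotheses do their real work in eliminating conclusion~(3), the subfield subgroups $\GL_n(p^b)$ with $b$ properly dividing $a$. By assumption $r$ is divisible by a prime $s\mid\Phi_{ae}^*(p)=\Phi_{an}^*(p)$, so $p$ has order exactly $an$ modulo $s$ and $s\equiv1\pmod{an}$, giving $s\ge an+1$. Such an $s$ divides no factor $p^{bi}-1$ with $i\le n$ (that would force $an\mid bi\le bn<an$) and exceeds the field-automorphism index $a/b$, so neither $\GL_n(p^b)$ nor its normalizer contains an element of order divisible by $s$; this rules conclusion~(3) out. The most delicate step is the refinement of conclusion~(1) into the precise items (1)--(3): I would use that for $n$ even the torus carrying $x$ has order dividing $q^{n/2}+1$ and lies in $\Omega_n^-(q)$ (giving item~(2), with $q$ odd) but not in $\Omega_n^+(q)$, while $\Sp_n(q)$ gives item~(3), and that for $n$ odd no nondegenerate form is stabilised, leaving only the unitary group $\SU_n(q^{1/2})$ of item~(1). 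Combining this with the maximality data of Kleidman--Liebeck and the embedding information in \cite{HM} and \cite{Atl} yields the stated parity, sign and field conditions. I expect this to be the main obstacle, since it is here that the geometry of the forms and the exact maximal-subgroup structure of $\SL_n(q)$ enter.

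For existence I would show that a class as required exists precisely when $\Phi_n^*(q)>2n+1$. If this holds then $\Phi_{an}^*(p)>1$ --- otherwise Lemma~\ref{lem:smallZsig}(a) forces $p=2$ and $an=6$, whence $\Phi_n^*(q)\le2n+1$, a contradiction --- so choosing a prime $s\mid\Phi_{an}^*(p)$ and setting $r=\Phi_n^*(q)$ gives $r>2e+1$ divisible by $s$, realised by an element of the cyclic Singer subgroup of order $(q^n-1)/(q-1)$ of $\SL_n(q)$. Conversely, if $\Phi_n^*(q)\le2n+1$ then every admissible $r$ is at most $2e+1$ and no class exists. It then remains to list the pairs with $\Phi_n^*(q)\le2n+1$, which I would read off directly from Lemma~\ref{lem:smallZsig} applied to $(q,n)$: part~(a) gives $(n,q)=(6,2)$, part~(b) gives $(4,2),(10,2),(12,2),(18,2),(4,3),(6,3),(6,5)$, and part~(c) gives $(3,2),(8,2),(20,2),(3,4),(6,4)$, all subject to $n>2$. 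This reproduces exactly the list in the statement, so the existence half is routine once Lemma~\ref{lem:smallZsig} is in hand.
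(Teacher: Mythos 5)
Your proposal is correct and follows essentially the same route as the paper, which offers no separate proof but records Lemma~\ref{lem:maxSLn} as an ``immediate consequence'' of Theorem~\ref{overgroups} and Lemma~\ref{lem:smallZsig}: you apply Theorem~\ref{overgroups} with $e=n$ (covering composite $r$ via a power of $x$ of order a product of two Zsigmondy primes), use the $\Phi_{ae}^*(p)$ hypothesis to eliminate the subfield case, absorb the $(n,e)=(4,4),(6,6)$ rows of Table~\ref{tab:zsig} into the normalizer of $\Sp_n(q)$, and read the exception list off Lemma~\ref{lem:smallZsig} using that $\Phi_n^*(q)\equiv1\pmod n$. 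This is exactly the intended derivation, with the details filled in correctly.
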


\begin{lem}   \label{lem:maxSLn-1}
 Let $G=\SL_n(q)$, $n>3$, and set $e=n-1$. Let $C$ be a conjugacy class of
 elements of order~$r$ dividing $\Phi_{e}^*(q)$.  Assume that $r > 2e+1$
 and $r$ is divisible by some prime divisor of $\Phi_{ae}^*(p)$. Then the only
 possible maximal subgroups containing an element of $C$ are:
 \begin{enumerate}
  \item  the normalizer of $\SU_n(q^{1/2})$ (if $n$ is even and $q$ is a
   square); 
  \item the normalizer of $\Omega_n(q)$ (if $nq$ is odd); and
  \item the stabilizer of a $1$-space or of a hyperplane.
 \end{enumerate}
 Moreover, such a class exists unless possibly $(n,q)$ is one of
 (7,2), (5,2), (11,2), (13,2), (19,2), (5,3), (7,3), (7,5), (4,2),
 (9,2), (21,2), (4,4) or (7,4).
\end{lem}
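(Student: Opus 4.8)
The plan is to fix an element $g\in C$ of order $r$, determine its action on $V=\FF_q^n$, and then invoke Theorem~\ref{overgroups} with $e=n-1$ (note $e>n/2$ since $n>3$). Since $r$ is a primitive prime divisor of $q^{e}-1$, the order of $q$ modulo $r$ equals $e=n-1$, so the nontrivial $r$th roots of unity fall into Frobenius orbits of length $n-1$; as $r\nmid q-1$ the only $r$th root of unity in $\FF_q$ is $1$, and $\det g=1$ then forces exactly one trivial eigenvalue. Hence $g$ acts as $1\oplus\rho$ with $\rho$ an $(n-1)$-dimensional irreducible $\FF_q\langle g\rangle$-module, its fixed space is a line, and the only $g$-invariant subspaces of $V$ are $0$, this line, the complementary hyperplane, and $V$.

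Let $M$ be a maximal subgroup of $G$ containing $g$. If $M$ is reducible it stabilizes a proper nonzero subspace, which is $g$-invariant and hence is the fixed line or the hyperplane; this is case~(3). If $M$ is irreducible, then $r>2e+1$ lets me apply Theorem~\ref{overgroups}, so $M$ falls into one of its four cases. The inclusion $M\supseteq\SL(V)$ is impossible since $M$ is proper. The extension-field case is empty here, because its degree $f$ divides $\gcd(n,e)=\gcd(n,n-1)=1$. The subfield case is ruled out by the hypothesis $r\mid\Phi_{ae}^*(p)$: such an $r$ is a primitive prime divisor of $p^{ae}-1$, so $r\nmid p^{bi}-1$ for every $i\le n$ and every proper divisor $b$ of $a$ (using $n>2$), whence $r\nmid|\GL_n(p^b)|$.

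It then remains to identify which classical subgroups of Theorem~\ref{overgroups}(1), and which nearly simple groups of Table~\ref{tab:zsig}, can contain $g$. Here I would compare $r$, a primitive prime divisor of degree $e=n-1$, with the cyclotomic factors of the orders $|\Sp_n(q)|$, $|\Omega^{\pm}_n(q)|$ and $|\SU_n(q^{1/2})|$. A parity count against these factors shows that $r$ divides no symplectic order and no even-dimensional orthogonal order, divides a unitary order only when $n$ is even and $q$ is a square, and divides an odd-dimensional orthogonal order only when $n$ is odd (and then $q$ is odd); this produces exactly cases~(1) and~(2). For the nearly simple groups, only the row $n=7$, $e=6$ of Table~\ref{tab:zsig} satisfies $e=n-1$, and there each of $G_2(q)$, $\tw{2}G_2(q)$, $\PSU_3(q)$ lies in $\Omega_7(q)$ with $q$ odd; hence none is maximal in $\SL_n(q)$ and all are absorbed into case~(2).

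Finally, the existence statement amounts to producing a prime $r>2e+1$ dividing $\Phi_{ae}^*(p)$; when $q=p$ this is the same as a prime divisor of $\Phi_e^*(q)$ exceeding $2e+1$. Since every prime divisor of $\Phi_e^*(q)$ is $\equiv 1\pmod{e}$, the only such divisors not exceeding $2e+1$ are $e+1$ and $2e+1$, so by Lemma~\ref{lem:smallZsig} (parts (a)--(c)) a suitable $r$ fails to exist precisely in the listed small cases; translating those values through $n=e+1$ yields exactly the exceptional pairs $(n,q)$. As in Lemma~\ref{lem:maxSLn}, the main obstacle is the bookkeeping of the third paragraph: confirming by torus orders which classical groups genuinely arise for $e=n-1$, and verifying that the $n=7$ candidates of Table~\ref{tab:zsig} collapse into the orthogonal case rather than contributing new maximal overgroups.
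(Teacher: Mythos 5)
Your route is exactly the one the paper intends: the paper gives no proof of Lemma~\ref{lem:maxSLn-1} at all, recording it as an ``immediate consequence'' of Theorem~\ref{overgroups}, Lemma~\ref{lem:smallZsig} and the classification in \cite{GPPS}, and your first three paragraphs correctly supply those details. The eigenvalue analysis (a fixed line plus an irreducible hyperplane, so reducible maximal subgroups are precisely the two parabolics of case~(3)), the vanishing of the extension-field case because $\gcd(n,n-1)=1$, the order/parity count that singles out $\SU_n(q^{1/2})$ ($n$ even, $q$ square) and $\Omega_n(q)$ ($nq$ odd) among the classical groups, and the observation that the only row of Table~\ref{tab:zsig} with $e=n-1$ is $(n,e)=(7,6)$, whose groups preserve a quadratic form and hence have their normalizers inside the case-(2) subgroup, are all as intended. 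Two small glosses in the subfield step: since $r$ need not be prime, you should argue with a prime divisor $s$ of $r$ lying in $\Phi_{ae}^*(p)$; and showing $s\nmid|\GL_n(p^b)|$ rules out $g\in\GL_n(p^b)$ but not yet $g$ in its normalizer --- for that, note $\gcd(r,q-1)=1$, so $g$ is a power of $g^{q-1}$, which lies in $\GL_n(p^b)$ whenever $g$ normalizes (i.e.\ lies in $Z\cdot\GL_n(p^b)$), giving the contradiction.

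The one step that does not work as stated is the existence argument. You reduce existence to ``producing a prime $r>2e+1$ dividing $\Phi_{ae}^*(p)$'' and then read off the failure cases from Lemma~\ref{lem:smallZsig}. But Lemma~\ref{lem:smallZsig} controls the size of $\Phi_e^*(q)$ itself, not of its largest prime divisor, and the two criteria genuinely differ: for example $\Phi_4^*(7)=25$, so for $(n,q)=(5,7)$ there is no prime divisor exceeding $2e+1=9$, yet $(5,7)$ is correctly absent from the exception list. Under your framing the deduction ``fails precisely in the listed cases'' is therefore unjustified. The repair is to allow composite $r$: whenever $\Phi_e^*(q)>2e+1$ (and $\Phi_{ae}^*(p)>1$, which for $e\ge3$ fails only at $(n,q)=(4,4)$, already on the list), take $r=\Phi_e^*(q)$; this $r$ satisfies the hypotheses of the lemma, and Theorem~\ref{overgroups} still applies because such an $r$ is either a prime $>2e+1$ or is divisible by a product of two Zsigmondy primes --- which is exactly why that second alternative appears in Theorem~\ref{overgroups}, and why the lemma's hypothesis reads ``$r$ divisible by some prime divisor of $\Phi_{ae}^*(p)$'' rather than ``$r$ prime''. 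With that adjustment, Lemma~\ref{lem:smallZsig} applied to $\Phi_e^*(q)$ with $e=n-1$ yields exactly the stated list of exceptional pairs.
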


The only $\SL_n(q)$, $n>2$, where  neither of the previous lemmas applies are
$\SL_3(2)$, $\SL_3(4)$ and $\SL_4(2) \cong \fA_8$.

\subsection{The special unitary groups $\SU_n(q)$.}
The following two results are easy consequences of earlier results. Let
$q=p^a$. 

If $n$ is odd, then we take $e=2n$ and so the only special cases are
when $\Phi_e^*(q)= 2n+1$ or $4n+1$, that is, $(n,q)$ is one of
$(5,2),(9,2),(3,3),(3,4)$, or $(3,5)$ by Lemma~\ref{lem:smallZsig}. In these
cases we use \cite{HM} and
\cite{GPPS} to check the possibilities. This gives:

\begin{lem}   \label{lem:maxSUodd}
 Let $G=\SU_n(q)$, $n>2$ and $n$ odd, $(n,q)\ne(3,2)$. Set $e=2n$. 
 Let $C$ be a conjugacy class of elements of order~$r$ dividing
 $\Phi_{e}^*(q)$. Assume that $r$ is divisible by some prime
 divisor of $\Phi_{ae}^*(p)$. If $M$ is a maximal subgroup of $G$ containing
 an element of order $r$,  then one of the following holds:
 \begin{enumerate}
 \item $M$ is the normalizer of $G \cap \GU_{n/f}(q^f)$, $f$ an odd prime
  divisor of $n$;
 \item $(n,q)=(5,2)$, $C$ consists of elements of order $11$ and $M$ is the
  normalizer of $\PSL_2(11)$; 
 \item  $(n,q)=(9,2)$, $C$ consists of elements of order $19$, and $M$ is the
  normalizer of $J_3$; 
 \item $(n,q)=(3,3)$, $C$ consists of elements of order $7$, and $M=\PSL_2(7)$; or
 \item  $(n,q)=(3,5)$, $C$ consists of elements of order $7$, and $M$ is the
  normalizer of $\fA_7$ ($3$ classes fused in the automorphism group).
\end{enumerate}  
 \end{lem}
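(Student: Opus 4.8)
The plan is to transfer the problem into the linear group $\GL_n(q^2)$, where Theorem~\ref{overgroups} is available, and then translate the resulting list of overgroups back into subgroups of $G=\SU_n(q)$. Write $q=p^a$ and regard $G\le\GL(V)=\GL_n(q^2)$ acting on the natural Hermitian module $V=\FF_{q^2}^n$. First I would observe that a prime divisor $r$ of $\Phi_{2n}^*(q)$ is a primitive prime divisor of $q^{2n}-1$, so $q$ has multiplicative order $2n$ modulo $r$ and hence $q^2$ has order $n$ modulo $r$. Thus $r$ is a Zsigmondy prime divisor of $(q^2)^n-1$ with associated value $e'=n$ equal to the full dimension (in particular $e'>n/2$), and -- crucially for the table step below -- $e'$ is odd. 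Any element $g$ of order $r$ therefore acts irreducibly on $V$ over $\FF_{q^2}$, so every subgroup of $\GL(V)$ containing such a $g$, and in particular every maximal subgroup $M$ of $G$ containing an element of order $r$, is irreducible on $V$.

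In the generic case I would apply Theorem~\ref{overgroups} to $M\le\GL(V)$, with $n$ playing the role of both the dimension and of $e'$; this is valid as soon as $r>2e'+1=2n+1$, or $r$ is a product of two Zsigmondy primes (which covers the case where $\Phi_{2n}^*(q)$ is a proper prime power of $2n+1$). I then run through the four conclusions. Conclusion (1) forces $M$ to contain one of $\SL(V)$, $\SU(V)$, $\Sp(V)$ or $\Omega(V)$; since $M\le G=\SU(V)$ and $n$ is odd, the symplectic case is vacuous, the orthogonal case is impossible because a primitive prime divisor $r$ of $q^{2n}-1$ cannot divide $|\SO_n(q)|$ (whose order involves only factors $q^j-1$ with $j\le n-1<2n$), and $\SL(V)\not\le G$; hence $M=\SU(V)=G$, which is not proper. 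Conclusion (2) gives an extension-field subgroup of degree $f\mid\gcd(n,e')=n$; intersecting the $\FF_{q^{2f}}$-structure with the Hermitian form yields $M=N_G(G\cap\GU_{n/f}(q^f))$ with $f$ an odd prime divisor of $n$, which is exactly conclusion~(1) of the lemma. Conclusion (3) is a subfield subgroup, excluded by the hypothesis that $r$ is divisible by a prime divisor of $\Phi_{ae}^*(p)$, i.e. that $r$ is primitive over the prime field, so it cannot divide the order of a proper subfield subgroup. Finally, conclusion (4) would require an entry of Table~\ref{tab:zsig} with $e=n$; all such entries have $n\in\{4,6\}$ even, so none occurs for odd $n$.

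It remains to treat the parameter pairs for which the inequality $r>2n+1$ may fail. Applying Lemma~\ref{lem:smallZsig} with the parameter $2n$ (even, and $\equiv 2\pmod{4}$ since $n$ is odd), the only $(n,q)$ with $\Phi_{2n}^*(q)\in\{1,2n+1,4n+1\}$ are $(3,2)$, $(5,2)$, $(9,2)$, $(3,3)$, $(3,4)$ and $(3,5)$. The pair $(3,2)$ is excluded by hypothesis, while $(3,4)$, where $\Phi_6^*(4)=13>7=2n+1$, is already covered by the generic argument (and a direct check via \cite{Atl} shows only conclusion~(1) arises, so it needs no separate entry). For the remaining pairs $(5,2)$, $(9,2)$, $(3,3)$, $(3,5)$ one has $r=2n+1=2e'+1$, the borderline value at which Theorem~\ref{overgroups} does not apply; here I would instead enumerate directly, using the low-dimensional representation data of Hiss--Malle~\cite{HM} together with \cite{GPPS} and the Atlas~\cite{Atl}, all quasisimple irreducible subgroups of the relevant $\GL_n(q^2)$ of order divisible by $r$. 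This produces precisely the extra almost simple maximal subgroups involving $\PSL_2(11)$, $J_3$, $\PSL_2(7)$ and $\fA_7$ recorded in conclusions (2)--(5).

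The main obstacle is exactly this last step: at $r=2e'+1$ the clean overgroup theorem is unavailable, so one cannot avoid an explicit, case-by-case identification of the almost simple subgroups of the four small unitary groups containing an element of order $r$, which is where the \cite{HM} and \cite{Atl} input is indispensable. Everything else is the routine dictionary between the Aschbacher classes of $\GL_n(q^2)$ and the geometric maximal subgroups of $\SU_n(q)$, once the reduction $e'=n$ has been made.
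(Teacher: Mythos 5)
Your proposal is correct and follows essentially the same route as the paper: apply Theorem~\ref{overgroups} (the GPPS classification) in the generic case, use Lemma~\ref{lem:smallZsig} to isolate the exceptional parameter pairs, and settle those by direct checks from \cite{HM}, \cite{GPPS} and \cite{Atl}. The only difference is one of normalization: you work inside $\GL_n(q^2)$ with Zsigmondy parameter $e'=n$ and threshold $2n+1$, whereas the paper keeps $e=2n$ and threshold $4n+1$, which is why $(3,4)$ appears on its exceptional list but drops out of yours --- with no effect on the final statement.
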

 
Similarly, if $n$ is even, we take $e=2(n-1)$ and so the only problematic
cases are when $\Phi_e^*(q)= 2n-1$, $4n-3$ or $1$, so $(n,q)$ is one of $(4,2),
(6,2),(10,2),(4,3),(4,4)$ or $(4,5)$.
Since $\Phi_6^*(2)=1$, we have to exclude the case ($4,2)$. 
 
\begin{lem}   \label{lem:maxSUeven}
 Let $G=\SU_n(q)$ with $n>2$, $n$ even, $(n,q) \ne(4,2)$.
 Set $e=2(n-1)$. Let $C$ be a conjugacy class of elements of order~$r$
 dividing $\Phi_{e}^*(q)$. Assume that $r$ is divisible by
 some prime divisor of $\Phi_{ae}^*(p)$. If $M$ is a maximal subgroup of $G$
 containing an element of order $r$, then one of the following occurs:
 \begin{enumerate}
  \item  $M$ is the stabilizer  of a nondegenerate $1$-space; 
  \item $(n,q)=(6,2)$,  $C$ is a class of elements of order $11$ and
   $M=M_{22}$ ($3$ classes);
  \item $(n,q)=(4,3)$, $C$ is a class of elements of order $7$, and
   $M = 4_2.\PSL_3(4)$ ($2$ classes) or $M=\fA_7$ ($4$ classes); or
  \item $(n,q)=(4,5)$, $C$ is a class of elements of order $7$, and $M=\fA_7$
   (2 classes fused in $\GU_4(5)$).
 \end{enumerate}
\end{lem}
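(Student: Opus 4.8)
The plan is to realize $G=\SU_n(q)$ inside $\GL(V)=\GL_n(q^2)$ with $V=\FF_{q^2}^n$ the natural Hermitian module, and to exploit the very rigid action of an element $g\in C$ of order $r$. Since $r\mid\Phi_{2(n-1)}^*(q)$ divides $q^{2(n-1)}-1=(q^2)^{n-1}-1$ and $r$ is primitive for $q$ at exponent $2(n-1)$, it is primitive for $q^2$ at exponent $e'=n-1$; as $n-1>n/2$ for $n>2$, this places us in the setting of Theorem~\ref{overgroups} over the field $\FF_{q^2}$ in dimension $n$. Because $r\mid q^{n-1}+1$ while $r\nmid q^2-1$ and $r\nmid(q^2)^m-1$ for $m<n-1$, the element $g$ fixes a nondegenerate $1$-space $\langle v\rangle$ pointwise and acts irreducibly on the nondegenerate hyperplane $v^\perp$; consequently the only proper nonzero $g$-invariant subspaces of $V$ are $\langle v\rangle$ and $v^\perp$.

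First I would dispose of the reducible case. If the maximal subgroup $M\le G$ containing $g$ stabilizes a proper nonzero subspace of $V$, then it must stabilize $\langle v\rangle$ or $v^\perp$; since each is nondegenerate and the perp of the other, $M$ lies in, hence by maximality equals, the stabilizer of the nondegenerate $1$-space $\langle v\rangle$. This is conclusion~(1). No totally singular subspace is $g$-invariant, so the remaining reducible maximal subgroups cannot contain $g$.

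It remains to treat the case that $M$ is irreducible on $V$, where I would apply Theorem~\ref{overgroups} to $M\le\GL_n(q^2)$. Outside the small pairs discussed below, $r$ satisfies the inequality $r>2e'+1=2n-1$ required there; moreover the hypothesis that $r$ be divisible by a prime divisor of $\Phi_{ae}^*(p)$ guarantees that $r$ is primitive already over the prime field $\FF_p$, the order of $p$ modulo $r$ being the full $ae=2a(n-1)$. Granting this, I run through the four outcomes. In Case~(1), $M$ contains a classical group on $V$, but the only one contained in $G$ is $\SU(V)=G$ itself, so $M$ is not proper. Case~(2) requires an extension-field structure of degree dividing $\gcd(n,e')=\gcd(n,n-1)=1$ and hence does not occur. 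Case~(3) would place $g$ in a proper subfield subgroup, impossible because $r$ is primitive over $\FF_p$. Finally Case~(4) is vacuous here, since no row of Table~\ref{tab:zsig} has parameters $(n,e')=(n,n-1)$ with $n$ even. Thus in the generic range only conclusion~(1) survives.

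The remaining work, and the genuine obstacle, concerns the pairs $(n,q)$ for which $r$ fails the inequality $r>2n-1$. By Lemma~\ref{lem:smallZsig} this forces $\Phi_{2(n-1)}^*(q)=2n-1$, equivalently $r=2e'+1$, which for even $n$ occurs exactly for $(6,2),(10,2),(4,3)$ and $(4,5)$; the pair $(4,2)$ is already excluded since $\Phi_6^*(2)=1$, while for $(4,4)$, where $\Phi_6^*(4)=2e+1=13>7$, Theorem~\ref{overgroups} still applies and produces nothing new. For the four genuine boundary pairs the $r=2e'+1$ examples of \cite{GPPS} must be examined directly; combining the low-dimensional representation tables of Hiss--Malle \cite{HM} with the Atlas \cite{Atl} then pins down the extra irreducible overgroups, giving $M_{22}$ for $(6,2)$, the groups $4_2.\PSL_3(4)$ and $\fA_7$ for $(4,3)$, and $\fA_7$ for $(4,5)$, and confirms that $(10,2)$ yields only conclusion~(1). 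This bookkeeping over the small pairs, rather than the generic Zsigmondy argument, is where the real care is required.
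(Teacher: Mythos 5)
Your proposal is correct and follows essentially the same route as the paper: realize $\SU_n(q)$ inside $\GL_n(q^2)$ so that $r$ becomes a Zsigmondy element for $(q^2)^{n-1}-1$ with $e'=n-1>n/2$, apply Theorem~\ref{overgroups} (the reducible case giving the nondegenerate $1$-space stabilizer, the irreducible cases being killed by $\gcd(n,n-1)=1$, the $\Phi_{ae}^*(p)$ hypothesis, and the absence of relevant rows in Table~\ref{tab:zsig}), and isolate the boundary pairs via Lemma~\ref{lem:smallZsig}, settling them with \cite{GPPS}, \cite{HM} and the Atlas \cite{Atl}. The only difference is cosmetic: the paper conservatively lists $(4,4)$ among the problematic pairs (using the threshold $2e+1$ measured relative to $q$), whereas you correctly note that $\Phi_6^*(4)=13$ already exceeds the threshold $2e'+1=7$ relevant over $\FF_{q^2}$, so that $(4,4)$ needs no separate treatment.
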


Finally, we deal with $\SU_4(2)$.   The maximal subgroups containing elements
of order $5$ are $\fS_6$ and $2^4.\fA_5$ (see \cite{Atl}).

\subsection{The odd-dimensional orthogonal groups $\Omega_{2n+1}(q)$}

\begin{lem}   \label{lem:maxSOodd}
 Let  $G=\Omega_{2n+1}(q), q=p^a$, $n>2$ and $q$ odd. Let $e =2n$.
 Let $C$ be a conjugacy class of elements of order~$r$ dividing
 $\Phi_{e}^*(q)$. Assume that $r > 2e+1$ and $r$ is divisible by some prime
 divisor of $\Phi_{ae}^*(p)$. Let $M$ be a maximal subgroup of $G$
 intersecting $C$. Then one of the following holds:
 \begin{enumerate}
  \item $M$ is the stabilizer of a nondegenerate $1$-space;
  \item $n=3$, $M$ is the normalizer of $G_2(q)$;
  \item $n=p=3$, $M$ is the normalizer of $\PSU_3(q)$; or
  \item $n=3$, $q=3^{2a+1}$ and $M$ is the normalizer of a Ree group
   $\tw2G_2(q)$.
 \end{enumerate}
 Moreover, such a class exists unless $(2n+1,q)$ is either $(7,3)$ or $(7,5)$.
 In those two cases, the additional possibilities are given in
 Table~\ref{tab:Oodd}.
\end{lem}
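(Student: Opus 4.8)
The plan is to read off the answer from the action of a torus element and Theorem~\ref{overgroups}, exactly as in the proofs of Lemmas~\ref{lem:maxSLn}--\ref{lem:maxSUeven}. First I would analyse how an element $g\in C$ of order $r$ acts on the natural module $V=\FF_q^{2n+1}$. Since $r$ divides $\Phi_{2n}^*(q)$, the multiplicative order of $q$ modulo $r$ equals $e=2n$, so every nontrivial eigenvalue of $g$ over $\overline{\FF}_q$ lies in a Frobenius orbit of length exactly $2n$. As $\dim V=2n+1$, this forces $V=W\perp\langle v\rangle$, where $W$ is a $2n$-dimensional $\FF_q$-irreducible, hence nondegenerate, subspace on which $g$ acts irreducibly and $\langle v\rangle$ is a nondegenerate fixed $1$-space. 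Consequently the only nonzero $g$-invariant subspaces of $V$ are $\langle v\rangle$, $W=\langle v\rangle^\perp$ and $V$, none of them isotropic.

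Next I would split according to whether $M$ is reducible. If $M$ is reducible, then it stabilises $\langle v\rangle$ or its perp $W$; since these determine one another, $M$ lies in the stabiliser of a nondegenerate $1$-space, which is conclusion~(1). If $M$ is irreducible I would apply Theorem~\ref{overgroups} to $M\le\GL(V)=\GL_{2n+1}(q)$, which is legitimate since $e=2n>(2n+1)/2$ and $r>2e+1$. Its four outcomes are then disposed of in turn. Case~(1) gives no proper subgroup, because of $\SL(V),\SU(V),\Omega^{(\pm)}(V),\Sp(V)$ only the orthogonal group $\Omega_{2n+1}(q)=G$ lies in $G$. Case~(2) cannot occur, since the degree $f$ divides $\gcd(2n+1,2n)=1$. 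Case~(3) is a subfield subgroup, excluded by the hypothesis that $r$ is divisible by a prime divisor of $\Phi_{ae}^*(p)$: such a prime has order $ae$ modulo $p$, whereas an element of $\Omega_{2n+1}(p^b)$ with $b\mid a$ proper forces $r\mid p^{be}-1$ and hence order dividing $be<ae$. Only case~(4) survives, and I would select from Table~\ref{tab:zsig} the rows whose first two entries are $(2n+1,2n)$, that is whose $e$-entry equals the dimension minus one; the only such row is $(7,6)$. With $p$ odd this yields exactly the normalisers of $G_2(q)$, of $\tw2G_2(q)$ (when $q=3^{2k+1}$) and of $\PSU_3(q)$ (when $p=3$), forcing $n=3$ and giving conclusions~(2)--(4), while for $n>3$ no row qualifies and only~(1) occurs. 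I would invoke \cite{HM} and \cite{Atl} to confirm maximality and that no further almost simple overgroups arise in dimension~$7$.

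For the existence statement I would take $r=\Phi_{2n}^*(q)$, realised by an element of that order in the cyclic torus of order $q^n+1$ inside an $\OO^-_{2n}(q)\perp\OO_1(q)$ subgroup. Since $p$ is odd, $\Phi_{ae}^*(p)>1$ by Lemma~\ref{lem:smallZsig}(a); any prime $s\mid\Phi_{ae}^*(p)$ has order $ae$ modulo $p$, hence order $e$ modulo $p^a=q$, so $s\mid\Phi_{2n}^*(q)=r$, verifying that divisibility hypothesis. Because every prime divisor of $\Phi_{2n}^*(q)$ is $\equiv1\pmod{2n}$, the condition $r>2e+1$ amounts to $\Phi_{2n}^*(q)\notin\{1,e+1,2e+1\}$. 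By Lemma~\ref{lem:smallZsig}, for $q$ odd and $e=2n>4$ the value $2e+1$ never occurs and $e+1$ occurs only for $q^e\in\{3^6,5^6\}$, that is $(2n+1,q)=(7,3)$ or $(7,5)$; these are the asserted exceptions.

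I expect the main obstacle to be the honest verification in the almost simple case: confirming via \cite{GPPS}, \cite{HM} and \cite{Atl} that in dimension~$7$ the listed groups are precisely the irreducible almost simple overgroups containing an element of order $r$, and are maximal, together with the separate hands-on treatment of the two genuinely exceptional pairs $(7,3)$ and $(7,5)$, where the clean Zsigmondy argument breaks down and the extra possibilities recorded in Table~\ref{tab:Oodd} must be determined directly.
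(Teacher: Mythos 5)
Your proposal is correct and is essentially the paper's own argument: the paper presents this lemma as an immediate consequence of Theorem~\ref{overgroups} and Lemma~\ref{lem:smallZsig} (reducible subgroups giving the nondegenerate $1$-space stabilizer, the extension-field case dying on $\gcd(2n+1,2n)=1$, the subfield case dying on the $\Phi_{ae}^*(p)$ hypothesis, and the row $(7,6)$ of Table~\ref{tab:zsig} giving conclusions (2)--(4)), with the exceptional pairs $(7,3)$ and $(7,5)$ and their Table~\ref{tab:Oodd} entries verified via \cite{Atl} resp.\ \cite{HM} --- exactly the steps you spell out, including the Zsigmondy analysis identifying those two pairs as the only failures of $r>2e+1$ for odd $q$.
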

 
The entries in Table~\ref{tab:Oodd} can be proved using \cite{Atl} resp.
\cite{HM}.

\begin{table}[htbp]
\caption{Further maximal subgroups of $\Omega_{2n+1}(q)$, $n\ge3$}
  \label{tab:Oodd}
\[\begin{array}{|c||c|r|c|}
\hline
 (2n+1,q)& M    & \#\text{ classes}& r\\ \skipa \hline \hline 
 (7,3)& \fS_9,\ 2^6.\fA_7,\ \Sp_6(2)              & 2,1,2   &    7 \\ \hline 
 (7,5)& \fA_8,\ 2^6.\fA_7,\ \Sp_6(2),\ \PSU_3(3).2& 2,1,2,2 &    7 \\ \hline
\end{array}\]
\end{table}

 
\subsection{The symplectic groups $\Sp_{2n}(q)$}
 
 
Applying the previous results gives:
 
\begin{lem}   \label{lem:maxSp}
 Let $G=\Sp_{2n}(q), q=p^a$, $n\ge2$ with $(n,q) \ne (2,2)$, and set $e=2n$.
 Let $C$ be a conjugacy class of elements of order~$r$ dividing
 $\Phi_{e}^*(q)$. Assume that $r > 2e+1$ and $r$ is divisible by some prime
 divisor of $\Phi_{ae}^*(p)$. Let $M$ be a maximal subgroup of $G$
 intersecting $C$. Then one of the following holds:
 \begin{enumerate}
  \item $M$ is the normalizer of $\Sp_{2n/f}(q^f)$ for $f$ a prime divisor of $n$;
  \item $M$ is the normalizer of $\SU_n(q)$ with $qn$ odd;
  \item $q$ is even and $M=\Omega_{2n}^-(q)$; 
  \item $n=3$, $q$ is even, and $M$ is the normalizer of $G_2(q)$; or
  \item $n=2$, $q=2^{2a+1}$, and $M$ is the normalizer of a Suzuki
   group $\tw2B_2(q)$.
 \end{enumerate}
 Moreover, such a class exists unless $(2n,q)$ is as in Table~\ref{tab:sp}.
 In those remaining cases, the additional possibilities (for $r|\Phi_e(q)$)
 are listed in the table.
\end{lem}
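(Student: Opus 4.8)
The plan is to pass to the natural $2n$-dimensional module, invoke Theorem~\ref{overgroups}, and then determine which of its conclusions survive inside the symplectic group. First I would note that any $g\in C$ acts irreducibly on $V=\FF_q^{2n}$: since $r\mid\Phi_e^*(q)$ with $e=2n$, the multiplicative order of $q$ modulo any prime divisor of $r$ is exactly $e=2n=\dim V$, so the eigenvalues of $g$ generate $\FF_{q^{2n}}$ and the characteristic polynomial of $g$ is a single irreducible factor of degree $2n$. Hence $\langle g\rangle$, and therefore every maximal subgroup $M$ meeting $C$, is irreducible on $V$. Because $e=2n>\tfrac12\dim V$ and $r>2e+1$, I may apply Theorem~\ref{overgroups} to $M\le\GL(V)=\GL_{2n}(q)$; the task is then to intersect each of its four conclusions with $\Sp(V)=\Sp_{2n}(q)$.

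I would treat the four conclusions in turn. In conclusion~(1) the subgroup contains a full classical group: here $\SL(V)$ cannot occur since $\Sp(V)\subsetneq\SL(V)$, $\SU(V)$ cannot occur since its natural module is not self-dual (so it preserves no symplectic form), and $\Sp(V)$ itself is improper, leaving only $\Omega^{\pm}_{2n}(q)$, which lies in $\Sp_{2n}(q)$ precisely when $q$ is even (the quadratic form polarizing to the alternating form). As $r\mid\Phi_{2n}^*(q)$ divides $q^n+1$ but no $q^m-1$ with $m<2n$, the element $g$ lies in the $(q^n+1)$-torus and so forces the minus type, giving case~(3). Conclusion~(2), an extension-field structure of degree $f\mid\gcd(2n,e)=2n$, splits according to how the form descends: an $\FF_{q^f}$-symplectic structure yields $\Sp_{2n/f}(q^f)$ with $f$ a prime divisor of $n$, which is case~(1), while the degree-$2$ Hermitian structure yields $\GU_n(q)$, which contains an element of order $r$ only when $n$ is odd (its Coxeter torus then having order $q^n+1$) and embeds in $\Sp_{2n}(q)$ only for $q$ odd, giving case~(2) with $qn$ odd. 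Conclusion~(3), a subfield subgroup $\Sp_{2n}(p^b)$ with $b\mid a$ proper, is excluded by the hypothesis that $r$ is divisible by a prime divisor of $\Phi_{ae}^*(p)$, since such a prime has $p$ of multiplicative order $ae$ and so cannot divide $|\Sp_{2n}(p^b)|$. Finally, conclusion~(4) sends me to Table~\ref{tab:zsig}: the only rows with dimension $2n$, parameter $e=2n$, and a symplectic classical overgroup are ${}^2B_2(q)\le\Sp_4$ (so $n=2$, $q=2^{2a+1}$, case~(5)) and $G_2(q)\le\Sp_6$ with $p=2$ (so $n=3$, case~(4)); the remaining dimension-$6$ rows have overgroup $\OO_7$ and belong to the genuinely $7$-dimensional orthogonal setting of Lemma~\ref{lem:maxSOodd}, and for $2n\ge8$ there is no such row, so only cases~(1)--(3) survive.

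For the existence statement I would argue that a class $C$ with the stated properties exists exactly when $\Phi_e^*(q)$ admits a divisor $r>2e+1$ whose prime support meets $\Phi_{ae}^*(p)$; by Lemma~\ref{lem:smallZsig} this fails only for the finitely many pairs $(n,q)$ with $\Phi_{2n}^*(q)\in\{1,\,e+1,\,2e+1\}$, and these are collected in Table~\ref{tab:sp}, where instead I list the extra overgroups arising for the smaller primes $r\mid\Phi_e(q)$.

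The main obstacle will be the bookkeeping in the second paragraph: matching each geometric or almost-simple overgroup of $\GL_{2n}(q)$ to its symplectic specialization and pinning down the exact arithmetic side-conditions --- why the orthogonal overgroup must be of minus type, why $\GU_n(q)$ contributes only when $qn$ is odd, and why $G_2(q)$ lies in $\Sp_6(q)$ only in characteristic~$2$ (its $7$-dimensional module collapsing to $6$ dimensions there, reflecting the isogeny $\Sp_6\cong\Omega_7$ in that characteristic) --- together with checking that Table~\ref{tab:zsig} contains no further symplectic entry with $e=2n$ in dimensions $2n\ge8$. The remaining verifications, including the embeddings and the relevant torus orders, are routine given Theorem~\ref{overgroups} and the data of~\cite{HM} and~\cite{Atl} cited above.
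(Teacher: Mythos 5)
Your proposal is correct and follows essentially the same route as the paper: the paper's entire argument for this lemma is ``applying the previous results'' --- i.e.\ Theorem~\ref{overgroups} together with Table~\ref{tab:zsig}, the hypothesis on $\Phi_{ae}^*(p)$ killing the subfield case, and Lemma~\ref{lem:smallZsig} pinning down the exceptional pairs $(2n,q)$, whose extra overgroups are then read off from \cite{GPPS,HM} --- and your write-up simply fills in that bookkeeping explicitly and accurately.

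One detail in your second paragraph is wrong, though it does not affect the conclusion: you claim $\GU_n(q)$ ``embeds in $\Sp_{2n}(q)$ only for $q$ odd.'' In fact $\GU_n(q)\le\Sp_{2n}(q)$ for \emph{every} $q$: if $h$ is the Hermitian form, then $B(x,y)=\mathrm{Tr}_{\FF_{q^2}/\FF_q}(\lambda h(x,y))$ (with $\bar\lambda=-\lambda$ for $q$ odd, $\lambda=1$ for $q$ even) is a nondegenerate alternating form preserved by $\GU_n(q)$; note $B(x,x)=h(x,x)+h(x,x)=0$ in characteristic~$2$. The correct reason case~(2) of the lemma carries the condition ``$q$ odd'' is maximality, not existence of the embedding: for $q$ even the unitary group also preserves a quadratic form of minus type, so $N_G(\GU_n(q))\le\Omega_{2n}^-(q)$ and is therefore not maximal --- its elements of order $r$ are already accounted for by your case~(3). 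With that repair your argument is complete.
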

 
The entries in Table~\ref{tab:sp} follow by the results of \cite{GPPS, HM}.
We excluded $\Sp_4(2) \cong \fS_6$.


\begin{table}[htbp]
\caption{Further maximal subgroups of $\Sp_{2n}(q)$}   \label{tab:sp}
\[\begin{array}{|c||c|r||c|}
\hline
 (2n,q)& M         &     r& \text{Remarks}  \\ \skipa \hline \hline 
 (4,3)& 2^{1+4}.\fA_5               &    5 &   \PSU_4(2) \\ \hline  
 (6,2)& 2^6.\PSL_3(2),\ \fS_8       & 7 & \Phi_{6}^*(2)=1\\ \hline 
 (6,3)& 2.\PSL_2(13)\ (\text{2 classes})&  7 &        \\ \hline    
 (6,4)& \PSL_2(13)                  &   13 &        \\ \hline
 (6,5)& 2.J_2,\ \PSU_3(3)           &    7 &        \\ \hline
 (8,2)& \PSL_2(17)                  &   17 &        \\ \hline
 (10,2)& \text{none}                &   11 &        \\ \hline   
 (12,2)& \fA_{14},\ \PSL_2(25)      &   13 &        \\ \hline 
 (18,2)& \text{none}                &   19 &        \\ \hline
 (20,2)& \PSL_2(41)                 &   41 &        \\ \hline
\end{array}\]
\end{table}


\subsection{The orthogonal groups of plus type $\Omega_{2n}^+(q)$}
Applying the previous results gives:
 
\begin{lem}   \label{lem:maxSO+}
 Let $G=\Omega_{2n}^+(q), q=p^a$, $n>3$, and set $e=2n-2$. Let $C$ be a
 conjugacy class of elements of order~$r$ dividing $\Phi_{e}^*(q)$. Assume
 that $r>2e+1$ and $r$ is divisible by some prime divisor of $\Phi_{ae}^*(p)$.
 Let $M$ be a maximal subgroup of $G$ intersecting $C$.
 Then one of the following holds:
 \begin{enumerate}
  \item $M$ is the stabilizer of a nondegenerate subspace of dimension~1 or~2;
  \item $nq$ is odd and $M$ is the normalizer of $\Omega_n(q^2)$;
  \item $n$ is even, and $M$ is the normalizer of $\SU_n(q)$; or
  \item $n=4$ and $M$ is the normalizer of $\PSU_3(q)$ (for $p\ne 3$) or
   $\Spin_7(q)$.
 \end{enumerate}
 Moreover, such a class exists unless $(2n,q)$ is as in Table~\ref{tab:o+}
 where additional maximal subgroups arise as given.
\end{lem}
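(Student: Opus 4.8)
The plan is to run the same argument as in the preceding lemmas: understand how an element $g\in C$ acts on the natural module, dispose of the reducible case by hand, and feed the irreducible case into Theorem~\ref{overgroups}. Write $V=\FF_q^{2n}$ for the natural orthogonal module of $G=\Omega^+(V)=\Omega_{2n}^+(q)$, and let $g\in C$ have order $r\mid\Phi_e^*(q)$ with $e=2n-2$. Since $r$ is a primitive prime divisor of $q^e-1$ and $q^{2n-2}-1=(q^{n-1}-1)(q^{n-1}+1)$, we get $r\mid q^{n-1}+1$ while $r\nmid q^m-1$ for $m<e$, so the multiplicative order of $q$ modulo $r$ is exactly $e$. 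Hence $g$ is semisimple and decomposes $V=W\perp U$, where $W$ is an irreducible $\FF_q\langle g\rangle$-module of dimension $e=2n-2$ (every nontrivial eigenvalue lies in a Galois orbit of size $e>2$, so no nontrivial eigenvalue can occur on a $2$-space) and $U=C_V(g)$ is $2$-dimensional. Because $W$ carries an irreducible $g$-action of degree $2(n-1)$ it is nondegenerate of minus type, and as types multiply to $+$ the complement $U=W^\perp$ is of minus type too, hence anisotropic. Note $e=2n-2>n=\tfrac12\dim V$ since $n>2$, and $r>2e+1$, so Theorem~\ref{overgroups} applies to any irreducible overgroup of $g$ in $\GL(V)$.

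First I would treat $M$ reducible on $V$. As $g$ acts irreducibly on $W$ and trivially on $U$, every proper nonzero $g$-invariant subspace is a subspace of $U$, or $W$, or $W$ plus a subspace of $U$; since $U$ is anisotropic and $W$ is nondegenerate, each of these is nondegenerate. A maximal $M$ stabilising such a space is therefore the stabiliser of a nondegenerate subspace, which up to orthogonal complementation has dimension $1$ or $2$. This is conclusion~(1).

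Now suppose $M$ is irreducible, so Theorem~\ref{overgroups} applies to $M\le\GL(V)$; I would dispatch its four conclusions in turn. A subgroup of $G$ containing one of $\SL(V),\SU(V),\Omega^{(\pm)}(V),\Sp(V)$ must equal $G$ (none of $\SL(V)$, $\Sp(V)$, $\SU(V)$, $\Omega^-(V)$ lies inside $\Omega^+(V)$, and $\Omega^+(V)=G$), contradicting properness, so this case is void. An extension-field subgroup has degree $f\mid\gcd(2n,e)=\gcd(2n,2n-2)=2$, hence $f=2$; the induced $\FF_{q^2}$-form is either orthogonal, giving the normaliser of $\Omega_n(q^2)$ (yielding a simple odd-dimensional factor of the correct $+$-type precisely when $nq$ is odd, conclusion~(2)), or Hermitian, giving the normaliser of $\SU_n(q)$, which embeds in $\Omega^+_{2n}(q)$ exactly when $n$ is even (conclusion~(3)); the precise type conditions are read off from \cite{GPPS}. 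The subfield case is excluded by the hypothesis that $r$ is divisible by a prime divisor of $\Phi_{ae}^*(p)$: such a prime is a primitive prime divisor of $p^{ae}-1$ and so divides no $p^{be}-1$ with $b$ a proper divisor of $a$, whence $g$ lies in no subgroup defined over a proper subfield. Finally, scanning Table~\ref{tab:zsig} for entries of even ambient dimension $2n$ with $e=2n-2$, only the row $(8,6)$ survives under $n>3$, forcing $n=4$ and $H_0\in\{\PSU_3(q),\SL_2(q^3),\Spin_7(q)\}$; here $\PSU_3(q)$ requires $p\ne3$ (for $p=3$ it sits in $\OO_7$, not $\OO_8^+$), while $\SL_2(q^3)$ lies in $\Spin_7(q)\cong\Sp_6(q)$ in even characteristic and contributes no new maximal subgroup. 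This is conclusion~(4).

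It remains to establish existence of $C$. The torus of $G$ of order $(q^{n-1}+1)(q+1)$ arising from the decomposition $\OO_{2n-2}^-\perp\OO_2^-$ contains an element with $2$-dimensional fixed space of the required order as soon as there is a prime $r\mid\Phi_e^*(q)$ with $r>2e+1$ and $r$ divisible by a prime divisor of $\Phi_{ae}^*(p)$; Lemma~\ref{lem:smallZsig} determines exactly the finitely many pairs $(2n,q)$ where this fails, and these are then checked against \cite{HM} and \cite{Atl}, the surviving configurations being recorded in Table~\ref{tab:o+}. The main obstacle is not the reduction, which is routine once the action of $g$ is pinned down, but the careful matching of the abstract classes produced by Theorem~\ref{overgroups} to the concrete geometric subgroups of $\Omega_{2n}^+(q)$, together with the parity- and characteristic-conditions ($n$ even, $nq$ odd, $p\ne3$) governing which of them actually embed with the correct form, and the hand-analysis of the small cases lacking a sufficiently large Zsigmondy prime.
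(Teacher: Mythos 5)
Your proposal is correct and follows essentially the same route as the paper: decompose the natural module under $g$ (irreducible $(2n-2)$-dimensional minus-type piece plus anisotropic $2$-dimensional fixed space), send reducible overgroups to stabilizers of nondegenerate $1$- or $2$-spaces, feed irreducible overgroups into Theorem~\ref{overgroups} (the subfield case killed by the $\Phi_{ae}^*(p)$-hypothesis, Table~\ref{tab:zsig} contributing only its $(8,6)$-row), and identify the exceptional pairs $(2n,q)$ via Lemma~\ref{lem:smallZsig}, deferring the exceptional overgroups to \cite{HM}, \cite{Atl}. The only real divergence is at the $n=4$ subtleties: where you argue directly that $\SL_2(q^3)$ lies inside $\Spin_7(q)\cong\Sp_6(q)$ for $q$ even (an argument which, to be fully rigorous, still needs that this containment forces $N_G(\SL_2(q^3))$ to be non-maximal, e.g.\ via uniqueness of the $\Spin_7(q)$ overgroup), the paper instead invokes Kleidman's classification \cite{Kl87} of the maximal subgroups of the $8$-dimensional orthogonal groups.
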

    
\begin{table}[htbp]
\caption{Further maximal subgroups of $\Omega_{2n}^+(q)$, $n\ge4$}
  \label{tab:o+}
\[\begin{array}{|c||c|l|r||c|}
\hline
 (2n,q)& M    & \#\text{ classes}& r& \text{Remarks}  \\ \skipa \hline \hline 
 (8,2)& \fA_9,\ 2^6:\fA_8             & 3,3 &   7 & \Phi_6^*(2)=1\\ \hline 
 (8,3)& 2.\OO_8^+(2)                  & 4   &   7 &     \\ \hline  
 (8,4)&  \text{none}                  &     &  13 &     \\ \hline  
 (8,5)& 2.\tw2B_2(8),\ \fA_{10},\ 2.\fA_{10},\ 2.\OO_8^+(2)& 8,4,8,4&  7 & \\ \hline 
 (10,2)& \text{none}                  &     &  17 &     \\ \hline     
 (12,2)& \text{none}                  &     &  11 &     \\ \hline
 (14,2)& \fA_{16},\ \PSL_2(13),\ G_2(3).2&1,2,2& 13 &     \\ \hline
 (20,2)& \PSL_2(19), J_1              &1,1  &  19 &     \\ \hline    
 (22,2)& \text{none}                  &     &  41 &     \\ \hline
\end{array}\]
\end{table}

For the 8-dimensional groups we have used the results of Kleidman \cite{Kl87}.
For $\Omega_{14}^+(2)$ there are two classes of $\PSL_2(13)$: first of all
there are two different (not even quasi-equivalent) 14-dimensional
representations and so at least two classes (and exactly two in the full
orthogonal group). Note that both representations extend to $\PGL_2(13)$
which does sit in the full $\GO_{14}^+(2)$ but not in the simple group.
(The easiest way to see this is that there is an element of order~4 normalizing
the~13 in $\PGL_2(13)$, it permutes freely the 12 nontrivial eigenvalues of
the element of order~13 and on the 2-dimensional fixed space it acts trivially
since it commutes with the element of order 3 which is semisimple regular on
that 2-space. Thus, it has 5 Jordan blocks, but unipotent elements are in the
simple algebraic group if and only if they have an even number of Jordan
blocks.) So there are two classes. 

For $G_2(3)$, there is one class in the full orthogonal group $\GO_{14}^+(2)$
(since there is only one such representation) and it extends to $G_2(3).2$.
The centralizer of an outer involution $x\in G_2(3).2$ (which is unique up to
conjugacy) is $\PSL_2(8).3$. Since $x$ commutes with an element $y$ of order~9
that has precisely two eigenvalues of order~3, it must be trivial on the
2-dimensional space where $y^3=1$ and so it has at least 2 trivial Jordan
blocks.  If it had $t < 6$ Jordan blocks, then the reductive part of its
centralizer (in $\GL_{14}(2)$) would be a direct product of a torus and
$\GL_t(2)$ and so $\PSL_2(8).3$ would embed in $\GL_5(2)$, but the smallest
faithful representation of $\PSL_2(8).3$ is 6 dimensional.
So $x$ has 6 nontrivial Jordan blocks and thus lies in $\Omega_{14}^+(2)$.

\subsection{The twisted orthogonal groups $\Omega_{2n}^-(q)$}
Applying the previous results gives:
 
\begin{lem}   \label{lem:maxSO-}
 Let $G=\Omega_{2n}^-(q),q=p^a $, $n>3$, and set $e=2n$. Let $C$ be a conjugacy
 class of elements of order~$r$ dividing $\Phi_{e}^*(q)$. Assume that $r>2e+1$
 and $r$ is divisible by some prime divisor of $\Phi_{ae}^*(p)$. 
 Let $M$ be a maximal subgroup of $G$ intersecting $C$. 
 Then one of the following holds:
 \begin{enumerate}
  \item $M$ is the normalizer of $\Omega_{2n/f}^-(q^f)$ for $f$ a prime
   divisor of $n$; or
  \item $n$ is odd and $M$ is the normalizer of $\SU_n(q)$.
 \end{enumerate} 
 Moreover, such a class exists unless $(2n,q)$ is as in Table~\ref{tab:o-}
 where additional maximal subgroups arise as given.
\end{lem}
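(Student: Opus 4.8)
The plan is to run the minus-type orthogonal group through Theorem~\ref{overgroups} exactly as in the proofs of the preceding classical-group lemmas in this section. First I would record that, since $r\mid\Phi_{2n}^*(q)$, the order of $q$ modulo (a suitable prime divisor of) $r$ equals $e=2n$, so an element $g\in C$ has an eigenvalue whose Frobenius orbit has size $2n=\dim V$; hence the minimal polynomial of $g$ over $\FF_q$ has degree $2n$ and $g$ acts irreducibly on $V=\FF_q^{2n}$ (it is a Singer-type element of the minus form, of order dividing $q^n+1$). Consequently any $M\le G=\Omega_{2n}^-(q)$ meeting $C$ is irreducible on $V$, and, viewing $M$ inside $\GL(V)=\GL_{2n}(q)$, Theorem~\ref{overgroups} applies with $\GL$-dimension $2n>2$, with $e=2n>\tfrac{1}{2}(2n)$, and with $r>2e+1$ as assumed. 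I would then sweep its four conclusions.

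Conclusion~(1) would force $M$ to contain $\SL(V)$, $\Sp(V)$, $\SU(V)$ or $\Omega^{\pm}(V)$; since $M\le G$ and the only member of that list contained in $\Omega_{2n}^-(V)$ is $\Omega_{2n}^-(V)$ itself, this gives $M=G$, excluded as $M$ is proper. Conclusion~(3) (normalizing a subfield group $\GL_{2n}(p^b)$ with $b\mid a$ proper) is ruled out by the hypothesis that $r$ is divisible by a prime divisor $\ell$ of $\Phi_{2na}^*(p)$: such an $\ell$ has $p$ of multiplicative order $2na$ modulo it, which is impossible for an element of $\Omega_{2n}^-(p^b)$ with $b<a$. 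For conclusion~(4) I would note that $e=2n$ with $2n\ge8$, while the only even-dimensional rows of Table~\ref{tab:zsig} are $(4,4)$, $(6,6)$ and $(8,6)$; the first two have dimension below $8$ and the last has $e=6\ne2n$, so no $H_0$ there can carry an element of the required order. Thus only conclusion~(2) survives: $M$ preserves an $\FF_{q^f}$-structure with $f\mid\gcd(2n,2n)=2n$. Among these extension-field subgroups, those compatible with the minus form are $\Omega_{2n/f}^-(q^f)$ for $f$ an odd prime dividing $n$ (an odd-degree extension preserves the minus type), giving case~(1); and, for $f=2$, the unitary group $\GU_n(q)$, which embeds in $\GO^{\epsilon}_{2n}(q)$ with $\epsilon=(-1)^n$ and so lies in $G$ precisely when $n$ is odd, giving case~(2). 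I would point out that the remaining $f=2$ option, an orthogonal structure $\Omega_n(q^2)$ over $\FF_{q^2}$, lands in the \emph{plus}-type group and so does not occur here.

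For the existence clause I would invoke Lemma~\ref{lem:smallZsig} with $e=2n$: a prime $r\mid\Phi_{2n}^*(q)$ with $r>2e+1$ fails to exist only when $\Phi_{2n}^*(q)\le2e+1$, and—allowing also $r$ to be a product of two Zsigmondy primes when $\Phi_{2n}^*(q)$ is composite—this confines the bad pairs, for $n>3$, to the finitely many small $(2n,q)$ listed in Table~\ref{tab:o-}; when $a=1$ the condition on $\Phi_{2na}^*(p)$ is automatic, and for $a>1$ it is checked directly. In each exceptional pair I would pin down the extra maximal overgroups from \cite{GPPS}, \cite{HM} and the Atlas~\cite{Atl}, using Kleidman~\cite{Kl87} for the eight-dimensional cases, recording them in the table as was done for the plus-type groups. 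I expect the four-case sweep to be routine; the genuine work, and the main obstacle, lies in two places: the sign/type bookkeeping in conclusion~(2) (confirming that $\GU_n(q)\le G$ forces $n$ odd and that no further extension-field subgroup is maximal of minus type), and the hand verification of the sporadic small cases in Table~\ref{tab:o-}, where the generic Zsigmondy estimate degenerates.
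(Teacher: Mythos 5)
Your overall strategy --- irreducibility of the Zsigmondy element, a sweep of the four conclusions of Theorem~\ref{overgroups}, and Lemma~\ref{lem:smallZsig} together with \cite{GPPS}, \cite{HM}, \cite{Atl}, \cite{Kl87} for the exceptional cases in Table~\ref{tab:o-} --- is exactly the paper's (the lemma is stated there as an immediate application of the preceding results), and your elimination of conclusions (1), (3) and (4) is correct. However, there is a genuine error in your treatment of conclusion (2), precisely in the sign bookkeeping you yourself flagged as the main obstacle. You admit into case~(1) only \emph{odd} primes $f\mid n$, asserting that the remaining degree-2 orthogonal option ``$\Omega_n(q^2)$'' lands in the plus-type group. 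That is true only when $n$ is odd, where $\Omega_n(q^2)$ is the odd-dimensional group and indeed sits inside $\Omega_{2n}^+(q)$ (compare Lemma~\ref{lem:maxSO+}(2)). When $n$ is even, the degree-2 extension-field subgroup to consider is $\Omega_n^-(q^2)$, and it lies in $\Omega_{2n}^-(q)$, not in the plus-type group: restriction of scalars takes a minus-type form over $\FF_{q^2}$ to a minus-type form over $\FF_q$, as one sees from the Singer torus, since $\Omega_n^-(q^2)$ contains a cyclic torus of order $(q^2)^{n/2}+1=q^n+1$ while $\Omega_{2n}^+(q)$ has no such torus. This subgroup therefore contains elements of every order dividing $q^n+1$, in particular of order $r$, and it is exactly the instance $f=2$ of case~(1) of the lemma, whose hypothesis reads ``$f$ a prime divisor of $n$'', not ``odd prime divisor''.

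Concretely, for $(2n,q)=(8,2)$ your classification would predict no generic maximal overgroup at all (since $n=4$ has no odd prime divisor and $n$ is even), yet $\Omega_4^-(4)\cong\PSL_2(16)$ lies in $\Omega_8^-(2)$ and contains elements of order $17=\Phi_8^*(2)$, whereas $17$ does not divide $|\Omega_8^+(2)|$. So your version of the statement is false for every even $n$. The fix is to admit $f=2$ in case~(1) whenever $2\mid n$; the correct dichotomy is that for $f=2$ it is the orthogonal subgroup $\Omega_n^-(q^2)$ that survives when $n$ is even and the unitary subgroup $\GU_n(q)$ (which embeds in $\OO_{2n}^{(-1)^n}$) that survives when $n$ is odd, the other of the two landing in plus type or failing to contain elements of order $r$ in each case. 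The rest of your argument, including the existence clause, is sound once this is repaired.
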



\begin{table}[htbp]
\caption{Further maximal subgroups of $\Omega_{2n}^-(q)$, $n\ge4$}
  \label{tab:o-}
\[\begin{array}{|c||c|r|}
\hline 
 (2n,q)&   M         &     r\\ \skipa \hline \hline 
 (8,2)& \text{none}                       &  17        \\ \hline   
 (10,2)& \fA_{12}                         &  11        \\ \hline    
 (12,2)& \fA_{13},\ \PSL_2(13),\ \PSL_3(3)&  13        \\ \hline
 (18,2)& \fA_{20}                         &  19        \\ \hline       
 (20,2)& \text{none}                      &  41        \\ \hline
\end{array}\]
\end{table}

\subsection{The exceptional groups}
We end this section by completing a result of Weigel \cite{We92} on maximal
subgroups of exceptional groups of Lie type containing certain maximal tori.

\begin{prop}   \label{prop:excelt}
 Let $G$ be a simple exceptional group of Lie type. Then there exists a
 cyclic subgroup $T\le G$ such that $|T|$, $|N_G(T):T|$ and the conjugacy
 classes of maximal overgroups $M\ge T$ in $G$ are as given in
 Table~\ref{tab:exctorus}.
\end{prop}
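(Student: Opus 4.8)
The plan is to make the proposition constructive: for each of the ten families of exceptional groups $G$ I would exhibit one explicit cyclic maximal torus $T$, read off the first two columns of Table~\ref{tab:exctorus} from the theory of maximal tori, and then classify the overgroups $M\ge T$. In every case $T$ will be taken to be a \emph{regular} cyclic torus of large order, namely the one attached (via the parametrization of the $F$-classes of maximal tori of $\bG^F$) to a $\zeta_e$-regular element $w$ of the Weyl group $W$ for a suitable regular number $e$. For most families $e$ is the Coxeter number $h$ and $\phi(e)$ equals the rank, so $|T|=\Phi_e(q)$ is a single large cyclotomic value: $\Phi_6(q)$ for $G_2$, $\Phi_{12}(q)$ for $F_4$ and $\tw3D_4$, $\Phi_9(q)$ for $E_6$, $\Phi_{18}(q)$ for $\tw2E_6$, and $\Phi_{30}(q)$ for $E_8$. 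For $E_7$ one multiplies the leading factor by one further small cyclotomic factor, and for the very twisted families $\tw2B_2,\tw2G_2,\tw2F_4$ one takes the analogous cyclic ``twisted'' tori of maximal order. The index $|N_G(T):T|$ needs no real work: by Springer's theory $N_G(T)/T\cong C_W(w)$, which for the Coxeter-type tori above is cyclic of order $e$, and in general is the small reflection group on the $\zeta_e$-eigenspace; these values (and the orders $|T|$) can be taken from the standard tabulation of tori or verified by a short \Chevie\ computation.

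The substantive part is the list of maximal overgroups, and here the key reduction is that $T$ contains an element $g$ whose order is divisible by a Zsigmondy prime $r\mid\Phi_e^*(q)$, so any $M\ge T$ satisfies $r\mid|M|$; by Lemma~\ref{lem:smallZsig} this $r$ is large (at least $e+1$, and usually much larger once the finitely many exceptional pairs $(e,q)$ are set aside). I would then run through the Liebeck--Seitz classification of maximal subgroups of $G$ type by type. Parabolic subgroups are excluded because the regular torus $T$ carries the full-rank cyclotomic factor $\Phi_e(q)$ and hence lies in no proper Levi. For subfield subgroups $G(q_0)$ with $q=q_0^b$, a prime $r\mid\Phi_e^*(q)$ has multiplicative order $eb$ with respect to $q_0$, which exceeds every degree of $G$, so $r\nmid|G(q_0)|$ and these drop out. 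The reductive subgroups of maximal rank that genuinely contain $T$ are exactly the normalizers of those subsystem or twisted subgroups whose order is divisible by $\Phi_e(q)$; these produce the generic entries of the table, which I would match against Weigel's list \cite{We92}. Finally the almost simple maximal subgroups $S$ have order bounded independently of $q$, so $r>|S|$ for all but finitely many $q$ and they are eliminated generically.

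What remains is a finite list of small $q$: those in the exceptional clauses of Lemma~\ref{lem:smallZsig}, together with any $q$ small enough that some almost simple maximal subgroup $S$ has $r\mid|S|$. For each such $G$ and each surviving candidate $S$ I would verify by hand whether the element of order $r$ actually embeds, and if so record $S$ with its number of classes in the table, using the Atlas \cite{Atl}, the Hiss--Malle tables \cite{HM} of small-dimensional representations, and direct \Chevie\ computation; the arguments are of exactly the Jordan-block and eigenvalue type already illustrated for $\Omega_{14}^+(2)$ above. I expect this small-$q$ bookkeeping, rather than the uniform generic argument, to be the main obstacle: the delicate point is to be certain that no sporadic embedding of an almost simple group carrying an element of order $r$ has been missed, which is precisely the gap in \cite{We92} that the proposition is meant to close.
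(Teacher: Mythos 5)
Your plan for the first two columns of Table~\ref{tab:exctorus} (regular cyclic tori attached to regular Weyl group elements, automizers $N_G(T)/T\cong C_W(w)$) is sound and matches the paper's appeal to the general theory in Carter. Your strategy for the overgroups, however, diverges from the paper's --- the paper simply takes the generic classification from Weigel \cite{We92} and fills in the seven cases $G_2(q)$, $F_4(2)$, $F_4(3)$, $\tw2E_6(2)$, $\tw2E_6(3)$, $E_7(2)$, $E_7(3)$ that Weigel left open --- and your from-scratch version contains a genuine gap. You claim that ``the almost simple maximal subgroups $S$ have order bounded independently of $q$.'' That is true only when the socle is \emph{not} of Lie type in the defining characteristic. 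The Liebeck--Seitz classification also allows almost simple maximal subgroups whose socle is of Lie type in characteristic $p$, of untwisted rank up to half the rank of $G$, over fields whose size grows with $q$, as well as normalizers of fixed points of positive-dimensional maximal connected simple subgroups (e.g. $F_4(q)<E_6(q)$, $G_2(q)<F_4(q)$); none of these families has bounded order. The positive-dimensional ones could be removed by your Zsigmondy sieve (their order polynomials are not divisible by the relevant $\Phi_e$), but you never list this class, and, more importantly, the sieve fails outright in the one case that is actually hard.

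Concretely, for $G=E_7(q)$ the candidate socle $S=\PSU_3(q^3)$ survives every test you propose: since $q^9+1=(q+1)(q^2-q+1)(q^6-q^3+1)$, the order of $S$ is divisible by $\Phi_2(q)\Phi_{18}(q)$, i.e. by the full order of $T$, and in particular by every Zsigmondy prime you would use; and $|S|$ grows with $q$, so it is neither eliminated generically nor relegated to small-$q$ bookkeeping. Worse, the issue is not whether such an $S$ exists: Ryba \cite{ryba} showed that $\PSU_3(8).6$ really does embed in $E_7(2)$, so no counting or element-order argument can make it go away. What must be proved is \emph{containment}: any such subgroup, together with its normalizer, lies inside the maximal subgroup $\tw2E_6(q)_\SC.D_{q+1}$ and hence is not itself maximal. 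This is exactly where the paper expends its main effort --- for $q=2$ an analysis of the $56$-dimensional module over $\FF_2$, using the Brauer character of $\PSU_3(8)$ to show $S$ fixes a vector or a $2$-space and therefore lies in $M$; for $q=3$ an element-order argument; for $q\ge4$ a citation of Weigel. Your proposal has no mechanism for proving containment rather than non-existence, so as written the $E_7(q)$ line of the table (uniqueness of the maximal overgroup) is unjustified.
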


\begin{table}[htbp] 
  \caption{Cyclic subgroups and maximal overgroups in exceptional groups}
  \label{tab:exctorus}
\[\begin{array}{|l||cc|l|l|}    
\hline
 G& |T|& |A_G(T)|& M\ge T& \text{further maximals}\\
\skipa \hline \hline
 \tw2B_2(q^2),\ q^2\ge8& \Phi_8'& 4& N_G(T)& -\\ \hline
 ^2G_2(q^2),\ q^2\ge27& \Phi_{12}'& 6& N_G(T)& -\\ \hline
 G_2(q),\ 3|q+\eps& q^2+\eps q+1& 6& \SL_3^\eps(q).2& \PSL_2(13)\ (q=4)\\ \hline
 G_2(q),\ 3|q& q^2+q+1& 6& \SL_3(q).2\ (2\times)& \PSL_2(13)\ (q=3)\\ \hline
   \tw3D_4(q)& q^4-q^2+1& 4& N_G(T)& -\\ \hline
 \tw2F_4(q^2),\ q^2\ge8& \Phi_{24}'& 12& N_G(T)& -\\ \hline
 F_4(q),\ 2{\not|}q& q^4-q^2+1& 12& \tw3D_4(q).3& \\ \hline
 F_4(q),\ 2|q& q^4-q^2+1& 12& \tw3D_4(q).3\ (2\times)& \tw2F_4(2),\PSL_4(3).2_2\ (q=2)\\ \hline
       E_6(q)& \Phi_9/(3,q-1)& 9& \SL_3(q^3).3& -\\ \hline
   \tw2E_6(q)& \Phi_{18}/(3,q+1)& 9& \SU_3(q^3).3& -\\ \hline
       E_7(q)& \Phi_2\Phi_{18}/(2,q-1)& 18& \tw2E_6(q)_\SC.D_{q+1}& -\\ \hline
       E_8(q)& \Phi_{30}(q)& 30& N_G(T)& -\\ \hline
\end{array}\]
\footnotesize{Here, $A_G(T):=N_G(T)/C_G(T)$, $\Phi_8'=q^2+\sqrt{2}q+1$,
$\Phi_{12}'=q^2+\sqrt{3}q+1$, $\Phi_{24}'=q^4+\sqrt{2}q^3+q^2+\sqrt{2}q+1$.}
\end{table}

\begin{proof}
The existence of cyclic tori of $G$ of the given orders follows from the
general theory of tori in finite reductive groups, see for example
\cite[\S3.3]{Ca}. In all cases $T$ is (the image in $G$ of) a self-centralizing
maximal torus (of the corresponding finite reductive group of simply-connected
type), and the order of the automizer $A_G(T)=N_G(T)/C_G(T)$ can be
calculated from the Weyl group, see \cite[Prop.~3.3.6]{Ca}. 
\par
The lattice of overgroups of $T$ up to conjugation can be found in the work
of Weigel \cite[Sect.~4]{We92}, except for
$$G\in\{G_2(q),\ F_4(2),\ F_4(3),\ \tw2E_6(2),\ \tw2E_6(3),
      \ E_7(2),\ E_7(3)\}.$$
The maximal subgroups of $G_2(q)$ were determined by Kleidman
\cite[Thm.~A]{Kl88} and Cooperstein \cite[Thm.~2.3]{Co}: for $q\ge5$ the only
maximal subgroups containing $T$ are one class of $\SU_3(q).2$ if
$q\equiv1\pmod3$, respectively one or two classes of $\SL_3(q).2$ else.
For $G_2(3)$ and $G_2(4)$ there is one further class of maximal subgroups
$\PSL_2(13)$. The maximal subgroups of $F_4(2)$ and of $\tw2E_6(2)$ are
printed in the Atlas \cite{Atl}. According to the arguments given in
\cite[Sect.~4(f)]{We92}, the only further overgroup of $T$ in $F_4(3)$ could
be a group with derived group isomorphic to $\SU_3(9)$. But the latter group
contains elements of order~80, while the centralizer of a 5-element in
$F_4(3)$, of shape $C_{10}\Sp_4(3)$ has no element of order~16 (see \cite{Atl}).
(This fact was pointed out to us by Frank L\"ubeck. Alternatively, one can show
that the 25-dimensional module would have to split as $24\oplus1$ for such a
subgroup, whence $\SU_3(9)$ would have to be contained in the stabilizer
$\tw3D_4(3).3$ of that 1-space.)
It is shown in \cite[Thm.~5.1]{LM} that no further overgroups of $T$ arise
in $\tw2E_6(3)$. 
\par
Now assume that $G=E_7(q)$ with $q=2,3$.  If $q=3$, then we take
the cyclic maximal torus $T$ of order $4 \times 703$. If $q=2$, $T$
is cyclic of order $3 \times 57$ (by \cite[Prop.~3.2.2]{Ca}).

We claim that there is a unique maximal subgroup $M$ of $G$ containing 
$T$ (with $M=\tw2E_6(q)_\SC.D_{q+1})$. 
The proof is very similar in spirit to Weigel's proof and is based on various
results of Liebeck and Seitz and others on the maximal subgroups of the
exceptional Chevalley groups.  See \cite{LS} for a summary of these results
and various references.

Let $H$ be a maximal subgroup of $G$ containing $T$ other than $M$.
By \cite[Thm. 3]{LS}, $H$ is an almost simple group. Let $S$ denote its
socle. It follows by \cite[Table 2]{LS} that $S$ must be a Chevalley group
in the same characteristic as $G$. Moreover, by \cite[Thm. 7]{LS}, it
follows that the (untwisted) rank of $S$ is at most $3$.
Inspection of the groups of rank at most~$3$ defined over fields of size a
multiple of $q$ shows as in \cite{We92} that the only possibility is that
$S=\PSU_3(q^3)$.  If $q=3$, we can eliminate the latter case, since the
automorphism group of $S$ contains no elements of order $|T|$.   

So assume that $q=2$ and $F^*(H)=S = \PSU_3(8)$. We will show that the
normalizer of any subgroup of $E_7(2)$ isomorphic to $S$ and containing an 
element of order $19$ in $T$ is contained in $M$. Let $V$ be the irreducible 
$\FF_2G$-module of dimension $56$. Let $T_0 = \langle x \rangle$ be the
subgroup of $T$ of order $19$. Note that $T_0$ has a $2$-dimensional fixed
space on $V$, and so the fixed space of $T_0$ is the same as that of
$M_0=F^*(M)$. Since $M$ acts transitively on those $3$ points, it follows that 
the stabilizer of the vectors are the three subgroups of the form 
$M_0.2 < M$. Looking at the Brauer character table for
$\FF_2\PSU_3(8)$-modules and knowing that the element of order~19 has
Brauer character $-1$, it follows that if $S$ exists, then 
$V$ has three $\FF_2S$-composition factors; one of dimension $54$
and two trivial modules. Since $V$ is self dual, this implies that $S$ has
fixed points on $V$, whence $S \le M$. If $S$ has a $1$-dimensional fixed
space, then the normalizer of $S$ also fixes this vector, whence is also
contained in $M$. If $S$ has a $2$-dimensional fixed space, then $H$ is
contained in the stabilizer of the $2$-space and so again $H \le M$.  
\end{proof}

Note that we proved a bit more for $E_7(2)$: even the cyclic subgroup of order
57 of $T$ is only contained in one maximal subgroup of $E_7(2)$.  We also
note that Ryba \cite{ryba} proved that $\PSU_3(8).6$ in fact does embed
in $E_7(2)$.  It follows that in fact $\PSU_3(8).6$ embeds in
$\tw2E_6(q)_\SC.D_{q+1}$.

\section{Groups of Lie type} \label{sec:Lie type}

This section is devoted to the proof of Theorem~\ref{thm:main} for finite
simple groups of Lie type. We first give a sketch of the approach to be
followed.
Let $G$ be a finite group, $C$ a conjugacy class of $G$.  If $a \in \ZZ$, 
let $C^a$ be the conjugacy class $\{ x^a | x \in C\}$.  
Then, for $a\in\ZZ$ and fixed $x\in C$ the number of pairs
$$n_a(C):=|\{(y,z)\in C\times C^a\mid xyz=1\}|$$
in $G$ is given by the well-known character formula
$$n_a(C)=\frac{|C|^2}{|G|}
  \sum_{\chi\in\Irr(G)}\frac{\chi(C)^2\chi(C^a)}{\chi(1)},$$
where the sum ranges over the complex irreducible characters of $G$ and
$\chi(C)$ denotes the value of $\chi$ on elements of $C$
(see for example \cite[Thm.~I.5.8]{MM}).

Often, for $G$ almost simple and $C$ a large conjugacy class, the main
contribution to this structure constant is by the terms
coming from the linear characters of $G$. In our situation, the class $C$
will be contained in the derived group $G'$, so all linear characters take
value~1 on elements of $C$ and $C^a$. Set $d:=|G/G'|$, the number of linear
characters of $G$. We'll write
$$\eps_a(C)
  :=\frac{1}{d}\sum_{\chi\in\Irr(G)}\frac{\chi(C)^2\chi(C^a)}{\chi(1)}-1
  =\frac{1}{d}\sum_{\chi(1)\ne1}\frac{\chi(C)^2\chi(C^a)}{\chi(1)},$$
so that
$$n_a(C)=d\frac{|C|^2}{|G|}(1+\eps_a(C))
  =d\frac{|G|}{|C_G(x)|^2}\cdot(1+\eps_a(C))$$
for $x\in C$. So to show that there exists triples we need to prove that
$\eps_a(C)<1$. This can either be done using results from Deligne--Lusztig
theory, or, for many small rank groups, using the known generic character
tables which are available in the \Chevie-package \cite{Chevie}.
\par
To prove generation, assume that $H$ is a maximal subgroup of $G$ containing
$x$, and $C_H:=C\cap H$. Then there are at most $|C_H|$ pairs of
elements $y\in C_H$, $z\in H$, such that $xyz=1$.
So we will choose a conjugacy class $C$ of elements whose order is divisible
by a Zsigmondy prime as considered in the previous section, so which are
contained in few maximal subgroups of $G$, and for which the value of
$n_a(C)$ can be estimated from below by the character formula to be larger
than the possible contributions from maximal subgroups.

\subsection{Some character theory}   \label{subsec:3.1}
We first recall some results on character values in finite groups of Lie type.
Let $\bG$ be a connected reductive algebraic group over the algebraic closure
of a finite field and $F:\bG\rightarrow\bG$ a Frobenius map, with finite group
of fixed points $G=\bG^F$. For $\bT$ an $F$-stable maximal torus of $\bG$,
with $T:=\bT^F$, and $\theta\in\Irr(T)$ we denote by $R_{T,\theta}$ the
corresponding Deligne--Lusztig character.

\begin{lem}   \label{lem:DLvalues}
 Let $x\in G$ be a semisimple element. Then we have:
 \begin{itemize}
  \item[\rm(a)] If $R_{T,\theta}(x)\ne0$ then $x\in T^g$ for some $g\in G$.
  \item[\rm(b)] If $x$ is regular, lying in the (unique) maximal
   torus $T$, then $R_{T,\theta}(x)=\pm\theta_T^G(x)$.
 \end{itemize}
\end{lem}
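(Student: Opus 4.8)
The plan is to deduce both statements directly from the Deligne--Lusztig character formula, which expresses the value of $R_{T,\theta}$ at an arbitrary element in terms of Green functions of the connected centralizer of its semisimple part. Writing an element as $su$ for its Jordan decomposition (with $s$ semisimple and $u$ unipotent), this formula reads
\[
 R_{\bT,\theta}(su)=\frac{1}{|C^\circ_{\bG}(s)^F|}
   \sum_{\substack{h\in G\\ {}^{h^{-1}}s\in\bT}}
   \theta({}^{h^{-1}}s)\,Q^{C^\circ_{\bG}(s)}_{{}^h\bT}(u),
\]
see \cite[Thm.~7.2.8]{Ca}, where $Q$ denotes the relevant Green function. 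Both parts of the lemma then follow by specializing to a semisimple $x$ (so that $u=1$) and inspecting which terms survive.

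For part~(a), take $s=x$ and $u=1$. The only terms contributing to the sum are those indexed by $h\in G$ with ${}^{h^{-1}}x\in\bT$. Hence if no $G$-conjugate of $x$ lies in $\bT$, the index set is empty and the entire sum, and with it $R_{T,\theta}(x)$, vanishes. Taking the contrapositive: if $R_{T,\theta}(x)\ne0$, then ${}^{h^{-1}}x\in\bT\cap G=T$ for some $h\in G$, that is, $x\in T^g$ for a suitable $g\in G$. This direction uses nothing about the values of the Green function beyond the shape of the indexing set.

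For part~(b), suppose $x\in T$ is regular semisimple, so that $C^\circ_{\bG}(x)=\bT$ is itself a torus and $|C^\circ_{\bG}(x)^F|=|T|$. If ${}^{h^{-1}}x\in\bT$, then ${}^h\bT$, being a connected subgroup centralizing $x$, is contained in $C^\circ_{\bG}(x)=\bT$ and hence equals $\bT$; thus the Green function $Q^{\bT}_{{}^h\bT}(1)=Q^{\bT}_{\bT}(1)$ is independent of $h$, and for a torus this value is $\pm1$. Pulling it out of the sum, the character formula collapses to
\[
 R_{\bT,\theta}(x)=\pm\frac{1}{|T|}
   \sum_{\substack{h\in G\\ {}^{h^{-1}}x\in T}}\theta({}^{h^{-1}}x),
\]
which is precisely $\pm\theta_T^G(x)$, the induced class function of $\theta$ evaluated at $x$. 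I expect the only point needing genuine care to be the bookkeeping of this sign --- equivalently, the normalization of the Green function of a torus together with any sign $\eps_{\bG}\eps_{\bT}$ built into the chosen convention for $R_{\bT,\theta}$; everything else is a direct reading of the character formula.
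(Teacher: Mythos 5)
Your proposal is correct, and part~(a) coincides with the paper's argument: both simply read off from the Deligne--Lusztig character formula that the indexing set $\{h\in G:{}^{h^{-1}}x\in\bT\}$ must be nonempty. For part~(b), however, you take a genuinely different route. The paper does not touch the character formula again: it quotes the identity $\St\cdot R_{T,\theta}=\theta_T^G$ \cite[Prop.~7.5.4]{Ca} together with the fact that the Steinberg character takes value $\pm1$ on regular semisimple elements \cite[Thm.~6.4.7]{Ca}, and divides. You instead collapse the character formula directly: since $C_\bG(x)^\circ=\bT$ for regular $x$, every contributing torus ${}^h\bT$ contains $x$, hence lies in $C_\bG(x)^\circ$ and equals $\bT$, so the Green function factor $Q_\bT^\bT(1)$ is constant in $h$ and pulls out, leaving exactly the induction formula for $\theta_T^G(x)$. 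Both arguments are valid; the paper's is shorter given Carter's two cited results, while yours is more self-contained (it needs only the character formula, not the Steinberg character) and in fact yields slightly more: with the standard normalization one has $Q_\bT^\bT(1)=R_{\bT,1}^\bT(1)=1$ exactly, so your computation shows the sign in $R_{T,\theta}(x)=\pm\theta_T^G(x)$ is $+1$, sharper than what the lemma asserts. Your hedging on that sign is the only loose end, and it is harmless since the statement only claims $\pm$.
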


\begin{proof}
Part~(a) is clear from the character formula \cite[Prop.~7.5.3]{Ca}. For the
second part, note that if $x$ is regular, then it lies in a unique $F$-stable
maximal torus, so $C_\bG(x)^\circ=\bT$. The Steinberg character St then takes
value~$\pm1$ on $x$ by \cite[Thm.~6.4.7]{Ca}, so the claim follows by the
fact that St$\cdot R_{T,\theta}=\theta_T^G$ \cite[Prop.~7.5.4]{Ca}.
\end{proof}

Let's translate this to the setting of Lusztig series of characters. For this
let $\bG^*$ be a group in duality with $\bG$, with corresponding Frobenius
map $F^*:\bG^*\rightarrow\bG^*$ and group of fixed points $G^*:={\bG^*}^{F^*}$
(see \cite[\S4.3]{Ca}). There is a bijective correspondence between
$G$-conjugacy classes of pairs $(T,\theta)$ in $G$ as above and $G^*$-classes
of pairs $(T^*,s)$ where $T^*\le G^*$ is a maximal torus in duality with $T$
and $s\in T^*$ is semisimple (see \cite[Prop.~13.13]{DM}). We will also write
$R_{T^*,s}$ for $R_{T,\theta}$ if $(T^*,s)$ corresponds to $(T,\theta)$ in
this way. Then the Lusztig series $\cE(G,s)$ is by definition the set of
constituents of $R_{T^*,s}$ for $T^*$ running over maximal tori of $G^*$
containing $s$. Lusztig has shown that the $\cE(G,s)$, with $s$ running over
semisimple elements of $G^*$ modulo conjugation, form a partition of $\Irr(G)$.
With these definitions we have:

\begin{lem}   \label{lem:Lvalues}
 Let $x\in G$ be semisimple and $\chi\in\Irr(G)$ with $\chi(x)\ne0$. Then
 there exists a maximal torus $T\ni x$ of $G$ and $s\in T^*\le G^*$ such that
 $\chi\in\cE(G,s)$.
\end{lem}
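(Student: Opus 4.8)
The plan is to prove Lemma~\ref{lem:Lvalues} directly from Lemma~\ref{lem:DLvalues}(a) together with the orthogonality/partition structure of Lusztig series. The statement is: if $x$ is semisimple and $\chi(x)\ne0$, then $\chi$ lies in some series $\cE(G,s)$ with $s\in T^*$ for a maximal torus $T\ni x$. Since the $\cE(G,s)$ partition $\Irr(G)$, the character $\chi$ certainly lies in \emph{some} series $\cE(G,s)$; the content of the lemma is that the dual semisimple element $s$ can be chosen to lie in the dual $T^*$ of a maximal torus $T$ actually \emph{containing} $x$.

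First I would use the fact that every irreducible character is a constituent of some Deligne--Lusztig character: by Lusztig's theory, $\chi\in\cE(G,s)$ means exactly that $\chi$ appears in $R_{T^*,s}=R_{T,\theta}$ for some maximal torus $T^*\ni s$ and corresponding $(T,\theta)$. The key step is then to connect the nonvanishing $\chi(x)\ne0$ to the nonvanishing of some $R_{T,\theta}$ on $x$. The natural route is to expand $\chi$ as a $\ZZ$-linear combination of Deligne--Lusztig characters $R_{T,\theta}$ (all with $(T^*,s)$ in the fixed series, since the $R_{T,\theta}$ with $s$ fixed span the subspace containing $\cE(G,s)$, and distinct series are orthogonal). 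Concretely, $\chi$ is a uniform-plus-rest decomposition, but more cleanly: if $\chi\in\cE(G,s)$ then $\chi$ is a $\ZZ$-combination of the $R_{T,\theta}$ with $(T,\theta)\leftrightarrow(T^*,s)$. Since $\chi(x)\ne0$, at least one such $R_{T,\theta}$ must satisfy $R_{T,\theta}(x)\ne0$. By Lemma~\ref{lem:DLvalues}(a), this forces $x\in T^g$ for some $g\in G$.

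The final step is a conjugation cleanup. Having found a torus $T$ with $x\in T^g$, I would replace the pair $(T,\theta)$ by its $g$-conjugate $(T^g,\theta^g)$; conjugation preserves the series (it fixes $s$ up to $G^*$-conjugacy, which is all the statement requires) and moves $T$ to a torus genuinely containing $x$. Renaming $T^g$ as $T$ and $s$ as the corresponding dual element in $(T^g)^*\cong T^*$, we obtain a maximal torus $T\ni x$ and $s\in T^*$ with $\chi\in\cE(G,s)$, exactly as claimed.

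The main obstacle I anticipate is justifying cleanly that $\chi\in\cE(G,s)$ can be written as a $\ZZ$-linear combination of precisely those $R_{T,\theta}$ belonging to the same series, so that nonvanishing of $\chi$ at $x$ transfers to nonvanishing of one of these $R_{T,\theta}$. This is where one must invoke that the $R_{T,\theta}$ attached to a fixed geometric conjugacy class (equivalently, fixed $s$ up to $G^*$-conjugacy) span a space orthogonal to those of other classes, and that $\cE(G,s)$ lies in the span of the $R_{T^*,s}$ over tori $T^*\ni s$ --- a standard consequence of Lusztig's classification (see \cite[\S13]{DM} or \cite[Ch.~7]{Ca}). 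Once this spanning/orthogonality statement is granted, the rest is a short argument combining the nonvanishing with Lemma~\ref{lem:DLvalues}(a) and a conjugation, and I would expect the written proof to be only a few lines.
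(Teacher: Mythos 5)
Your proposal follows the same skeleton as the paper's proof (put $\chi$ in some series $\cE(G,s)$ via Lusztig's partition, transfer the nonvanishing $\chi(x)\ne0$ to some Deligne--Lusztig character, apply Lemma~\ref{lem:DLvalues}(a), then conjugate), but the central step is based on a false claim. You assert that $\chi\in\cE(G,s)$ is a $\ZZ$-linear combination of the $R_{T,\theta}$ with $(T,\theta)\leftrightarrow(T^*,s)$, equivalently that ``$\cE(G,s)$ lies in the span of the $R_{T^*,s}$.'' This is not a consequence of Lusztig's classification; it is simply untrue in general. The span of all Deligne--Lusztig characters is the space of \emph{uniform} functions, which is a proper subspace of class functions for every group other than (essentially) $\GL_n(q)$: for instance the cuspidal unipotent character of $\Sp_4(q)$, or either irreducible constituent of $R_{T,\theta_0}$ for $\theta_0$ of order~2 in $\SL_2(q)$, is not a linear combination of any $R_{T,\theta}$'s. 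Membership in $\cE(G,s)$ only says $\langle\chi,R_{T^*,s}\rangle\ne0$ for some $T^*\ni s$; it does not let you expand $\chi$ itself in terms of such characters, so nonvanishing of $\chi$ at $x$ does not transfer to nonvanishing of some $R_{T^*,s}$ at $x$ by your route.

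The missing ingredient --- and the actual key step in the paper's proof --- is that \emph{characteristic functions of semisimple classes are uniform}. Writing $f_x$ for the characteristic function of the class of the semisimple element $x$, one has $\chi(x)=|C_G(x)|\,\langle\chi,f_x\rangle$, and since $f_x$ lies in the uniform space, only the orthogonal projection of $\chi$ onto the uniform space contributes to this inner product. That projection \emph{is} a linear combination of the $R_{T^*,s}$ with the same $s$ (here you correctly use orthogonality of Deligne--Lusztig characters attached to distinct series, since $\chi$ is orthogonal to every $R_{T',\theta'}$ outside its own series). Hence $\chi(x)\ne0$ forces $R_{T^*,s}(x)\ne0$ for some maximal torus $T^*\ni s$, and then Lemma~\ref{lem:DLvalues}(a) together with your conjugation cleanup completes the argument exactly as in the paper. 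So the gap is repairable, but not by the spanning statement you propose to invoke; it is repaired by replacing ``$\chi$ is uniform'' with ``evaluation at semisimple elements factors through the uniform projection.''
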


\begin{proof}
By the result of Lusztig, there is some $s\in G^*$ with $\chi\in\cE(G,s)$.
Since $\chi(x)\ne0$ and the characteristic functions of semisimple classes are
uniform, there exists a maximal torus $T^*\le G^*$ with $s\in T^*$ such that
$R_{T^*,s}(x)\ne0$, so $R_{T,\theta}(x)\ne0$ for some $\theta\in\Irr(T)$.
By Lemma~\ref{lem:DLvalues} this implies that $x$ lies in some conjugate
of $T$.
\end{proof}

Let's assume for what follows that $\bG$ has connected center, so that
centralizers of semisimple elements in $\bG^*$ are connected
(see \cite[Thm.~4.5.9]{Ca}). Let $W(s)$ denote the Weyl group of the
centralizer $C_{\bG^*}(s)$ of $s$ in $\bG^*$. The semisimple
character of $G$ corresponding to $s$ is defined (up to sign) by
$$\chi_{s}:=\frac{1}{|W(s)|}\sum_{w\in W(s)} \eps_w\,R_{T_w^*,s}$$
(with certain signs $\eps_w$, see \cite[Def.~14.40]{DM}), where $T_w^*$
denotes a maximal torus of $C_{G^*}(s)$ (hence of $G^*$) parametrized by $w$
in the sense of \cite[Prop.~3.3.3]{Ca}. By \cite[Thm.~8.4.8]{Ca} the degree
of $\chi_{s}$ is given by
$$\chi_{s}(1)=|G^*:C_{G^*}(s)|_{p'},$$
the part of the index of $C_{G^*}(s)$ prime to the characteristic $p$ of $\bG$.
The character $\chi_{s}$ is irreducible (see \cite[Prop.~14.43]{DM}). We let
$W(\theta):=N_G(\bT,\theta)/T$ denote the stabilizer
of $\theta$ in $W(T):=N_G(\bT)/T$. 

\begin{prop}   \label{prop:values}
 Let $x\in G$ be regular, lying in the (unique) maximal torus $T$ of $G$
 parametrized by $v\in W$. Let $(T^*,s)$ correspond to $(T,\theta)$, and assume
 that the intersection of the $F$-conjugacy class of $v$ in $W$ with $W(s)$
 is a single $F$-conjugacy in $W(s)$. Then
 $$\chi_{s}(x)=\pm\sum_{w\in W(T)/W(\theta)}\theta(x^w),$$
 where the sum runs over a set of coset representatives of $W(\theta)$ in
 $W(T)$. In particular $|\chi_{s}(x)|\le |C_W(Fv):C_{W(s)}(Fv)|$.
\end{prop}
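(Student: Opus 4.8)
The plan is to substitute $x$ into the defining expression $\chi_{s}=\frac{1}{|W(s)|}\sum_{w\in W(s)}\eps_w R_{T_w^*,s}$ and to use regularity to kill almost every term. First I would invoke Lemma~\ref{lem:DLvalues}(a): since $x$ is regular it lies in a \emph{unique} maximal torus of $G$, namely $T=T_v$, so the summand $R_{T_w^*,s}(x)=R_{T_w,\theta_w}(x)$ vanishes unless some conjugate $T_w^g$ contains $x$, which by uniqueness forces $T_w$ to be $G$-conjugate to $T$, i.e.\ $w\sim_F v$ in $W$. After replacing $v$ by an $F$-conjugate lying in $W(s)$ (possible since $s\in T^*$ forces the $F$-class of $v$ to meet $W(s)$), the surviving indices $w\in W(s)$ are exactly those $F$-conjugate to $v$ \emph{in $W$}, and the hypothesis says these constitute a single $F$-class \emph{of $W(s)$}. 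Hence the surviving tori $T_w^*$ are all $C_{\bG^*}(s)$-conjugate, so by \cite[Prop.~13.13]{DM} the pairs $(T_w,\theta_w)$ are all $G$-conjugate to $(T,\theta)$; in particular their induced characters $\theta_w^G$ all equal $\theta_T^G$.

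Next I would apply Lemma~\ref{lem:DLvalues}(b) to each surviving term, yielding $R_{T_w,\theta_w}(x)=\St(x)\,\theta_T^G(x)$, a value independent of $w$. As the sign $\eps_w$ is constant along an $F$-class of $W(s)$ (being governed by the $\FF_q$-rank of $T_w$), I write $\eta$ for this common sign; using that the $F$-class of $v$ in $W(s)$ has size $|W(s)|/|C_{W(s)}(Fv)|$, the sum collapses to
$$\chi_{s}(x)=\frac{\eta\,\St(x)}{|C_{W(s)}(Fv)|}\,\theta_T^G(x).$$

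It then remains to unwind $\theta_T^G(x)$. Regularity again forces $gxg^{-1}\in T\Rightarrow g\in N_G(\bT)$, so the induced-character formula gives $\theta_T^G(x)=\sum_{w\in W(T)}\theta(x^w)$; since $\theta(x^w)$ depends only on the coset $wW(\theta)$ (because $W(\theta)$ stabilises $\theta$), this equals $|W(\theta)|\sum_{w\in W(T)/W(\theta)}\theta(x^w)$. The decisive identity is $|W(\theta)|=|C_{W(s)}(Fv)|$: under duality the action of $W(T)\cong C_W(Fv)$ on $\Irr(T)$ matches its action on $T^*\ni s$, so the stabiliser $W(\theta)$ of $\theta$ coincides with the stabiliser of $s$, namely $W(s)\cap C_W(Fv)=C_{W(s)}(Fv)$. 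Substituting, the two factors $|C_{W(s)}(Fv)|$ cancel and produce $\chi_{s}(x)=\pm\sum_{w\in W(T)/W(\theta)}\theta(x^w)$. The final inequality is then immediate, since $T$ is abelian so $|\theta(x^w)|=1$, and there are exactly $|W(T):W(\theta)|=|C_W(Fv):C_{W(s)}(Fv)|$ summands.

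The step I expect to be most delicate is the reduction in the first two paragraphs: one must verify that the single-$F$-class hypothesis genuinely forces the surviving pairs $(T_w,\theta_w)$ into one $G$-class (so a common $\theta_T^G(x)$ can be factored out), and that the signs $\eps_w$ combine with $\St(x)$ into one global sign rather than cancelling. The clean cancellation at the end rests entirely on the duality identification $W(\theta)=C_{W(s)}(Fv)$, and this is exactly where the connectedness of $Z(\bG)$ (hence of $C_{\bG^*}(s)$) enters, guaranteeing $W(s)=\mathrm{Stab}_W(s)$.
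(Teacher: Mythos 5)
Your proposal is correct and follows essentially the same route as the paper's proof: collapse the defining sum for $\chi_s$ using regularity (Lemma~\ref{lem:DLvalues}) together with the single-$F$-class hypothesis to get $\pm\frac{1}{|C_{W(s)}(Fv)|}\theta_T^G(x)$, evaluate $\theta_T^G(x)$ via $N_G(T)=N_G(\bT)$ as $|W(\theta)|\sum_{w\in W(T)/W(\theta)}\theta(x^w)$, and cancel using the duality identification $W(\theta)\cong C_{W(s)}(Fv)$. The only differences are cosmetic: you spell out the sign constancy along $F$-classes and sketch the duality argument that the paper handles by citing \cite[proof of Prop.~14.43]{DM}.
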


\begin{proof}
By assumption $T$ is parametrized by the $F$-conjugacy class of $v\in W$.
Since $x\in T$ is regular, by the definition of $\chi_s$ and
Lemma~\ref{lem:DLvalues} we have
$$\chi_{s}(x)=\frac{1}{|W(s)|}\sum_{w\sim v}\,\eps_v\theta_T^G(x)$$
where the sum extends over those $w\in W(s)$ for which $T_w$ is
$G$-conjugate to $T$, that is, the $F$-conjugates of $v$ in $W(s)$. By
assumption there are exactly $|W(s):C_{W(s)}(Fv)|$ such conjugates, so
$$\chi_{s}(x)=\pm\frac{1}{|C_{W(s)}(Fv)|}\,\theta_T^G(x).$$
 \par
In order to evaluate $\theta_T^G(x)$ observe that for $g\in G$ we have
$x^g\in T$ if and only if $x\in{}^gT$, that is, if $g\in N_G(T)$. Moreover
$N_G(T)=N_G(\bT)$ since $T$ contains the regular element $x$. Thus
$$\theta_T^G(x)=\frac{1}{|T|}\sum_{g\in N_G(\bT)}\theta(x^g)
  =\frac{1}{|T|}\sum_{g\in N_G(\bT)}{}^g\theta(x).$$
Then, with $W(T)=N_G(\bT)/T$ and $W(\theta)=N_G(\bT,\theta)/T$ we have 
$$\theta_T^G(x)=|W(\theta)|\sum_{w\in W(T)/W(\theta)}{}^w\theta(x)
  =|W(\theta)|\sum_{w\in W(T)/W(\theta)}{}\theta(x^w).$$
Since $(T,\theta)$ corresponds to $(T^*,s)$ we have $W(\theta)\cong
N_{G^*}(\bT^*,s)/T^*\cong C_{W(s)}(Fv)$ (see \cite[proof of Prop.~14.43]{DM}),
so
$$\chi_{s}(x)=\pm\frac{1}{|C_{W(s)}(Fv)|}\,|W(\theta)|
  \sum_{w\in W(T)/W(\theta)}{}\theta(x^w)
  =\pm\sum_{w\in W(T)/W(\theta)}{}\theta(x^w).$$
\end{proof}

Apart from these results from Deligne--Lusztig theory we also use the following
result on blocks with cyclic defect groups: let $B$ be a block with
cyclic defect group $P$ generated by $x$, say. Then the character values
$\chi(x)$, for all non-exceptional characters $\chi$ in $B$, are the same up
to sign.

\subsection{The exceptional groups}
We first consider the  simple exceptional group of Lie type. We postpone the
treatment of the classical groups $\tw2G_2(3)'=\PSL_2(8)$ and
$G_2(2)'=\PSU_3(3)$ until the next subsection, as well as the Tits group
$\tw2F_4(2)'$ which will be considered in Section~\ref{sec:spor}.

We show that not all triples of elements from a class of generators of the
cyclic subgroup $T$ as in Table~\ref{tab:exctorus} can lie in proper subgroups.
For the five families of exceptional groups of small rank, the character
tables are known, so that lower bounds for the structure constant $n_a(C)$
can easily be obtained. Using Proposition~\ref{prop:excelt} we can also
estimate the contribution from maximal subgroups containing $T$ from above.
From this one gets:

\begin{prop}   \label{prop:excsmall}
 Theorem~\ref{thm:main} holds for the simple exceptional groups of Lie type
 $\tw2B_2(q^2)$, $\tw2G_2(q^2)$, $G_2(q)$, $\tw3D_4(q)$ and $\tw2F_4(q^2)$,
 with $C$ containing generators of a cyclic subgroup $T\le G$ as in
 Table~\ref{tab:exctorus}.
\end{prop}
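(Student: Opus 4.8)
The plan is to carry out, for each of the five families, the two-step strategy described at the beginning of the section, taking $C$ to be a class of generators of the cyclic torus $T$ recorded in Table~\ref{tab:exctorus}. A generator $x$ of $T$ is regular semisimple with $C_G(x)=T$, and since $G$ is simple we have $d=|G/G'|=1$; thus the character formula specializes to $n_a(C)=\frac{|G|}{|T|^2}\,(1+\eps_a(C))$. Part~(1) of Theorem~\ref{thm:main} is the case $a=1$, which counts, for fixed $x\in C$, the pairs $(y,z)\in C\times C$ with $xyz=1$. Part~(2) is the case $a=-2$: if $(y,z)\in C\times C^{-2}$ satisfies $xyz=1$, then $z\sim x^{-2}$ and hence $xy=z^{-1}\sim x^2$, so that $xy$ is conjugate to $x^2$. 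As $|T|$ is odd in every one of these families, $x^{-2}$ is again a generator of $T$, so that $C^{-2}$ likewise consists of regular semisimple generators of $T$ and the two cases can be treated uniformly. It therefore suffices to produce, for $a\in\{1,-2\}$, a triple counted by $n_a(C)$ that generates $G$.

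The first task is to bound $\eps_a(C)$. Since $x$ is regular and lies in a unique maximal torus, Lemmas~\ref{lem:DLvalues} and~\ref{lem:Lvalues} imply that $\chi(x)\ne0$ forces $\chi$ to lie in a Lusztig series $\cE(G,s)$ with $s$ conjugate into $T^*$, while Proposition~\ref{prop:values} bounds each nonzero value $|\chi_s(x)|$ in terms of Weyl-group indices, hence by the fixed constant $|W|$. Because the nontrivial character degrees grow with $q$ while $|W|$ is fixed, this forces $\eps_a(C)\to0$. Concretely, for these five families of small rank the generic character tables are available in the \Chevie\ package \cite{Chevie}, so $n_a(C)$ can be written as an explicit function of $q$ and one verifies directly that $\eps_a(C)<1$; in fact $n_a(C)$ has order $|G|/|T|^2$.

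To upgrade existence of triples to generation, fix $x\in C$ and consider a pair $(y,z)$ counted by $n_a(C)$ with $H:=\langle x,y,z\rangle\ne G$. Then $H$ lies in some maximal subgroup $M$, and since $\langle x\rangle=T$ this $M$ is a maximal overgroup of $T$, hence one of those listed in Table~\ref{tab:exctorus} by Proposition~\ref{prop:excelt}. For each such $M$ the number of offending pairs is at most $|C\cap M|$, as $z=(xy)^{-1}$ is determined by the choice of $y\in C\cap M$. For $\tw2B_2(q^2)$, $\tw2G_2(q^2)$, $\tw3D_4(q)$ and $\tw2F_4(q^2)$ the only overgroup is $N_G(T)$, with $|C\cap N_G(T)|$ bounded in terms of $|A_G(T)|$; for $G_2(q)$ the dominant contribution is that of $\SL_3^{\eps}(q).2$, of order $|M|/|T|$. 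In every case the total non-generating count is of strictly smaller order in $q$ than $|G|/|T|^2$, so $n_a(C)$ exceeds it and a generating triple exists once $q$ is large enough.

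The main obstacle is the finite list of small groups for which this asymptotic comparison does not suffice, namely the smallest members $\tw2B_2(8)$, $\tw2G_2(27)$, $\tw3D_4(2)$, $\tw2F_4(8)$ together with $G_2(3)$ and $G_2(4)$, and the extra maximal subgroups they carry, in particular the class of $\PSL_2(13)$ occurring in each of $G_2(3)$ and $G_2(4)$. For these I would evaluate $n_1(C)$ and $n_{-2}(C)$ exactly from the character table and subtract the precise contribution $|C\cap M|$ of every relevant maximal subgroup, checking in each case that a generating triple survives. The degenerate cases $G_2(2)'\cong\PSU_3(3)$ and $\tw2G_2(3)'\cong\PSL_2(8)$, as well as the Tits group $\tw2F_4(2)'$, are excluded from this proposition and are dealt with in the following subsection and in Section~\ref{sec:spor}.
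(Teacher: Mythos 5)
Your proposal is correct and follows essentially the same route as the paper: use the character-formula structure constants $n_1(C)$ and $n_{-2}(C)$, evaluated generically via the \Chevie\ character tables, compare them with the contributions $|C\cap M|$ from the maximal overgroups of $T$ supplied by Proposition~\ref{prop:excelt}, and finish the problematic small groups (notably $G_2(3)$ and $G_2(4)$ with their extra $\PSL_2(13)$ subgroups) by exact computation. The only cosmetic difference is that you frame the generic bound through Deligne--Lusztig estimates before falling back on \Chevie, and you flag the smallest members of the other four families for separate checking, which the paper avoids since its generic \Chevie\ computations are uniform in $q$.
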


\begin{proof}
The character tables of all five families of groups are known explicitly and
contained for example in the {\sf GAP}-package \Chevie\ \cite{Chevie}. With
this system it is possible to compute structure constants generically, that
is, for all prime powers $q$ (in a fixed congruence class modulo~12) at the
same time. For $G=\tw2B_2(q^2)$ one obtains with \Chevie
$$n_a(C)=\frac{|G|}{|T|^2}\bigg(1+\frac{4q^5+11\sqrt{2}q^4+6q^3-2q+\sqrt{2}}
      {\sqrt{2}(q^2+\sqrt{2}q+1)}\bigg)\ge \frac{|G|}{|T|^2}.$$
By Proposition~\ref{prop:excelt} the only maximal subgroup $M$ of $G$
containing $T$ is the normalizer $N_G(T)$, which contributes at most
$|C_H|=|C\cap H|=4$ triples.  \par
Similarly, for $G=\tw2G_2(q^2)$ one finds that
$$n_a(C)\ge \frac{|G|}{|T|^2}.$$
The only maximal subgroup containing $T$, the normalizer $N_G(T)$, contains
at most $6$ of these triples.  \par
For $G=G_2(q)$ one finds that for either congruence
$$n_a(C)\ge \frac{1}{3}q^{10}.$$
For $q\ge5$ the possible number of triples from the subgroups $\SL_3^\eps(q).2$
is too small. For $G_2(3)$ we have $n_1(C)=25456$, $n_{-2}(C)=26185$,
while the two classes of maximal subgroups $\SL_3(3).2$ contain at most 133
triples each and the maximal subgroup $\PSL_2(13)$ contains no more than 15
triples with fixed first component.
Similarly, for $G_2(4)$ we have $n_1(C)=1495561$, $n_{-2}(C)=1499657$,
while the maximal subgroups of type $\SU_3(4).2$ and $\PSL_2(13)$ contain at
most $1380+15$ triples.
\par
From the known character table of $G=\tw3D_4(q)$ one computes that
$$n_a(C)=|T|(q^{12}-4q^6+1)(q^2+1)^2.$$
The only maximal subgroup containing $T$ is the normalizer $N_G(T)$,
contributing 4 triples.
\par
From the known character table of $G=\tw2F_4(q^2)$ one finds that
$n_a(C)\ge q^{42}$ when $q^2\ge8$. Again, the only maximal subgroup containing
$T$ is the normalizer $N_G(T)$, accounting for 12 triples.
\end{proof}

For the larger exceptional groups, neither the character table nor the list of
maximal subgroups is completely known. In order to estimate the structure
constant $n_a(C)$ we use results from Deligne--Lusztig theory as
explained above.

\begin{prop}   \label{prop:excbig}
 Theorem~\ref{thm:main} holds for the simple exceptional groups of Lie type
 $F_4(q)$, $E_6(q)$, $\tw2E_6(q)$, $E_7(q)$, $E_8(q)$ with $C$ containing
 generators of a cyclic subgroup $T$ as in Table~\ref{tab:exctorus}.
\end{prop}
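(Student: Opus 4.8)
The plan is to run, uniformly across the five families, the Deligne--Lusztig estimate that forces $\eps_a(C)<1$, and then to compare the resulting lower bound for $n_a(C)$ with the contribution of the few maximal overgroups of $T$ listed in Proposition~\ref{prop:excelt}. Fix a generator $x$ of $T$. Since $T$ is a self-centralizing maximal torus arising from a Springer-regular element $v\in W$ of order $d'=|A_G(T)|$, the element $x$ is regular semisimple with $C_G(x)=T$, and the two exponents encoding the two parts of Theorem~\ref{thm:main} both yield regular semisimple elements of $T$: part~(1) is the case $a=1$, while part~(2) is the case $a=-2$, since $z=(xy)^{-1}\in C^{-2}$ means exactly that $xy$ is conjugate to $x^2$. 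Because $x$ is regular, the second orthogonality relation gives the clean identity $\sum_{\chi\in\Irr(G)}|\chi(x)|^2=|C_G(x)|=|T|$, which is what will control the sum defining $\eps_a(C)$.

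The core of the argument is a uniform bound on character values at such a regular generator. By Lemma~\ref{lem:Lvalues}, only characters lying in a Lusztig series $\cE(G,s)$ with $s$ in the dual torus $T^*$ can be nonzero at $x$, and for the semisimple character of each such series Proposition~\ref{prop:values} gives $|\chi_s(x)|\le|C_W(Fv):C_{W(s)}(Fv)|\le|A_G(T)|=d'$, the $F$-conjugacy hypothesis being verifiable for the regular element $v$ and the relevant centralizer Weyl groups $W(s)$. To pass from the semisimple characters to all of $\cE(G,s)$ I would invoke the cyclic-defect remark closing Subsection~\ref{subsec:3.1}: the Zsigmondy prime $r\mid|T|$ divides $|G|$ to the first power, so the Sylow $r$-subgroups are cyclic and generated by a power of $x$; hence every $\chi$ with $\chi(x)\ne0$ lies in an $r$-block of cyclic defect, on which the non-exceptional characters take value at $x$ equal up to sign to that of the block's semisimple character. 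This gives $|\chi(x^a)|\le d'$ for every $\chi$ nonvanishing at $x$, and combined with orthogonality yields, writing $D$ for the smallest degree of a nontrivial irreducible character of $G$,
$$|\eps_a(C)|\le\sum_{\chi\ne1}\frac{|\chi(x)|^2\,|\chi(x^a)|}{\chi(1)}
   \le\frac{d'}{D}\sum_{\chi\ne1}|\chi(x)|^2=\frac{d'\,(|T|-1)}{D}.$$

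It then suffices to check $d'(|T|-1)<D$ in each family. Here $|T|$ is a cyclotomic value of degree equal to the rank, whereas $D$ is a polynomial in $q$ of much larger degree (for instance $|T|=\Phi_{30}(q)\sim q^8$ against $D\sim q^{29}$ for $E_8(q)$, and $|T|=\Phi_{12}(q)\sim q^4$ against $D$ of degree at least $6$ for $F_4(q)$), so the inequality holds for all $q$ with at most a handful of smallest exceptions, read off from the minimal-degree data of Landazuri--Seitz--Zalesskii or L\"ubeck; the residual small cases such as $F_4(2),F_4(3),\tw2E_6(2),E_7(2),E_7(3)$ are disposed of via the explicit overgroup information already secured in Proposition~\ref{prop:excelt}. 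With $\eps_a(C)<1$ in hand we get $n_a(C)\ge\frac{|G|}{|T|^2}(1-|\eps_a(C)|)>0$, a huge power of $q$. For generation I would count the non-generating triples: with $x$ fixed, $(x,y,z)$ with $xyz=1$ fails to generate exactly when $x,y$ lie in a common maximal subgroup $M$, and the number of such triples is at most $\sum_M|C\cap M|$ over the maximal overgroups of $x$, which by Proposition~\ref{prop:excelt} are precisely the groups $M\ge T$ of Table~\ref{tab:exctorus}. For $M=N_G(T)$ one has $C\cap M=C\cap T$ of size $|N_G(T):T|=d'$, and for the remaining overgroups $|C\cap M|$ is at most a small multiple of $|M|/|T|$; in every case this is dwarfed by $n_a(C)\sim|G|/|T|^2$, so some triple generates $G$ for both $a=1$ and $a=-2$.

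The step I expect to be the main obstacle is the uniform value bound $|\chi(x)|\le d'$ across \emph{all} irreducible characters: Proposition~\ref{prop:values} delivers it only for the semisimple characters and only under an $F$-conjugacy hypothesis, so promoting it to the whole series through cyclic-defect block theory --- in particular handling the exceptional characters and verifying the $F$-conjugacy condition for the Springer-regular $v$ --- is the delicate point on which the entire estimate rests.
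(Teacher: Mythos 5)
Your treatment of $F_4(q)$, $E_6(q)$, $\tw2E_6(q)$ and $E_8(q)$ is in substance the paper's own argument: for those four families every non-central element of the dual torus $T^*$ is regular (the paper verifies this), so each Lusztig series $\cE(G,s)$ with $s\notin Z(G^*)$ is a single semisimple character whose value at $x$ is bounded via Proposition~\ref{prop:values}, the unipotent characters take values $0,\pm1$ at $x$, and the resulting bound on $|\eps_a(C)|$ is compared with the overgroups from Proposition~\ref{prop:excelt}. Your bookkeeping via the second orthogonality relation $\sum_\chi|\chi(x)|^2=|C_G(x)|=|T|$ is coarser than the paper's series-by-series count, but it is numerically adequate there (one minor slip: a Zsigmondy prime $r$ need not divide $|G|$ to the first power; the Sylow $r$-subgroup is cyclic because it sits inside the cyclic torus $T$, not for the reason you give).

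The genuine gap is $E_7(q)$ --- exactly the step you flag as the ``main obstacle,'' but flagging it does not fill it. For $E_7$ the dual torus $T^*$ of order $\Phi_2\Phi_{18}/\gcd(2,q-1)$ contains non-central \emph{non-regular} semisimple elements (e.g.\ elements of order dividing $q+1$, with centralizers involving $\tw2E_6(q)_\SC$), so the series $\cE(G,s)$ are no longer singletons: they are in bijection with the unipotent characters of the large groups $C_{G^*}(s)$, and Proposition~\ref{prop:values} controls only the one semisimple character in each. Your proposed repair via cyclic-defect blocks does not deliver the uniform bound $|\chi(x^a)|\le d'$: the remark closing Subsection~\ref{subsec:3.1} concerns values of non-exceptional characters at a \emph{generator of the cyclic defect group}, i.e.\ at an $r$-element, whereas your $x$ has order $\Phi_2(q)\Phi_{18}(q)/d$ and is not an $r$-element. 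To control $\chi(x)$ one would need Brauer's second main theorem together with Dade's description of generalized decomposition numbers, including the exceptional characters (whose values are sums over an inertial orbit, not single signed values), plus a proof that the $r$-part of $x$ is regular with $C_G(x_r)=T$; none of this is carried out, and it is precisely the kind of explicit block-theoretic input (in the classical cases the paper cites Fong--Srinivasan for it) that is not available off the shelf for $E_7$. The paper therefore treats $E_7(q)$ by a different mechanism: L\"ubeck's degree-gap theorem --- the unipotent character $\phi_{7,1}$ is, up to the $d$ twists, the unique nontrivial character of degree below $q^{26}$, and it lies in the principal $r$-block so takes value $\pm1$ on $x$ --- combined with the trivial bounds $|\chi(x)|\le|C_G(x)|^{1/2}<q^4$ and the fact that at most $|C_G(x)|<q^8$ characters are nonzero at $x$, so that all remaining characters contribute less than $q^{20}/q^{26}$ to $\eps_a(C)$. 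Without such a substitute, your estimate $|\eps_a(C)|\le d'(|T|-1)/D$ is unproven for $E_7(q)$, and the proposition remains open in that case.
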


\begin{proof}
Let first $G=F_4(q)$. By Lemma~\ref{lem:Lvalues} the only characters not
vanishing on an element $x\in C$ are those in Lusztig series $\cE(G,s)$ where
$s$ is a semisimple element lying in the dual torus $T^*\le G^*$ (up to
conjugation). As $T$ is the centralizer of any of its
non-identity elements, the same is true for $T^*$. Thus all non-identity
elements $s\in T^*$ are regular, and $\cE(G,s)$ consists of a single
irreducible and thus semisimple Deligne-Lusztig character $\chi_s$
(up to sign). \par
The elements of $\cE(G,1)$ are by definition the unipotent characters of $G$.
Using the character formula \cite[Prop.~7.5.4]{Ca} or the theory of blocks
with cyclic defect group for a prime dividing $|T|$, one finds that
$\chi(x)\in\{0,\pm1\}$ for all unipotent characters $\chi\in\cE(G,1)$, with
$\chi(x)\equiv\chi(1)\pmod{|T|}$. Explicit computation using the degrees of
unipotent characters, given in \cite[13.9]{Ca} for example, shows that the
contribution of these $\chi\ne1_G$ to $\eps_a(C)$ is positive. \par
Next, the character formula in Proposition~\ref{prop:values} shows that for
$s\in T^*$ a regular element,
$\chi_s(x)$ is a sum of $|N_G(T):T|=12$ roots of unity, hence of absolute value
at most~12. There are $(|T|-1)/12$ such semisimple characters, each of degree
$|G:T|_{q'}$, so they contribute at most $12^2|T|^2/|G|_{q'}$ to
$|\eps_a(C)|$. Thus $n_a(C)\ge |G|/|T|^2(1-12^2|T|^2/|G|_{q'})$. \par
By Table~\ref{tab:exctorus} the only maximal subgroups $H$ containing $x$ are
one or two classes of triality groups $\tw3D_4(q).3$, when $q\ge3$. Since
any such subgroup contains the full normalizer of $T$, $T$ is contained in a
unique group in each class, and moreover the intersection $C_H=C\cap H$ is a
single conjugacy class of $H$, whence $|C_H|=|H:T|$. Thus each of the two
classes of triality subgroups contributes at most $|H:T|$ triples, which is
smaller than $n_a(C)$.
For $F_4(2)$, the maximal subgroups above $N_G(T)$ contain less than~$10^8$
triples, while $n_a(C)>10^{13}$.  \par
For $E_6(q)$ we compute the structure constant $n_a(C)$ in the group
$G=E_6(q)_\ad$ of adjoint type, which contains the simple group as a normal
subgroup of index~$d=\gcd(3,q-1)$. Here, the dual group is of type $E_6(q)_\SC$,
and the semisimple elements in the dual torus $T^*$ are either central, or
regular. The regular elements parametrize irreducible Deligne-Lusztig
characters, whose contribution to $|\eps_a(C)|$ is thus bounded above by
$81\,|T|^2/|G|_{q'}$ by Proposition~\ref{prop:values}, while the $d$ central
elements parametrize the $d$ Lusztig series consisting of the various
characters of $G$ having the same restrictions to $E_6(q)$ as unipotent
characters. Explicit computation gives that their contribution to $|\eps(C)|$
is less than $q^{56}/|G:T|$.
\par
By Proposition~\ref{prop:excelt} the only maximal subgroup $H$ containing $T$
is of type $\SL_3(q^3).3$. This subgroup contains at most $|H|<3q^{24}$
triples. \par
The argument for $G=\tw2E_6(q)_\ad$ is completely analogous to the one in the
untwisted case. Here, the contribution from the unipotent characters to
$|\eps_a(C)|$ is less than $2q^{56}/|G:T|$. The only maximal subgroup containing
$T$ is of type $\SU_3(q^3).3$, by Proposition~\ref{prop:excelt}, and we may
conclude as before. \par
For $G=E_7(q)_\ad$ the torus $T^*$ does contain non-regular non-central
elements. We use the results of L\"ubeck \cite[\S5.9]{Lue} on smallest
character degrees. Let $x\in G'=E_7(q)$ be an element of order
$\Phi_2(q)\Phi_{18}(q)/d$, where $d=\gcd(2,q-1)$. The non-trivial character
of $G$ of smallest degree is the unipotent character
$\phi_{7,1}$. It lies in the principal $p$-block for any Zsigmondy prime
divisor $p$ of $\Phi_{18}(q)$, so takes value~$\pm1$ on $x$ (in fact,
value~$-1$). There are just~$d$ characters of this degree, and the next
smallest character degree is
larger than $q^{26}$. Now $|C_G(x)|=(q+1)(q^6-q^3+1)<q^8$, so
$|\chi(x)|<q^4$ for all $\chi\in\Irr(G)$, and moreover clearly there are at
most $q^8$ characters with $\chi(x)\ne0$. So all other characters contribute at
most $q^{20}/q^{26}<q^{-3}$ to $\eps_a(C)$. The only maximal subgroup
containing our element $x$ is $M=\tw2E_6(q).D_{q+1}$, contributing at most
$|M|<q^{80}$ triples.  \par
For $G=E_8(q)$, arguing as for $F_4(q)$ above we see that the non-unipotent
characters
contribute at most $900\,|T|^2/|G|_{q'}$ to $|\eps(C)|$, while the unipotent
characters different from $1_G$ contribute at most $q^{211}/|G:T|$. \par
On the other hand the normalizer $N_G(T)$ contains no more than $30$
triples.
\end{proof}

\subsection{The classical groups}
We now turn to the simple classical groups of Lie type. Let $G_\ad$ be a group
of adjoint type such that $G'=[G_\ad,G_\ad]$ is our given simple group. This
is the group of fixed points under a Frobenius endomorphism of a simple
algebraic group with connected center, so in particular the results of
Section~\ref{subsec:3.1} are applicable. We'll choose the class $C\subset G'$
of an element $x$ whose order is divisible by a Zsigmondy prime as in
Section~\ref{overgroups}, such that its centralizer is a maximal torus of
$G_\ad$. Note that the number of conjugates of a maximal subgroup containing
such an element can grow with both the field size and the rank.  However, note
that the automizer of the maximal torus $T$ containing $x$ acts semiregularly
on the nontrivial powers of $x$. It follows that every nontrivial eigenspace
of $x$ on an irreducible $V$ has dimension
at most $\dim V/[N_G(T):T]$. 


\begin{table}[htbp] 
  \caption{Maximal tori in classical groups} \label{tab:classtorus}
\[\begin{array}{|l|cc|l|}    
\hline
 G& |T|& \!\!|A_G(T)|& \text{ref.}\\
\skipa \hline \hline
 \PGL_n(q),\ n\ge3\text{ odd}& (q^n-1)/(q-1)& n& \Lem~\ref{lem:maxSLn}\\
 \PGL_n(q),\ n\ge4\text{ even}& q^{n-1}-1& n-1& \Lem~\ref{lem:maxSLn-1}\\
 \PGU_n(q),\ n\ge3\text{ odd}& (q^n+1)/(q+1)& n& \Lem~\ref{lem:maxSUodd}\\
 \PGU_n(q),\ n\ge4\text{ even}& q^{n-1}+1& n-1& \Lem~\ref{lem:maxSUeven}\\
 \PCO_{2n+1}(q)& q^n+1& 2n& \Lem~\ref{lem:maxSOodd}\\
 \PCSp_{2n}(q)& q^n+1& 2n& \Lem~\ref{lem:maxSp}\\
 {\PCO_{2n}^\circ}^+(q)& (q^{n-1}+1)(q+1)& 2n-2& \Lem~\ref{lem:maxSO+}\\
 {\PCO_{2n}^\circ}^-(q)& q^n+1& n& \Lem~\ref{lem:maxSO-}\\ \hline
\end{array}\]
\end{table}

We consider the various types case by case.

\begin{prop}   \label{prop:2Dn}
 Theorem~\ref{thm:main} holds for the groups $\OO_{2n}^-(q)$, $n\ge4$, with
 $C$ containing elements of order~$\Phi_{2n}^*(q)$.
\end{prop}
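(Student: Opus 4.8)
The plan is to follow the general scheme of Section~\ref{sec:Lie type}: choose a class $C$ of regular semisimple elements whose centraliser is the cyclic maximal torus $T$ of order $q^n+1$ listed for ${\PCO_{2n}^\circ}^-(q)$ in Table~\ref{tab:classtorus}, bound the structure constants $n_a(C)$ from below via the character formula, and bound the contribution of proper subgroups from above via Lemma~\ref{lem:maxSO-}. Concretely, I would work in the adjoint group $G:=G_\ad={\PCO_{2n}^\circ}^-(q)$ (which has connected centre, so the results of Section~\ref{subsec:3.1} apply), take $x\in G'=\Omega_{2n}^-(q)$ of order a Zsigmondy prime $r\mid\Phi_{2n}^*(q)$ with $r>2e+1$ (so that the hypotheses of Lemma~\ref{lem:maxSO-} hold, with $e=2n$), and set $C:=x^G$. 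Since $r$ is a primitive prime divisor of $q^{2n}-1$, the element $x$ is regular and $C_G(x)=T$, with automizer $N_G(T)/T$ of order $n$ acting semiregularly on the nontrivial powers of $x$. For part~(1) of Theorem~\ref{thm:main} I take $a=1$, and for part~(2) I take $a=-2$ (so that $xy$ is conjugate to $x^2$); it then suffices to show $\eps_a(C)<1$ for both values of $a$ together with the subgroup bound.

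For the lower bound on $n_a(C)$ I would estimate $\eps_a(C)$ by splitting $\Irr(G)$ according to Lusztig series. By Lemma~\ref{lem:Lvalues} only the series $\cE(G,s)$ with $s$ in the dual torus $T^*\le G^*$ contribute, and these fall into three families. First, the unipotent characters ($s=1$): for a Zsigmondy prime $r\mid\Phi_{2n}^*(q)$ the Sylow $r$-subgroup of $G$ is cyclic and contained in $T$, so the theory of blocks with cyclic defect group gives $\chi(x)\in\{0,\pm1\}$ for every unipotent $\chi$; as there are only boundedly many of them and the smallest nontrivial degree of $\Omega_{2n}^-(q)$ is of order $q^{2n-3}$, their contribution to $|\eps_a(C)|$ is $O(q^{-(2n-3)})$. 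Second, the regular $s\in T^*$ (those with $C_{G^*}(s)=T^*$): each gives a single semisimple character $\chi_s$ of degree $|G:T|_{q'}$, and by Proposition~\ref{prop:values} $|\chi_s(x^a)|\le|N_G(T):T|=n$; since there are at most $|T|/n$ of them, their total contribution is at most of order $n^2|T|^2/|G|_{q'}$, negligible for $n\ge4$. Third, the non-central non-regular $s\in T^*$: here I would use that $x^a$ is regular semisimple, so $\chi(x^a)$ is determined by the uniform projection of $\chi$ and hence, by Lemma~\ref{lem:DLvalues}, is a combination (bounded polynomially in $n$) of values $\theta_T^G(x^a)$; combined with column orthogonality $\sum_\chi|\chi(x)|^2=|C_G(x)|=|T|=q^n+1$ and the minimal-degree bound $D(G)\gtrsim q^{2n-3}$, this yields $\sum_{\chi\ne1}|\chi(x)|^2|\chi(x^a)|/\chi(1)\le M|T|/D(G)$ with $M=\max_{\chi\ne1}|\chi(x^a)|$ bounded independently of $q$. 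For $n\ge4$ this is $<1$ once $q$ exceeds an explicit bound.

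It then remains to bound the contribution of maximal subgroups. By Lemma~\ref{lem:maxSO-} the only maximal subgroups meeting $C$ are the normalisers of $\Omega_{2n/f}^-(q^f)$ ($f\mid n$ prime) and, when $n$ is odd, of $\SU_n(q)$. Each such $M$ again contains a torus of order $q^n+1$, so $|C\cap M|$ is of order at most $|T|\approx q^n$, while $|C|=|G:T|$ and $|G:M|$ are large; summing the bound $\sum_{H\ni x}|C\cap H|=\sum_M|C\cap M|^2|G:M|/|C|$ over the few classes of maximal subgroups shows it is far smaller than $n_a(C)\approx d\,|G|/|T|^2$. Hence for both $a=1$ and $a=-2$ there exist triples in $C$ with product~$1$ (respectively with $xy$ conjugate to $x^2$) generating $G'$, which gives both parts of Theorem~\ref{thm:main}.

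The hard part will be the third family of Lusztig series, the non-regular non-central $s$, where the naive degree bound alone is not decisive; the uniform-projection argument together with $\sum_\chi|\chi(x)|^2=|T|$ is what controls it, and pinning down the uniform constant $M$ rigorously is the main technical point. The remaining obstacle is the finite list of small cases: the pairs $(2n,q)$ in Table~\ref{tab:o-} (where no suitable $r>2e+1$ exists, so one must instead use the extra maximal subgroups recorded there) and, for $n=4$, the small fields $q\le7$ not reached by the asymptotic estimate. For all of these I would compute $n_1(C)$ and $n_{-2}(C)$ directly from the known generic character tables in \Chevie\ and check them against the explicit maximal-subgroup data of Lemma~\ref{lem:maxSO-} and Table~\ref{tab:o-}, exactly as in Proposition~\ref{prop:excsmall}.
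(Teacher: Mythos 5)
Your overall scheme is the paper's own: the class of regular semisimple generators of the cyclic torus $T$ of order $q^n+1$ in $G={\PCO_{2n}^\circ}^-(q)$, a lower bound for $n_a(C)$ ($a=1,-2$) via Lusztig series, and an upper bound on triples inside the subgroups of Lemma~\ref{lem:maxSO-}. The genuine gap is in your third family, the series $\cE(G,s)$ with $s\in T^*$ non-central and non-regular. Your bound $\sum_{\chi\ne1}|\chi(x)|^2|\chi(x^a)|/\chi(1)\le M|T|/D$ (with $D$ the smallest non-trivial degree, $\approx q^{2n-3}$, and $M$ an unspecified polynomial in $n$) is only $<1$ once $q$ or $n$ is large, and you propose to settle the leftover cases ``from the known generic character tables in \Chevie''. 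No such tables exist: \Chevie\ contains generic tables only for groups of very small rank (the paper invokes it for $\tw2B_2$, $\tw2G_2$, $G_2$, $\tw3D_4$, $\tw2F_4$, $\PSp_4$, $\PSL_3$, \ldots), not for $\OO_{2n}^-(q)$ with $n\ge4$; so your fallback cannot be executed, and the residual pairs $(n,q)$ — which you have not even pinned down, since $M$ is left implicit — remain unproved. The paper avoids any such residue: for non-central $s$ it identifies the centralizers $Z_r=\GU_{n/r}(q^r)$, gets $|\chi_{s_r}(x)|\le r$ from Proposition~\ref{prop:values}, transfers this bound to the \emph{non-semisimple} characters of $\cE(G,s_r)$ by Fong--Srinivasan block theory (\cite[\S9]{FoSr2}: they lie in the same $p$-block as $\chi_{s_r}$ for Zsigmondy primes $p\,|\,\Phi_{2n}(q)$), and counts at most $q^r/r$ such $s_r$ up to conjugacy; since the relevant degree is then $|G:Z_r|_{q'}$ rather than the global minimal degree, this gives $\eps_a(C)<0.5$ for \emph{all} $n\ge4$, $q\ge2$. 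The only separately treated groups are those of Table~\ref{tab:o-}, where no element of order $\Phi_{2n}^*(q)>2e+1$ exists at all; these are handled by explicit computation in the groups themselves ($\OO_{10}^-(2)$, $\OO_{12}^-(2)$) and by an index bound for the exceptional subgroup $\fA_{20}$, not with \Chevie.

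Two further slips in your setup. First, you require a Zsigmondy \emph{prime} $r>2e+1$; Lemma~\ref{lem:smallZsig} does not guarantee one (all prime divisors of $\Phi_{2n}^*(q)$ are merely $\equiv1\pmod{2n}$, so $\Phi_{2n}^*(q)$ could exceed $2e+1$ while being a product of primes each $\le 4n+1$). The hypotheses of Lemma~\ref{lem:maxSO-} concern the element \emph{order} $r$, which should be taken to be the full $\Phi_{2n}^*(q)$ — as the statement you are proving in fact demands — and Theorem~\ref{overgroups} is designed to cover exactly this, allowing $r$ to be a product of two Zsigmondy primes. Second, your claim that $|C\cap M|$ is ``of order at most $|T|$'' is false: for $M=N_G(\SU_n(q))$ or $N_G(\Omega_{2n/f}^-(q^f))$ one has $|C\cap M|\approx|M|/|T|$; what is bounded is the number of $G$-conjugates of $M$ containing the fixed element $x$. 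Your counting formula is correct, but with the correct value of $|C\cap M|$ it yields the bound $\sum_r|H_r:T|$, and one still needs the index estimates $|G:H_1|\ge q^5(q^n+1)$ and $|G:H_r|\ge q^{18}(q^n+1)$, as in the paper, to conclude that this is below $|C|^2/2|G|\le n_a(C)$.
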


\begin{proof}
We estimate the structure constants $n_a(C)$ in the
group $G:={\PCO_{2n}^\circ}^-(q)$. Since the class $C$ is contained in the
derived subgroup $G'=\OO_{2n}^-(q)$, this will prove the claim. Note that
elements of $C$ are regular since their order is divisble by a Zsigmondy
prime for $q^{2n}-1$. Thus, by Lemma~\ref{lem:Lvalues} the irreducible
characters of $G$ not vanishing
on an element $x\in C$ lie in Lusztig series $\cE(G,s)$ for semisimple
elements $s$ in the dual group $G^*=\Spin_{2n}^-(q)$ whose centralizer order is
divisible by $\Phi_{2n}(q)$. These are precisely the elements in tori
$T^*\le G^*$ of order $q^n+1$; the latter correspond to the $F$-class of an
elements $v$ of the Weyl group $W$ of $G$ with cyclic centralizer of order~$n$.
\par
The elements of $\cE(G,s)$, with $s$ central in $G^*$, are the extensions to
$G$ of the unipotent characters of $G'$. Those which do not vanish on $x$
take value~$\pm1$ on $x$. Furthermore, since $|N_G(T):T|=n$, there are
exactly $n-1$ non-principal unipotent characters of $G'$ not vanishing on $C$;
as all unipotent characters extend to $G$, there are $d(n-1)$ non-linear
characters of $G$ to consider for $\eps_a(C)$, with $d=|G:G'|=\gcd(4,q^n+1)$.
Since the minimal non-trivial character degree of $G$ equals
$b:=(q^n+1)(q^{n-1}-q)/(q^2-1)$ by Tiep--Zalesskii \cite[Thm.~1.1]{TZ}, the
contribution of non-linear characters in $\cE(G,s)$, $s\in Z(G^*)$, to
$|\eps_a(C)|$ is at most
$$(n-1)\frac{1}{b} =\frac{(n-1)(q^2-1)}{(q^n+1)(q^{n-1}-q)}.$$
\par
The non-central elements in $T^*$ have centralizer $Z_r:=\GU_{n/r}(q^r)$ for
some divisor $r$ of $n$ such that $n/r$ is odd. Let $s_r$ denote an element
with centralizer $Z_r$. Then $C_{W}(v)\cong W(T) \cong C_n$,
$W(s)=\fS_{n/r}$, and $C_{W(s)}(v)\cong N_{Z_r}(T)/T \cong C_{n/r}$, so by
Proposition~\ref{prop:values} we have $|\chi_{s_r}(x)|\le r$. By
\cite[\S9]{FoSr2} the $n/r$ characters in $\cE(G,s_r)$ not vanishing on $x$
lie in the same $p$-block as $\chi_{s_r}$, for all Zsigmondy prime divisors~$p$
of $\Phi_{2n}(q)$. In particular, their value on $x$ has the same absolute
value as $\chi_{s_r}(x)$, and similarly for the value on $x^a$. Finally, up
to conjugation there are at most $q^r/r$ elements in $T^*$ with centralizer
$\GU_{n/r}(q^r)$, so the characters in Lusztig series for non-central
elements of $G^*$ contribute at most
$$\sum_{r|n}\frac{n q^r r^3}{r^2|G:Z_r|_{q'}}
  =\sum_{r|n}\frac{nrq^r}{|G:Z_r|_{q'}}$$
to $\eps_a(C)$. Using these estimates an easy calculation now shows that
$\eps_a(C)<0.5$ for all $n\ge4$, $q\ge2$, so that $n_a(C)\ge |C|^2/2|G|$.
\par
The maximal subgroups of $G$ containing an element $x\in C$ were listed in
Lemma~\ref{lem:maxSO-}. For the generic two classes, viz. $H_1=N_G(\SU_n(q))$
(for $n$ odd) and $H_r=N_G(\GO_{2n/r}^-(q^r))$ (for $r$ an odd prime divisor
of $n$), we have $|G:H_1|\ge q^5(q^n+1)$, and $|G:H_r|\ge q^{18}(q^n+1)$,
whence $\sum_{r=1}^n (q^n+1)/|G:H_r|<1/2$. So at most
$$\sum_{r=1}^n |H_r:T|=\frac{|G|}{|T|^2}\sum_{r=1}^n \frac{|T|}{|G:H_r|}
  <\frac{1}{2}\frac{|G|}{|T|^2}=|C|^2/2|G|\le n_a(C)$$
triples are contained in these proper subgroups. For the groups
$\OO_{10}^-(2)$, $\OO_{12}^-(2)$ in Table~\ref{tab:o-} explicit calculation
produces generating triples as claimed. For $\OO_{20}^-(2)$ the index of the
subgroup $\fA_{20}$ is too large.
\end{proof}

\begin{prop}   \label{prop:Bn}
 Theorem~\ref{thm:main} holds for the groups $\OO_{2n+1}(q)$, $n\ge3$,
 $q$ odd, with $C$ containing elements of order~$\Phi_{2n}^*(q)$. 
\end{prop}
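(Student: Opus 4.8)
The plan is to follow the template of Proposition~\ref{prop:2Dn} closely, working inside the conformal group $G:=\PCO_{2n+1}(q)$ of Table~\ref{tab:classtorus}, whose derived subgroup is the simple group $G'=\Omega_{2n+1}(q)$ and which has connected centre, so that the results of Section~\ref{subsec:3.1} apply. I would take $C\subset G'$ to be the class of a regular semisimple element $x$ of order $\Phi_{2n}^*(q)$, lying in the cyclic torus $T$ of order $q^n+1$ with $|N_G(T):T|=2n$. As in the general sketch, it then suffices to show that $\eps_a(C)<1$ for $a\in\{1,-2\}$, so that $n_a(C)$ is close to $|C|^2/|G|=|G|/|T|^2$, and that the number of triples coming from the maximal subgroups meeting $C$ is strictly smaller than this.

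For the structure constant I would run the Deligne--Lusztig estimate exactly as in Proposition~\ref{prop:2Dn}, now with dual group $G^*=\Sp_{2n}(q)$, since $B_n$ and $C_n$ are in duality. By Lemma~\ref{lem:Lvalues} only the characters in series $\cE(G,s)$ with $s$ in the dual torus $T^*$ of order $q^n+1$ contribute, and such an $s$ is either central or has centraliser $\GU_{n/r}(q^r)$ for a divisor $r\mid n$ (no symplectic factor can occur, as $\Phi_{2n}(q)$ forces a single unitary block). The central elements give the unipotent characters: those not vanishing on $x$ take value $\pm1$, and since $|N_G(T):T|=2n$ there are exactly $2n-1$ non-principal ones, contributing at most $(2n-1)/b$ to $|\eps_a(C)|$, where $b$ is the minimal nontrivial degree of $\Omega_{2n+1}(q)$ supplied by Tiep--Zalesskii \cite{TZ}. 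For regular $s$, Proposition~\ref{prop:values} gives $|\chi_s(x)|\le 2n$, and for centraliser $\GU_{n/r}(q^r)$ it gives $|\chi_{s_r}(x)|\le r$; the Fong--Srinivasan description \cite{FoSr2} of the blocks for a Zsigmondy divisor of $\Phi_{2n}(q)$ then shows the characters of each such series share this absolute value. Summing these contributions as in Proposition~\ref{prop:2Dn} yields $\eps_a(C)<1$ for all $n\ge3$ and odd $q$, whence $n_a(C)\ge|C|^2/2|G|$.

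For generation I would invoke Lemma~\ref{lem:maxSOodd}. When $n\ge4$ the only maximal overgroups of $x$ are the stabilisers $H$ of a nondegenerate $1$-space, of index $|G:H|$ of order $q^{2n}$; bounding the bad triples by $\sum_H|H:T|=(|G|/|T|^2)\sum_H|T|/|G:H|$ and noting $\sum_H|T|/|G:H|$ is of order $q^{-n}$, these lie far below $n_a(C)$. For $n=3$ one must additionally treat the exotic overgroups $G_2(q)$, $\PSU_3(q)$ (when $p=3$) and $\tw2G_2(q)$ (when $q=3^{2a+1}$) of $\Omega_7(q)$; here the point is merely that each has index at least of order $q^7$, so the identical estimate applies.

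I expect the genuine obstacle to lie not in these generic bounds but in the two small cases $(2n+1,q)=(7,3)$ and $(7,5)$ of Table~\ref{tab:Oodd}, where no class with $r>2e+1$ exists (one is forced to $r=7$) and where the extra maximal subgroups $\fS_9$, $\fA_8$, $2^6.\fA_7$, $\Sp_6(2)$ and $\PSU_3(3).2$ intervene. For these two groups I would abandon the asymptotic estimate and instead compute $n_1(C)$ and $n_{-2}(C)$ directly from the known character tables, subtracting the explicit (and small) contributions of the finitely many maximal subgroups listed in Table~\ref{tab:Oodd}, exactly as was done for $\OO_{10}^-(2)$ and $\OO_{12}^-(2)$ in Proposition~\ref{prop:2Dn}.
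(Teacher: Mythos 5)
Your proposal follows essentially the same route as the paper's proof: estimate $n_a(C)$ in the adjoint group (the paper calls it $\SO_{2n+1}(q)$, which is your $\PCO_{2n+1}(q)$) via Lemma~\ref{lem:Lvalues}, Proposition~\ref{prop:values}, the Tiep--Zalesskii degree bound and Fong--Srinivasan block theory for the dual torus in $G^*=\Sp_{2n}(q)$; then rule out the overgroups from Lemma~\ref{lem:maxSOodd}; and finally settle $\OO_7(3)$ and $\OO_7(5)$ by explicit computation --- exactly the paper's argument. One small correction: since the automizer of $T$ here is cyclic of order $2n$ (not $n$, as in the $\OO_{2n}^-$ case whose bound you carried over), Proposition~\ref{prop:values} gives $|\chi_{s_r}(x)|\le 2r$ for semisimple elements with centralizer $\GU_{n/r}(q^r)$, not $\le r$; note your own bound $2n$ for regular $s$ is just the case $r=n$ of this, so your two bounds are inconsistent with each other. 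The paper's sum accordingly carries $(2r)^3$, i.e.\ $\sum_{r|n}8nrq^r/|G:Z_r|_{q'}$, and this still yields $|\eps_a(C)|<1/2$, so the slip is harmless; also the unitary centralizers occur only for $r\mid n$ with $n/r$ odd, a condition you omit. Finally, for $n=3$ the paper observes that $\tw2G_2(q)$ contains no elements of order $\Phi_6^*(q)$ for $q\ge27$, so that subgroup never actually intervenes, and that $G_2(q)$ and $\PSU_3(q)$ each have $q+1$ conjugates above $x$ --- a multiplicity your summation should make explicit, though it does not change the outcome.
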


\begin{proof}
As in the previous proof we first estimate the structure constant $n_a(C)$
in the group $G:=\SO_{2n+1}(q)$ of adjoint type. Let $x\in C$ and $T$ its
centralizers in $G$, a maximal torus of order $|T|=q^n+1$, parametrized by
$v\in W$ with cyclic centralizer of order $2n$ and $C$ the class of a
generator of $T\cap G'$.
As in the proof of the previous result we see that the extensions to $G$ of
unipotent characters of $G'$ which do not vanish on $x$ take value~$\pm1$
on $x$. Furthermore,
$|N_G(T):T|=2n$, so there are exactly $2(2n-1)$ non-linear such characters.
The minimal degree of a non-trivial unipotent character of $G$ equals
$b=(q^n-1)(q^n-q)/2(q+1)$ by \cite[Prop.~5.1]{TZ}, so the contribution of
characters from $\cE(G,s)$ with $s\in Z(G^*)$ to $|\eps_a(C)|$ is at most
$$(2n-1)\frac{1}{b}=\frac{(2n-1)2(q+1)}{(q^n-1)(q^n-q)}.$$
The non-central elements in the torus $T^*$ in the dual group
$G^*=\Sp_{2n}(q)$ have centralizer $Z_r:=\GU_{n/r}(q^r)$, with $r|n$ such
that $n/r$ is odd. Then $C_{W}(v)\cong W(T) \cong C_{2n}$, $W(s)=\fS_{n/r}$,
and $C_{W(s)}(v)\cong N_{Z_r}(T)/T \cong C_{n/r}$, so by
Proposition~\ref{prop:values} we have $|\chi_{s_r}(x)|\le 2r$. Arguing as for
${\PCO_{2n}^\circ}^-(q)$ we see that the non-unipotent characters contribute
at most
$$\sum_{r|n}\frac{n q^r (2r)^3}{r^2|G:Z_r|_{q'}}
  =\sum_{r|n}\frac{8nrq^r}{|G:Z_r|_{q'}}$$
to $|\eps_a(C)|$. Again, it ensues that $n_a(C)\ge |C|^2/2|G|$ (recall that
$q>2$ since $q$ is odd). \par
We now estimate the contribution from maximal subgroups. The maximal subgroups
containing elements of order divisible by $\Phi_{2n}^*(q)$ were described in
Lemma~\ref{lem:maxSOodd}. The stabilizers of non-degenerate 1-spaces
$H:=N_G(\SO_{2n}^-(q))$ have index $|G:H|>q^2(q^n+1)$, and for $n=3$, $G_2(q)$
has index $q^3(q^4-1)$ and $\PSU_3(q)$ has index $q^6(q^3-1)(q^4-1)$, and for
both groups there are $q+1$ distinct conjugates above $x$. Thus,
the trivial estimate that there are at most $|H|$ triples in $H$ shows that
not all triples can lie inside these proper subgroups. The Ree groups
$\tw2G_2(q)$ do not possess elements of order $\Phi_6^*(q)$ for $q\ge27$.
For $\OO_7(3)$ and $\OO_7(5)$, explicit computation yields that there exist
generating triples consisting of elements of order~7.
\end{proof}

\begin{prop}   \label{prop:Cn}
 Theorem~\ref{thm:main} holds for the groups $\PSp_{2n}(q)$, $n\ge2$,
 $(n,q)\ne(2,2)$, with $C$ containing elements of order~$\Phi_{2n}^*(q)$,
 respectively of order~5 when $(n,q)=(2,3)$.
\end{prop}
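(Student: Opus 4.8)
The plan is to follow the template already established in the proofs of Propositions~\ref{prop:2Dn} and~\ref{prop:Bn}, transferring the estimates to the symplectic case. First I would pass to the adjoint group $G:=\PCSp_{2n}(q)$, whose derived subgroup is $\PSp_{2n}(q)$, and choose $x\in C$ to be a regular semisimple element generating a cyclic maximal torus $T$ of order $|T|=q^n+1$ with $|N_G(T):T|=2n$ (by Table~\ref{tab:classtorus}). Since the order of $x$ is divisible by a Zsigmondy prime for $q^{2n}-1$, the element is regular, so Lemma~\ref{lem:Lvalues} applies and only characters in Lusztig series $\cE(G,s)$ with $s$ in the dual torus $T^*\le G^*=\Spin_{2n+1}(q)=\SO_{2n+1}(q)_{\SC}$ can fail to vanish on $x$. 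The dual group here is the one whose role was played by $\Sp_{2n}(q)$ in the previous proof, with the roles of $B_n$ and $C_n$ interchanged.

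The core is then the estimate $\eps_a(C)<1$. I would split the contributing characters exactly as before: the unipotent characters (from $s\in Z(G^*)$) take value $\pm1$ on $x$ and number $d(2n-1)$ nonlinear ones, so their contribution is bounded by $(2n-1)/b$ where $b$ is the minimal nontrivial character degree supplied by Tiep--Zalesskii \cite{TZ}; and the non-central semisimple elements $s_r\in T^*$ have centralizer $Z_r=\GU_{n/r}(q^r)$, giving $|\chi_{s_r}(x)|\le 2r$ via Proposition~\ref{prop:values} together with the block-theoretic fact that the characters in $\cE(G,s_r)$ not vanishing on $x$ share a cyclic-defect block (as in \cite{FoSr2}). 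Summing $\sum_{r|n} 8nrq^r/|G:Z_r|_{q'}$ and the unipotent term should again yield $\eps_a(C)<1$, hence $n_a(C)\ge|C|^2/2|G|$, for all $n\ge2$ and $q$ in the relevant range.

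To prove generation I would invoke Lemma~\ref{lem:maxSp}, which lists the maximal subgroups $M$ meeting $C$: the field-extension subgroups $\Sp_{2n/f}(q^f)$, the unitary subgroups $\SU_n(q)$ with $qn$ odd, the subfield-type $\Omega_{2n}^-(q)$ for $q$ even, and for small $n$ the exceptional $G_2(q)$ and Suzuki subgroups $\tw2B_2(q)$. For each generic family I would bound $|G:M|$ from below (so that the number of conjugates above $x$, which is $(q^n+1)/|G:M|$, summed over families, stays below $1/2$) and use the crude estimate of at most $|M|$ triples per conjugate; this makes the proper-subgroup contribution smaller than $n_a(C)$.

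The main obstacle I anticipate is twofold. First, $\Sp_{2n}(q)$ is the one classical family with small-rank and small-field exceptions clustered tightly (the whole Table~\ref{tab:sp}, including $\Phi_6^*(2)=1$ forcing a separate treatment of $\Sp_6(2)$, and the low cases $(4,3),(6,3),(6,4),(6,5),(8,2)$ and the $2$-dimensional Suzuki-related subgroup $\tw2B_2(q)$ when $n=2$); each of these must be checked by hand, most conveniently from \Chevie\ structure constants or the Atlas, to exhibit explicit generating triples. Second, the case $(n,q)=(2,3)$ requires the special choice of $C$ as the class of elements of order~$5$ rather than order $\Phi_4^*(3)$, reflecting the exceptional torus structure of $\PSp_4(3)\cong\PSU_4(2)$, and must be handled separately as indicated in the statement.
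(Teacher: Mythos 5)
Your plan to transfer the bookkeeping of Propositions~\ref{prop:2Dn} and~\ref{prop:Bn} verbatim breaks down at the enumeration of semisimple elements in the dual torus, and this is exactly where the symplectic case differs from the odd orthogonal one. For $G=\PCSp_{2n}(q)$ the dual group is $G^*=\Spin_{2n+1}(q)$, of type $B_n$, and when $q^n\equiv3\pmod4$ the dual torus $T^*$ of order $q^n+1$ contains \emph{non-central} elements whose centralizer is $\Spin_{2n}^-(q)$, not a group of the form $\GU_{n/r}(q^r)$. (In Proposition~\ref{prop:Bn} the dual is $\Sp_{2n}(q)$, where the corresponding element $-1$ of the torus is central, so no such elements arise; your assertion silently copies that situation.) The Lusztig series of such an element contains the Weil character of $G$, of degree $q^n-1$, which takes value $-2$ on $x$ --- so it survives the vanishing argument --- and whose degree is far below the generic semisimple degrees $|G:Z_r|_{q'}$ entering your sum. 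Its contribution to $\eps_a(C)$, of order $1/q^n$, in fact dominates both the unipotent term (of order $n/q^{2n-2}$) and your sum $\sum_{r}8nrq^r/|G:Z_r|_{q'}$ once $n\ge3$; omitting it means your claimed bound on $|\eps_a(C)|$ is not an upper bound at all. The paper deals with precisely this point: it isolates the Weil character and bounds the remaining $n-1$ characters of its series via Jordan decomposition against the second smallest degree $b(q^n-1)$, $b=(q^n+1)(q^{n-1}-q)/(q^2-1)$ from \cite{TZ}, adding the term $\tfrac{1}{q^n-1}+\tfrac{(n-1)(q^2-1)}{(q^n+1)(q^{n-1}-q)(q^n-1)}$ to the estimate. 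Even with this correction the generic argument only yields $|\eps_a(C)|<0.5$ for $n\ge3$, $(n,q)\ne(3,2)$; for the \emph{entire} family $\PCSp_4(q)$ (all $q\ge3$, not just finitely many small cases as you suggest) one must compute the structure constant generically in \Chevie.

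There is also a quantitative flaw in your generation step. For $q$ even, Lemma~\ref{lem:maxSp}(3) produces the maximal subgroup $M=\Omega_{2n}^-(q)$, of index $q^n(q^n-1)$ in $\Sp_{2n}(q)$, so $|M|$ is of the same order as $n_a(C)\ge|C|^2/2|G|\approx|G|/2|T|^2$: the ``crude estimate of at most $|M|$ triples per conjugate'' cannot beat the structure constant for this subgroup. The paper instead bounds the number of bad triples lying in $M$ by the structure constant computed \emph{inside} $M$, which by Proposition~\ref{prop:2Dn} is less than $2|M|/|T|^2$, i.e.\ smaller by a factor of roughly $|T|^2$ (bounding $|C\cap M|\le|M:T|$ as in the proof of Proposition~\ref{prop:2Dn} would also do). Finally, among the exceptional cases you defer to hand computation, $\Sp_{20}(2)$ cannot be settled by exhibiting a triple directly; there one argues instead that the structure constant inside the unique exceptional overgroup $\PSL_2(41)$ of Table~\ref{tab:sp} is too small to account for all triples.
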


\begin{proof}
The claim for $\PSp_4(3)$ can be checked by computer, so now assume that
$(n,q)\ne(2,3)$. Then all elements $x\in C$ are regular with centralizer a
cyclic torus $T$ of order $q^n+1$ in the adjoint type group $G:=\PCSp_{2n}(q)$.
As in the previous proof, using \cite[Prop.~5.1]{TZ} we
obtain that the non-linear unipotent characters of $G'$ contribute at most
$$\gcd(2,q-1)\frac{(2n-1)2(q+1)}{(q^n-1)(q^n-q)}$$
to $|\eps_a(C)|$ in $G$. The non-trivial elements in the dual torus of order
$q^n+1$ of $G^*=\Spin_{2n+1}(q)$ have centralizers of types $\Spin_{2n}^-(q)$
(when $q^n\equiv3\pmod4$) and $\GU_{n/r}(q^r)$ for $r|n$ with $n/r$ odd. The
semisimple elements with centralizer of types $\Spin_{2n}^-(q)$ lead to the
Weil character of $G$ of degree $q^n-1$, which takes value~$-2$ on $x$. Via
Jordan decomposition the characters in this Lusztig series $\cE(G,s)$ are in
bijection with the unipotent characters of $\Spin_{2n}^-(q)$; thus the second
smallest degree in $\cE(G,s)$ equals $b|G^*:\Spin_{2n}^-(q)|_{q'}=b(q^n-1)$
with $b=(q^n+1)(q^{n-1}-q)/(q^2-1)$
by \cite[Thm.~1.1]{TZ}. Since $|\cE(G,s)|=n$ these characters contribute at most
$$\frac{1}{q^n-1}+\frac{(n-1)(q^2-1)}{(q^n+1)(q^{n-1}-q)(q^n-1)}$$
to $|\eps_a(C)|$ (and only when $q^n\equiv3\pmod4$). \par
As in the previous proof, the characters parametrized by semisimple elements
with centralizer $Z_r:=\GU_{n/r}(q)$ contribute at most
$$\sum_{r|n}\frac{8nrq^r}{|G:Z_r|_{q'}}.$$
For $n\ge3$ and $(n,q)\ne(3,2)$ this shows that $|\eps_a(C)|<0.5$. Explicit
computation of the structure constant in $\PCSp_4(q)$ with \Chevie\ shows
that here $|\eps_a(C)|<0.5$ for $q\ge3$ as well.
\par
The maximal subgroups of $G$ containing elements of order divisible by
$\Phi_{2n}^*(q)$ are given in Lemma~\ref{lem:maxSp} and Table~\ref{tab:sp}.
The index of $H_1:=N_G(\SU_n(q))$, for $n$ odd, is at least $q^5(q^n+1)$, the
index of $H_r:=N_G(\Sp_{2n/r}(q^r))$ is at least $q^{10}(q^n+1)$ for $n\ge4$,
respectively $q^2(q^2-1)$ for $n=r=2$. The index of $G_2(q)$ for $n=3$ equals
$q^3(q^4-1)$, and there are $q+1$ distinct conjugates containing $x$.
The structure constant $n_a(C)$
in $H=\SO_{2n}^-(q)$ was investigated in Proposition~\ref{prop:2Dn}; it is less
than $2|H|/|T|^2$. By explicit computation in {\sf GAP}, the groups
$\Sp_{2n}(q)$ listed in Table~\ref{tab:sp} different from $\Sp_{20}(2)$
possess generating triples of the stated form. For $\Sp_{20}(2)$ the structure
constant in the exceptional subgroup $\PSL_2(41)$ is too small. The subgroups
$\tw2B_2(q)$ of $\Sp_4(q)$ in Lemma~\ref{lem:maxSp}(5) do not possess elements
of order $\Phi_{2n}^*$ for $q\ne2$. 
\end{proof}

The excluded group $\PSp_4(2)$ has derived group the alternating group $\fA_6$
which will be considered in Lemma~\ref{small n}.

To deal with the even-dimensional split orthogonal groups we need to understand
the elements in a certain maximal torus of the dual group.

\begin{lem}   \label{lem:torusDn}
 Let $n\ge4$, $T$ a maximal torus of $G=\Spin_{2n}^+(q)$ of order
 $(q^{n-1}+1)(q+1)$. Then for $s\in T\setminus Z(G)$ either
 $C_G(s)\cong H:=\Spin_{2n-2}^-(q)(q+1)$ or $C_G(s)\le\GU_n(q)$ if $n\ge6$ is
 even, or $C_G(s)\le\GU_{n-1}(q)(q+1)$ if $n=4$ or $n$ is odd. Moreover, the
 number of elements with centralizer $H$ is given in Table~\ref{tab:centO+}.
\end{lem}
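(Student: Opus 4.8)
The plan is to read off the centralizer structure from the action of $s$ on the natural $2n$-dimensional module of $\SO_{2n}^+(q)$, exploiting that the torus $T$ is built from a prescribed signed cycle type. Concretely, the torus of order $(q^{n-1}+1)(q+1)$ corresponds to an element $v\in W(D_n)$ that is a product of a negative $(n-1)$-cycle and a negative $1$-cycle; geometrically this splits the module orthogonally as $V_1\perp V_2$, where $V_1$ is a $2(n-1)$-dimensional space of minus type carrying a cyclic torus $T_1$ of order $q^{n-1}+1$ and $V_2$ is an anisotropic $2$-space of minus type carrying $T_2\cong C_{q+1}$ (the product of the two minus types being plus, as required). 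Writing $s=s_1s_2$ accordingly, and using that $\bG=\Spin_{2n}^+$ is simply connected so that $C_{\bG}(s)$ is connected (Steinberg), it suffices to determine the root subsystem $\{\alpha:\alpha(s)=1\}$, equivalently the eigenvalue multiplicities of $s$ on $V$, and then to lift the resulting $\SO$-centralizer through the spin covering.

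First I would dispose of the case $s_1=1$. Then $s$ acts trivially on $V_1$ and with a pair of eigenvalues $\{\eta,\eta^{-1}\}$, $\eta\ne\pm1$, on $V_2$, so $C_{\bG}(s)^\circ$ has a factor of type $D_{n-1}$ supported on $V_1$ together with the rank-one torus on $V_2$; since $V_1$ retains minus type this is $\SO_{2n-2}^-(q)\times C_{q+1}$, lifting to $H=\Spin_{2n-2}^-(q)(q+1)$. For $s_1\ne1$ the element acquires an eigenvalue $\lambda\ne\pm1$ off $V_2$, and the goal is containment $C_G(s)\le C_G(s_1)$ inside a unitary subgroup. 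Here the even/odd split enters through which unitary group embeds: for $n$ even one has $\GU_n(q)\le\SO_{2n}^+(q)$ with a maximal torus of the $(n-1){+}1$ cycle type of order exactly $(q^{n-1}+1)(q+1)$, so $T\le\GU_n(q)$ and one checks that the non-real eigenvalues of $s_1$ prevent the orthogonal centralizer from exceeding the unitary one, giving $C_G(s)\le\GU_n(q)$; for $n$ odd the relevant unitary group sits as $\GU_{n-1}(q)(q+1)$ (respecting the minus type of $V_1$), yielding $C_G(s)\le\GU_{n-1}(q)(q+1)$. The case $n=4$ must be treated as exceptional precisely because of the isogeny $\SO_6^-\cong\PSU_4$ (type $D_3=A_3$) together with the factorisation $q^3+1=(q+1)(q^2-q+1)$, which forces the smaller bound $\GU_3(q)(q+1)$ rather than $\GU_4(q)$.

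Finally I would establish the count in Table~\ref{tab:centO+}. By the first case this is the number of $s_2\in T_2\cong C_{q+1}$ for which $s_1=1$ and $C_G(s)$ is exactly $H$; that is $q+1$ diminished by the elements whose $V_2$-eigenvalue is $\pm1$ (these have strictly larger, non-$H$ centralizers) and by identifying which $s$ already lie in $Z(G)$. The main obstacle is exactly this bookkeeping in the simply connected group: the answer depends on $|Z(\Spin_{2n}^+(q))|=\gcd(4,q^n-1)$ and hence on $q\bmod4$ and on the parity of $n$, since the spin covering changes which anisotropic involutions are central and how the $q+1$ torus elements fuse. I expect the structural dichotomy of the previous paragraph to be routine once the cycle type and low-rank isogenies are pinned down, so the delicate point is tracking these congruence conditions to produce the exact entries of the table, with the $n=4$ row requiring separate verification for the reasons above.
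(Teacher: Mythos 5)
Your overall framework---the orthogonal splitting $V_1\perp V_2$ into a $(2n-2)$-dimensional and a $2$-dimensional minus-type space, analysis of eigenvalues of $s$ on the natural module, and a lift through the spin cover---is the same as the paper's. But there are two genuine gaps. First, you assign the large centralizer $H$ only to elements with $s_1=\Id_{V_1}$. For $q$ odd this is wrong: the elements acting as $-\Id$ on $V_1$ (note $-\Id_{V_1}$ is the unique involution of the cyclic torus $T_1$ of even order $q^{n-1}+1$) have exactly the same type of centralizer, so the relevant set is the subgroup $\{\pm\Id_{V_1}\}\times T_2\cong C_2\times C_{q+1}$ of $T$, not just $\{\Id_{V_1}\}\times T_2$. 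This is not a cosmetic point: the spinor norm does not treat the two cosets of this $C_2$ equally, and the intersection of $C_2\times C_{q+1}$ with $\Omega_{2n}^+(q)$ (cyclic of order $q+1$, or $C_2\times C_{(q+1)/2}$, or cyclic with no nontrivial central elements, according to $q\bmod 4$ and the parity of $n$) is precisely what produces the three columns of Table~\ref{tab:centO+}. If you only start from the $s_1=\Id$ elements, your count for odd $q$ comes out roughly half of the correct one (e.g.\ you cannot reach the entry $q$ in the column $q^n\equiv3\pmod4$), and you also lose the distinction between the involution classes that lift to one versus two classes in $\Spin_{2n}^+(q)$.

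Second, your explanation of the $n=4$ exception is incorrect, and with it the $n=4$ row of the table. The reason $n=4$ is special is not the isogeny $D_3=A_3$ or the factorization $q^3+1=(q+1)(q^2-q+1)$; it is triality of $\Spin_8^+(q)$. For $n$ even there are elements of $T$ whose eigenvalues on $V_1$ and on $V_2$ coincide; these have centralizer $\GU_n(q)$ in $\SO_{2n}^+(q)$. For $n=4$ (and only then) the lifts of these elements to $\Spin_8^+(q)$ are carried by the triality automorphism to elements with centralizer $\Spin_6^-(q)(q+1)=H$ (indeed $|\GU_4(q)|=|\Spin_6^-(q)|(q+1)$), so they must be counted in the ``centralizer $\cong H$'' alternative. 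This is simultaneously why the second alternative in the statement drops to $\GU_3(q)(q+1)$ for $n=4$ and why the $n=4$ entries are exactly three times the $n>4$ entries ($3(q-1)$ and $3q/2$). Your argument has no mechanism to produce this factor of~$3$. Finally, you defer the entire congruence bookkeeping (``I expect\dots the delicate point is tracking these congruence conditions''), but that bookkeeping \emph{is} the content of the second half of the lemma; combined with the wrong starting set from the first gap, the proposal as written cannot recover the table.
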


\begin{table}[htbp] 
  \caption{Centralizer $\Spin_{2n-2}^-(q)(q+1)$ in $\Spin_{2n}^+(q)$}
  \label{tab:centO+}
\[\begin{array}{|l||c|c|c|}    
\hline
 & q\text{ even}& q^n\equiv1\pmod4& q^n\equiv3\pmod4\\
\skipa \hline \hline
 n=4& 3q/2& 3(q-1)& -\\
 n>4& q/2& q-1& q\\    \hline
\end{array}\]
\end{table}

\begin{proof}
Assume that $q$ is odd. We first work inside the group $\SO_{2n}^+(q)$.
Under the action of $T$, the natural module decomposes into an orthogonal sum
$V_1\perp V_2$, with $\dim V_1=2n-2$, $\dim V_2=2$. The elements $x\in T$
for which $x|_{V_1}$ and $x|_{V_2}$ do not have the same eigenvalues have
centralizer contained in $\SO_{2n-2}^-(q)\times \SO_2^-(q)$. Moreover,
if $x|_{V_1}\ne\pm\Id$ then the centralizer is contained in
$\GU_{n-1}(q)\times(q+1)$. Thus, elements with centralizer
$\SO_{2n-2}^-(q)\times \SO_2^-(q)$ are those acting as $\pm\Id$ on $V_1$ (and
of order dividing $q+1$ on $V_2$), hence they are the elements in a subgroup
$H\cong C_2\times C_{q+1}$ of $T$, and they are all real. Apart from the two
central elements, we have $2q$ such elements and $q$ classes. Now let's study
the elements which lie inside $H':=H\cap \Omega_{2n}^+(q)$.
If $q\equiv1\pmod4$, then $H'$ is cyclic of order $q+1$ with $q-1$ noncentral
elements in $(q-1)/2$ classes. Lifting to $\Spin_{2n}^+(q)$ each element has
two preimages, so we get $q-1$ classes.
\par
If $q\equiv3\pmod4$ and $n$ is even, $H'\cong C_2\times C_{(q +1)/2}$ which
contains 3 involutions (1 central),  so there are $q-1$ noncentral elements
including two involutions, lying in $(q-3)/2+2$ classes. The involutions lift
to one class each, all other classes have two preimage classes, so there are
again $(q-3)+2=q-1$ classes.
\par
If $q\equiv3\pmod4$ and $n$ is odd, then the central involution has spinor
norm -1 and $H'$ has only one involution, so it is cyclic of order $q+1$ with
no nontrivial central elements. Thus we find $q$ elements with this
centralizer and so $(q-1)/2+1=(q+1)/2$ classes in $\Omega_{2n}^+(q)$. Again,
the only element with only one preimage class is the involution, so we find
$q$ classes in $G$.
\par
The elements $x\in T$ for which $x|_{V_1}$ and $x|_{V_2}$ have the same
eigenvalues (which can only happen if $n$ is even), have centralizer
$\GU_n(q)$. For $n=4$, their lifts to $\Spin_8^+(q)$ are interchanged with
those of centralizer $\Spin_{2n-2}^-(q)(q+1)$ by triality, so we get
$3(q-1)$ classes with this centralizer in total. \par
The case where $q$ is even is much easier since there
$\Spin_{2n}^+(q)=\SO_{2n}^+(q)$.
\end{proof}

\begin{prop}   \label{prop:Dn}
 Theorem~\ref{thm:main} holds for the groups $\OO_{2n}^+(q)$, $n\ge4$,
 with $C$ containing elements which acts as an element of order
 $\Phi_{2n-2}^*(q)$ on a $2n-2$-dimensional subspace and as an element of
 order $>2$ dividing~$q+1$ on its orthogonal complement. 
\end{prop}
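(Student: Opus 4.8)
The plan is to follow the pattern of Propositions~\ref{prop:2Dn} and~\ref{prop:Cn}: I will estimate the structure constant $n_a(C)$ inside the adjoint group $G:={\PCO_{2n}^\circ}^+(q)$, whose derived subgroup $G'=\OO_{2n}^+(q)$ is the given simple group, taking $a=1$ for part~(1) and $a=-2$ for part~(2) of Theorem~\ref{thm:main}, and then subtract the triples lying in the maximal overgroups of $x$ furnished by Lemma~\ref{lem:maxSO+}. An element $x\in C$ is regular, since its order is divisible by a Zsigmondy prime for $\Phi_{2n-2}(q)$, with centralizer the torus $T$ of order $(q^{n-1}+1)(q+1)$ and $|N_G(T):T|=2n-2$. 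Hence by Lemma~\ref{lem:Lvalues} the only irreducible characters of $G$ not vanishing on $x$ lie in Lusztig series $\cE(G,s)$ for semisimple $s$ in the dual torus $T^*\le G^*=\Spin_{2n}^+(q)$ of order $(q^{n-1}+1)(q+1)$, and these $s$ are precisely the ones classified in Lemma~\ref{lem:torusDn}.

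I would then bound $|\eps_a(C)|$ by splitting over the three centralizer types of Lemma~\ref{lem:torusDn}. For $s$ central the series $\cE(G,s)$ consists of the extensions to $G$ of the unipotent characters of $G'$; those not vanishing on $x$ take value $\pm1$ there (by the character formula, or by cyclic-defect block theory for a Zsigmondy divisor of $\Phi_{2n-2}(q)$), and since $|N_G(T):T|=2n-2$ only boundedly many survive (at most $2n-2$), so the minimal nontrivial character degree from Tiep--Zalesskii \cite{TZ} bounds their contribution by a quantity tending to $0$. For $s$ with $C_{G^*}(s)\cong H:=\Spin_{2n-2}^-(q)(q+1)$ the members of $\cE(G,s)$ are, by Jordan decomposition, in bijection with the unipotent characters of $H$; Proposition~\ref{prop:values} bounds $|\chi_s(x)|$ by a small Weyl-group index $|C_W(Fv):C_{W(s)}(Fv)|$, and the Fong--Srinivasan block description \cite{FoSr2} forces every nonvanishing character in the series to share this absolute value on $x$ and on $x^a$. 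By Table~\ref{tab:centO+} there are at most $3q$ such $s$ up to conjugacy, so their contribution is again controlled. Finally, for $s$ whose centralizer lies in $\GU_n(q)$ (for $n$ even) or in $\GU_{n-1}(q)(q+1)$, Proposition~\ref{prop:values} once more bounds the character values while the degrees $|G^*:C_{G^*}(s)|_{q'}$ are large enough that the sum over these classes is negligible.

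Putting these estimates together should give $|\eps_a(C)|<1/2$ for every $n\ge4$ and every $q$, whence $n_a(C)\ge|C|^2/2|G|$. It then remains to bound the triples contained in the maximal overgroups of $x$. By Lemma~\ref{lem:maxSO+} these are the stabilizers of nondegenerate $1$- or $2$-spaces, the normalizers of $\Omega_n(q^2)$ and of $\SU_n(q)$, and, only when $n=4$, the normalizers of $\PSU_3(q)$ or $\Spin_7(q)$. Each such index $|G:H|$ exceeds $|T|=(q^{n-1}+1)(q+1)$ by at least a factor $q^2$ (the smallest being the $2$-space stabilizer, of index roughly $q^{2n-2}$), so, exactly as in Proposition~\ref{prop:2Dn}, the total number $\sum_H|H:T|$ of triples lying in these subgroups is bounded by $(1/2)|G|/|T|^2=|C|^2/2|G|\le n_a(C)$. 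Hence a generating triple of the required shape exists.

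The main obstacle is the centralizer bookkeeping rather than the Deligne--Lusztig estimate or the maximal-subgroup count. Because $x$ carries a $2$-dimensional tail of order dividing $q+1$ in addition to its Zsigmondy part, $T$ is not self-centralizing, so Lemma~\ref{lem:torusDn} must be used both to count how many $s\in T^*$ fall into each centralizer class and to identify the Weyl-group indices feeding Proposition~\ref{prop:values}. The case $n=4$ is the most delicate, since triality interchanges the $\Spin_{2n-2}^-(q)(q+1)$ and $\GU_n(q)$ centralizer types and thereby roughly triples the number of elements of the former (see Table~\ref{tab:centO+}); this has to be tracked to preserve the bound $|\eps_a(C)|<1/2$. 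The finitely many exceptional groups recorded in Table~\ref{tab:o+}, namely $\OO_8^+(2)$, $\OO_8^+(3)$, $\OO_8^+(5)$ and the $\OO_{2n}^+(2)$ with $2n\le 22$, I would settle by directly computing the structure constant and the subgroup contributions in {\sf GAP}, exactly as for the corresponding entries in Propositions~\ref{prop:2Dn} and~\ref{prop:Cn}.
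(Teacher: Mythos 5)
Your proposal follows essentially the same route as the paper's proof: estimate $n_a(C)$ in the adjoint group ${\PCO_{2n}^\circ}^+(q)$ using the three-way split of the dual torus $T^*$ from Lemma~\ref{lem:torusDn}, bound the unipotent contribution by the Tiep--Zalesskii minimal degrees and the other series via Proposition~\ref{prop:values} together with block theory, then subtract the subgroup contributions coming from Lemma~\ref{lem:maxSO+}; this is exactly the paper's argument.

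Two concrete caveats about your treatment of the exceptional cases. First, for $\OO_8^+(2)$ the class described in the statement does not exist at all, since $\Phi_6^*(2)=1$: the putative element would have order~$3$ with a $6$-dimensional fixed space on the natural module, and by Scott's Lemma (Lemma~\ref{Scott}) three such elements with product~$1$ can never generate, so the GAP verification you propose for this group would simply fail; one must switch to a different class (the paper uses elements of order~$9$). Second, ``directly computing the structure constant'' in $\OO_{14}^+(2)$, $\OO_{20}^+(2)$ or $\OO_{22}^+(2)$ is not practicable (their character tables are unavailable), but it is also unnecessary: your generic estimate $|\eps_a(C)|<1/2$ already covers all $(n,q)\ne(4,2)$, and among the additional subgroups listed in Table~\ref{tab:o+} for these groups only $\fA_{16}\le\Omega_{14}^+(2)$ contains elements of order $o(x)$ at all (the entries for $(10,2)$, $(12,2)$, $(8,4)$, $(22,2)$ are empty, and $\PSL_2(19)$, $J_1$, $\PSL_2(13)$, $G_2(3).2$ have no elements of order $3\Phi_{2n-2}^*(2)$), so only that one small-group contribution needs checking --- which is how the paper disposes of these cases, reserving genuine machine computation for $\OO_8^+(q)$ with $q\le5$. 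A harmless slip: the $2$-space stabilizer has index roughly $q^{4n-4}$, not $q^{2n-2}$, and the $1$-space stabilizers never contain $x$ (its eigenvalues lie outside $\FF_q$); the inequality you actually need, that each relevant index exceeds $q^2|T|$, still holds.
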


\begin{proof}
Let $C$ be the conjugacy class of an element $x\in \OO_{2n}^+(q)$ as in the
statement. Then $x$ is regular semisimple in a maximal torus $T$ of order
$(q^{n-1}+1)(q+1)$ in the group $G:={\PCO_{2n}^\circ}^+(q)$ of adjoint
type. Such an element exists unless $(n,q)=(4,2)$. For the group $\OO_8^+(2)$
it can be checked using {\sf GAP} that there exist triples as required with
elements of order~9, so now assume that $(n,q)\ne(4,2)$.
We estimate the structure constant $n_a(C)$ in $G$. The characters of $G$
not vanishing on $x$ lie in Lusztig series parametrized by semisimple elements
in the dual torus $T^*\le G^*=\Spin_{2n}^+(q)$. We distinguish three types of
such elements: the central elements, the elements with centralizer
$\Spin_{2n-2}^-(q)\times\Spin_2^-(q)$, and the remaining elements, having
centralizer $\GU_n(q)$ (for $n$ even) or smaller, see Lemma~\ref{lem:torusDn}.
\par
The Lusztig series for central elements $s\in T^*$ contain the extensions to
$G$ of unipotent characters of $G'$. Those which do not vanish on $x$ take
value~$\pm1$ on $x$. Furthermore, $|N_G(T):T|=2n-2$, so there are exactly
$d(2n-3)$ non-linear such characters. The minimal degree of a non-linear
unipotent character of $G$ equals $b_1:=(q^n-1)(q^{n-1}+q)/(q^2-1)$ by
\cite[Prop.~7.2]{TZ}, so the contribution of characters from $\cE(G,s)$
with $s\in Z(G^*)$ to $|\eps_a(C)|$ is at most
$$(2n-3)\frac{1}{b_1}=\frac{(2n-3)(q^2-1)}{(q^n-1)(q^{n-1}+q)}.$$
Next, let $s\in T^*$ with centralizer $\Spin_{2n-2}^-(q)\Spin_2^-(q)$;
the corresponding semisimple character $\chi_s$ has degree
$b_2:=(q^n-1)(q^{n-1}-1)/(q+1)$. By Proposition~\ref{prop:values}
$\chi_s(x^a)$ has absolute value at most~2, and thus the same holds for all
the $n-1$ characters in $\cE(G,s)$. By Lemma~\ref{lem:torusDn} there are at
most $q$ such semisimple elements up to conjugation, so these characters
contribute at most
$$q(n-1)2^3\frac{1}{b_2}=\frac{8(n-1)q(q+1)}{(q^n-1)(q^{n-1}-1)}$$
to $\eps_a(G)$. If $n$ is even, there are at most $(q+1)^2-2(q+1)=q^2-1$
further elements $s_0$ with centralizer $Z_0:=\GU_n(q)$, each conjugate to its
inverse. The remaining elements in $T^*$ have centralizer
$Z_r:=\GU_{(n-1)/r}(q^r)(q+1)$, with $r|(n-1)$ such that $(n-1)/r$ is odd,
and for each $r$ there are at most $(q^r+1)(q+1)-(q+1)^2=(q^r-q)(q+1)<q^r(q+1)$
such elements $s_r$, falling into at most $q^r(q+1)/r$ classes.
By Proposition~\ref{prop:values} we have $|\chi_{s_0}(x)|\le 2$, respectively
$|\chi_{s_r}(x)|\le 2r$. Thus, the remaining characters contribute at most
$$\frac{8(n-1)(q^2-1)}{|G:Z_0|_{q'}}+
   \sum_{r|n-1}\frac{(n-1)q^r(q+1)(2r)^3}{r^2|G:Z_r|_{q'}}$$
to $|\eps_a(C)|$. This shows that $n_a(C)\ge \frac{1}{2}|C|^2/|G|$ for
$(n,q)\ne(4,2)$.
\par
The maximal subgroups of $G$ containing elements of order divisible by
$\Phi_{2n-2}^*(q)$ are given in Lemma~\ref{lem:maxSO+} and Table~\ref{tab:o+}.
By our choice of $x$ it stabilizes a unique decomposition of the underlying
space, into a 2-dimensional subspace and its orthogonal complement. The
stabilizer $H_1:=\GO_{2n-2}^-(q)\times \GO_{2}^-(q)\cap G$ of this
decomposition has index $q^{2(n-1)}(q^n-1)(q^{n-1}-1)/(q+1)$.
The index of $H_2:=N_G(\SU_n(q))$, for $n$ even, is at least
$q^{n(n-1)/2}(q^{n-1}-1)$, the index of $H_3:=N_G(\Omega_n(q^2))$, for $n$
odd, is at least $q^{(n^2-1)/2}(q^n-1)$.
\par
For $n=4$, there is one more subgroup $\Spin_7(q)$, again with too few triples
by Proposition~\ref{prop:Bn}, and a subgroup $N_G(\PSU_3(q))$, of index
at least $q^9(q^4-1)(q^3-1)$. Thus, not all triples can lie inside these
proper subgroups. By explicit computation in {\sf GAP}, the groups
$\OO_8^+(3)$ and $\OO_8^+(5)$ possesses generating triples of elements of
order~7, resp.~21.
The only other group in Table~\ref{tab:o+} containing elements of order $o(x)$
is $\fA_{16}$ in $\Omega_{14}^+(2)$, but there the number of triples is too
small.
\end{proof}

\begin{prop}   \label{prop:SUnodd}
 Theorem~\ref{thm:main} holds for the groups $\PSU_n(q)$, $n\ge3$ odd,
 $(n,q)\ne(3,2)$, with $C$ containing elements of order~$\Phi_{2n}^*(q)$.
\end{prop}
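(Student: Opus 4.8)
The plan is to follow the strategy already used for the other classical families, working inside the adjoint group $G:=\PGU_n(q)$, whose derived subgroup is $G'=\PSU_n(q)$ and whose dual is the simply connected group $G^*=\SU_n(q)$, so that $\bG$ has connected center and the results of Section~\ref{subsec:3.1} apply. I take $C\subset G'$ to be the class of a regular semisimple element $x$ of order $\Phi_{2n}^*(q)$ lying in the Coxeter torus $T$ of order $(q^n+1)/(q+1)$; here $|N_G(T):T|=n$ and $T$ is parametrized by an $n$-cycle $v\in W=\fS_n$ with $C_W(Fv)\cong C_n$. Such an $x$ exists as soon as a suitable Zsigmondy prime is available, which by Lemma~\ref{lem:smallZsig} fails only for the finitely many pairs excluded in Lemma~\ref{lem:maxSUodd}; those, together with $(n,q)=(3,2)$ (where $\PSU_3(2)$ is not simple), will be handled separately by explicit computation.

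First I estimate $n_a(C)$ for $a\in\{1,-2\}$, the two values needed for the two parts of Theorem~\ref{thm:main}, by bounding $\eps_a(C)$. By Lemma~\ref{lem:Lvalues} only characters in series $\cE(G,s)$ with $s$ in the dual torus $T^*$ can be nonzero on $x$, and I split these $s$ into three kinds according to the subfield they generate: central elements, elements with centralizer $Z_d:=\GU_{n/d}(q^d)$ for a proper divisor $d\mid n$, and regular elements. For central $s$ the series consist of the extensions to $G$ of unipotent characters of $G'$; since $|N_G(T):T|=n$, exactly $n-1$ nonprincipal ones survive on $x$ with value $\pm1$, contributing at most $(n-1)/b$ with $b$ the minimal nontrivial degree from Tiep--Zalesskii \cite{TZ}. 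For the intermediate $s_d$, Proposition~\ref{prop:values} gives $|\chi_{s_d}(x)|\le d$, and by the Fong--Srinivasan description of the relevant $\Phi_{2n}$-blocks \cite{FoSr2} the $n/d$ surviving characters of $\cE(G,s_d)$ lie in one block and share this absolute value; summing over the at most $q^d/d$ such classes yields a bound of the shape $\sum_{d\mid n,\,d<n}nd\,q^d/|G:Z_d|_{q'}$. Finally the regular $s$ give single semisimple characters with $|\chi_s(x)|\le n$ and degree $|G:T|_{q'}$, contributing at most $n^2|T|^2/|G|_{q'}$. Combining these and checking that every term is small gives $\eps_a(C)<\frac{1}{2}$, whence $n_a(C)\ge\frac{1}{2}|C|^2/|G|$. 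Note that when $n$ is prime the intermediate family is empty, which simplifies the estimate considerably.

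For generation I invoke Lemma~\ref{lem:maxSUodd}: apart from a short list of sporadic overgroups occurring only for $(n,q)\in\{(5,2),(9,2),(3,3),(3,5)\}$, the only maximal subgroups meeting $C$ are the field-extension normalizers $M_f:=N_G(\GU_{n/f}(q^f))$ for odd primes $f\mid n$. Each such $M_f$ contributes at most $|C\cap M_f|\le|M_f:T|$ triples with fixed first component, and summing over the conjugates above $x$ gives at most $\sum_f (|G|/|T|^2)\cdot|T|/|G:M_f|$; since each index $|G:M_f|$ is a large power of $q$ times $(q^n+1)/(q+1)$, this total is strictly below $n_a(C)$. Thus some triple $(x,y,z)$ avoids every maximal subgroup and hence generates $G'$, and taking $a=1$ resp.\ $a=-2$ yields the two assertions of Theorem~\ref{thm:main}.

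I expect the main obstacle to be the generation step rather than the structure-constant estimate: the number of conjugates of the field-extension subgroups grows with both $n$ and $q$, so the index bounds must be made uniform in both parameters, and the handful of sporadic overgroups (such as $\PSL_2(11)$, $J_3$, $\PSL_2(7)$, $\fA_7$ in the four small cases), together with the pairs excluded by the Zsigmondy condition, must each be ruled out or resolved by direct computation in {\sf GAP} or from \cite{Atl} and \cite{HM}.
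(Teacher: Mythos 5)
Your strategy is the paper's own: the same adjoint group $G=\PGU_n(q)$, the same cyclic torus of order $(q^n+1)/(q+1)$, the same splitting of the relevant Lusztig series by centralizer type in the dual torus, the same inputs (Lemma~\ref{lem:Lvalues}, the Tiep--Zalesskii minimal degree bounds, Proposition~\ref{prop:values}, Fong--Srinivasan block theory to spread the value bound over a whole series), and the same generation step via Lemma~\ref{lem:maxSUodd} with index estimates for the field-extension normalizers. (One small slip: for the unitary groups the block-theoretic input is \cite[Thm.~7A]{FoSr1}; \cite{FoSr2} is what the paper uses for the orthogonal groups.)

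There is, however, a genuine quantitative gap: your claim that these estimates give $\eps_a(C)<1/2$ uniformly is false for $n=3$. There the dominant term is precisely the one you assign to the regular elements $s\in T^*$: roughly $|T|/3$ semisimple characters of degree $|G|_{q'}/|T|$, each with $|\chi_s(x)|\le 3$, contributing about
$$\frac{9|T|^2}{|G|_{q'}}=\frac{9(q^2-q+1)}{(q+1)(q^2-1)}$$
to $|\eps_a(C)|$, and this exceeds $1/2$ for every prime power $q\le 16$. Your remark that the intermediate family is empty for prime $n$ does not help, since the offending term is the regular one, not the intermediate one; the same problem also kills $(n,q)=(5,2)$. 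The paper circumvents this by abandoning the crude bound when $n=3$ and computing the structure constant exactly from the known generic character table, obtaining $n_a(C)=|T|(q^2+3q+1)$ for $q\ne 3,5$, and by treating $\PSU_3(3)$, $\PSU_3(5)$ and $\PSU_5(2)$ by direct machine calculation. Your list of cases earmarked for explicit computation (the Zsigmondy failure and the four pairs with sporadic overgroups) does not include $\PSU_3(q)$ for general small $q$, so for instance $\PSU_3(4),\PSU_3(7),\ldots,\PSU_3(16)$ fall through the cracks, and since the generation step compares subgroup contributions against the (unproved) lower bound $n_a(C)\ge |C|^2/2|G|$, the gap propagates to that step as well. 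Once the $n=3$ and $(5,2)$ cases are patched as in the paper, the rest of your argument goes through.
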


\begin{proof}
Let $x\in C$, with centralizer a cyclic torus $T$ of order $(q^n+1)/(q+1)$ in
$G:=\PGU_n(q)$. The minimal degree of a non-principal unipotent character of
$G$ is at least $(q^n-q)/(q+1)$ by \cite[Thm.~4.1]{TZ} so the non-linear
unipotent characters of $G'$ contribute at most
$$\frac{(n-1)(q+1)}{(q^n-q)}$$
to $|\eps_a(C)|$ in $G$. The non-trivial elements in the dual torus $T^*$ in
$G^*=\SU_n(q)$ of order $(q^n+1)/(q+1)$ have centralizers of types
$Z_r:=G^*\cap\GU_{n/r}(q^r)$ for $r|n$, $r>1$. Let $s_r$ denote an element with
centralizer $Z_r$. Then $C_{W}(v)\cong W(T) \cong C_n$,
$W(s)=\fS_{n/r}$, and $C_{W(s)}(v)\cong N_{Z_r}(T)/T \cong C_{n/r}$, so by
Proposition~\ref{prop:values} we have $|\chi_{s_r}(x)|\le r$. By
\cite[Thm.~7A]{FoSr1} the $n/r$ characters in $\cE(G,s_r)$ not vanishing on $x$
lie in the same $p$-block as $\chi_{s_r}$, for all Zsigmondy prime
divisors $p$ of $\Phi_{2n}(q)$, so their value on $x$ has the same absolute
value as $\chi_{s_r}(x)$. Up to conjugation there are at
most $q^{r-1}/r$ elements with centralizer $\SU_{n/r}(q^r)$, so the characters
in Lusztig series for non-central elements of $G^*$ contribute at most
$$\sum_{r|n}\frac{n q^{r-1} r^3}{r^2|G:Z_r|_{q'}}
  =\sum_{r|n}\frac{nrq^{r-1}}{|G:Z_r|_{q'}}$$
to $|\eps_a(C)|$. Using these estimates an easy calculation now shows that
$|\eps_a(C)|<0.5$ for all $n\ge5$, $(n,q)\ne(5,2)$, so that
$n_a(C)\ge |C|^2/2|G|$.
For $n=3$, the structure constant can be computed explicitly as
$|T|(q^2+3q+1)$ for $q\ne3,5$. 
\par
For the groups $\PSU_3(3)$, $\PSU_3(5)$ and $\PSU_5(2)$ it can be checked by
direct calculation that there exist generating triples. For the remaining
cases, by Lemma~\ref{lem:maxSUodd} the only maximal subgroups to consider are
the images in $\PGU_n(q)$ of normalizers of $\GU_{n/r}(q^r)$ in $\GU_n(q)$,
for $r|n$, of index at least $q^{n^2/4}(q^{n-1}-1)/r$, resp.\ $q^3(q^2-1)/3$
if $n=3$. Again, not all triples can lie in these subgroups.
\end{proof}

The excluded group $\PSU_3(2)$ is solvable.

\begin{prop}   \label{prop:SUneven}
 Theorem~\ref{thm:main} holds for the groups $\PSU_n(q)$, $n\ge4$ even,
 $(n,q)\ne(4,2)$, with $C$ containing elements of order~$\Phi_{2n-2}^*(q)$. 
\end{prop}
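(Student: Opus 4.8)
The plan is to follow the template of the preceding proof for the odd-dimensional unitary groups (Proposition~\ref{prop:SUnodd}): estimate the structure constant $n_a(C)$ inside the adjoint-type group $G:=\PGU_n(q)$, and then show that the contribution of the maximal overgroups of Lemma~\ref{lem:maxSUeven} is too small to account for all triples. First I would fix $x\in C$; since $n-1$ is odd and $o(x)$ is divisible by a Zsigmondy prime for $q^{2(n-1)}-1=q^{2n-2}-1$, such an $x$ acts irreducibly on a nondegenerate $(n-1)$-dimensional subspace and as a nontrivial scalar of order dividing $q+1$ on its $1$-dimensional orthogonal complement. In particular $x$ is regular semisimple with centralizer a cyclic maximal torus $T$ of order $q^{n-1}+1$ and $|N_G(T):T|=n-1$, as recorded in Table~\ref{tab:classtorus}. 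As $C\subset G'=\PSU_n(q)$, it suffices to prove $\eps_a(C)<1$ in $G$.

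By Lemma~\ref{lem:Lvalues} only the characters lying in Lusztig series $\cE(G,s)$ for $s$ in the dual torus $T^*\le G^*=\SU_n(q)$ survive on $x$, and I would organise these by the centralizer of $s$. The $d:=\gcd(n,q+1)$ central elements $s$ give the extensions to $G$ of the unipotent characters of $G'$; those not vanishing on $x$ take value $\pm1$ there, and because $|N_G(T):T|=n-1$ exactly $n-2$ of them are non-principal. Using the Tiep--Zalesskii lower bound $(q^n-q)/(q+1)$ for the smallest non-principal unipotent degree \cite{TZ}, and noting that the factor $1/d$ in $\eps_a(C)$ cancels the $d$-fold multiplicity of these extensions, their total contribution to $|\eps_a(C)|$ is at most $(n-2)(q+1)/(q^n-q)$. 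The noncentral $s\in T^*$ split into two kinds. An element acting as a scalar on the $(n-1)$-space has centralizer $(\GU_{n-1}(q)\times\GU_1(q))\cap\SU_n(q)$; here $W(s)\cong\fS_{n-1}$ contains the $(n-1)$-cycle parametrizing $T$, so Proposition~\ref{prop:values} gives $|\chi_s(x)|\le1$, and as there are only $O(q)$ such $s$, each of degree of order $q^{n-1}$, their contribution is negligible. The remaining noncentral $s$ have centralizer $Z_r=\SU_n(q)\cap(\GU_{(n-1)/r}(q^r)\times\GU_1(q))$ with $r\mid(n-1)$, $r>1$; then $W(s)\cong\fS_{(n-1)/r}$ gives $|\chi_{s_r}(x)|\le r$ by Proposition~\ref{prop:values}, and by the Fong--Srinivasan block theory \cite{FoSr1} all $(n-1)/r$ characters of $\cE(G,s_r)$ not vanishing on $x$ share this value up to sign for every Zsigmondy prime divisor of $\Phi_{2n-2}(q)$. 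Counting these as in Proposition~\ref{prop:SUnodd} bounds their contribution by $\sum_{r\mid n-1}nrq^{r-1}/|G:Z_r|_{q'}$. Summing the three estimates should yield $|\eps_a(C)|<0.5$ for all $n\ge4$ outside a short list of small $(n,q)$, whence $n_a(C)\ge|C|^2/2|G|$.

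For generation I would use Lemma~\ref{lem:maxSUeven}: since $x$ fixes a unique nondegenerate $1$-space (its orthogonal complement), generically the only maximal subgroup meeting $C$ is the stabilizer $H$ of that point, whose index $q^{n-1}(q^n-1)/(q+1)$ far exceeds $|T|$; hence the trivial bound of at most $|C\cap H|\le|H:T|$ triples inside $H$ cannot account for all of $n_a(C)\ge|C|^2/2|G|$, so some generating triple exists. The finitely many small pairs flagged before Lemma~\ref{lem:maxSUeven}, namely $(n,q)\in\{(6,2),(10,2),(4,3),(4,4),(4,5)\}$ (with $(4,2)$ excluded), together with the extra maximal subgroups listed in that lemma ($M_{22}$ for $(6,2)$; $4_2.\PSL_3(4)$ or $\fA_7$ for $(4,3)$; $\fA_7$ for $(4,5)$), I would settle by computing the structure constant and the relevant class intersections directly in {\sf GAP} and \Chevie.

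The main obstacle I anticipate is the bookkeeping for the noncentral Lusztig series. One must determine precisely which semisimple centralizers occur in $T^*$ --- in particular how the scalar on the $1$-space interacts with the eigenvalues on the $(n-1)$-space, which is exactly the feature distinguishing the even case from the odd case of Proposition~\ref{prop:SUnodd} --- confirm that the Fong--Srinivasan blocks behave uniformly across all Zsigmondy primes dividing $\Phi_{2n-2}(q)$, and verify that the crude bounds $|\chi_{s_r}(x)|\le r$ combined with the class counts really force $|\eps_a(C)|<0.5$ down to the minimal admissible value $n=4$. The small exceptional pairs, where the generic estimate breaks down and one must instead rely on the explicit overgroup data, are the other place where care is required.
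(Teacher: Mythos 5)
Your proposal is correct and follows essentially the same route as the paper's proof: estimate $\eps_a(C)$ in $\PGU_n(q)$ by splitting the Lusztig series over central elements (unipotent characters, Tiep--Zalesskii bound), scalar-type elements, and the remaining elements of the dual torus (Proposition~\ref{prop:values} plus Fong--Srinivasan block theory), then invoke Lemma~\ref{lem:maxSUeven} to see that generically only the stabilizer $\GU_{n-1}(q)$ of the fixed nondegenerate $1$-space can absorb triples, with the small pairs $(6,2),(4,3),(4,5)$ settled by direct computation, exactly as in the paper. The one slip is that $x$ acts \emph{trivially}, not as a nontrivial scalar, on its $1$-dimensional invariant subspace (its order $\Phi_{2n-2}^*(q)$ is prime to $q+1$, so the determinant condition forces the remaining eigenvalue to be~$1$); this is immaterial, since regularity, the centralizer $T$ of order $q^{n-1}+1$, and the uniqueness of the stabilized $1$-space all still hold.
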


\begin{proof}
Let $x\in C$, with centralizer a cyclic torus $T$ of order $q^{n-1}+1$ in
$G:=\PGU_n(q)$. The minimal degree of a non-principal unipotent character of
$G'$ is at least $(q^n+q)/(q+1)$ by \cite[Thm.~4.1]{TZ}, so the $n-2$
non-linear unipotent characters of $G'$ not vanishing on regular elements
of $T$ contribute at most
$$\frac{(n-2)(q+1)}{(q^n+q)}$$
to $|\eps_a(C)|$ in $G$. The non-trivial elements in the dual torus $T^*$ in
$G^*=\SU_n(q)$ of the same order have centralizers of types
$Z_r:=\GU_{(n-1)/r}(q^r)$ for $r|(n-1)$. Let $s_r$ denote an element with
centralizer $\GU_{(n-1)/r}(q^r)$. Then $C_{W}(v)\cong W(T) \cong C_{n-1}$,
$W(s)=\fS_{(n-1)/r}$, and $C_{W(s)}(v)\cong N_{Z_r}(T)/T \cong C_{(n-1)/r}$,
so by Proposition~\ref{prop:values} we have $|\chi_{s_r}(x)|\le r$. By
\cite[Thm.~7A]{FoSr1} the $(n-1)/r$ characters in $\cE(G,s_r)$ not vanishing
on $x$ lie in the same $p$-block as $\chi_{s_r}$, for all Zsigmondy prime
divisors $p$ of $\Phi_{2n-2}(q)$, so their value on $x$ has the same absolute
value as $\chi_{s_r}(x)$. Up to conjugation there are at
most $q^{r-1}/r$ elements with centralizer $\GU_{(n-1)/r}(q^r)$, so the
characters in Lusztig series for non-central elements of $G^*$ contribute at
most
$$\sum_{r|n}\frac{(n-1) q^{r-1} r^3}{r^2|G:Z_r|_{q'}}
  =\sum_{r|n}\frac{(n-1)rq^{r-1}}{|G:Z_r|_{q'}}$$
to $|\eps_a(C)|$. It follows that $|\eps_a(C)|<0.5$ and thus
$n_a(C)\ge |C|^2/2|G|$ for all even $n\ge4$, $(n,q)\ne(4,2),(4,3),(6,2)$.
\par
For $\PSU_6(2)$, $\PSU_4(3)$ and $\PSU_4(5)$ direct calculation shows that
there exist generating triples. By Lemma~\ref{lem:maxSUeven} the only maximal
subgroup to consider in the remaining cases is $\GU_{n-1}(q)$, of index
$q^{n-1}(q^n-1)/(q+1)$, thus not all triples can lie in proper subgroups.
\end{proof}

The excluded group $\PSU_4(2)$ is isomorphic to $\PSp_4(3)$, treated in
Proposition~\ref{prop:Cn}.

\begin{prop}   \label{prop:SLn}
 Theorem~\ref{thm:main} holds for the groups $\PSL_n(q)$, $n\ge3$,
 $(n,q)\ne(3,2),(4,2)$, with
 \begin{enumerate}
  \item[\rm(a)] $C$ containing elements of order~$\Phi_n^*(q)$ if $n$ is
   odd, and
  \item[\rm(b)] $C$ containing elements of order~$\Phi_{n-1}^*(q)$, if $n$ is
   even.
 \end{enumerate}
\end{prop}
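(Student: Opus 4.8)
The plan is to follow the template of Propositions~\ref{prop:SUnodd} and~\ref{prop:SUneven}, working in the adjoint group $G:=\PGL_n(q)$ (whose center is trivial, hence connected, so that the results of Section~\ref{subsec:3.1} apply) and bounding the structure constant $n_a(C)$ for a regular semisimple class $C\subset G'=\PSL_n(q)$ of elements of order divisible by a Zsigmondy prime. For $n$ odd I take $e=n$, so that $x\in C$ has centralizer a Singer torus $T$ of order $(q^n-1)/(q-1)$ acting irreducibly on the natural module, with $|N_G(T):T|=n$ (Table~\ref{tab:classtorus}). For $n$ even I take $e=n-1$, so that $x$ acts irreducibly on a hyperplane and fixes a unique complementary line, with $T$ of order $q^{n-1}-1$ and $|N_G(T):T|=n-1$. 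The dual group is $G^*=\SL_n(q)$ throughout, and the first task is to show $|\eps_a(C)|<\tfrac12$, whence $n_a(C)\ge|C|^2/2|G|$.

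For the character estimate I would argue as in the unitary cases. By Lemma~\ref{lem:Lvalues} only characters in series $\cE(G,s)$ with $s$ in the dual torus $T^*$ are relevant. The $d=\gcd(n,q-1)$ central elements $s\in Z(G^*)\le T^*$ give the $d$ twists of the unipotent series; their non-linear members that do not vanish on $x$ take value $\pm1$, there are at most about $dn$ of them, and their total weight is controlled by the minimal nontrivial degree $(q^n-q)/(q-1)$ via Tiep--Zalesskii~\cite{TZ}. For a non-central $s\in T^*$ the connected centralizer is of type $\GL_{n/r}(q^r)\cap G^*$ (when $n$ is odd) or of type $\GL_{(n-1)/r}(q^r)$ times a torus factor (when $n$ is even), with $W(s)\cong\fS_{n/r}$ respectively $\fS_{(n-1)/r}$; Proposition~\ref{prop:values} then yields $|\chi_{s_r}(x)|\le r$, the Fong--Srinivasan block results~\cite{FoSr1} show the remaining characters of $\cE(G,s_r)$ share this absolute value, and bounding the number of such $s_r$ up to conjugacy by $q^{r-1}/r$ gives a rapidly convergent sum over $r\mid n$ (respectively $r\mid n-1$). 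A direct calculation then forces $|\eps_a(C)|<\tfrac12$ for all but finitely many $(n,q)$, the remainder being settled with \Chevie\ or {\sf GAP}.

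The generation step is where the two parities diverge, and the even case is the main obstacle. For $n$ odd, Lemma~\ref{lem:maxSLn} shows the only maximal overgroups of $x$ are the extension-field subgroups $N_G(\GL_{n/f}(q^f)\cap G)$ and, for $q$ square, $N_G(\SU_n(q^{1/2}))$; each has index of order $q^{cn^2}$, so the crude bound of $|C\cap H|$ triples per subgroup, summed over the conjugates containing $x$, stays far below $n_a(C)$. For $n$ even, however, Lemma~\ref{lem:maxSLn-1} forces us to treat the stabilizers of the unique line and the unique hyperplane fixed by $x$ (item~(2) is vacuous since it needs $n$ odd). These are maximal parabolics of index only about $q^{n-1}$, of the same order of magnitude as $|T|$, and the naive estimate $|C\cap H|\approx|G|/|T|^2\approx n_a(C)$ is useless.

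To handle such a parabolic $P=U\rtimes L$ I would count its obstruction triples through the Levi $L$ rather than by $|C\cap H|$. Since $x$ acts without nonzero fixed points on $U$ (its eigenvalue on the fixed line differs from all eigenvalues on the hyperplane), every $U$-coset that meets $C$ lies entirely in $C$; consequently the number of triples $(x,y,z)$ with product $1$ lying in $P$ equals $|U|$ times the structure constant of the Singer class in $L\cong\GL_{n-1}(q)$ (up to the $\GL_1$ bookkeeping). That structure constant is $\approx|L|/|T|^2$, so $P$ contains about $q^{(n-1)(n-2)}$ obstruction triples, a factor $q^{-(n-1)}$ below $n_a(C)\approx q^{(n-1)^2}$; as $x$ lies in exactly one line-stabilizer and one hyperplane-stabilizer, the combined parabolic contribution is negligible. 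The residual small configurations — the entries of the exception lists of Lemmas~\ref{lem:maxSLn} and~\ref{lem:maxSLn-1}, together with $\PSL_3(4)$ — are verified by direct computation, while $\PSL_4(2)\cong\fA_8$ is deferred to Section~\ref{sec:spor} and $\PSL_3(2)\cong\PSL_2(7)$ is the genuine exception of Theorem~\ref{thm:main}. The delicate point throughout is thus the refined count inside the even-dimensional parabolics, where the gap between $|C\cap H|$ and the true structure constant in $H$ supplies exactly the saving one needs.
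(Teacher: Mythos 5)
Your proposal is correct, and the comparison with the paper is worth recording because you diverge on exactly one step. The character-theoretic half --- working in $G=\PGL_n(q)$, using Lemma~\ref{lem:Lvalues}, the bound $|\chi_{s_r}(x)|\le r$ from Proposition~\ref{prop:values}, Fong--Srinivasan block theory and the sum over divisors $r$ to get $|\eps_a(C)|<\tfrac12$ --- is exactly the paper's argument, as are your odd-$n$ generation step via Lemma~\ref{lem:maxSLn} and the disposal of small cases by computer. For even $n$, however, the paper is cruder than you: it bounds the obstruction from each end-node parabolic by $|C\cap P_i|=|P_i|/|T|=(q-1)|G|/\bigl((q^n-1)(q^{n-1}-1)\bigr)$ and declares this ``sufficiently small''. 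Measured against the only lower bound available, $n_a(C)\ge\tfrac12|C|^2/|G|=\tfrac12|G|/|T|^2$, the two parabolics together contribute up to $4(q-1)(q^{n-1}-1)/(q^n-1)$ times that bound, a quantity which always exceeds $1$ (about $2$ for $q=2$, tending to $4$ for large $q$); so the paper's count does not by itself close the inequality --- this is precisely the phenomenon you flag when you call the naive estimate useless. Your Levi factorisation supplies the missing saving, and your key claims check out: $C_U(y)=1$ for every $y\in C\cap P_i$ (eigenvalue on the fixed line distinct from those on the hyperplane) and $U$ is abelian, so $yU=y^U$, hence $C\cap P_i$ and $C^a\cap P_i$ are unions of $U$-cosets, the condition $(xy)^{-1}\in C^a$ depends only on images in $L$, and the bad pairs number exactly $|U|$ times a Singer-class structure constant in $L\cong\GL_{n-1}(q)$ --- smaller than the crude count by a factor of roughly $|T|$. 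Two details to record when writing this up: first, $\GL_{n-1}(q)$ has $q-1$ linear characters, so the Levi structure constant is $\approx(q-1)|L|/|T|^2$, not $|L|/|T|^2$ (this is your ``$\GL_1$ bookkeeping''); with that factor the parabolic contribution is still only about $4(q-1)^2/(q^n-1)$ times $n_a(C)$, negligible for $n\ge4$, and the character estimate needed in $\GL_{n-1}(q)$, $n-1$ odd, is your case~(a) analysis one dimension down. Second, for $n$ even and $q$ a square you must also rule out $N_G(\SU_n(q^{1/2}))$ from Lemma~\ref{lem:maxSLn-1}(1), which you omit but which goes exactly as in your odd case (huge index, few conjugates through $x$). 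In sum, the paper's bound is shorter but quantitatively insufficient as stated, whereas your count costs an extra structure-constant estimate in the Levi and gains the factor of order $q^{n-1}$ that actually makes the even-dimensional case go through.
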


\begin{proof}
First assume that $n$ is odd.
Let $x\in C$, with centralizer a cyclic torus $T$ of order $(q^n-1)/(q-1)$ in
$G:=\PGL_n(q)$. The minimal degree of a non-principal unipotent character of
$G$ is at least $(q^n-q)/(q-1)$ by \cite[Thm.~3.1]{TZ} so the non-linear
unipotent characters of $G'$ contribute at most
$$\frac{(n-1)(q-1)}{(q^n-q)}$$
to $|\eps_a(C)|$ in $G$. The non-central elements in the dual torus $T^*$ in
$G^*=\SL_n(q)$ of the same order have centralizers of types
$Z_r:=G^*\cap\GL_{n/r}(q^r)$ for $r|n$, $r>1$. Let $s_r$ denote an element with
centralizer $Z_r$. Then $C_{W}(v)\cong W(T) \cong C_n$,
$W(s)=\fS_{n/r}$, and $C_{W(s)}(v)\cong N_{Z_r}(T)/T \cong C_{n/r}$, so by
Proposition~\ref{prop:values} we have $|\chi_{s_r}(x)|\le r$. Again by
\cite[Thm.~7A]{FoSr1} the values of the $n/r$ characters in $\cE(G,s_r)$ not
vanishing on $x$ have the same absolute value as $\chi_{s_r}(x)$. Up to
conjugation there are at most $q^{r-1}/r$ elements with centralizer $Z_r$,
so the characters in Lusztig series for non-central elements of $G^*$
contribute at most
$$\sum_{r|n}\frac{n q^{r-1} r^3}{r^2|G:Z_r|_{q'}}
  =\sum_{r|n}\frac{nrq^{r-1}}{|G:Z_r|_{q'}}$$
to $|\eps_a(C)|$. It follows that $|\eps_a(C)|<0.5$ for all $n\ge5$,
$(n,q)\ne(5,2)$, so that $n_a(C)\ge \frac{1}{2}|C|^2/|G|$.
For $n=3$, the structure constant can be computed explicitly from the known
character table as $|T|(q^2-3q+1)$ for $q\ne2,4$. 
\par
By direct computation, the groups $\PSL_3(4)$ and $\PSL_5(2)$
contain generating triples. By Lemma~\ref{lem:maxSLn}, the only maximal
subgroups to consider in the remaining cases are the image in $G$ of
$\GU_n(q^{1/2})$ if $q$ is a square, of index at least
$q^{n(n-1)/4}(q^{n-1}-1)$ and with $q^{n/2}-1$ conjugates containing $x$,
and of the normalizers of $\GL_{n/r}(q^r)$ in
$\GL_n(q)$, for $r|n$, $r>1$, of index at least $q^{n^2/4}(q^{n-1}-1)/r$,
resp.\ $q^3(q^2-1)/3$ if $n=3$. \par

If now $n\ge4$ is even, the same arguments apply to show that $|\eps_a(C)<0.5|$
unless $(n,q)=(4,2)$, so that $n_a(C)\ge \frac{1}{2}|C|^2/|G|$. \par
By direct computation, the group $\PSL_4(4)$ contains generating
triples. The only maximal subgroups to consider in the remaining cases are,
by Lemma~\ref{lem:maxSLn}, the image in $G$ of $\GU_n(q^{1/2})$, with $q$ a
square, of large index and less than $q^{n/2}$ conjugates containing $x$, and
the end node
parabolic subgroups $P_1$, $P_{n-1}$, of index $(q^n-1)/(q-1)$. Since both
$P_i$ contain the full normalizer of the maximal torus $T$ of order $q^{n-1}-1$,
there are at most $|P_i|/|T|$ elements from $C$ in $P_i$, so each contains
at most $(q-1)|G|/((q^n-1)(q^{n-1}-1))$ triples, which is sufficiently small
for our estimate.
\end{proof}

The excluded group $\PSL_4(2)\cong\fA_8$ will be treated in Lemma~\ref{small n}.
The groups $\PSL_2(q)$ are the subject of the next section.

\subsection{The groups $\PSL_2(q)$}   \label{subsec:L2q}
We now consider $\SL_2(q)$ and $\PSL_2(q)$ where the results are somewhat
different especially for $q$ even. 
    
From the subgroup structure of $\SL_2(q)$, it is quite easy to see that if
$q > 11$, then the only maximal subgroups containing the split torus are the
normalizer of the torus and the two Borel subgroups.  Similarly, the
normalizer of the nonsplit torus is the unique maximal subgroup containing
the nonsplit torus.

We first deal with $q$ odd.

\begin{lem}   \label{sl2q odd}
 Let $S=\PSL_2(q)$ with $q \ge11$ and odd. Let $C$ be a conjugacy class
 of elements of order $(q-1)/2$. Then Theorem~\ref{thm:main} holds for $C$.
 Also, Theorem~\ref{thm:main} holds for $\PSL_2(9)$ with $C$ a class of
 elements of order~5.
\end{lem}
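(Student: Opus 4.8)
The plan is to realise $C$ as the class of a regular semisimple generator $x$ of (the image in $S$ of) the split torus $T$, which is cyclic of order $m:=(q-1)/2$ and self-centralising, so that $|C|=q(q+1)$. Part~(1) of Theorem~\ref{thm:main} asks for a generating triple in $C\times C\times C$ with product~$1$, which is governed by the structure constant $n_1(C)$; part~(2) asks for $x,y\in C$ with $xy$ conjugate to $x^2$, equivalently for a triple $(x,y,z)$ with $xyz=1$, $y\in C$ and $z=(xy)^{-1}\in C^{-2}$, which is governed by $n_{-2}(C)$. So I would first compute $n_1(C)$ and $n_{-2}(C)$ from the (well-known) character table of $\PSL_2(q)$, and then bound the number of such triples lying in the few maximal subgroups that can meet $C$.

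For the structure constants I would use that on a regular split semisimple $x$ only the trivial character, the Steinberg character $\St$ (value~$1$), the principal series characters (degree $q+1$, value $\alpha(x)+\alpha(x)^{-1}$ for the associated character $\alpha$ of $T$), and — when $q\equiv1\pmod{4}$ — the two characters of degree $(q+1)/2$ (value $\eta(x)=\pm1$ for the quadratic character $\eta$ of $T$) are nonzero; the discrete series, and for $q\equiv3\pmod{4}$ the two characters of degree $(q-1)/2$, all vanish on $x$. Writing each principal series value as a sum of two roots of unity and summing over $\widehat T$ by orthogonality reduces $\eps_a(C)$ to counting, among a short list of exponents, those divisible by $m$. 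Carrying this out I expect the clean closed forms
$$n_1(C)=2(q-1),\qquad n_{-2}(C)=3q-2,$$
uniformly in the congruence class of $q$ modulo~$4$; in the case $q\equiv1\pmod{4}$ the contributions of the two exceptional characters exactly compensate the two missing principal series pairs.

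For generation I would use that, for $q>11$, the only maximal subgroups meeting $C$ are the two Borel subgroups $B_1,B_2\supseteq T$ and the torus normaliser $N=N_S(T)$, dihedral of order $q-1$. The key device is that in a Borel $B=U\rtimes T$ the quotient map $B\to B/U\cong T$ is a homomorphism carrying each regular semisimple element to a representative of its semisimple part; hence if $xyz=1$ inside $B$ then the semisimple parts multiply to~$1$ in $T$. For part~(1) the three semisimple parts lie among $t^{\pm1}$, so their product is $t^{\,e}$ with $e$ an odd integer in $\{\pm1,\pm3\}$, which is never $\equiv0\pmod{m}$ once $m\ge5$; the same holds in $N$. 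Thus no triple counted by $n_1(C)$ lies in a proper subgroup, and since $n_1(C)=2(q-1)>0$ generating triples exist. For part~(2) the semisimple parts are $t^{\pm1}$ for $x$ and $y$ and $t^{\pm2}$ for $z$, and (fixing $x$ with semisimple part $t$) the only cancellation is $t\cdot t\cdot t^{-2}=1$; this forces $y$ into the same $B$-class as $x$ (at most $q$ choices, with $z$ then determined) and leaves at most one admissible triple in $N$. Hence at most $2q+1$ of the $n_{-2}(C)=3q-2$ triples are non-generating, and for $q\ge11$ generating triples remain.

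The main obstacle is part~(2): here cancellation of semisimple parts inside a Borel genuinely occurs, and the lower bound $n_{-2}(C)\sim 3q$ only barely exceeds the Borel contribution $\sim 2q$, so the argument requires the sharp count (the reduction to a single $B$-class) rather than the crude bound $|C\cap B|=2q$ per Borel, and it needs the exact value of $n_{-2}(C)$. Two genuinely special situations must be handled by direct computation: the group $\PSL_2(11)$, where $(q-1)/2=5$ and there is an extra class of maximal subgroups $\fA_5$ meeting $C$ (this is why the structural statement above is restricted to $q>11$), and $\PSL_2(9)\cong\fA_6$, where one instead takes $C$ to consist of elements of order~$5$; in both cases the two assertions of Theorem~\ref{thm:main} are verified on the character table directly.
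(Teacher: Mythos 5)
Your proposal is correct --- I verified your claimed structure constants $n_1(C)=2(q-1)$ and $n_{-2}(C)=3q-2$ in both congruence classes of $q$ modulo~$4$, and your counting of non-generating triples is sound --- but it takes a genuinely different route from the paper. The paper works upstairs in $\SL_2(q)$ with the class $\tilde C$ of elements of order $q-1$ and uses no character theory at all: existence follows from the elementary observation that $\tr(xy)$, for $x,y\in\tilde C$, takes every value of $\FF_q$, so one can place $xy$ in any prescribed regular semisimple class, namely in $\tilde C$ itself for part~(1) and in $-\tilde C^{2}$ for part~(2) (a trace $\ne\pm2$ determines a unique $\SL_2(q)$-class, and $-\tilde C^{2}$ maps onto the $\PSL_2(q)$-class of $x^2$). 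Generation is then automatic: if $\langle x,y\rangle$ were proper then, for $q>11$, $x$ and $y$ would lie in a common Borel subgroup, whence $xy$ is unipotent or conjugate to $x^2$ in $\SL_2(q)$; the classes $\tilde C$ and $-\tilde C^{2}$ are neither, contradiction. This sign twist is precisely what spares the paper the delicate estimate you are forced into: working directly in $\PSL_2(q)$, Borel subgroups really do contain triples of type $(C,C,C^{-2})$ (your $2q-1$ triples with cancelling semisimple parts), so your argument hinges on the sharp comparison $3q-2>2q+1$, a margin of only $q-3$, and would collapse under any cruder bound such as $|C\cap B|\le 2q$ triples per Borel. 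Both arguments rest on the same subgroup-theoretic input (for $q>11$ the only maximal overgroups of an element of order $(q-1)/2$ are the two Borel subgroups above its torus and the torus normalizer), and both fall back on direct computation for $q=9,11$, where the extra maximal subgroups ($\fA_5$ for $q=11$) invalidate the structural argument. What your route buys is uniformity with the paper's general strategy for groups of Lie type in Section~\ref{sec:Lie type}, together with the exact values of the structure constants; what the paper's route buys is brevity --- a two-line trace computation replaces the character table, and no counting of bad triples is needed at all.
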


\begin{proof}
We work in $G=\SL_2(q)$. First assume that $q>11$. Let $C$ be a conjugacy class
of elements of order $q-1$ in $G$. It is a trivial matrix computation to
show that $\tr(xy)$ with $x,y \in C$ can be any element of $\FF_q$. Thus,
there exist $x,y \in C$ and $z \in C$ or $z \in -C^{-2}$ so that $xyz=1$.
We claim that $G=\langle x,y \rangle$ in either case. If not, then $x,y$ are
contained in a Borel subgroup $B$. However, if $x,y \in C \cap B$, we see
that $xy$ is either unipotent or conjugate to $x^2$. Thus,
$G=\langle x, y \rangle$. Passing to the quotient $\PSL_2(q)$ gives the
result. For $q=9,11$ the claim follows by direct computation.
\end{proof}

If $G=\PSL_2(7)$, then it is straightforward to compute that if $C$
is a conjugacy class of elements of order $7$, there exist $x,y \in C$
with $xy \in C^{-2}$ with $G=\langle x, y \rangle$.   One easily checks
that $G$ is a true exception to the first statement of Theorem \ref{thm:main}.

If $G=\PSL_2(5) \cong \PSL_2(4) \cong \fA_5$, then there is a generating
triple of conjugate elements of order $5$ which generate $G$.  
It is also straightforward to check that if $V$ is a non-trivial
absolutely irreducible $kG$-module with $k$ of characteristic
different than $2$, then an element of order $5$ has no
eigenspace of dimension greater than $(1/3) \dim V$.  If $k$ has
characteristic $2$, then any element of order $5$ has a trivial
fixed space. 

We next consider $G=\PSL_2(q)$ with  $q$ even.  If $q=4$, then 
$G=\fA_5 \cong \PSL_2(5)$, a case just dealt with.   

\begin{lem}  \label{sl2q even}
 Let $G=\PSL_2(q)$ with $q > 7$ and even. Let $C$ be a conjugacy class of
 elements of order $q-1$. Let $k$ be an algebraically closed field of
 characteristic $p$. 
 \begin{enumerate}
  \item[\rm(a)] There exist $x,y,z\in C$ with product $1$ and
   $G=\langle x, y\rangle$.
  \item[\rm(b)] If $p > 2$ and $V$ is any nontrivial irreducible $kG$-module,
   then every eigenspace of $x$ has dimension at most $(1/3) \dim V$.
  \item[\rm(c)] If $p =2$ and $V$ is any  nontrivial irreducible $kG$-module of
   dimension greater than $2$, then every eigenspace of $x$ has dimension at
   most $(1/3) \dim V$.
 \end{enumerate}
\end{lem}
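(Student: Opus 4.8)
The plan is to prove (a) by an explicit construction inside $\SL_2(q)=\PSL_2(q)$ (the centre is trivial since $q$ is even), and to prove (b), (c) by pinning down the eigenvalue multiplicities of $x$ on an irreducible module. For (a) I take $x=\mathrm{diag}(\lambda,\lambda^{-1})$ with $\lambda$ a generator of $\FF_q^\times$, so $\langle x\rangle$ is the split torus $T$ and $t:=\tr(x)=\lambda+\lambda^{-1}$. As $x$ and $x^{-1}$ have equal trace we have $C=C^{-1}$, so it suffices to find $y\in C$ with $\tr(xy)=t$ and $\langle x,y\rangle=G$: then $xy\in C$ and $z:=(xy)^{-1}\in C$ satisfy $xyz=1$. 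Writing $y=\left(\begin{smallmatrix}a&b\\c&d\end{smallmatrix}\right)$ with $\det y=1$, $a+d=t$, a one-line computation gives $\tr(xy)=t(a+\lambda^{-1})$, so $\tr(xy)=t$ forces $a=1+\lambda^{-1}$, $d=\lambda+1$ and $bc=t+1$. Since $\lambda$ has order $q-1\ge7$ one checks $t\notin\{0,1\}$, so $t+1\ne0$ and I may take $b,c$ both nonzero. Then $y$ is neither triangular nor monomial, hence $\langle x,y\rangle$ lies in neither Borel subgroup containing $T$ nor in $N_G(T)\cong D_{2(q-1)}$. Any maximal subgroup containing $x$ contains $T=\langle x\rangle$, so the list of maximal overgroups of the split torus recalled at the start of this subsection (valid for $q>11$, and checked directly for $q=8$, using that no subfield subgroup and no $D_{2(q+1)}$ contains an element of order $q-1$) shows these are the only possibilities; hence $\langle x,y\rangle=G$.

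For (c), $x$ is a regular semisimple $p'$-element, hence diagonalisable. By Steinberg's tensor product theorem every nontrivial irreducible $kG$-module is $V=\bigotimes_{i\in S}L(1)^{[i]}$ for some $\emptyset\ne S\subseteq\{0,\dots,f-1\}$ ($q=2^f$, $L(1)$ the natural module, $[i]$ the $i$th Frobenius twist), of dimension $2^{|S|}$. On $L(1)^{[i]}$ the eigenvalues of $x$ are $\lambda^{\pm2^i}$, so on $V$ they are $\lambda^{\sum_{i\in S}\eps_i2^i}$ with $\eps_i\in\{\pm1\}$, and $\dim V_{\lambda^m}$ is the number of sign patterns with $\sum_{i\in S}\eps_i2^i\equiv m\pmod{q-1}$. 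The key step is elementary: two patterns coincide modulo $q-1=2^f-1$ only if $\sum_{i\in S}\delta_i2^i\equiv0$ for some $\delta\in\{-1,0,1\}^S\setminus\{0\}$, and since $|\sum_{i\in S}\delta_i2^i|\le 2^f-1$ with equality forcing $S=\{0,\dots,f-1\}$ and all $\delta_i$ of one sign, the only collision that ever occurs is between the all-$+$ and all-$-$ patterns in the Steinberg case $S=\{0,\dots,f-1\}$. Thus every eigenspace has dimension $1$, except that on the Steinberg module the eigenvalue $1$ has dimension $2$. As $\dim V>2$ forces $|S|\ge2$ and $q\ge8$ gives $2\le q/3$, the bound $\tfrac13\dim V$ holds in every case; the excluded $\dim V=2$ is exactly the natural module, whose two eigenspaces have dimension $\tfrac12\dim V$.

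For (b), first suppose $p\nmid q-1$ (in particular whenever $p\nmid|G|$ or $p\mid q+1$). Then $x$ is a semisimple $p'$-element, and reduction modulo $p$ is injective on the $(q-1)$th roots of unity, so eigenvalue multiplicities of $x$ add along the decomposition matrix. A direct computation, most transparently via the permutation module on $\mathbb P^1(\FF_q)$, shows that every ordinary irreducible character has all $x$-eigenvalue multiplicities at most $2$ (the value $2$ occurring only on the fixed space of the Steinberg module and on one pair of eigenvalues of each principal series, while every discrete series restricts to the regular representation of $\langle x\rangle$). Since each $p$-modular irreducible occurs with nonnegative multiplicity as a constituent of some ordinary character's restriction, its eigenvalue multiplicities are likewise at most $2\le\tfrac13(q-1)\le\tfrac13\dim V$.

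The remaining case $p\mid q-1$ is the main obstacle. Here $p$ divides $|x|=q-1$, so $x$ is no longer semisimple: its $p$-part acts unipotently and the genuine eigenspaces $\ker(x-\zeta)$ are counts of Jordan blocks rather than semisimple multiplicities, and the involution of $N_G(T)$ only gives $\dim\ker(x-\zeta)=\dim\ker(x-\zeta^{-1})$, hence the factor $\tfrac12$ rather than $\tfrac13$. To close the gap I would use the classical $p$-modular representation theory of $\PSL_2(q)$: the discrete series characters have defect $0$ (as $(q-1)_p$ is the full $p$-part of $|G|$), so they are projective and restrict freely to $\langle x\rangle$, giving all eigenspaces of dimension $(q-1)/(q-1)_p$; the remaining simple modules lie in blocks of cyclic defect group of order $(q-1)_p$ whose Brauer trees are lines, and from their known uniserial structure one reads off the $\langle x\rangle$-module structure, and hence the Jordan type of $x$, verifying the bound $\tfrac13\dim V$ module by module. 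This block-theoretic bookkeeping for $p\mid q-1$ is the delicate part of the argument.
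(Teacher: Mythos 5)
Your parts (a) and (c) are correct and essentially match the paper's own arguments: (a) is the same trace computation combined with the short list of maximal overgroups of the split torus (the paper simply invokes its odd-$q$ lemma), and (c) is exactly the paper's Steinberg tensor product count, with the collision analysis written out in full. The genuine problem is part (b) in the case $p \mid q-1$, which you correctly identify as the obstacle but then do not close. Your plan --- read off the Jordan type of $x$ on each simple module from Brauer trees and the uniserial structure of the non-defect-zero modules --- is only a sketch, and it is not routine to execute: since $p$ divides $o(x)=q-1$, the element $x$ is not $p$-regular, so Brauer characters and decomposition numbers say nothing directly about its eigenspaces; one would need to control the actual $k\langle x\rangle$-module structure (not merely composition factors) of $\St$ mod $p$ and of the modular constituents of the principal series, which requires lattice-level arguments. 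There is also a slip in your defect-zero case: a projective module of dimension $q-1$ restricts to a free $k\langle x\rangle$-module of rank $1$, and on a free rank-one module every eigenspace of $x$ is $1$-dimensional (one Jordan block of size $(q-1)_p$ for each $(q-1)_{p'}$-th root of unity); the quantity $(q-1)/(q-1)_p$ you state is the number of distinct eigenvalues, not the dimension of an eigenspace. The final bound survives, since both numbers are at most $(q-1)/3$, but the statement is wrong.

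The missing idea --- which is the paper's proof of (b) and makes your entire case division unnecessary --- is to restrict $V$ to the unipotent radical $U$ of a Borel subgroup $B\supseteq T=\langle x\rangle$ rather than to $\langle x\rangle$ itself. Since $q$ is even and $p>2$, $U$ is a $p'$-group, so $V|_U$ is semisimple, and $\langle x\rangle$ permutes the $q-1$ nontrivial linear characters of $U$ simply transitively (the root action $b\mapsto\lambda^2 b$ is simply transitive on $\FF_q^\times$ because $q-1$ is odd). In any cross characteristic $\dim V = q-1+e$ with $e\in\{0,1,2\}$, and $U$ acts nontrivially by faithfulness, so each nontrivial $U$-weight space is exactly $1$-dimensional; hence $V = F\oplus C_V(U)$ as $k\langle x\rangle$-modules, with $F$ free of rank $1$ and $\dim C_V(U)=e$. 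On $F$ every eigenspace of $x$ is $1$-dimensional \emph{whether or not $p$ divides $q-1$}, so every eigenspace of $x$ on $V$ has dimension at most $1+e\le 3\le \tfrac{1}{3}(q-1+e)$ for $q\ge8$. This single uniform argument replaces both your $p\nmid q-1$ character-theoretic computation and the unfinished $p\mid q-1$ block-theoretic bookkeeping.
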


\begin{proof}
(a) follows just as for $q$ odd. 
Now first suppose that $p > 2$. Thus, $\dim V = q - 1 +e$ where $e = 0,1,2$
by the well-known representation theory of $\PSL_2(q)=\PGL_2(q)$. Let $B$ be
a Borel subgroup containing $x \in C$. Let $U$ be the unipotent radical of $B$.
Then $U$ has $q-1$ nontrivial eigenspaces on $V$ permuted transitively by
$x$. Thus $V$ is a direct
sum of a rank $1$  free $x$-module and an $x$-submodule of dimension $e$.
Thus, any eigenspace of $x$ has dimension at most $1 + e$, whence (b) holds. 
 
Now assume that $p=2$.  Then every nontrivial irreducible module is a tensor
product of Frobenius twists of the natural module.  It is trivial to check
that either every eigenspace for elements of $C$ has dimension at most $1$
or $V$ is the Steinberg module of dimension $q$ and the largest eigenspace
is the trivial eigenspace of dimension $2 < q/3$.
\end{proof}

We close this section with a result for the nonsplit tori in $\SL_2(q)$ with 
$q$ even.

\begin{lem}  \label{sl2q even2}
 Let $G=\SL_2(q)$ with $q  > 3$ even.  Let $x \in G$ have order~$q+1$.
 If $k$ is an algebraically closed field of characteristic $2$ and $V$ is an
 irreducible nontrivial $kG$-module, then $x$ has distinct eigenvalues on
 $V$ and $C_V(x)=0$.
\end{lem}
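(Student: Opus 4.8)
The plan is to reduce the whole statement to a short combinatorial computation with roots of unity. Write $q=2^f$ and let $V_1$ denote the natural $2$-dimensional module. Since $\operatorname{char}k=2=p$ we are in defining characteristic, so I would first invoke Steinberg's tensor product theorem (already used in the proof of Lemma~\ref{sl2q even}) to write the nontrivial irreducible module as $V\cong\bigotimes_{i\in S}V_1^{(i)}$ for some nonempty $S\subseteq\{0,\dots,f-1\}$, where $V_1^{(i)}$ denotes the $i$-th Frobenius twist; thus $\dim V=2^{|S|}$. Because $x$ has order $q+1$, which is odd and hence prime to $p$, it is semisimple and therefore diagonalizable over the algebraically closed field $k$. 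On $V_1$ it lies in the nonsplit torus with eigenvalues $\zeta,\zeta^{-1}$ for a primitive $(q+1)$-th root of unity $\zeta\in k$, so on $V_1^{(i)}$ its eigenvalues are $\zeta^{\pm2^i}$. The next step is to read off the eigenvalues of $x$ on the tensor product: they are exactly the $\zeta^{m}$ with $m=\sum_{i\in S}\eps_i2^i$, as $(\eps_i)_{i\in S}$ ranges over $\{\pm1\}^S$.

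From here I would establish both claims by elementary estimates on the integers $m$. Each exponent satisfies $|m|\le\sum_{i=0}^{f-1}2^i=q-1<q+1$, so $\zeta^m=\zeta^{m'}$ can only occur when $m=m'$ or $m-m'=\pm(q+1)$. For two distinct sign patterns one has $m-m'=\sum_{i\in S}(\eps_i-\eps_i')2^i$, which is even, while $q+1$ is odd; this rules out the second possibility, and a dominance argument (the top term $2^{\max S}$ exceeds the sum of all lower ones) shows that $m=m'$ in turn forces $(\eps_i)=(\eps_i')$. Hence the $2^{|S|}=\dim V$ eigenvalues are pairwise distinct, which is the first assertion. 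For the fixed space I would note that $1=\zeta^0$ is an eigenvalue iff some $m\equiv0\pmod{q+1}$, i.e.\ (since $|m|<q+1$) iff some $m=0$; but the same dominance bound gives $|m|\ge 2^{\max S}-\sum_{i<\max S}2^i\ge1$, so no $m$ vanishes and $C_V(x)=0$.

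The genuinely delicate point---and the place where the hypothesis that $q$ is even is indispensable---is excluding the "wraparound'' $m-m'=\pm(q+1)$; once the eigenvalues are put in the form $\zeta^m$, everything else is bookkeeping. The parity observation ($m-m'$ even, $q+1$ odd) disposes of it cleanly, and it is precisely this step that breaks down when $q$ is odd, which is consistent with that case being treated separately. The only minor points left to verify are that $S\ne\emptyset$ (so that $V$ is nontrivial) and that $\zeta\in k$, both of which are immediate since $k$ is algebraically closed of characteristic prime to $q+1$.
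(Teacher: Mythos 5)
Your proposal is correct and follows essentially the same route as the paper: the paper's proof also invokes the Steinberg tensor product decomposition of $V$ into distinct Frobenius twists of the natural module and then asserts that the distinctness of eigenvalues and vanishing of $C_V(x)$ are ``straightforward to see.'' Your parity argument (ruling out wraparound modulo $q+1$) and dominance bound simply supply the details of that straightforward verification, so this is a filled-in version of the paper's own argument rather than a different one.
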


\begin{proof}
$V$ is a tensor product of distinct Frobenius twists of the natural
$2$-dimensional module. If $q=2^f$, there are exactly $f$ distinct twists.
It is straightforward to see that possible eigenvalues are distinct on $V$
and $C_V(x)=0$.
\end{proof}

\section{Alternating and sporadic groups}   \label{sec:spor}
In this section we prove Theorem~\ref{thm:main} for alternating and sporadic
groups. We first recall a translation result for generation. See
\cite[Lemma~4.6]{gurneu} for example. 

\begin{lem} \label{translation}
 Let $G$ be a finite group generated by $x,y,z$ with product $1$. Then for
 any $d\ge1$ there is a normal subgroup $N$ generated by $x^d, y^d$ and $d$
 conjugates of $z$ with product $1$ generating $N$. Moreover $G/N$ is cyclic
 of order dividing $d$. 
\end{lem}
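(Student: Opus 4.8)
The plan is to exhibit the required normal subgroup and its generators explicitly, working directly in $G$ rather than through any free-group presentation. Since $xyz=1$ we have $z=y^{-1}x^{-1}$. For $0\le i\le d-1$ set $z_i:=x^izx^{-i}$, a conjugate of $z$, and define
$$N:=\langle x^d,\,y^d,\,z_0,z_1,\dots,z_{d-1}\rangle.$$
First I would record the product relation. Writing $z_i=x^iy^{-1}x^{-(i+1)}$ and multiplying the $z_i$ in order, the powers of $x$ telescope at each junction, giving $z_0z_1\cdots z_{d-1}=y^{-d}x^{-d}$, hence
$$y^d\cdot z_0z_1\cdots z_{d-1}\cdot x^d=1.$$
Thus the listed generators of $N$, namely $x^d$, $y^d$ and the $d$ conjugates $z_0,\dots,z_{d-1}$ of $z$, have product $1$ in this ordering, and by construction they generate $N$. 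In particular $y^d\in\langle x^d,z_0,\dots,z_{d-1}\rangle$ by the relation, so in fact $N=\langle x^d,z_0,\dots,z_{d-1}\rangle$.

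Next I would prove $N\trianglelefteq G$. The point that makes this short is that it suffices to check invariance under conjugation by $x$ and by $z_0$: since $z_0=(xy)^{-1}$ we have $y=x^{-1}z_0^{-1}$, and therefore $G=\langle x,y\rangle=\langle x,z_0\rangle$. Invariance under $z_0$ is automatic because $z_0\in N$. For $x$ one computes $x z_i x^{-1}=z_{i+1}\in N$ for $0\le i\le d-2$, while the only nonobvious case $x z_{d-1}x^{-1}=x^dz_0x^{-d}$ again lies in $N$ since $x^d,z_0\in N$; together with $x\cdot x^d\cdot x^{-1}=x^d$ this shows $xNx^{-1}\subseteq N$, whence $xNx^{-1}=N$ by finiteness. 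Consequently $N$ is normalized by $\langle x,z_0\rangle=G$.

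Finally, for the quotient: in $G/N$ the image of $z_0$ is trivial, so $\bar y=\bar x^{-1}$ and $G/N=\langle\bar x\rangle$ is cyclic, and $\bar x^d=1$ because $x^d\in N$, so $|G/N|$ divides $d$. This establishes all three assertions. The only genuine content is the choice of the $d$ specific conjugates $z_i=x^izx^{-i}$, and I expect the main (though modest) obstacle to be verifying that these finitely many conjugates, rather than the full normal closure of $z$, already suffice to generate a normal subgroup with the stated product relation. This is exactly what the telescoping identity for $z_0\cdots z_{d-1}$ and the observation ``$z_0\in N$, $G=\langle x,z_0\rangle$'' deliver.
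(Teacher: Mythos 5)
Your proof is correct. Note that the paper itself does not prove Lemma~\ref{translation}: it is recalled from Guralnick--Neubauer \cite[Lemma~4.6]{gurneu}, where the statement arises in the context of monodromy of branched coverings. The standard argument there is universal: one works in $\Gamma=\langle a,b,c\mid abc=1\rangle$ (free of rank~$2$), maps $\Gamma\to\ZZ/d\ZZ$ by $a\mapsto1$, $b\mapsto-1$, $c\mapsto0$, and identifies the kernel --- via the $d$-fold cyclic cover of the thrice-punctured sphere, or equivalently by Reidemeister--Schreier with transversal $1,a,\dots,a^{d-1}$ --- as generated by $a^d$, $b^d$ and $d$ conjugates of $c$ with product~$1$; pushing forward along the surjection $\Gamma\to G$ then gives the lemma. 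Your argument reproduces this computation directly inside $G$: your conjugates $z_i=x^izx^{-i}$ are exactly the Schreier generators attached to that transversal, the telescoping identity $y^dz_0\cdots z_{d-1}x^d=1$ (equivalently, after a cyclic permutation, $x^dy^dz_0\cdots z_{d-1}=1$, matching the order of the generators as listed in the statement) is verified by hand, and normality follows from the efficient observation that it suffices to check conjugation by $x$ and by $z_0$, since $z_0\in N$ and $G=\langle x,z_0\rangle$. What the citation's route buys is functoriality and the topological interpretation (behaviour of branch cycle descriptions under cyclic covers), which is how such translation lemmas are usually deployed; what your route buys is a short, self-contained, purely group-theoretic proof in which the normal subgroup and all its generators are exhibited explicitly. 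All the individual steps check out: the telescoping induction $z_0\cdots z_k=y^{-(k+1)}x^{-(k+1)}$, the reduction $N=\langle x^d,z_0,\dots,z_{d-1}\rangle$ (so that $y^d$ need not be checked separately for normality), the passage from $xNx^{-1}\subseteq N$ to equality using finiteness, and the conclusion that $G/N=\langle\bar x\rangle$ has order dividing~$d$.
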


\begin{lem}  \label{n-2 cycles}
 Let $n\ge11$ be odd and set $G=\fA_n$.  Then there exist three $n-2$
 cycles with product $1$ that generate $G$. 
\end{lem}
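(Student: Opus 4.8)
The plan is to prove this via the standard character-theoretic structure-constant computation, exactly in the spirit of the approach developed for the groups of Lie type earlier in the paper, but now specialized to the symmetric/alternating group where character values on long cycles are governed by the Murnaghan--Nakayama rule. Concretely, let $C$ be the class of an $(n-2)$-cycle in $\fA_n$ (since $n$ is odd, $n-2$ is odd, so an $(n-2)$-cycle is an even permutation and lies in $\fA_n$; moreover for $n\ge 11$ it is a single $\fA_n$-class because its centralizer in $\fS_n$ contains the transposition on the two fixed points). I would count triples $(x,y,z)\in C^3$ with $xyz=1$ using the structure-constant formula
$$n(C)=\frac{|C|^2}{|\fA_n|}\sum_{\chi\in\Irr(\fA_n)}\frac{\chi(C)^3}{\chi(1)},$$
and show this count dominates the number of triples that fail to generate.

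The first key step is to estimate $\chi(C)$ for an $(n-2)$-cycle. By Murnaghan--Nakayama, an irreducible character of $\fS_n$ is nonzero on an element with an $(n-2)$-cycle only if its partition has a hook of length $n-2$; the remaining two boxes then contribute a small multiplicative factor. This forces the partition to be a near-hook, so that only a bounded number of $\fS_n$-characters (and hence of $\fA_n$-characters, after restriction) contribute nontrivially, each with small value, while the trivial character contributes the main term $1$. This is the analogue of the ``few characters do not vanish'' principle used throughout Section~\ref{sec:Lie type}. I would thereby obtain a lower bound $n(C)\ge (1-o(1))|C|^2/|\fA_n|$, or more crudely $n(C)\ge \tfrac12 |C|^2/|\fA_n|$ for $n\ge 11$.

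The second step is to bound the contribution of non-generating triples. An $(n-2)$-cycle moves $n-2\ge 9$ points and fixes only two, so its support is large; the primitive maximal subgroups of $\fA_n$ not containing $\fA_{n-1}$ have order bounded polynomially (indeed subexponentially) in $n$, far too small to account for $\Theta(|C|^2/|\fA_n|)\approx |C|/|C_{\fA_n}(x)|$ triples. The only genuinely dangerous maximal subgroups are the intransitive ones, namely stabilizers $(\fS_2\times\fS_{n-2})\cap\fA_n$ of the $2$-point set fixed by $x$: two of the three cycles could share fixed points and lie in such an $\fA_{n-2}$-type subgroup. I would bound the number of triples lying inside any such subgroup by $|C\cap H|\cdot|C\cap H|$ (or directly by the structure constant inside $H$) and sum over the $O(n^2)$ relevant subgroups, checking this total is $o(|C|^2/|\fA_n|)$; here one uses Lemma~\ref{translation} together with the fact that distinct $(n-2)$-cycles in a product-$1$ triple cannot all fix the same pair unless the group they generate is intransitive.

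The main obstacle I anticipate is controlling generation rather than existence: the structure-constant lower bound is routine once the Murnaghan--Nakayama estimate is in hand, but ruling out that a positive proportion of the triples generate only a transitive imprimitive or primitive proper subgroup requires care. The cleanest route is probably to argue that any three $(n-2)$-cycles generating a \emph{transitive} proper subgroup would force that subgroup to be primitive (since an $(n-2)$-cycle with $n-2$ prime-to-small-factors, or simply of such large support, cannot preserve a nontrivial block system for $n\ge 11$), and then invoke the classification of primitive groups containing an element with at most two fixed points to conclude such a subgroup must be $\fA_n$ or $\fS_n$ itself. Combining the subexponential order bound on primitive overgroups with the explicit count then finishes the argument; the parity hypothesis ($n$ odd) is used exactly to keep the $(n-2)$-cycle inside $\fA_n$ and to ensure the class is real, so that $C^{-1}=C$ and the structure constant above is the relevant one.
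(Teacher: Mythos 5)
Your proposal takes a genuinely different route from the paper (the paper's proof is constructive: it exhibits an explicit $(n-2)$-cycle $x$ and a conjugate $y$ with $xy$ an involution, proves $\langle x,y\rangle=\fA_n$ via transitivity $\Rightarrow$ primitivity $\Rightarrow$ triple transitivity and Wielandt's theorems, then converts the triple (cycle, cycle, involution) into three $(n-2)$-cycles by the translation Lemma~\ref{translation} with $d=2$), but as written your counting argument has a genuine gap at its crux, the generation step. You identify the ``genuinely dangerous'' subgroups as the $2$-set stabilizers $(\fS_2\times\fS_{n-2})\cap\fA_n$; in fact those contribute only a fraction $\approx 2/n(n-1)$ of all triples. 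The dominant obstruction is the two point stabilizers $\fA_{n-1}$ containing a given $x$ (all three cycles fixing a common point) --- exactly the phenomenon the paper records after Lemma~\ref{small n}, where in $\fA_{10}$ almost all triples generate $\fA_9$ or $\fA_8$. For these subgroups your proposed crude bounds fail numerically: with $x$ fixed, $n_1(C)\approx |C|^2/|G|=n!/\bigl(2(n-2)^2\bigr)$, while $|C\cap\fA_{n-1}|=(n-1)!/(n-2)$, so the bound of $|C\cap H|$ pairs per stabilizer is already $2(n-2)/n\approx 2$ times the \emph{total} count, and $|C\cap H|^2$ is worse. One is therefore forced into your parenthetical alternative, the structure constant inside $H=\fA_{n-1}$, and there your key character-theoretic input breaks down: on the class of cycle type $(n-2,1)$ in $\fA_{n-1}$ the non-vanishing irreducible characters are not bounded in number but comprise the whole family $(a,2,1^{n-3-a})$, $2\le a\le n-3$; moreover that class has all parts odd and distinct, hence splits in $\fA_{n-1}$, and the self-conjugate member of the family (present since $n$ is odd) takes values of absolute value about $\sqrt{n-2}/2$ on the split classes. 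The estimate can still be pushed through because these dimensions grow rapidly, yielding a point-stabilizer contribution of roughly $8/n$ of all triples, but this analysis is precisely what is missing, and at $n=11$ the margin $8/11$ is close enough to $1$ that all error terms would need explicit control.

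Two further points. First, your ``cleanest route'' for primitive overgroups --- that a primitive group containing a cycle with two fixed points must contain $\fA_n$ --- is false: $\PSL_2(16)\le\fA_{17}$ is primitive and contains $15$-cycles, and in general $\PSL_2(2^f)$ of degree $n=2^f+1$ contains $(n-2)$-cycles, so such subgroups exist for infinitely many odd $n$; you must instead bound their contribution by order estimates for primitive groups (which does work, since their orders are far smaller than $n!/n^2$). Second, the exclusion of imprimitive subgroups is not a consequence of ``large support'' alone: for $n$ even an $(n-2)$-cycle does preserve a system of blocks of size $2$. What kills block systems is $\gcd(n,n-2)=\gcd(n,2)=1$, i.e.\ the parity hypothesis, which therefore does more work than the two uses (membership in $\fA_n$, realness of $C$) you assign to it; this is also exactly where the paper's own proof invokes $n$ odd.
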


\begin{proof}
Set  $x = ( 2 \ 4 \ 5 \ 6 \ldots n)$ and $u=(1 \ 2)(3 \ 4)$.
Then $w:=ux = (1 \ 2 \  \ldots n)$.  Set $s=w^4$.   
Then $v:=u^s=(5 \ 6)(7 \ 8)$.   Set $y = x^s$.  So $w=vy$.
Thus,  $y^{-1}(vu)x=1$ with $vu$ an involution moving $8$ points.
Let $H: = \langle x,y \rangle$.    Since $y$ does not fix
$1$ or $3$ or $\{1, 3 \}$,   $H$ is transitive.  Since $x$ is an 
$n-2$ cycle and $n$ is odd,  $H$ is primitive and then
applying \cite[Thm. 13.8]{Wie}, $H$ is triply transitive.
It follows by \cite[Thm. 15.1]{Wie} that either $H=G$
or $n \le 21$.  By inspection of the triply transitive degree $n$
groups with $11\le n\le 21$ generated by $n-2$ cycles, no possibility
remains for $H$ since $n\ge11$.
 
So $G=\langle x,y \rangle$ where $x$ and $y$ are $n-2$ cycles
and their product is an involution.  By the translation principle,
a subgroup of index at most $2$ in $G$ is generated by $x^2$
and two conjugates of $y$ with product $1$.
\end{proof}  
 
\begin{lem} \label{n-3 cycles}
 Let $n\ge12$ be even and set $G=\fA_n$. Then there exist three $n-3$
 cycles with product $1$ that generate $G$.
\end{lem}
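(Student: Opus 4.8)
The plan is to follow the template of Lemma~\ref{n-2 cycles}, with the one structural change forced by parity: for even $n$ an $(n-2)$-cycle is odd, whereas an $(n-3)$-cycle is even since $n-3$ is odd. First I would write down an explicit $(n-3)$-cycle $x$ fixing three points, say $x=(4\ 5\ 6\ \ldots\ n)$ fixing $\{1,2,3\}$, together with an even involution $u$ (a product of two transpositions involving the fixed points) chosen so that $w:=ux$ is a single long cycle on as many points as possible. Setting $s:=w^k$ for a suitable small exponent $k$ and $y:=x^s$ produces a second $(n-3)$-cycle, and the identity $yx^{-1}=u^{s}u$ exhibits $yx^{-1}$ as a product of two disjoint even involutions, hence itself an even involution $t$ of small support. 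Thus $(x,y,t)$ is, after a cyclic reordering, a product-$1$ triple of the shape (cycle, cycle, involution).

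Next I would prove that $H:=\langle x,y\rangle=\fA_n$. Transitivity is immediate, because the fixed-point sets of $x$ and $y=x^s$ are disjoint, so their supports cover $\{1,\dots,n\}$ and meet, leaving a single orbit. Since $x$ is a single $(n-3)$-cycle, a block of any nontrivial block system would have size dividing $n$ while being compatible with a single cycle on $n-3$ points; using that $n$ is even but $n-3$ is odd this is impossible exactly as in Lemma~\ref{n-2 cycles}, so $H$ is primitive. As $x$ now fixes \emph{three} points, Wielandt's transitivity-raising result \cite[Thm.~13.8]{Wie} upgrades primitivity to quadruple transitivity. Finally \cite[Thm.~15.1]{Wie} forces $H\supseteq\fA_n$ once $n$ is large, and I would settle the finitely many remaining degrees by inspection. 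The only even degrees $n\ge12$ carrying a $4$-transitive group other than $\fA_n,\fS_n$ are the Mathieu degrees $12$ and $24$, and both are excluded because $H$ contains the $(n-3)$-cycle $x$ while $M_{12}$ has no element of order~$9$ and the order-$21$ elements of $M_{24}$ have cycle type $3\cdot 7^3$ rather than $21\cdot 1^3$; hence $H=\fA_n$ in every case.

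With $H=\fA_n$ established, I would convert the mixed triple into the desired one by the translation principle. Apply Lemma~\ref{translation} to the product-$1$ triple $(t,x,y)$, with the involution in the first slot and $d=2$. Because $t^2=1$ the first generator disappears, and because $n-3$ is odd the power $x^2$ is again a single $(n-3)$-cycle; the two conjugates of $y$ furnished by the lemma are $(n-3)$-cycles as well. Thus the normal subgroup $N$ of index dividing~$2$ is generated by $x^2$ and two conjugates of $y$, three $(n-3)$-cycles, whose product is~$1$. Since $\fA_n$ is simple for $n\ge5$, the only subgroup of index at most~$2$ is $\fA_n$ itself, so $N=\fA_n$ and these three cycles are the required generating triple.

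The main obstacle I anticipate lies in the first two steps: choosing the anchoring involution $u$ and the exponent $k$ so that $yx^{-1}$ comes out as an \emph{even involution} (rather than a $3$-cycle or an element of larger order), since this is precisely what allows the $d=2$ translation to annihilate the correction term while preserving the cycle type of $x^2$; and then disposing of the two even Mathieu degrees $12$ and $24$ at the end of the transitivity argument. Both are routine once arranged correctly, but the parity bookkeeping is the delicate point that separates this case from Lemma~\ref{n-2 cycles}.
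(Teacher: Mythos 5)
Your construction breaks down at the transitivity step, and the failure is structural, not a matter of bookkeeping. You insist that the splicing involution $u$ be \emph{even}, i.e.\ a product of two transpositions. But $x=(4\ 5\ \ldots\ n)$ has four cycles on $\{1,\dots,n\}$ (one long cycle and three fixed points), and each transposition in $u$ can merge at most two cycles of $x$ into one; hence $w=ux$ has at least two cycles, so it is never an $n$-cycle. In the best case (e.g.\ $u=(1\ 4)(2\ 5)$) $w$ is an $(n-1)$-cycle fixing one of the points $1,2,3$ — say the point $3$. Then every power $s=w^k$ also fixes $3$, so $y=x^s$ fixes $3$ as well, and therefore $\langle x,y\rangle$ lies in the stabilizer of the point $3$: it is intransitive, hence certainly not $\fA_n$. (The other choices of $u$, such as $(1\ 2)(3\ j)$ or $(1\ 4)(2\ 3)$, leave $w$ with a $2$-cycle component and force $x$ and $y$ to share \emph{two} fixed points, which is worse.) So your claim that ``the fixed-point sets of $x$ and $y=x^s$ are disjoint'' is false in your setup, and everything downstream (primitivity, $4$-transitivity, the translation step) has no group to apply to.

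The parity concern that led you to require $u$ even is a red herring: whatever the parity of $u$, the element $u^su$ is a product of two \emph{conjugate} permutations, which have equal parity, so $u^su$ is automatically even. This is exactly what the paper exploits: it takes $u=(1\ 2)(3\ 4)(5\ 6)$, a product of \emph{three} transpositions (odd), and $x=(1\ 3\ 5\ 7\ 8\ \ldots\ n)$, arranged so that $w=xu$ is the full $n$-cycle; the fact that $w\notin\fA_n$ is harmless, since only $x$, $y=x^{w^6}$ and the even involution $vu=u^{w^6}u$ (moving exactly $12$ points) need to lie in $\fA_n$. With that correction, your outline matches the paper's proof — transitivity is now genuine, primitivity holds (though note the paper needs a separate argument when $3\mid n$, where the fixed-point set of $x$ could a priori be a block of size $3$; this is ruled out using $vu(F)$, not by parity of $n$ alone), Wielandt gives $4$-transitivity and $n\le25$, the Mathieu degrees are excluded, and the $d=2$ translation of Lemma~\ref{translation} converts the triple (cycle, cycle, involution) into three $(n-3)$-cycles, using that $n-3$ is odd so squaring preserves the cycle type.
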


\begin{proof}
Set $u =(1 \ 2)(3 \ 4)(5 \ 6)$ and $x = ( 1 \ 3 \ 5 \ 7 \ 8  \ldots n)$.
Then $w = xu$ is the standard $n$-cycle.  Set $s=w^6$.
So $w=yv$ where $y=x^s$ and $v=u^s$.  Note that $vu$ is an involution moving
exactly $12$ points and $y^{-1}(vu)x=1$.  Set $H= \langle x,y \rangle$.
By construction $H$ is transitive.   It is obviously primitive unless $3|n$
and then the fixed points $F$ of $x$ would have to be a block (i.e. $\{2,4,6\}$
as well as $vu(F)=\{1,3,5\}$).  Since $x(1)=3$ and $x(5) =7$, this is
a contradiction. So $H$ is primitive.

By \cite[Thm. 13.8]{Wie}, $H$ is $4$-transitive.  
By \cite[Thm. 15.1]{Wie}, $n \le 25$.   The only possibilities
for $H$ other than $G$ would for $H$ to be a Mathieu group with
$n=12, 22$ or $24$.  In all three cases, we see that the Mathieu group $M_n$
does not contain both an involution moving exactly $12$ points and an
$n-3$ cycle.
\end{proof}

A computer check shows the following:

\begin{lem} \label{small n}
 Let $5\le n\le 10$. Then Theorem~\ref{thm:main} holds for $\fA_n$ with $C$ the
 class of a $k$-cycle, where $k=5$ for $n=5,6$ and $k=7$ otherwise.
\end{lem}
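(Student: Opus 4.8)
The plan is to verify both parts of Theorem~\ref{thm:main} for each of the six groups by a finite computation, built on the character formula recalled at the start of Section~\ref{sec:Lie type}; the character tables, power maps, and lists of maximal subgroups needed are all available in the Atlas \cite{Atl} and in {\sf GAP}. Fix $G=\fA_n$ and the prescribed class $C$ (of $5$-cycles for $n=5,6$, of $7$-cycles for $7\le n\le10$). For part~(1) I would evaluate the structure constant
\[
n_1(C)=\frac{|C|^2}{|G|}\sum_{\chi\in\Irr(G)}\frac{\chi(C)^3}{\chi(1)},
\]
and for part~(2) the constant $n_{-2}(C)$, which counts triples $(x,y,z)$ with $x,y\in C$, $z\in C^{-2}$ and $xyz=1$; for such a triple $xy=z^{-1}$ is conjugate to $x^2$, as required. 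In each case the sum runs over only a handful of irreducible characters and is evaluated directly, so positivity of the relevant structure constant is immediate.

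A positive structure constant only produces triples with product~$1$, not necessarily generating ones, so the second and more delicate step is to rule out the non-generating triples. A triple fails to generate exactly when $x$ and $y$ lie in a common maximal subgroup $M$; hence I would list, from the Atlas, the few classes of maximal subgroups meeting $C$ — for $n=7$, for instance, only the two classes of $\PSL_2(7)$ have order divisible by $7$ — and bound the triples inside each crudely by $|C\cap M|$. Summing over the finitely many classes of maximal subgroups gives an upper bound for the non-generating triples; whenever this is smaller than $n_1(C)$, respectively $n_{-2}(C)$, generating triples of the required type exist. For groups this small an equally good and more transparent route, which is what the phrase ``computer check'' refers to, is simply to fix a representative $x\in C$, run over the $C_G(x)$-orbits of $y\in C$, and test both the product condition and $\langle x,y\rangle=G$ directly in {\sf GAP}; this settles existence and generation at once.

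Two points require care but present no real obstacle. First, the class of $k$-cycles splits into two $\fA_n$-classes for $n=5,6,7,8$ (but not for $n=9,10$); since the two classes are interchanged by an odd permutation the computation is symmetric and either may be taken as $C$, though one must track the power map, as $C^{-2}$ is in fact the \emph{other} class of $k$-cycles in the split cases. Second, I would record the genuine exception already built into Theorem~\ref{thm:main}: since $\fA_5\cong\PSL_2(4)$ has $q$ even, part~(2) is excluded for $n=5$ (consistent with $\fA_5$ admitting a faithful $2$-dimensional module in characteristic~$2$), while part~(1) still holds; for each of $\fA_6,\dots,\fA_{10}$ both parts hold. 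The heart of the argument is thus not the structure-constant evaluation, which is routine, but the verification that the triples produced actually generate rather than lie in a proper primitive subgroup — precisely what the maximal-subgroup bound, or the direct generation test, supplies.
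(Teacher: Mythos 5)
Your second route---fix $x\in C$, enumerate the pairs $(y,z)$ in the relevant classes with $xyz=1$, and test $\langle x,y\rangle=G$ directly in {\sf GAP}---is exactly the paper's proof: the lemma is introduced there by the single sentence ``A computer check shows the following,'' and the remark that follows it (counting $42$ generating pairs out of $7446$ for $\fA_{10}$) makes clear that precisely this enumeration was carried out. Your bookkeeping is also correct: the $k$-cycle class splits for $n=5,6,7,8$ but not $n=9,10$, the class $C^{-2}$ is indeed the other class in the split cases, and part~(2) is genuinely excluded for $\fA_5\cong\PSL_2(4)$ while part~(1) still holds there.

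The caveat is that your first, featured method---positivity of $n_1(C)$ and $n_{-2}(C)$ together with the bound $\sum_M|C\cap M|$ over maximal overgroups---is not ``equally good'': it provably cannot close for $\fA_{10}$. By the paper's own remark, for a fixed $7$-cycle $x$ only $42$ of the $7446$ pairs $(y,z)$ with $y,z\in C$ and $xyz=1$ generate $\fA_{10}$; the remaining $7404$ lie in intransitive subgroups, so any usable upper bound on non-generating pairs would have to be accurate to within about $0.6\%$. Your crude bound fails already at a single subgroup: a point stabilizer $M\cong\fA_9$ containing $x$ satisfies $|C\cap M|=36\cdot 6!=25920>7446$. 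Even replacing $|C\cap M|$ by the exact number of such pairs inside each of the seven maximal overgroups of $x$ (three $\fA_9$'s, three $(\fS_8\times\fS_2)\cap\fA_{10}$, one $(\fS_7\times\fS_3)\cap\fA_{10}$) does not help, because of overcounting: each of the $3717$ pairs generating a two-point stabilizer $\fA_8$ lies in three of these subgroups, so the union bound is at least $3\cdot 3717=11151>7446$ from those pairs alone. Thus for $\fA_{10}$ the direct generation test is not a ``more transparent'' alternative but the only one of your two methods that succeeds; the character-theoretic estimate should be dropped, or restricted to those $n$ where one checks it actually closes.
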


The result is pretty close for $\fA_{10}$: for a fixed element $x$ of order~7,
only 42 out of the 7446 pairs $(y,z)$ with $y,z$ conjugate to $x$ and $xyz=1$
do generate $G$ (this is roughly one in~177), all others generate intransitive
subgroups: for example, 2856 pairs generate an $\fA_9$, 3717 generate an
$\fA_8$. The generating triple for $\fA_{10}$ can be obtained by translation
from the rigid genus~0 triple of $\fS_{10}$ consisting of elements of cycle
shapes $(2^5,7,7.2)$.

\begin{prop}   \label{prop:spor}
 Theorem~\ref{thm:main} holds for sporadic groups and for the Tits group,
 with $C$ as indicated in Table~\ref{tab:sporadic}.
\end{prop}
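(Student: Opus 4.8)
The plan is to handle the $26$ sporadic groups and the Tits group $\tw2F_4(2)'$ uniformly, using that for each of them the full ordinary character table and the conjugacy classes of maximal subgroups are known and tabulated (in the Atlas \cite{Atl} and the {\sf GAP} character table library). For each group $G$ I would take for $C$ the class listed in Table~\ref{tab:sporadic}; these are chosen to consist of elements $x$ whose order is divisible by a large prime $r$, so that an element of $C$ lies in very few maximal subgroups. With the character table available I would compute the two relevant structure constants exactly from the formula at the start of Section~\ref{sec:Lie type}: the number $n_1(C)$ of pairs $(y,z)\in C\times C$ with $xyz=1$ (settling part~(1), a generating triple inside $C$), and the number $n_{-2}(C)$ of pairs $(y,z)\in C\times C^{-2}$ with $xyz=1$ (settling part~(2), since $xy$ is conjugate to $x^2$ exactly when $z=(xy)^{-1}$ lies in the class $C^{-2}$ of $x^{-2}$). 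In each case this is a finite sum $\sum_{\chi\in\Irr(G)}\chi(C)^2\chi(C^a)/\chi(1)$ over a known table.

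The second step is to bound the triples that fail to generate. If $M$ is a maximal subgroup of $G$ meeting $C$, then for a fixed $x\in C$ there are at most $|C\cap M|$ pairs $(y,z)$ with $y\in C\cap M$ and $z\in M$, hence at most $|C\cap M|$ triples lying in that copy of $M$. Summing $|C\cap M|$ over the conjugates of $M$, and over the (few) classes of maximal subgroups that can contain an element of order $o(x)$, yields an upper bound for the number of non-generating triples; the class-fusion and permutation-character data in the Atlas make both the list of relevant $M$ and the intersection numbers $|C\cap M|$ explicit. Whenever this total is strictly less than $n_a(C)$, a generating triple of the required form exists. Since none of the groups treated here is $\PSL_2(7)$ or $\PSL_2(q)$ with $q$ even, both exceptions in Theorem~\ref{thm:main} are vacuous, so both parts hold without exception.

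The work is essentially bookkeeping, and the only genuine obstacle is the size of the data for the largest groups, notably the Monster and the Baby Monster. For these the character table, though enormous, is stored and usable, and the maximal subgroup difficulty is circumvented by the choice of $r$: any maximal subgroup meeting $C$ has order divisible by $r$, and the short list of maximal subgroups of order divisible by a large prime $r$ is known in every case, even for the Monster where the full classification was at the time incomplete. When only the normalizer information is available I would bound $|C\cap M|$ crudely by $|N_G(\langle x\rangle)|$, which is still far smaller than $n_a(C)$. The truly tight cases are small, of the kind already met for $\fA_{10}$ in Lemma~\ref{small n}, where $n_a(C)$ is only moderately larger than the subgroup contribution; there I would certify a generating triple by exhibiting explicit representatives and checking generation directly, which is how the entries of Table~\ref{tab:sporadic} are verified.
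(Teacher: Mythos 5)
Your overall strategy is the paper's: compute $n_1(C)$ and $n_{-2}(C)$ from the character table, list the maximal subgroups meeting $C$ via the Atlas, and show their contribution is smaller. But there is a concrete gap in your bounding step. You bound the number of non-generating pairs by $\sum_{M\ni x}|C\cap M|$, i.e.\ you allow $z$ to be an \emph{arbitrary} element of $M$. This crude bound (which the paper only uses for groups of Lie type) is not good enough for the smallest sporadic groups. For $G=M_{11}$ with $C=11a$: a fixed $x\in C$ lies in exactly one conjugate $M\cong\PSL_2(11)$, and $|C\cap M|=60$, while $n_1(C)=35$; so your inequality $35>60$ fails and part~(1) cannot be concluded. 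For $M_{12}$ with $C=11a$ the crude bound is $720+720+60=1500$ (one conjugate of $M_{11}$ from each of the two classes, plus one $\PSL_2(11)$), which exceeds both $n_1(C)=640$ and $n_{-2}(C)=1180$, so both parts fail there.

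The missing idea is the paper's refinement: instead of $|C\cap M|$, one computes the \emph{structure constant inside} $M$, i.e.\ the number of pairs $(y,z)\in (C\cap M)\times(C^a\cap M)$ with $xyz=1$, using the character table of $M$. This is exactly why the classes in Table~\ref{tab:sporadic} were chosen so that every maximal overgroup is almost quasi-simple or metabelian — groups whose character tables are available — and it is what the numbers after the colons in the table mean (e.g.\ ``$\PSL_2(11){:}2|14$'' records inner structure constants $2$ and $14$). With these one gets $2<35$ for $M_{11}$ and $35+35+2=72<640$, $80+80+14=174<1180$ for $M_{12}$, and no explicit generation check is ever needed. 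Your fallback of exhibiting explicit generating triples in tight cases would repair the argument, but your claim that this ``is how the entries of Table~\ref{tab:sporadic} are verified'' is not accurate: the paper's verification is purely character-theoretic, via structure constants in the maximal subgroups rather than direct computation in $G$.
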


\begin{table}[htbp] 
  \caption{Structure constants for sporadic groups} \label{tab:sporadic}
\[\begin{array}{|l|rcr|l|}    
\hline
 G& C& A_G(C)& n_a(C)& \text{max. overgroups of $g\in C$}\\
\hline\hline
 M_{11}& 11a&  5& 35|80& \PSL_2(11):2|14\\
 M_{12}& 11a&  5& 640|1180& M_{11}:35|80\ (2\times),\PSL_2(11):2|14\\
    J_1& 19a&  6& 496|419& 19.6:3|3\\
 M_{22}& 11a&  5& 3632|3776& \PSL_2(11):2|14\\
    J_2&  7a&  6& 12528& \PSU_3(3):397,\PSL_3(2).2:12\\
 M_{23}& 23a& 11& 17646|18222& 23.11:11\\
\tw2F_4(2)'& 13a& 6& 114870|114195& \PSL_3(3).2: 106|133 (2\times),\PSL_2(25):1650\\
     HS& 11a&  5& 363464|367964& M_{22}:3632|3776, M_{11}:35|80\ (2\times)\\
    J_3& 19a&  9& 131161& \PSL_2(19):4\ (2\times)\\
 M_{24}& 23a& 11& 441904|455728& M_{23}:17646|18222,\PSL_2(23):5|29\\
    McL& 11a&  5& 7372675|7463800& M_{22}:3632|3776\ (2\times),M_{11}:35|80\\
     He& 17a&  8& 14113998& \PSp_4(4).2:13892\\
     Ru& 29a& 14& 174426828& \PSL_2(29):35\\
    Suz& 13a&  6& *& G_2(4),\PSL_3(3).2\ (2\times),\PSL_2(25)\\
     ON& 31a& 15& 479254117& \PSL_2(31):7\ (2\times)\\
   Co_3& 23a& 11& *& M_{23}\\
   Co_2& 23a& 11& *& M_{23}\\
Fi_{22}& 13a&  6& *& \OO_7(3)\ (2\times), \tw2F_4(2)'\\
     HN& 19a&  9& 756228015580& \PSU_3(8).3:134923\\
     Ly& 67a& 22& *& 67.22\\
     Th& 19a& 18& 252411100157582& \PSU_3(8).6:539180,\PSL_2(19).2:36\\
Fi_{23}& 17a& 16& *& \PSp_8(2),\PSp_4(4).4\\       
   Co_1& 13a& 12& *& 3.Suz.2, (\fA_4\times G_2(4)).2\\
    J_4& 43a& 14& *& 43.14\\
Fi_{24}'& 29a&14& *& 29.14\\
      B& 47a& 23& *& 47:23\\            
      M& 71a& 35& *& \PSL_2(71)\\       
\hline
\end{array}\]
'*': the values are too large to be printed\hskip 3cm
\end{table}

\begin{proof}
In Table~\ref{tab:sporadic} we give, for each sporadic simple group $G$, a
conjugacy class $C$ (in Atlas notation), the index
$A_G(C):=|N_G(\langle g\rangle):C_G(g)|$ for $g\in C$, the structure constant
$n_1(C)$ (and, if that is different, $n_{-2}(C)$) in $G$, and the list
of conjugacy classes of maximal subgroups
containing an element from $C$. Here, the structure constant in $G$
as well as in any maximal subgroup $H$ non-trivially intersecting $C$ is
easily computed from the character tables, using {\sf GAP} for example, since
we chose $C$ such that only almost quasi-simple or metabelian maximal
subgroups $H$ occur.
It remains to check that these contributions are less than $n_a(C)$. \par
The lists of maximal subgroups are taken from the Atlas \cite{Atl} or from
the Atlas home page \cite{webAtl}.
\end{proof}
    
\section{An application to representation theory} \label{peterconj}

Here, we prove our main results Corollary~\ref{cor:main} and
Theorem~\ref{thm:main2} from the introduction.

We first recall Scott's Lemma \cite{Sc}.
Let $k$ be a  field of characteristic $p \ge 0$.   

\begin{lem} \label{Scott}
 Suppose that $G = \langle g_1, \ldots, g_r \rangle$ with $g_1\cdots g_r=1$.
 Then for any finite dimensional $kG$-module $V$ we have
 $$\sum_{i=1}^r \dim [g_i,V] \ge \dim V - \dim V^G +  \dim [G,V].$$
\end{lem}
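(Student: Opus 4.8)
The plan is to deduce the inequality from a single rank computation built from two linear maps attached to the generating tuple. Throughout I write $[g_i,V]=(g_i-1)V$ for the image of $g_i-1$ on $V$, and I first record that $[G,V]=\sum_i[g_i,V]$: indeed $g_i$ acts trivially on $V/\sum_i[g_i,V]$ for every $i$, so the whole of $G=\langle g_1,\dots,g_r\rangle$ does, forcing $[G,V]\subseteq\sum_i[g_i,V]$, and the reverse inclusion is clear. Set $E:=\bigoplus_{i=1}^r[g_i,V]$, so that $\dim E=\sum_i\dim[g_i,V]$ is exactly the left-hand side of the asserted inequality.

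First I would introduce the simultaneous commutator map $D\colon V\to E$, $v\mapsto((g_i-1)v)_i$. Its kernel is $\bigcap_i C_V(g_i)=V^G$ because the $g_i$ generate $G$, so $\dim\operatorname{Im}D=\dim V-\dim V^G$. By itself this already yields the weaker bound $\sum_i\dim[g_i,V]\ge\dim V-\dim V^G$; the whole point of the lemma is to gain the extra summand $\dim[G,V]$, and for that I will exhibit inside $E$ a subspace containing $\operatorname{Im}D$ of codimension at least $\dim[G,V]$.

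The device is the relation $g_1\cdots g_r=1$, exploited through the partial products $a_i:=g_1\cdots g_i$ (so $a_0=a_r=1$). Define $s\colon V^{\oplus r}\to V$ by $s((v_i)_i)=\sum_i a_{i-1}v_i$ and let $\bar s$ be its restriction to $E$. The telescoping identity $\sum_i a_{i-1}(g_i-1)=\sum_i(a_i-a_{i-1})=a_r-a_0=0$ shows $\bar s\circ D=0$, i.e. $\operatorname{Im}D\subseteq\ker\bar s$, while $\bar s(E)\subseteq[G,V]$ since $[G,V]$ is $G$-stable. Combining these,
\[
\dim\operatorname{Im}D\le\dim\ker\bar s=\dim E-\dim\operatorname{Im}\bar s,
\]
so once I know $\operatorname{Im}\bar s=[G,V]$ the claim drops out upon substituting $\dim\operatorname{Im}D=\dim V-\dim V^G$ and rearranging.

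The one genuine point, and the step I expect to be the main obstacle, is the surjectivity $\operatorname{Im}\bar s=[G,V]$. Here $\operatorname{Im}\bar s=\sum_i a_{i-1}[g_i,V]=\sum_i(a_i-a_{i-1})V$, and telescoping once more gives $(a_i-a_j)V\subseteq\operatorname{Im}\bar s$ for all $i,j$; taking $j=0$ yields $(a_i-1)V\subseteq\operatorname{Im}\bar s$ for every $i$. Since $a_1,\dots,a_{r-1}$ generate the same group $G$ as the $g_i$ (one has $g_i=a_{i-1}^{-1}a_i$), the spanning fact for $[G,V]$ applied to this alternative generating set gives $[G,V]=\sum_{i=1}^{r-1}(a_i-1)V\subseteq\operatorname{Im}\bar s\subseteq[G,V]$, hence equality. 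This closes the argument, the remainder being only the bookkeeping of ranks in $E$ indicated above.
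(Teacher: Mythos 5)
Your proof is correct. Note, however, that the paper does not actually prove this statement: it is recalled as Scott's Lemma with a citation to Scott's 1977 paper \emph{Matrices and cohomology}, so there is no internal proof to compare against. Your argument is a valid, self-contained one, and it is in essence the classical argument: you have built the complex
$$0 \to V^G \to V \xrightarrow{\ D\ } \bigoplus_{i=1}^r [g_i,V] \xrightarrow{\ \bar s\ } V \to V/[G,V] \to 0,$$
which your verifications show is exact at every spot except possibly at the middle term $\bigoplus_i [g_i,V]$ (exactness at $V^G$ and the first $V$ by generation, at the second $V$ and $V/[G,V]$ by your surjectivity claim $\operatorname{Im}\bar s=[G,V]$), and the inequality is precisely the statement that the homology at the middle term has non-negative dimension. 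All the individual steps check out: the identity $\sum_i a_{i-1}(g_i-1)=a_r-a_0=0$ gives $\operatorname{Im}D\subseteq\ker\bar s$; the computation $a_{i-1}[g_i,V]=(a_i-a_{i-1})V$ together with telescoping and the observation that $a_1,\dots,a_{r-1}$ also generate $G$ gives $\operatorname{Im}\bar s=[G,V]$; and rank--nullity for $\bar s$ then yields the bound. (In cohomological language, $\ker\bar s$ is the cocycle space and $\operatorname{Im}D$ the coboundary space for the free group $\langle x_1,\dots,x_r\mid x_1\cdots x_r\rangle$ acting through $G$, restricted to tuples with $i$th entry in $[g_i,V]$; your elementary telescoping argument avoids having to invoke any of this machinery.)
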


We shall apply this when $r=3$ and $G$ has no fixed points on $V$ or $V^*$.
Noting that $\dim C_V(g_i) = \dim V - \dim [g_i,V]$, this gives
$$
\sum_{i=1}^3 \dim C_V(g_i) \le \dim V.
$$

Recall our notation $C^a:=\{x^a\mid x \in C\}$ for $C$ a conjugacy class of
$G$. The connection between our previous results on generation and the
size of eigenspaces is made by:

\begin{lem}   \label{lem:dim1/3}
 Let $k$ be algebraically closed. Let $x,y\in\GL_n(k)=\GL(V)$ be conjugate
 with product $xy$ conjugate to $x^2$, where $n\ge2$. Set
 $G=\langle x, y \rangle$. If $V$ contains no $1$-dimensional $kG$-submodules
 and has no $1$-dimensional $kG$-quotient module, then every eigenspace of
 $x$ has dimension at most $n/3$.
\end{lem}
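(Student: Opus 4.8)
The plan is to deduce everything from Scott's Lemma (Lemma~\ref{Scott}) after a scalar twist that moves a chosen eigenvalue of $x$ to~$1$. Fix an eigenvalue $\lambda\in k^\times$ of $x$ and set $e:=\dim\ker(x-\lambda)$, the dimension of the associated eigenspace; the goal is to show $e\le n/3$. Put $z:=(xy)^{-1}$, so that $xyz=1$, and note that $z$ is conjugate to $x^{-2}$ because $xy$ is conjugate to $x^2$. Now introduce the scaled elements $x':=\lambda^{-1}x$, $y':=\lambda^{-1}y$ and $z':=\lambda^2 z$; since $\lambda^{-1}\lambda^{-1}\lambda^2=1$ we still have $x'y'z'=1$, and the group $G':=\langle x',y'\rangle$ equals $\langle x',y',z'\rangle$, so the three scaled elements form a generating triple with product~$1$.

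Next I would compute the three fixed spaces. Since $C_V(x')=\ker(\lambda^{-1}x-1)=\ker(x-\lambda)$ we get $\dim C_V(x')=e$, and because $y$ is conjugate to $x$ we likewise get $\dim C_V(y')=\dim\ker(y-\lambda)=e$. For the third, $C_V(z')=\ker(\lambda^2 z-1)=\ker(z-\lambda^{-2})$; as $z$ is conjugate to $x^{-2}$ this has the same dimension as $\ker(x^{-2}-\lambda^{-2})=\ker(x^2-\lambda^2)=\ker\bigl((x-\lambda)(x+\lambda)\bigr)$, which contains $\ker(x-\lambda)$. Hence $\dim C_V(z')\ge e$ (this also survives characteristic~$2$, where the factor becomes $(x-\lambda)^2$).

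The crux is to verify that Scott's Lemma applies to $G'$ on $V$ in its sharp $r=3$ form, i.e.\ that $V^{G'}=0$ and $V_{G'}=0$; this is exactly the point where one must use that $V$ has no $1$-dimensional sub- or quotient module, not merely no trivial ones. A nonzero $v\in V^{G'}$ would satisfy $x'v=y'v=v$, i.e.\ $xv=yv=\lambda v$, so $kv$ would be a $G$-stable line, that is, a $1$-dimensional $kG$-submodule — excluded by hypothesis; thus $V^{G'}=0$. Dually, a nonzero element of $V_{G'}$ produces a $1$-dimensional $kG$-quotient of $V$ on which $x$ and $y$ both act by $\lambda$, again excluded, so $V_{G'}=0$. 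I expect this bookkeeping to be the only genuine obstacle, the rest being a short computation.

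With both vanishings in hand, the $r=3$ form of Scott's Lemma recorded just after Lemma~\ref{Scott} gives $\dim C_V(x')+\dim C_V(y')+\dim C_V(z')\le\dim V=n$. Substituting the computed values yields $2e+\dim C_V(z')\le n$, and combining with $\dim C_V(z')\ge e$ gives $3e\le 2e+\dim C_V(z')\le n$, hence $e\le n/3$. Since $\lambda$ was an arbitrary eigenvalue of $x$ and all eigenvalues lie in $k$ (as $k$ is algebraically closed), this shows every eigenspace of $x$ has dimension at most $n/3$, as required.
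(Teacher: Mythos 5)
Your proposal is correct and follows essentially the same route as the paper's own proof: twist the triple $(x,y,(xy)^{-1})$ by the eigenvalue so that the chosen eigenspaces become fixed spaces, observe that the hypothesis on $1$-dimensional sub- and quotient modules forces $V^{G'}=0$ and $V_{G'}=0$ for the scaled group, and apply Scott's Lemma to the resulting generating triple with product $1$. The only cosmetic difference is that the paper fixes an eigenvalue of maximal eigenspace dimension while you handle an arbitrary eigenvalue, and you spell out the bound $\dim C_V(z')\ge e$ via the conjugacy of $(xy)^{-1}$ with $x^{-2}$, which the paper leaves implicit.
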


\begin{proof}
Let $\theta$ be an eigenvalue of $x$ with the $\theta$-eigenspace of maximal
dimension among all eigenspaces. Set $z=(xy)^{-1}$,
$$x'=\theta^{-1}x,\quad y'=\theta^{-1}y,\quad z'=\theta^2z.$$
Then $x'y'z'=1$. By hypothesis, $V$ and $V^*$ have
no fixed points for $H=\langle x',y' \rangle$.  
Note that the fixed space of each of these elements has dimension at least
that of the $\theta$-eigenspace of $x$, so an application of Scott's Lemma to
this triple shows that this dimension is at most $n/3$.
\end{proof}

Let's say that a finite group $G$ \emph{has property (E)} if there exists
$g\in G$ such that for any algebraically closed field $k$ and any $kG$-module
$V$ for which $G$ has no fixed points on $V$ or $V^*$, every eigenspace of~$g$
on $V$ has dimension at most $(1/3) \dim V$. Note that
non-trivial irreducible representations of a group $G$ with property (E)
are necessarily of dimension at least~3; in particular,
$G$ is perfect. We will say that $G$ has property (E) \emph{in characteristic
$p$} if the result holds for modules over fields of characteristic $p$.

\begin{cor}   \label{cor:propE}
 Let $G$ be a finite nonabelian simple group other than $\PSL_2(q)$ with $q$
 even.  Then $G$ has property (E). All finite nonabelian simple groups have
 property (E) in characteristic not $2$.  
\end{cor}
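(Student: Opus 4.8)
The plan is to separate the excluded groups $\PSL_2(q)$ with $q$ even from all the others, exhibiting in each case a single element $g\in G$ that serves simultaneously for every field $k$ (respectively every $k$ of characteristic $\ne2$) and every module of the required kind.

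Suppose first that $G\ne\PSL_2(q)$ with $q$ even. By Theorem~\ref{thm:main}(2) there is a class $C$ with elements $x,y\in C$ such that $xy$ is conjugate to $x^2$ and $\langle x,y\rangle=G$, and I would take $g:=x$. Let $k$ be algebraically closed and $V$ a nonzero $kG$-module having no fixed points on $V$ or on $V^*$. Then $V$ is nontrivial, so the representation is faithful (as $G$ is simple) and $\dim V\ge2$; moreover, since $G$ is perfect its only one-dimensional module is trivial, so the absence of fixed points on $V$ and on $V^*$ is exactly the absence of a one-dimensional submodule and of a one-dimensional quotient. The images of $x$ and $y$ in $\GL(V)$ are then conjugate, their product is conjugate to the square of the first, and they generate the image of $G$, so Lemma~\ref{lem:dim1/3} applies and every eigenspace of $g$ on $V$ has dimension at most $(1/3)\dim V$. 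As one $g$ serves for all pairs $(k,V)$, this gives property (E) in every characteristic; note that this already covers $G=\PSL_2(7)$, which is not excluded from part~(2).

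For $G=\PSL_2(q)$ with $q$ even I would prove property (E) only in characteristic $\ne2$, where the relation $xy\sim x^2$ is unavailable (in characteristic $2$ the group carries a two-dimensional module, which is precisely what forbids it), so Lemma~\ref{lem:dim1/3} cannot be invoked directly. Here I would take $g:=x$ of order $q-1$ when $q\ge8$, and an element of order $5$ when $q=4$ (so $G\cong\fA_5$), and bound the $1$-eigenspace and the remaining eigenspaces by different means. For the fixed space I would feed the generating triple of $C$-conjugates with product $1$ from Lemma~\ref{sl2q even}(a) (resp. the explicit order-$5$ triple for $\fA_5$) into Scott's Lemma~\ref{Scott}: since the three elements are conjugate, the estimate $\sum_{i=1}^3\dim C_V(g_i)\le\dim V$ collapses to $3\dim C_V(g)\le\dim V$, valid for every $V$ with no fixed points on $V$ or $V^*$. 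For an eigenvalue $\theta\ne1$ I would instead pass to a composition series, using that $\dim\ker(g-\theta)$ is subadditive on short exact sequences: trivial composition factors contribute nothing, their only eigenvalue being $1$, while each nontrivial factor $S$ satisfies $\dim\ker((g-\theta)|_S)\le(1/3)\dim S$ by Lemma~\ref{sl2q even}(b) (resp. the corresponding statement for $\fA_5$), and summing yields $\dim\ker((g-\theta)|_V)\le(1/3)\dim V$.

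The first case is immediate once Theorem~\ref{thm:main}(2) and Lemma~\ref{lem:dim1/3} are in hand; the real work, and the step I expect to require the most care, is the characteristic $\ne2$ analysis of $\PSL_2(q)$ with $q$ even. The essential subtlety is that in the modular situation a module with no fixed points on $V$ or $V^*$ can still contain trivial composition factors, which would defeat a naive composition-factor bound on the $1$-eigenspace; this is exactly why I reserve the composition-factor estimate for $\theta\ne1$, where trivial factors are harmless, and treat $\theta=1$ separately through Scott's Lemma, where such factors do contribute but the conjugacy of the generating triple still forces the $(1/3)$ bound.
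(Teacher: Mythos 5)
Your strategy is essentially the paper's own. In the generic case you combine Theorem~\ref{thm:main}(2) with Lemma~\ref{lem:dim1/3}, noting that the single element $g=x$ serves all pairs $(k,V)$; for $\PSL_2(q)$ with $q$ even in characteristic $\ne 2$ you split off the eigenvalue $1$, which you handle by Scott's Lemma~\ref{Scott} applied to the generating triple of conjugates with product $1$ from Lemma~\ref{sl2q even}(a) (resp.\ the $\fA_5$ remarks when $q=4$), and you treat eigenvalues $\theta\ne 1$ by passing to composition factors and invoking Lemma~\ref{sl2q even}(b). This is exactly the published argument; your composition-series subadditivity discussion, with the observation that trivial factors are harmless for $\theta\ne1$, is the correct elaboration of the paper's terse sentence that property (E) for eigenvalues other than $1$ ``can be checked on irreducible modules''.

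The one genuine gap is your treatment of $\PSL_2(7)$. You place it in the generic case because the printed statement of Theorem~\ref{thm:main}(2) excludes only $q$ even; but the paper's own proof of this corollary reads ``unless $G=\PSL_2(q)$ with $q$ even or $q=7$'', and what Section~\ref{subsec:L2q} actually establishes for $\PSL_2(7)$ is only the existence of $x,y\in C$ with $xy\in C^{-2}$, where $C$ is a class of elements of order $7$. Since $2$ is a square modulo $7$, one has $C^2=C$ while $C^{-2}=C^{-1}\ne C$, so this is \emph{not} the relation ``$xy$ conjugate to $x^2$''; it is instead equivalent to a triple of $C$-conjugates with product $1$. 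Indeed the relation you need genuinely fails in $\PSL_2(7)$: for any class $C$ and fixed $x$, the pair $(y,z)=(x,x^{-2})$ always solves $xyz=1$ with $y\in C$, $z\in C^{-2}$, and a character-table computation shows that for $C$ of order-$7$ elements this trivial, non-generating solution is the \emph{only} one (the relevant structure constant equals $1$); the same failure occurs for the other classes. So the hypothesis you invoke for $\PSL_2(7)$ is a misprint in the stated theorem, and your argument as written leaves that group uncovered. The repair lies inside your own framework and is what the paper does: treat $\PSL_2(7)$ exactly like the $q$ even case, using Scott's Lemma on the generating triple of conjugate $7$-elements with product $1$ (which Section~\ref{subsec:L2q} does supply, since $C^{-2}=C^{-1}$) for the eigenvalue $1$, and checking the bound for $\theta\ne1$ directly on the irreducible modules of $\PSL_2(7)$, all of which have dimension at least $3$ with order-$7$ elements having eigenspaces of dimension at most $\lceil\dim V/7\rceil\le(1/3)\dim V$ in every characteristic.
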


\begin{proof}
Let $x,y\in G$ as in Theorem~\ref{thm:main}(b). Then $G$ has property (E) with
respect to $g:=x$. The result follows by the previous Lemma unless
$G=\PSL_2(q)$ with $q$ even or $q=7$.  If $q=7$ or we are considering modules
in any characteristic other than $2$ and $q$ is even, the result follows for
irreducible modules by Lemma~\ref{sl2q even} (and the remarks above it).
For any eigenvalue other than~$1$, property (E) can be checked on irreducible
modules. The condition for the eigenvalue~$1$ follows by Scott's Lemma from
Theorem~\ref{thm:main}.
\end{proof}

The same proof in the remaining cases yields:

\begin{cor} \label{cor:weakpropE}    
 Let $G$ be a finite nonabelian simple group and $k$ an algebraically closed
 field of characteristic $p$. There exists $x \in G$ such  that if $V$
 contains no composition factors of dimension at most $2$, then every
 eigenspace of $x$ has dimension at most $(1/3) \dim V$.
\end{cor}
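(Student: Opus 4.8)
The plan is to reduce everything to results already established, separating the groups that enjoy property~(E) in characteristic~$p$ from those that do not. The first observation is that the hypothesis on $V$ here is strictly stronger than the one in the definition of property~(E): since a nonabelian simple group is perfect, its only one-dimensional module is trivial, so a module with no composition factor of dimension at most~$2$ has in particular no trivial composition factor, and therefore $V^G=0$ and $(V^*)^G=0$. Consequently, whenever $G$ has property~(E) in characteristic~$p$ I would simply take $x=g$ to be the element provided by Corollary~\ref{cor:propE}, and the desired eigenspace bound is immediate. By Corollary~\ref{cor:propE} this already settles every finite nonabelian simple group with the sole exception of $G=\PSL_2(q)$, $q$ even, over a field of characteristic~$2$.

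For that remaining case I would take $x$ of order $q-1$ as in Lemma~\ref{sl2q even} when $q>7$, the small case $q=4$ (where $G\cong\fA_5$) being checked directly using an element of order~$5$. The key structural fact is that $q-1$ is odd, so $x$ is a $p'$-element and hence acts semisimply on every $kG$-module. Therefore, for each scalar $\theta$, the dimension of the $\theta$-eigenspace of $x$ on $V$ equals the sum of the dimensions of the $\theta$-eigenspaces of $x$ on the composition factors of $V$.

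It remains to bound these per-factor contributions. Every composition factor $W$ of $V$ is a nontrivial irreducible $kG$-module, and in characteristic~$2$ the irreducible $\PSL_2(q)$-modules are precisely the tensor products of distinct Frobenius twists of the natural two-dimensional module; hence the only ones of dimension at most~$2$ are the trivial module and the twists of the natural module. The hypothesis on $V$ rules these out, so $\dim W>2$ for every $W$, and Lemma~\ref{sl2q even}(c) gives that every eigenspace of $x$ on $W$ has dimension at most $(1/3)\dim W$. Summing over the composition factors via the additivity just noted, every eigenspace of $x$ on $V$ has dimension at most $(1/3)\dim V$, as desired.

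The one step genuinely requiring care, and hence the crux of the argument, is this passage from the irreducible statement Lemma~\ref{sl2q even}(c) to arbitrary $V$: it works only because $x$ is semisimple, so that eigenvalue multiplicities are additive along a composition series and a trivial composition factor can never enter under the hypothesis. For all other simple groups no such reduction is needed, since property~(E) is already phrased for arbitrary modules; there the entire content of the corollary is the elementary observation, made in the first paragraph, that its hypothesis is a strengthening of the property-(E) hypothesis.
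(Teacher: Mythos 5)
Your proposal is correct and follows essentially the same route as the paper, whose entire proof is the remark that ``the same proof'' as for Corollary~\ref{cor:propE} applies in the remaining cases: property~(E) settles every group except $\PSL_2(q)$, $q$ even, in characteristic~$2$, where Lemma~\ref{sl2q even}(c) is applied to the composition factors of $V$. Your only deviation is minor and harmless: you handle the eigenvalue~$1$ by the same composition-factor reduction (legitimate here, since the hypothesis excludes trivial composition factors) where the paper's template argument invokes Scott's Lemma, and note that plain subadditivity of eigenspace dimensions along a filtration --- valid for any element, semisimple or not --- already suffices, so the semisimplicity of $x$ is not actually the crux you present it to be.
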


We now move towards composite groups:

\begin{lem}   \label{lem:prod}
 Let $G=G_1\times G_2$ be a direct product of finite groups. Assume that
 $G_1$ and $G_2$ both have property (E). Then so does $G$.
\end{lem}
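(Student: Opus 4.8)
The plan is to take as witness for $G$ the element $g=(g_1,g_2)$, where $g_1\in G_1$ and $g_2\in G_2$ witness property (E) for the two factors. Fix an algebraically closed field $k$ and a $kG$-module $V$ with $V^G=0=(V^*)^G$, and let $\lambda$ be an eigenvalue of $g$ on $V$; the goal is to bound $\dim\ker(g-\lambda)$ by $\frac13\dim V$. I would split the argument according to whether $\lambda\ne1$ or $\lambda=1$, since the fixed space behaves differently from the other eigenspaces.

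Consider first the generic case $\lambda\ne1$. Here I would reduce to irreducible modules: along a composition series of $V$ the dimension of a fixed eigenspace is subadditive, and on a trivial composition factor $g$ acts as the identity, so trivial factors contribute nothing to the $\lambda$-eigenspace. It therefore suffices to treat a nontrivial irreducible $kG$-module $S$, and over $k$ this has the form $S=S_1\otimes S_2$ with $S_i$ irreducible for $G_i$; since $S$ is nontrivial, at least one factor, say $S_1$, is nontrivial. Choosing a basis of $S_2$ in which $g_2$ is upper triangular with diagonal entries $\beta_1,\dots,\beta_m$ produces a $g$-stable flag of $S$ whose successive quotients are isomorphic to $S_1$, with $g$ acting on the $j$th quotient as $\beta_j g_1$; hence $\dim\ker(g-\lambda)\le\sum_j\dim\ker(g_1-\lambda\beta_j^{-1})$. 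As $S_1$ is a nontrivial irreducible, property (E) for $g_1$ bounds each summand by $\frac13\dim S_1$, and summing over the $m=\dim S_2$ quotients gives $\dim\ker(g-\lambda)\le\frac13\dim S$. This step uses only the abstract property (E) of the factors, and in fact delivers the bound for \emph{every} eigenvalue on nontrivial irreducibles.

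The case $\lambda=1$, that is the fixed space $C_V(g)$, is the main obstacle: now the trivial composition factors sitting in the middle of $V$ do contribute, and the subadditive estimate is too weak (it only yields the bound when $V$ has no trivial composition factors, as in Corollary~\ref{cor:weakpropE}). Here I would instead invoke Scott's Lemma (\ref{Scott}) in the form $\sum_{i=1}^3\dim C_V(x_i)\le\dim V$, which applies because $G$ has no fixed points on $V$ or $V^*$. For this I need a triple $x_1,x_2,x_3$ of conjugates of $g$ with product $1$ that generate $G$: then all three fixed spaces have the common dimension $\dim C_V(g)$, and Scott's estimate gives $3\dim C_V(g)\le\dim V$. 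Such product-$1$ generating triples of conjugates are available in each factor (for the simple factors to which this lemma is applied, from Theorem~\ref{thm:main}, with $g_i$ lying in the relevant class), and I would form the required triple coordinatewise, each component then being conjugate to the corresponding $g_i$ and the products being $1$. Alternatively, if one uses the stronger data that $g_i$ lies in a pair $x_i,y_i$ with $x_iy_i$ conjugate to $x_i^2$, then Lemma~\ref{lem:dim1/3} handles all eigenvalues uniformly and both cases collapse into one.

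The genuinely delicate point, in either formulation, is to arrange that the coordinatewise triple generates all of $G_1\times G_2$ rather than a proper subdirect product. By Goursat's lemma a subgroup projecting onto both factors is the preimage of the graph of an isomorphism between a common quotient of $G_1$ and of $G_2$, and it is the whole group precisely when that common quotient is trivial. When $G_1\not\cong G_2$ this is automatic; in general I would exploit the freedom to replace the second triple by a conjugate, or to twist it by an automorphism of $G_2$, so that the two generating triples are not linked by any isomorphism and no proper common quotient survives. Combining the two cases, every eigenspace of $g$ on $V$ has dimension at most $\frac13\dim V$, which is property (E) for $G$.
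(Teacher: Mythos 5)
Your ``generic case'' is, in fact, the paper's entire proof of Lemma~\ref{lem:prod}: the paper reduces at once to an irreducible module $V=V_1\otimes V_2$ (this reduction is asserted ``as in the previous proof'') and then runs exactly your flag argument -- triangularize $g_2$ on $V_2$, observe that $g$ acts on each quotient $V_1\otimes(W_{j+1}/W_j)$ as a scalar times $g_1$, and sum the $\tfrac13\dim V_1$ bounds. Note that once $V$ is irreducible this handles \emph{every} eigenvalue, the fixed space included; the paper never invokes Scott's Lemma, conjugacy classes or generation anywhere in this lemma.

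The genuine gaps are in your separate treatment of $\lambda=1$. First, property (E) of $G_1$ and $G_2$ is a purely representation-theoretic hypothesis; it does not furnish product-one generating triples of conjugates of $g_i$. By appealing to Theorem~\ref{thm:main} you silently replace the hypotheses of the lemma by ``the factors are simple groups with such triples'', which is not the statement being proved, and which does not even cover the paper's own use of the lemma: Lemma~\ref{lem:prod} is iterated to treat $L_1\times\cdots\times L_t$, so in the inductive step $G_1$ is itself a direct product, not simple, and no triple data is available for it. Second, and fatally, your Goursat escape does not work: if $T_2=\varphi(T_1)$ for an isomorphism $\varphi\colon G_1\to G_2$, then $T_2^h=(\mathrm{inn}_h\circ\varphi)(T_1)$ and $\alpha(T_2)=(\alpha\circ\varphi)(T_1)$, so conjugating the second triple or twisting it by an automorphism of $G_2$ can never destroy the linking isomorphism; what you need are two generating triples in \emph{different} $\Aut$-orbits. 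That can fail outright: for $G_1=G_2=\fA_5$ with $g_i$ a $5$-cycle (the witness used for $\fA_5$), there are exactly $60$ product-one triples in $C^3$, all of them generate $\fA_5$, and they form a single $\mathrm{Inn}(\fA_5)$-orbit; consequently every product-one triple of conjugates of $(g_1,g_2)$ generates a proper subgroup of $\fA_5\times\fA_5$ (a diagonal $\fA_5$, or a subgroup of $M\times\fA_5$). So the Scott route cannot be repaired by choosing triples more carefully, and your $\lambda=1$ case is not proved. (The same objection applies to your closing alternative via Lemma~\ref{lem:dim1/3}, which again needs $\langle (x_1,x_2),(y_1,y_2)\rangle=G_1\times G_2$.)

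To be fair, the worry that prompted your detour is legitimate: subadditivity along a composition series does not control the fixed space when trivial composition factors sit in the heart of $V$, and the paper's one-line reduction to irreducible $V$ is asserted rather than argued for that case. But observe that where the paper actually applies the lemma (Corollary~\ref{simple fixed}, and through it Theorem~\ref{peter}), the modules have no trivial composition factors at all, and there the reduction to irreducibles -- hence the tensor-flag argument -- is valid for every eigenvalue including $1$. So the correct repair stays entirely module-theoretic; importing Scott's Lemma~\ref{Scott} and generation into this lemma is both unnecessary for the intended applications and, as above, impossible to carry out from property (E) alone.
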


\begin{proof}
We may assume that $k$ is algebraically closed. Let $g_i\in G_i$ as in
property~(E). We claim that $g:=(g_1,g_2)\in G$ satisfies (E) for $G$.
As in the previous proof we may assume that $V$ is irreducible. Thus,
$V=V_1 \otimes V_2$, with $V_i$ irreducible for $G_i$ and at least one of
them not the trivial module, say $V_1$.

By assumption every eigenspace of $g_1$ on $V_1$ has dimension at most 
$(1/3) \dim V_1$. Choose a $g_2$-invariant filtration $0<W_1<\ldots<W_r=V_2$
of $V_2$ with 1-dimensional quotients, so $r=\dim V_2$. It is clear that
on $V_1\otimes (W_{j+1}/W_j)$ each eigenspace of $g$ has the same dimension
as an eigenspace of $g_1$ and so of dimension at most $(1/3) \dim V_1$.
The result follows.
\end{proof}

The previous result implies that direct products of finite nonabelian
simple groups have property (E) in any characteristic not $2$, and also in
characteristic $2$ as long as we avoid $\PSL_2(q)$ with $q$ even.     

In this last case, we will need a different result. If $S$ is a finite subset
of $\GL(V)$, let $\avg(S,V): = |S|^{-1} \sum_{s \in S} \dim C_V(s)$.
 
\begin{cor}
 Let $G=L_1\times\ldots\times L_t$ where $L_i \cong\PSL_2(q)$ with $q\ge 4$
 even. Let $k$ be an algebraically closed field of characteristic $2$.
 Let $V$ be a $kG$-module with no trivial composition factors. Let 
 $X=X_1 \times \ldots \times X_t$ where $X_i \le L_i$ is cyclic of order $q+1$.
 \begin{enumerate}
  \item[\rm(a)]  $\avg(X,V)  \le (1/3)\dim V$.
  \item[\rm(b)]  There exists $x \in X$ with $\dim C_V(x) < (1/3) \dim V$.
 \end{enumerate}
\end{cor}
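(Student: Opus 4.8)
The plan is to reduce to the case of an irreducible $V$ and then to compute the average fixed-space dimension directly from the character decomposition of the abelian group $X$.

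First I would exploit that $X$ has order $(q+1)^t$, which is odd, so $X$ is a $p'$-group (with $p=2$). By Maschke's theorem every restriction $V|_X$ is semisimple, so any short exact sequence of $kG$-modules restricts to a split sequence of $kX$-modules. Consequently $V|_X\cong\bigoplus_j (V_j)|_X$, where the $V_j$ run over the $G$-composition factors of $V$; and since $\avg(X,-)$ depends only on the restriction to $X$ and is additive over direct sums, we get $\avg(X,V)=\sum_j\avg(X,V_j)$ while $\dim V=\sum_j\dim V_j$. As $V$ has no trivial composition factors, each $V_j$ is a nontrivial irreducible $kG$-module, so it suffices to prove (a) for $V$ irreducible and nontrivial. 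Being irreducible over the direct product, $V=V_1\otimes\cdots\otimes V_t$ with each $V_i$ an irreducible $kL_i$-module and at least one $V_i$ nontrivial.

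Next I would record the key identity. Since $X$ is abelian of order prime to $p$ and $k$ is algebraically closed, $V|_X=\bigoplus_{\psi\in\hat X}V_\psi$ decomposes into the isotypic components for the linear characters $\psi$ of $X$. For $x\in X$ one has $C_V(x)=\bigoplus_{\psi(x)=1}V_\psi$, and $|\{x:\psi(x)=1\}|=|\ker\psi|=|X|/o(\psi)$, where $o(\psi)$ denotes the order of $\psi$. Summing over $x$ and dividing by $|X|$ therefore gives
$$\avg(X,V)=\sum_{\psi\in\hat X}\frac{\dim V_\psi}{o(\psi)}=\dim V^X+\sum_{\psi\ne1}\frac{\dim V_\psi}{o(\psi)}.$$
Because $|X|$ is odd, every nontrivial $\psi$ satisfies $o(\psi)\ge3$, so the second sum is at most $\tfrac13(\dim V-\dim V^X)$. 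It then remains only to see that $V^X=0$: here I would use that $q$ is even, so $L_i\cong\PSL_2(q)=\SL_2(q)$ and a generator $x_i$ of $X_i$ has order $q+1$. Since the $X_i$ act factorwise on $V=\bigotimes_i V_i$, we have $V^X=\bigotimes_i V_i^{X_i}$, and for the nontrivial factor $V_i^{X_i}\subseteq C_{V_i}(x_i)=0$ by Lemma~\ref{sl2q even2}. Thus $V^X=0$ and $\avg(X,V)\le\tfrac13\dim V$, proving (a). For (b) I would isolate the identity: it contributes $\dim C_V(1)=\dim V>\tfrac13\dim V$, so if every $\dim C_V(x)$ were at least $\tfrac13\dim V$ the average would strictly exceed $\tfrac13\dim V$, contradicting (a); hence some $x\in X$ has $\dim C_V(x)<\tfrac13\dim V$.

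The only substantive input beyond bookkeeping is Lemma~\ref{sl2q even2}, which is what forces $V^X=0$; the coefficient $\tfrac13$ then emerges precisely because $q+1$ is odd, guaranteeing $o(\psi)\ge3$ for all nontrivial $\psi$. I expect the only point needing care to be the passage from the non-strict bound (a) to the strict inequality in (b), which the identity-element argument handles cleanly.
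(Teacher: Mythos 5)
Your proof is correct. It has the same skeleton as the paper's argument --- reduce to an irreducible module via semisimplicity of $V|_X$ (the paper states this reduction without comment), factor $V=V_1\otimes\cdots\otimes V_t$, and invoke Lemma~\ref{sl2q even2} on a nontrivial tensor factor to force $V^X=0$ --- but it diverges at the final step. The paper finishes by citing the Guralnick--Mar\'oti theorem \cite[Thm.~1.1]{GuMa}, a general result on average fixed-space dimensions for arbitrary finite groups of odd order acting without trivial composition factors, whereas you prove exactly the instance needed here by hand: since $X$ is abelian of order prime to the characteristic, $V|_X$ splits into weight spaces $V_\psi$, and the identity $\avg(X,V)=\sum_\psi \dim V_\psi/o(\psi)$ together with $o(\psi)\ge 3$ for $\psi\ne 1$ (oddness of $q+1$) and $V^X=0$ gives the bound $(1/3)\dim V$ directly. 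What your route buys is a self-contained, elementary proof that makes transparent where the constant $1/3$ comes from; what the citation buys the authors is brevity and a statement that works for non-abelian $X$ as well, which is irrelevant here since $X$ is abelian. Your deduction of (b) from (a) via the contribution of the identity element is the same easy observation the paper compresses into ``clearly the second statement follows from the first''; the only (harmless) caveat in both treatments is the degenerate case $\dim V=0$, where the strict inequality in (b) is vacuously unattainable.
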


\begin{proof}
Clearly the second statement follows from the first.
To prove the first statement, it suffices to assume that $V$ is irreducible.
So $V=V_1 \otimes \ldots \otimes V_t$ with each $V_i$ an irreducible
$kL_i$-module (and at least one nontrivial). By Lemma~\ref{sl2q even2} we have
that $C_V(X)=0$.  Since $|X|$ is odd, the result follows by 
\cite[Thm. 1.1]{GuMa}.
\end{proof}

Combining the previous results yields:

\begin{cor} \label{simple fixed}
 Let $G$ be a direct product of a finite number of isomorphic nonabelian
 simple groups. Let $k$ be an algebraically closed field and $V$ be 
 a $kG$-module with no trivial composition factors. Then there exists
 $g \in G$ with $\dim C_V(g) \le (1/3) \dim V$.
\end{cor}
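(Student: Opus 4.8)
The plan is to reduce everything to the notion of property~(E) together with the single auxiliary corollary proved immediately above, splitting into two cases according to the isomorphism type of the simple factors and the characteristic of $k$. The key preliminary remark is that the hypothesis ``$V$ has no trivial composition factors'' is exactly what is needed to feed $V$ into property~(E): such a $V$ has no trivial submodule and no trivial quotient, hence $V^G=0$ and $(V^*)^G=0$, i.e.\ $G$ has no fixed points on $V$ or on $V^*$. Furthermore $C_V(g)$ is precisely the eigenspace of $g$ for the eigenvalue~$1$, so any bound on \emph{all} eigenspaces of a fixed element of $G$ immediately yields the sought bound on $\dim C_V(g)$.

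In the first (generic) case I would assume that either $\operatorname{char}k\ne2$, or that $\operatorname{char}k=2$ but the common simple factor is not $\PSL_2(q)$ with $q$ even. Under this hypothesis the discussion following Lemma~\ref{lem:prod} guarantees that $G$ has property~(E); I pick an element $g\in G$ witnessing it. By the preliminary remark $G$ has no fixed points on $V$ or $V^*$, so property~(E) applies and every eigenspace of $g$ on $V$, in particular $C_V(g)$, has dimension at most $(1/3)\dim V$.

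The second case is the one genuinely excluded from property~(E): $\operatorname{char}k=2$ with every simple factor isomorphic to a fixed $\PSL_2(q)$, $q\ge4$ even. Here I would instead invoke the corollary proved immediately above bounding $\avg(X,V)$: choosing $X=X_1\times\cdots\times X_t$ with each $X_i\le L_i$ cyclic of order $q+1$, its part~(b) directly supplies an element $x\in X\le G$ with $\dim C_V(x)<(1/3)\dim V$, and $g:=x$ is as required.

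Since both of the genuinely substantial inputs---property~(E) for direct products and the averaging bound resting on \cite{GuMa}---are already in hand, there is no real analytic obstacle remaining. The only points demanding care are the exhaustiveness of the case split and the verification that ``isomorphic simple factors'' forces all factors to be the \emph{same} $\PSL_2(q)$ in the exceptional case, so that the averaging corollary applies verbatim; I would check these at the outset.
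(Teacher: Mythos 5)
Your proof is correct and is exactly the argument the paper intends: the paper states this corollary with no written proof ("Combining the previous results yields"), and the intended combination is precisely your case split — property (E) via Lemma~\ref{lem:prod} and Corollary~\ref{cor:propE} in the generic case, and the averaging corollary for $\PSL_2(q)$, $q$ even, in characteristic~2 in the exceptional case. Your preliminary observations (no trivial composition factors $\Rightarrow$ no fixed points on $V$ or $V^*$, and $C_V(g)$ being the $1$-eigenspace) are the right glue and are all that needs checking.
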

 
We can now solve a conjecture stated in the 1966 thesis of Peter Neumann
\cite{Nthesis}. There are two new ingredients in our proof.
The first is the previous result.  However, we also improve his result even 
for solvable groups.  The idea is to consider
the average dimension over some coset of a normal subgroup.
This result does require the irreducibility assumption  (consider 
the augmentation ideal for an elementary abelian $2$-group in
any odd characteristic). 

\begin{thm}   \label{peter}
 Let $G$ be a finite group acting irreducibly and nontrivially on the 
 $kG$-module $V$.There exists an element $g \in G$ with 
 $\dim C_V(g) \le (1/3) \dim V$.
\end{thm}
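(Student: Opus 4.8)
The plan is to prove the statement by induction on $\dim V$, after two harmless reductions. First, extending scalars does not change $\dim C_V(g)$ for any $g$ and turns $V$ into a direct sum of Galois conjugate absolutely irreducible nontrivial modules, all of the same dimension; since a rational $g$ has the same fixed-space dimension on each conjugate, it suffices to make one summand small, so I may assume $k=\bar k$. Second, replacing $G$ by $G/C_G(V)$ changes neither the hypotheses nor the quantity $\dim C_V(g)$ (a good element of the quotient lifts to a good element of $G$), so I may assume $G$ acts faithfully. Now fix a minimal normal subgroup $N$ of $G$. By Clifford's theorem $V|_N$ is semisimple and its irreducible constituents form a single $G$-orbit; since the trivial $N$-character is $G$-fixed while $N\ne1$ acts faithfully, \emph{no} constituent of $V|_N$ is trivial.

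The first main case is $N$ nonabelian, so $N=S_1\times\cdots\times S_k$ is a direct product of isomorphic nonabelian simple groups. Then $V|_N$ has no trivial composition factors by the remark above, so Corollary~\ref{simple fixed} applied to $N$ produces $g\in N$ with $\dim C_V(g)\le(1/3)\dim V$, and we are done. Otherwise $N\cong C_p^a$ is elementary abelian; decompose $V|_N=\bigoplus_{\lambda\in\Lambda}V_\lambda$ into $N$-eigenspaces, where $\Lambda$ is a single $G$-orbit of \emph{nontrivial} linear characters of $N$. If $|\Lambda|=1$ then $\lambda$ is $G$-invariant and $N$ acts on $V$ by the nontrivial scalar character $\lambda$; choosing $n\in N$ with $\lambda(n)\ne1$ gives $C_V(n)=0$, which is far better than required.

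So the crux is $m:=|\Lambda|>1$, where $G$ acts imprimitively: with $H:=\mathrm{Stab}_G(\lambda)$ of index $m$ and $W:=V_\lambda$ irreducible for $H$, we have $V=\mathrm{Ind}_H^G W$. Let $\pi\colon G\to\mathrm{Sym}(\Omega)$ be the transitive action on the $m$ blocks $\Omega=G/H$. For $g\in G$ the fixed space splits along the cycles of $\pi(g)$: a cycle of length $\ell$ contributes $\dim C_W(w)$ where $w$ is the action of a conjugate of $g^{\ell}$ on the corresponding block, so $\dim C_V(g)=\sum_{\text{cycles }c}\dim C_{W}(w_c)$. The strategy is to find $g$ whose image $\pi(g)$ has few cycles \emph{and} whose block-actions $w_c$ have small fixed space on $W$; for instance any $g$ with $\pi(g)$ an $m$-cycle already gives $\dim C_V(g)\le\dim W=\dim V/m\le(1/3)\dim V$ once $m\ge3$. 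When $\pi(G)$ is too small to supply such elements (in particular when $m=2$), one instead averages $\dim C_V(g)$ over a suitable coset of $\ker\pi$, feeding in the inductive bound for $H$ acting on the smaller module $W$ for the fixed-block contributions and the average-fixed-space estimates of Guralnick--Mar\'oti \cite{GuMa} for the permutation contribution.

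I expect this imprimitive case to be the main obstacle. The nonabelian minimal normal case is dispatched cleanly by Corollary~\ref{simple fixed}, and the homogeneous abelian case $m=1$ is trivial since $N$ then acts by a nontrivial scalar. By contrast, in the induced situation an element $h\in H$ that is good on $W$ by induction need \emph{not} be good on $V$, since it may fix too many of the remaining $m-1$ blocks; one must arrange that $\pi(g)$ moves most blocks while the block-actions $w_c$ stay small. Balancing these two requirements, and in particular forcing the resulting constant down to the sharp value $1/3$ rather than the weaker bound coming from the naive estimate $\dim C_V(g)\le(\#\text{cycles})\cdot\dim W$, is exactly what the coset-averaging argument combined with the quantitative results of \cite{GuMa} accomplishes; this is also where Neumann's bound for solvable groups is improved.
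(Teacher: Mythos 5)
Your reductions (extending scalars, passing to a faithful quotient), the nonabelian minimal normal case via Corollary~\ref{simple fixed}, and the homogeneous abelian case are all correct and coincide with the paper's proof. But the heart of the matter --- the imprimitive elementary abelian case --- is precisely where your proposal stops being a proof and becomes an announcement. Two concrete problems. First, your ``easy'' subcase relies on an $m$-cycle in $\pi(G)$, which a transitive group of degree $m\ge 3$ need not contain (e.g.\ an elementary abelian $2$-group acting regularly), so essentially the whole burden falls on your fallback. Second, that fallback --- ``average over a coset of $\ker\pi$, feed in induction on $H$ acting on $W$ and the estimates of \cite{GuMa}'' --- cannot work as stated: for $N$ an elementary abelian $2$-group the bound of \cite{GuMa} is only \emph{strictly less than} $(1/2)\dim V$ (the constant there is $1/p$ with $p$ the smallest prime divisor of $|G|$), and induction on the block stabilizer gives no control over the permutation part, as you yourself observe. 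So nothing in your outline pushes the constant below $1/2$ in the critical situation where $\pi(g)$ has $2$-cycles.

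What actually closes this gap in the paper is a specific computation you never perform. One first disposes of odd $p$ entirely by citing \cite{IKMM} (which gives $(1/p)\dim V\le(1/3)\dim V$ there), so only $p=2$ remains. Then one picks $g$ acting \emph{without fixed points} on the set of homogeneous components (Jordan's lemma for transitive groups) and proves $\avg(gN,V)\le(1/3)\dim V$ orbit by orbit: an orbit of length $\delta\ge3$ trivially contributes at most $(1/\delta)$ of its dimension, and for an orbit of length $2$, say on $W=V_1\oplus V_2$, the image of $N$ in $\GL(W)$ is a Klein four-group $M=\{1,z_1,z_2,z\}$ with $z_i$ trivial on $V_i$; writing $g=(s,t)\tau$ one has $(gz)^2=g^2$ but $(gz_i)^2=g^2z$, and since $g^2$ and $g^2z$ have no common fixed vectors, $\dim C_W(gz_i)\le\dim V_1-\dim C_W(g)$. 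Averaging over the four coset elements then gives at most $(1/4)\dim W$, which is what beats $1/3$. This identity, exploiting that $N$ acts on the two blocks by distinct characters of order $2$, is the new idea of the proof (and is what improves Neumann's solvable bound); it is not a consequence of \cite{GuMa}, and no induction on $\dim V$ is needed anywhere.
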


\begin{proof}
Let $R:=\End_{kG}(V)$. This is a division ring. We may replace
$k$ by the center of $R$ and so $R$ is a central simple algebra over $k$.
Now extend $k$ to a Galois splitting field $L$ of $R$.
Then $V \otimes_k L$ is a direct sum of Galois conjugates of $V$. Any element
has the same size fixed space on each of these conjugates.
We can then extend scalars and assume that $k$ is algebraically closed, so
there is no harm in assuming that $V$ is absolutely irreducible.

Let $N$ be a minimal normal subgroup of $G$.
If $N$ is an elementary abelian $p$-group for $p > 2$, the result
follows by \cite{IKMM} (see also \cite[Thm. 1.1]{GuMa}).

Suppose that $F(G)=1$. Then $N=L_1 \times \ldots \times L_t$ is a direct
product of isomorphic non-abelian simple
groups and $V$ is a completely reducible $kN$-module with no fixed points.
The result follows by Corollary~\ref{simple fixed}.

The remaining case is when $N$ is an elementary abelian $2$-group. If $N$ acts
homogeneously on $V$, then $N$ is central and there is a fixed point free
element in $N$. Let $\Omega =\{V_1, \ldots, V_m\}$ denote the homogeneous
components of $N$ with $m>1$.  Then $G$ acts transitively on $\Omega$. Thus,
there exists $g \in G$ with no fixed points on $\Omega$.

We will prove that $\avg(gN,V) \le (1/3) \dim V$.
This completes the proof for then certainly some element in the coset $gN$
has fixed point space of dimension at most $(1/3) \dim V$.   
Let $\Delta \subseteq \Omega$ be an orbit for $g$ with $\delta = |\Delta|$.
Let $W = \oplus_{i \in \Delta} V_i$. We will in fact prove that
$\avg(gN,W) \le (1/3) \dim W$ and clearly this suffices.
Obviously, $\dim C_W{(gy}) \le (1/\delta) \dim  W$ for every $y \in N$.  
Thus, if $\delta > 2$, our assertion has been proved. It remains to consider
the case that $\delta = 2$. By reordering, we may assume that
$W = V_1 \oplus V_2$.
Consider the image of $N$ in $\GL(W)$. Since the $V_i$ are distinct nontrivial
weight spaces for $N$, the image of $N$ in $\GL(W)$ is $M$, an elementary
abelian group of order $4$. In computing this average on $W$, we may mod out by
the kernel of the action of $N$ on $W$ and so we are averaging over the coset
$gM$ of size $4$. Let $M=\{1, z_1, z_2, z \}$
where $z_i$ is trivial on $V_i$ (and so acts as $-1$ on the other factor).  

Write $g=(s,t)\tau$ in $\GL(W)$ where $\tau$ is an involution interchanging
coordinates (identify $V_1$ with $V_2$).
Then $g^2=(st, ts)$ and so we see that $c:=\dim C_W(g) = \dim C_{V_1}(st)$.   
Note that  $(gz)^2=g^2$ and $(gz_i)^2=g^2z$.
Thus, $\dim C_W(gz)=\dim C_W(g)=c$ and $\dim C_W(gz_i)  \le \dim V_1 - c$  
(since $g^2z$ and $g^2$ have no common fixed points). Thus,
$$\begin{aligned}
  \avg(gN,W) =& \frac{1}{4}(2 \dim C_W(g) + 2 \dim C_W(gz_i))\\
    \le& \frac{1}{4}2\dim V_1 = \frac{1}{4}\dim W < \frac{1}{3}\dim W.
\end{aligned}$$
This completes the proof.
\end{proof}

\begin{rem}   \label{rem:history}\ \hfill
\begin{enumerate}
 \item  Neumann \cite{Nthesis} proved that for solvable $G$ there exists an
  element $g \in G$ with $\dim C_V(g) \le (7/18) \dim V$. Segal and Shalev
  \cite{SS} using the classification of finite simple groups showed that
  for all groups one could obtain $(1/2)\dim V$ as a bound. Isaacs et al
  \cite{IKMM} improved this to $(1/p)\dim V$ as long as $V$ is a completely
  reducible $G$-module (where $p$ is the smallest prime dividing
  the order of $G$  -- for $p=2$, they used the classification of finite simple
  groups via a result of \cite{GK}).   
  Mar\'oti and the first author \cite{GuMa} improve this by showing that one
  can take $V$ to be any module with no trivial composition factors and 
  improve the bound to strictly less than $(1/p)\dim V$ where $p$ is the
  smallest prime divisor of $|G|$.
 \item  As we have already noted, one cannot improve the $1/3$. See the next
  section for examples, showing that even for arbitrarily large dimension,
  one cannot do better than $1/9$.  
\end{enumerate}
\end{rem}

We close this section by extending our result to infinite linear groups as
announced in Theorem~\ref{thm:main2} of the introduction:

\begin{thm} \label{linear groups}
 Let $G$ be a nontrivial irreducible subgroup of $\GL(V)$ with $V$ a finite
 dimensional vector space over a field $k$.  There exists
 $g \in G$ with $\dim C_V(g) \le (1/3) \dim V$.
\end{thm}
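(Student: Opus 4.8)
The plan is to deduce the infinite case from the finite case already settled in Theorem~\ref{peter}, by passing to the Zariski closure of $G$ and then down to a suitable finite subgroup. Set $n:=\dim V$. First I would run the same field reduction as in the proof of Theorem~\ref{peter}: the ring $R:=\End_{kG}(V)$ is a division ring of finite $k$-dimension (Schur's Lemma needs no finiteness of $G$), so we may replace $k$ by the centre of $R$, pass to a Galois splitting field, and finally extend scalars to $\bar k$. Galois-conjugate constituents carry the same fixed-space dimensions, and $\dim_k C_V(g)=\dim_{\bar k}C_{V\otimes\bar k}(g)$ for each $g$, so it suffices to treat $k=\bar k$ algebraically closed with $V$ absolutely irreducible; an element produced on the extended module is automatically the required element of the original group $G$.

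Next I would replace $G$ by its Zariski closure $H:=\overline{G}\le\GL(V)$. Since invariance of a subspace is a closed condition, $G$ and $H$ have exactly the same invariant subspaces, so $H$ is an algebraic group acting faithfully and absolutely irreducibly on $V$, with $G$ Zariski dense in $H$. The crucial observation is that $h\mapsto\dim C_V(h)=n-\mathrm{rank}(h-1)$ is upper semicontinuous, so $U:=\{h\in H:\dim C_V(h)\le n/3\}$ is Zariski open in $H$; if $U\neq\emptyset$ then density of $G$ forces $U\cap G\neq\emptyset$, which is what we want. Thus it remains only to exhibit a \emph{single} element of the algebraic group $H$ with fixed space of dimension at most $n/3$. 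Here I would note that $H$ is automatically reductive: the unipotent radical $R_u(H^\circ)$ is a connected normal unipotent subgroup, so by Kolchin's theorem its nonzero fixed space is an $H$-submodule and hence all of $V$, forcing trivial action, whence faithfulness gives $R_u(H^\circ)=1$.

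Finally I would reduce the reductive group $H$ to the finite case by producing a finite subgroup $F\le H$ that still acts irreducibly on $V$; then Theorem~\ref{peter} applied to $F$ yields $g\in F\subseteq H$ with $\dim C_V(g)\le n/3$, witnessing $U\neq\emptyset$. In characteristic $p>0$ one arranges $H^\circ$ to be defined over $\bar{\FF}_p$ and writes it as an increasing union of finite groups of Lie type $H^\circ(\FF_q)$, which for large $q$ inherit irreducibility from the ambient algebraic group; one then absorbs the finite component group $H/H^\circ$ together with enough torus torsion to retain irreducibility. In characteristic $0$ one spreads $H$, $V$ and the representation out over a finitely generated integral domain and reduces modulo a suitable maximal ideal to land in the positive-characteristic situation without changing $\dim V$ or destroying irreducibility.

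The main obstacle is exactly this last step: guaranteeing a finite \emph{irreducible} subgroup of $H$ (equivalently, a single small-fixed-space element of $H$). The delicate case is the imprimitive one, where $H^\circ$ is a torus and $H$ acts monomially through the finite quotient $H/H^\circ$ permuting the weight lines; there the finite subgroup must combine enough torus torsion to separate the weight lines with lifts of the component group. One must also control the characteristic-$p$ subtleties of restricting irreducibility to finite groups of Lie type, and handle the bookkeeping of the reduction-mod-$p$ argument in characteristic $0$. Once a finite irreducible $F$ is in hand, everything else is the soft semicontinuity-and-density argument above.
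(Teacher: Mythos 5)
Your field reduction and the semicontinuity\,+\,density observation are both correct, but the step you yourself flag as the main obstacle is not merely delicate --- it is \emph{false} in characteristic $0$. The Zariski closure $H$ of an irreducible linear group need not contain any finite subgroup acting irreducibly: take $G=H=\SL_2(\CC)$ acting on $V=\mathrm{Sym}^{10}(\CC^2)$, of dimension $11$. The finite subgroups of $\SL_2(\CC)$ are the cyclic groups, the binary dihedral groups, and the three binary polyhedral groups, and the largest dimension of an irreducible representation of any of these is $6$ (attained by the binary icosahedral group); hence no finite subgroup of $H$ is irreducible on $V$. Your fallback for characteristic $0$ --- spread $H$ out over a finitely generated ring and reduce modulo a maximal ideal --- does not repair this, because whatever you produce in positive characteristic (a group of Lie type $H^\circ(\FF_q)$, or an element with small fixed space) lives in the reduced group over $\overline{\FF}_p$, not in $H$: groups such as $\SL_2(\FF_q)$ do not embed back into $\SL_2(\CC)$, and a point of the special fibre is not a point of the generic fibre. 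So your chain of reductions never returns to the original group $G$, and the proposal contains no mechanism for doing so. (It can be patched: the open locus $\{h:\mathrm{rank}(h-1)\ge\lceil 2n/3\rceil\}$ in the spread-out group scheme meets the special fibre over a dense set of maximal ideals, and Chevalley's theorem on constructible images then forces it to meet the generic fibre, which has a $k$-point since $k$ is algebraically closed; but none of this is in your write-up, and it is exactly where the work lies. The same issue already infects your characteristic~$p$ case when $k$ contains transcendentals, since then $H$ need not be defined over $\overline{\FF}_p$ without a further spreading-out step.)

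The paper sidesteps all of this by never passing to the Zariski closure: it reduces the group $G$ itself. One may assume $G$ is finitely generated (finitely many elements of $G$ already span $\End(V)$), so $G\le\GL_n(R)$ for a finitely generated subring $R$ of $k$; choose a maximal ideal $M$ of $R$ such that the image $\bar G\le\GL_n(R/M)$ is still absolutely irreducible (an open, hence dense, condition on maximal ideals), note that $R/M$ is \emph{finite} by the Nullstellensatz, and apply Theorem~\ref{peter} to the finite group $\bar G$ to get $\bar g$ with $\mathrm{rank}(I-\bar g)\ge(2/3)n$. The crucial point your route loses is that every element of $\bar G$ is by construction the image of an element of $G$, and rank can only drop under reduction mod $M$; so any lift $g\in G$ of $\bar g$ satisfies $\dim C_V(g)\le(1/3)\dim V$. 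This treats all characteristics uniformly and needs none of the reductivity, Steinberg restriction, component-group lifting, or Clifford-theoretic machinery your route through algebraic groups would require --- machinery which, as the example above shows, cannot in any case deliver the finite irreducible subgroup of $H$ your argument rests on.
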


\begin{proof}
As in the proof of Theorem~\ref{peter} there is no harm in assuming that  
$V$ is absolutely irreducible.
Let $n:=\dim(V)$. By passing to a subgroup we may assume that $G$ is finitely
generated. By
passing to the subfield of $k$ generated by the matrix entries of the
finitely many generators and their inverses we may assume that $k$ is finitely
generated as a field over the prime field. Indeed, we may assume that
$G\le \GL_n(R)$ where $R$ is a finitely generated subring of $k$ (whose
quotient field is $k$). Let $M$ be a maximal ideal of $R$ such that the
image $\bar{G}$ of $G$ in $\GL_n(R/M)$ is still absolutely irreducible. 
(This can be easily done. Choose $n^2$ elements of $G$ that form a basis
for $M_n(k)$. This is an open condition on the spectrum of $R$ and so holds
for a dense subset of maximal ideals of $R$). \par
By the Nullstellensatz, $R/M$ is finite, so by the result for finite groups
in Theorem~\ref{peter}, we may choose an element $\bar{g} \in \bar{G}$ whose
fixed space has dimension at most $n/3$. This is equivalent to the
rank of $I - \bar{g}$ being at least $(2/3)n$, whence the same is obviously
true for any lift $g \in G$ of $\bar{g}$.
\end{proof}

\section{Some characteristic~0 results} \label{char0}

Under suitable circumstances, one can improve Theorem \ref{thm:main2}.
If $G$ is a compact Lie group (or complex algebraic group), then
as the dimension increases, the dimension of any weight space of 
a maximal torus divided by the dimension tends to $0$ \cite[Thm.~6]{GLM}.

Here, we specialize to irreducible complex representations of non-abelian
simple groups where we prove much better upper bounds for the maximal size of 
eigenspaces of suitable elements.

\begin{thm}   \label{thm:main3}
 For any $\eps>0$ there exists $N>0$ with the following property: for all
 finite quasi-simple groups $G$ there exists $g \in G$ so that for all
 irreducible $\CC G$-modules $V$ of dimension $\dim V > N$ every eigenspace
 of $g$ is of dimension at most $\epsilon \dim V$.
\end{thm}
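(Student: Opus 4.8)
The plan is to fix, for each group, a single element $g$ and to bound the dimension of \emph{every} eigenspace of $g$ on \emph{every} large irreducible $V$ by means of the orthogonality identity
\[
 \dim E_\lambda(g)=\frac{1}{m}\sum_{j=0}^{m-1}\lambda^{-j}\chi_V(g^j),\qquad m:=|g|,
\]
where $E_\lambda(g)$ is the $\lambda$-eigenspace and $\lambda$ ranges over $m$-th roots of unity. Isolating the term $j=0$ gives the uniform estimate
\[
 \dim E_\lambda(g)\le\frac{\dim V}{m}+\frac1m\sum_{j=1}^{m-1}|\chi_V(g^j)|,
\]
so the whole problem reduces to choosing $g$ of large order $m$ whose non-trivial powers have small character values on every irreducible. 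Since a central element acts as a scalar and hence only permutes the eigenspaces of $g$ without altering their dimensions, I may pass to the simple quotient and work with whichever cover is convenient, taking $g$ to be the image of a suitable element. The sporadic groups and the Tits group form a finite list, so enlarging $N$ beyond the largest irreducible degree occurring among their covers makes the assertion vacuous for them; the two infinite families remain.

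For $G$ of Lie type I would distinguish two regimes. When the rank exceeds a threshold $r_0(\eps)$ I take $g=x$ of \emph{prime} order $r$ dividing $\Phi_e^*(q)$ with $e$ comparable to the rank, which exists by Lemma~\ref{lem:smallZsig} (the finitely many exceptions have small $e$) and satisfies $r\ge e+1\ge 2/\eps$. Then every non-trivial power $x^j$ is again regular semisimple, so by Lemma~\ref{lem:Lvalues} and Proposition~\ref{prop:values} (together with the cyclic-defect argument used for the unipotent series in Proposition~\ref{prop:excbig}) one has $|\chi_V(x^j)|\le B:=[N_G(T):T]$ for every irreducible $\chi_V$, where $T=C_G(x)$; hence $\dim E_\lambda(x)\le\dim V/r+B\le\eps\dim V$ once $\dim V\ge 2B/\eps$, and here $\dim V$ dominates $B=O(\mathrm{rank})$ by the Landazuri--Seitz--Zalesskii and Tiep--Zalesskii bounds \cite{TZ}. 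When the rank is at most $r_0$ I instead take $x$ a generator of a cyclic maximal torus $T$ of order $m$ as in Table~\ref{tab:classtorus}, with $B=O_{r_0}(1)$. The \emph{singular} powers $x^j$, those lying in a proper subtorus with larger centralizer, number only $O(q^{\,\mathrm{rank}/2})$ out of $m=\Theta(q^{\mathrm{rank}})$, a proportion $\delta=O(1/q)$; bounding these trivially by $|\chi_V(x^j)|\le\dim V$ and the regular ones by $B$ gives $\dim E_\lambda(x)\le B+\delta\dim V$, which is below $\eps\dim V$ once $q$ is large and $\dim V\ge 2B/\eps$. The finitely many pairs of bounded rank and bounded $q$ are absorbed by enlarging $N$.

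For the alternating groups $\fA_n$ and their covers I would fix a prime $p$ with $n/2<p\le n$ (available for large $n$ by Bertrand's postulate) and take $g$ to be a $p$-cycle; this lies in $\fA_n$ since $p$ is odd, and every non-trivial power $g^j$ is again a $p$-cycle, hence a permutation carrying a single cycle of length $>n/2$. The essential input is a sharp bound $|\chi_V(g^j)|\le(\dim V)^{1-c}$ for an absolute constant $c>0$, valid for such elements; estimates of this Larsen--Shalev type underlie the arguments of \cite{LST}. The displayed inequality then yields $\dim E_\lambda(g)\le\dim V/p+(\dim V)^{1-c}$, which is at most $\eps\dim V$ as soon as $p\ge 2/\eps$ and $\dim V>N\ge(2/\eps)^{1/c}$; the finitely many small $n$ are again handled by enlarging $N$, and the spin characters of $2.\fA_n$ obey analogous bounds, with the central scalar leaving eigenspace dimensions unchanged.

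The main obstacle is the \emph{uniformity} of the character-value inputs. For the groups of Lie type this is precisely where the Deligne--Lusztig estimates of Section~\ref{subsec:3.1} are indispensable, and one must check that in the bounded-rank regime the proportion $\delta$ of singular powers genuinely tends to $0$ with $q$, while in the high-rank regime a Zsigmondy prime of the required size is always present, so that a single threshold $N=N(\eps)$ serves all groups at once. For the alternating groups the crux is the sharp symmetric-group character bound, which is the one genuinely non-elementary ingredient of the proof.
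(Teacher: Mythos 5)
Your reduction framework (the orthogonality identity, isolating the $j=0$ term, and noting that central twists do not change eigenspace dimensions) matches the paper's, but your choice of element for the alternating groups is genuinely wrong. You take $g$ a $p$-cycle with $p$ prime and merely $n/2<p\le n$, which is all Bertrand's postulate provides; such a $p$ can be arbitrarily close to $n/2$, and then $g$ has $n-p\approx n/2$ fixed points. On the standard $(n-1)$-dimensional representation $V$ of $\fA_n$ the fixed space of $g$ has dimension exactly $n-p\approx\tfrac12\dim V$, so the conclusion of Theorem~\ref{thm:main3} fails for this $g$, and for the same representation the character bound you invoke is false: $\chi_V(g)=(n-p)-1\approx\tfrac12\chi_V(1)$, which is not $O(\chi_V(1)^{1-c})$. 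The Larsen--Shalev estimates \cite{LaSh} require the element to have very few short cycles (in particular, few fixed points); they say nothing useful about permutations with a positive proportion of fixed points. This is why the paper instead takes $x$ a cycle of length $n-e$ with $e\in\{2,3\}$: every nontrivial power of such a cycle still has only $e$ fixed points, so \cite[Thm.~1.2]{LaSh} gives $|\chi(x^j)|<\chi(1)^{1/2}$ uniformly, and the argument closes. Your construction could be repaired by forcing $p=n-o(n)$ (which needs prime-gap results well beyond Bertrand) or, more simply, by dropping primality as the paper does.

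There is a second gap at the double covers $2.\fA_n$, which are quasi-simple and cannot be dismissed: their faithful (spin) characters do not factor through $\fA_n$, so neither \cite{LaSh} nor the arguments of \cite{LST} apply to them, and your assertion that they "obey analogous bounds" is exactly the missing step, not a footnote. The paper circumvents this with a different idea: choose $x$ projecting to a product of a $2$-cycle and a $2^f$-cycle with $n/2\le 2^f\le n-4$; by \cite[Thm.~3.9]{HH} every noncentral element $y$ of $\langle x\rangle$ is conjugate to $yz$ with $z$ the central involution, so every faithful character vanishes on all noncentral powers of $x$, and each eigenspace has dimension $0$ or $\chi(1)/2^f$. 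Relatedly, your "pass to the simple quotient" step conflates choosing a lift (harmless) with transferring character estimates to the cover (not valid for faithful characters), and for Lie type your quoted tools (Proposition~\ref{prop:values}, the Lusztig-series analysis) are developed in the paper under a connected-center hypothesis, hence for adjoint groups rather than for covers such as $\SL_n(q)$. The paper's Lie-type argument is both simpler and cover-safe: it uses only the column-orthogonality bound $|\chi(t)|\le|C_G(t)|^{1/2}\le|T|^{1/2}$ for nontrivial powers $t$ of a regular semisimple element of order $\Phi_e^*(q)$, together with the Tiep--Zalesskii minimal-degree bounds \cite{TZ} to make $|T|^{1/2}/\dim V$ small, so no two-regime case division and no counting of singular powers is needed.
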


\begin{proof}  
We first consider the case  that $G$ is quasi-simple of Lie type. Furthermore,
we may assume that $G$ is not one of the finitely many exceptional covering
groups. Now for $G=\SL_2(q)$, $q\ge5$,
let $g$ be a regular semisimple element of odd order $(q-1)/2$ or $(q+1)/2$,
and otherwise let $g$ be (a lift to the central extension $G$ of) a regular
semisimple element of order $\Phi_e^*(q)$ in the maximal torus $T$ of $G/Z(G)$
as given in Table~\ref{tab:classtorus} for classical groups,
respectively as in Table~\ref{tab:exctorus} for exceptional groups. Note that
such regular semisimple elements exist whenever there is a suitable Zsigmondy
prime, that is, in all but finitely many cases. Then $|C_G(g)|=|T|$.
Let $V$ be an irreducible $\CC G$-module, with character $\chi$, and set
$d:=\dim V=\chi(1)$.  Then, the multiplicity of any linear character $\la$ of
$H:=\langle g\rangle$ in $\chi|_H$ is 
$$\langle\chi,\la\rangle_H=\frac{1}{|H|}\sum_{t\in H}\chi(t)\bar\la(t)
  \le\frac{1}{|H|}\Big(d+ |H|\,|T|^{1/2}\Big).
  $$
Thus, any eigenspace of $g$ on $V$ has at most dimension
$(\frac{1}{|H|}+\frac{|T|^{1/2}}{d})\dim V$. Since $|H|=\Phi_e^*(q)$
goes to infinity as $q^e$ does and since $|T|^{1/2}/d \rightarrow 0$ (by
\cite[Table~1]{TZ}), the claim follows.

Next consider $G=\fA_n$.  Let $x$ be a cycle of odd length $n-e$
with $e = 2$ or $3$. For sufficiently large $n$, it follows by
\cite[Thm. 1.2]{LaSh} that $|\chi(x^j)| < \chi(1)^{1/2}$ for all $x^j \ne 1$.
Now let $H$ be the subgroup generated by such an $x$.
Arguing as we did for the case of Lie type groups, we see that this implies
that  the multiplicity of any  irreducible character  of $H$ in the restriction
of a nontrivial character $\chi$ of $\fA_n$ is less than
$\chi(1)/(n-e) + \chi(1)^{1/2}$. The result follows. 

Suppose that $G$ is the double cover of $\fA_n$.    Let $\pi$ be
the projection from $G$ onto $\fA_n$.  Let $x \in G$ with $\pi(x)$
a product of a $2$-cycle and $2^f$-cycle where $n/2 \le 2^f \le n-4$. 
It follows by \cite[Thm. 3.9]{HH} that every noncentral element $y$ of
$H=\langle x \rangle$ is conjugate to $yz$ where $z$ is the central involution
in $G$. Thus $\chi(y)=0$ for every such $y$ if $\chi$ is a faithful character
of $G$.  
This implies that every eigenspace of $x$ is either $0$ dimensional
or has dimension $\chi(1)/2^f$.

Since the result is asymptotic, we need not consider sporadic groups or the
covers of $\fA_6$ and $\fA_7$.
\end{proof}

We give two sample results showing how the bound obtained in the proof of the
previous theorem can be strengthened with a bit more work for the groups of
Lie type. 
 
\begin{prop}
 Let $G=E_8(q)$. Then there exists $g\in G$ such that for every irreducible
 $\CC G$-module $V$ the dimension of every eigenspace of $g$ on $V$ is at
 most $(1/q^8)\dim V$.
\end{prop}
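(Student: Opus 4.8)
The plan is to take $g$ to be a generator of the cyclic maximal torus $T\le G$ of order $|T|=\Phi_{30}(q)$ recorded in Table~\ref{tab:exctorus}, so that $H:=\langle g\rangle=T$ with $|H|=\Phi_{30}(q)=q^8+q^7-q^5-q^4-q^3+q+1>q^8$ for every prime power $q$. Here $g$ is chosen once and for all, independently of $V$, which is essential since a single $g$ must work for every module. For an irreducible $\CC G$-module $V$ with character $\chi$, the element $g$ acts semisimply and the dimension of its eigenspace for an eigenvalue $\omega$ equals the multiplicity in $\chi|_H$ of the linear character $\la$ of the cyclic group $H$ with $\la(g)=\omega$, namely
$$\langle\chi|_H,\la\rangle_H=\frac{1}{|H|}\sum_{t\in H}\chi(t)\bar\la(t)
  =\frac{\chi(1)}{|H|}+\frac{1}{|H|}\sum_{t\ne1}\chi(t)\bar\la(t).$$
Since $1/|H|<1/q^8$, there is a small amount of room, and it suffices to control the contribution of the nontrivial powers $t=g^j$.

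The structural input I would use, exactly as in the treatment of $F_4(q)$ and $E_8(q)$ in the proof of Proposition~\ref{prop:excbig}, is that every nonidentity element of $T$ is regular semisimple with centralizer $T$: any prime divisor of $\Phi_{30}(q)$ is a primitive (Zsigmondy) divisor of $q^{30}-1$, so its centralizer cannot lie in a proper reductive subgroup, and each $t\ne1$ lies in a unique maximal torus. Lemma~\ref{lem:Lvalues} then applies to the dual torus $T^*\le G^*$ (recall $E_8(q)$ has connected center, so $Z(G^*)=1$). For a fixed $\chi$ there are only three possibilities. If $\chi$ is neither unipotent nor one of the semisimple characters $\chi_s$ with $s\in T^*\setminus\{1\}$, then $\chi$ vanishes identically on $T\setminus\{1\}$, whence $\langle\chi|_H,\la\rangle_H=\chi(1)/|H|<\chi(1)/q^8$ and nothing more is needed. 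The remaining, delicate, cases are the unipotent characters and the regular semisimple characters.

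For these I would bound the values on $t\ne1$ and then close the estimate numerically. If $\chi$ is unipotent, then $\chi(t)\in\{0,\pm1\}$ for all $t\ne1$, by the argument used for $F_4(q)$ in Proposition~\ref{prop:excbig} (via blocks of cyclic defect for a Zsigmondy prime dividing $|T|$, or directly from the character formula \cite[Prop.~7.5.4]{Ca}), so that $\big|\sum_{t\ne1}\chi(t)\bar\la(t)\big|\le|H|-1$. If $\chi=\chi_s$ is semisimple with $s\in T^*$ regular, then $W(s)=1$, and Proposition~\ref{prop:values} gives $|\chi_s(t)|\le|N_G(T):T|=30$, so $\big|\sum_{t\ne1}\chi(t)\bar\la(t)\big|\le30(|H|-1)$. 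In either case
$$\langle\chi|_H,\la\rangle_H\le\frac{\chi(1)}{|H|}+c,\qquad c\in\{1,30\}.$$
Since $|H|-q^8=\Phi_{30}(q)-q^8\ge\tfrac12q^7$ and $|H|\le2q^8$, one has $\chi(1)\big(\tfrac{1}{q^8}-\tfrac{1}{|H|}\big)\ge\chi(1)/(4q^9)$, so the desired inequality $\langle\chi|_H,\la\rangle_H\le\chi(1)/q^8$ follows as soon as $\chi(1)\ge 120\,q^9$. This is where I would invoke the known lower bounds on the relevant degrees of $E_8(q)$: the smallest nontrivial unipotent degree and the regular semisimple degree $\chi_s(1)=|G:T|_{p'}$ are both far larger than $q^9$ (see L\"ubeck \cite{Lue}), so the estimate closes comfortably.

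The conceptual heart of the argument is thus (i) the regularity of all powers of $g$, which via Lemma~\ref{lem:Lvalues} simultaneously restricts the non-vanishing characters to the unipotent and regular semisimple ones and supplies the sharp value bounds, and (ii) the bound $|\chi(t)|\le1$ for unipotent characters on regular semisimple classes; the final numerical step is routine, the only genuine slack required being the gap between $|H|>q^8$ and the enormous character degrees. I would also flag explicitly that the statement is to be read for nontrivial $V$, the trivial module being a formal exception.
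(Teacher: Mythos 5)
Your proposal is correct and takes essentially the same approach as the paper: the same element $g$ generating the self-centralizing torus of order $\Phi_{30}(q)$ with automizer of order $30$, the same multiplicity computation over $\langle g\rangle$, the bound $|\chi(t)|\le 30$ on nontrivial powers of $g$, and L\"ubeck's lower bound on character degrees to close the numerical estimate. Your case analysis (vanishing versus unipotent versus regular semisimple characters, via Lemma~\ref{lem:Lvalues}) merely spells out the details behind the paper's terse uniform appeal to Proposition~\ref{prop:values}, and your caveat about the trivial module is a fair reading of the (implicitly nontrivial) statement.
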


\begin{proof}
Let $g\in G$ of order $\Phi_{30}(q)=q^8+q^7-q^5-q^4-q^3+q+1$. Then $g$
generates a TI-torus $T$ of $G$, with $|N_G(T)/T|=30$ (see
Table~\ref{tab:exctorus}). Let $\chi\in\Irr(G)$ be
a non-trivial character. By Proposition~\ref{prop:values} we have
$|\chi(s)|\le 30$ for all powers $s=g^b\ne1$ of $g$. Thus, the multiplicity
of any linear character of $T$ in $\chi|_T$ is at most
$$\frac{\chi(1)+30|T|}{|T|}=\frac{\chi(1)}{|T|}+30\le \frac{\chi(1)}{q^8}$$
as claimed, since $\chi(1)>q^{28}$ by \cite{Lue}.
\end{proof}

\begin{prop}
 Let $G=\OO_{2n}^-(q)$. Then there exists $g\in G$ such that for every
 irreducible $\CC G$-module $V$ the dimension of every eigenspace of $g$ on
 $V$ is at most $(1/\Phi_{2n}^*(q))\dim V+n$.
\end{prop}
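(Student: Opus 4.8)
The plan is to imitate the argument just given for $E_8(q)$, taking for $g$ a generator of the cyclic subgroup of order $\Phi_{2n}^*(q)$ inside the Singer-type maximal torus $T$ of order $q^n+1$, whose normalizer quotient has order $|N_G(T):T|=n$ by Table~\ref{tab:classtorus} (here $T$ is self-centralizing, so $A_G(T)=N_G(T)/T$); such an element exists whenever $\Phi_{2n}^*(q)>1$. Writing $H:=\langle g\rangle$, the dimension of the eigenspace of $g$ on $V$ for the eigenvalue $\la(g)$ equals the multiplicity
$$\langle\chi,\la\rangle_H=\frac{1}{|H|}\sum_{t\in H}\chi(t)\bar\la(t)$$
of the linear character $\la$ of $H$ in $\chi|_H$, where $\chi$ is the character of $V$. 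So the whole statement reduces to the uniform bound $|\chi(t)|\le n$ for every nontrivial $t\in H$ and every $\chi\in\Irr(G)$: granting this, and using that $H$ has $\Phi_{2n}^*(q)-1$ nontrivial elements, one obtains
$$\langle\chi,\la\rangle_H\le\frac{1}{\Phi_{2n}^*(q)}\Big(\chi(1)+(\Phi_{2n}^*(q)-1)\,n\Big)<\frac{\dim V}{\Phi_{2n}^*(q)}+n,$$
which is exactly the asserted bound.

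The heart of the matter is thus the estimate $|\chi(t)|\le n$, and here I would reuse the Deligne--Lusztig analysis already carried out in the proof of Proposition~\ref{prop:2Dn}. First, every nontrivial power $t=g^b$ is again regular semisimple with centralizer exactly $T$: its order remains divisible by a Zsigmondy prime for $q^{2n}-1$, which forces $t$ to act irreducibly on the natural $2n$-dimensional module and hence to have centralizer the cyclic torus of order $q^n+1$. Thus by Lemma~\ref{lem:Lvalues} the only characters not vanishing at $t$ lie in series $\cE(G,s)$ with $s\in T^*$. For $s$ central these are the unipotent characters, which take value $\pm1$ on $t$; for $s$ non-central, with centralizer $\GU_{n/r}(q^r)$, Proposition~\ref{prop:values} gives $|\chi_s(t)|\le r\le n$ for the semisimple character, while the remaining members of $\cE(G,s)$ lie in the same block (for every Zsigmondy prime dividing $\Phi_{2n}(q)$) and so have the same absolute value at $t$, exactly as in Proposition~\ref{prop:2Dn}. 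Hence $|\chi(t)|\le n$ for all $\chi$. The passage from the adjoint group ${\PCO_{2n}^\circ}^-(q)$, in which these computations are naturally performed, to the simple group $\OO_{2n}^-(q)$ containing $g$ is routine, the relative Weyl group bound $n$ being unchanged.

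I expect the only genuine obstacle to be making the inequality $|\chi(t)|\le n$ truly uniform, that is, valid for every irreducible character and every nontrivial power $t=g^b$ at once. This requires two points beyond the generic estimate of Proposition~\ref{prop:2Dn}: that the case distinction central versus non-central is exhaustive on $T^*$, so that no Lusztig series of $G$ is overlooked; and that the block-theoretic identification of values within each $\cE(G,s)$ applies to the powers $t=g^b$ rather than to a single fixed generator. Both hold because $\langle g\rangle$ meets the defect group of each relevant block and because every nontrivial power is regular of the same centralizer type, so that no small-rank or case-by-case computation is needed once this uniformity is secured; the final display then follows immediately.
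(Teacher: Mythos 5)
Your proposal is correct and takes essentially the same approach as the paper: there, too, one picks $g$ of order $\Phi_{2n}^*(q)$, regular in the cyclic maximal torus of order $q^n+1$ of ${\PCO_{2n}^\circ}^-(q)$, uses Proposition~\ref{prop:values} (with each nontrivial power conjugate to exactly $n$ of its powers) to get $|\chi(g^b)|\le n$ for all $g^b\ne1$, and then bounds the multiplicity of any linear character of $H=\langle g\rangle$ in $\chi|_H$ by $\chi(1)/|H|+n$. The only difference is one of exposition: you spell out the Deligne--Lusztig and block-theoretic details behind the uniform bound $|\chi(t)|\le n$, which the paper leaves implicit by relying on the analysis already carried out in Proposition~\ref{prop:2Dn}.
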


\begin{proof}
Consider $G$ as the derived subgroup of ${\PCO_{2n}^\circ}^-(q)$ and let
$g\in G$ of order $\Phi_{2n}^*(q)$. Then $g$ is a regular element in a cyclic
maximal torus of ${\PCO_{2n}^\circ}^-(q)$ of order $q^n+1$, and any non-trivial
power of $g$ is conjugate to precisely $n$ of its powers. Let $\chi\in\Irr(G)$
be a non-trivial character. By Proposition~\ref{prop:values} we have
$|\chi(s)|\le n$ for all powers $s=g^b\ne1$ of $g$. Thus, the multiplicity
of any linear character of $H:=\langle g\rangle$ in $\chi|_H$ is at most
$$\frac{\chi(1)+n|H|}{|H|}=\frac{\chi(1)}{|H|}+n$$
as claimed.
\end{proof}

The following examples show that (as opposed to the simple group case)
even in characteristic $0$, one cannot get arbitrarily small ratios for
$\dim C_V(g)/\dim V$ as $\dim V$ increases (and $V$ is irreducible).

\begin{exmp} \label{exmp:A5}
Let $L=\fA_5$ and let $W$ be an irreducible $5$-dimensional module in
characteristic $0$  (all we need assume is that the characteristic is not $2$).

Set $G(m) =L \times \ldots \times L$ ($m$ copies) acting on
$V(m) =W \otimes \ldots \otimes W$ ($m$ copies).
Then $\dim C_{V(m)}(g) > (1/50) \dim V(m)$ for all $g \in G(m)$.

\begin{proof}  
If $g$ has order $5$, then 
$\chi_{V(m)}(g)=0$ and  $\dim C_{V(m)}(g)  = (1/5)\dim V(m)$ for all $m$.
 
If $g$ has order $2$, we claim that $\dim C_{V(m)}(g) > (1/2) \dim V(m)$. By
induction on $m$, it suffices to assume that $g=(h, \ldots, h)$ with $h\in L$
of order $2$. Then $\chi_{V(m)}(h) = 1$, whence the trivial eigenspace of $g$
has dimension  $(1/2)(\dim V(m) + 1)> (1/2) \dim V(m)$ (but as $m$ increases,
the ratio gets arbitrarily close to~$1/2$).

If $g$ has order $3$ , then $\chi_{V(m)}(h) = \pm 1$, whence the trivial
eigenspace of $g$ has dimension at least
$(1/3)(\dim V(m) - 2) \ge (1/5) \dim V(m)$ (and $1/5$ is achieved for $m=1$). 

Now take $g$ arbitrary. We may write (up to reordering)
$g=g_1 \otimes g_2 \otimes g_3 \otimes g_5$ acting on
$V(a_1)\otimes V(a_2)\otimes V(a_3) \otimes V(a_5)$  with $m=\sum a_i$ and
$g_i$ has each component of order $i$. Thus, $g_1$ is trivial on $V(a_1)$ and
by the previous results, $\dim C_{V(a_i)}(g_i) \ge (1/5) \dim V(a_i)$ for
$i=3,5$ and $\dim C_{V(a_2)}(g_2)>(1/2)\dim V(a_2)$. Thus, the result follows.
\end{proof}
\end{exmp}

We get a similar result for $\fA_4$. The proof is essentially the same
as in the previous example:

\begin{exmp}   \label{exmp:A4}
 Let $L=\fA_4$ and let $W$ be the irreducible $3$-dimensional module in
 characteristic not $2$. 
 Set $G(m) =L \times \ldots \times L$ ($m$ copies) acting on
 $V(m) =W \otimes \ldots \otimes W$ ($m$ copies).
 Then $\dim C_{V(m)}(g) \ge (1/9) \dim V(m)$ for all $g \in G(m)$.
\end{exmp}

We next show that for direct products of simple groups and for sufficiently
large dimension, Example~\ref{exmp:A5} is the worst case.

\begin{lem}
 Let $L$ be a finite nonabelian simple group and $V$ an irreducible nontrivial
 $\CC L$-module. Let $G = G(m)$ be the direct product of $m$ copies of $L$
 and $W=V(m)$ be the tensor product of $m$ copies of $V$. Let $\epsilon > 0$.
 If $m$ is sufficiently large, then there exists an element $g \in G$ such
 that $\dim C_{V(m)}(g) < (1/50 + \epsilon) \dim V(m)$.
\end{lem}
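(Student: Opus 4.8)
The plan is to reduce to a faithful absolutely irreducible $\CC L$-module and then dichotomize according to whether $L$ has an element acting on $V$ without the eigenvalue~$1$. First I would reduce to the case $k=\CC$ with $V$ faithful and absolutely irreducible: faithfulness is automatic since $\ker(L\to\GL(V))$ is a proper normal subgroup of the simple group $L$ and $V\ne0$, and passing to a splitting field does not change the dimensions of fixed spaces, exactly as in the proof of Theorem~\ref{peter}. The basic computation I would record is that for $h\in L$ of order $n$, the fixed space of the diagonal element $(h,\dots,h)$ on $V^{\otimes a}$ has dimension $\frac1n\sum_{k=0}^{n-1}\chi(h^k)^a$, where $\chi$ is the character of $V$. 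Since $V$ is faithful, $h^k$ is non-central, hence non-scalar, for $h^k\ne1$, so $|\chi(h^k)|<\dim V$ and this dimension is asymptotic to $\frac1n(\dim V)^a$. Thus a long block of copies of $h$ contributes a fixed-space density tending to $1/n$, and blocks built from elements of pairwise coprime orders $n_1,\dots,n_k$ contribute independently (a product of roots of unity in $\mu_{n_1}\times\cdots\times\mu_{n_k}$ equals $1$ only if each factor does), so their densities multiply.

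If some $h\in L$ acts fixed-point-freely on $V$, i.e.\ $\dim C_V(h)=0$, then $g=(h,1,\dots,1)$ has eigenvalues on $V(m)$ equal to those of $h$, none of them $1$, whence $C_{V(m)}(g)=0$ for every $m$ and we are done. So I may assume that every element of $L$ fixes a nonzero vector of $V$. In that case I would choose pairwise coprime element orders $n_1,\dots,n_k$ of $L$ with $\prod_i n_i>50$ and take $g$ to consist of long blocks of elements of those orders; by the previous paragraph the density tends to $1/\prod_i n_i<1/50$, and to cover an arbitrary large $m$ I distribute the $m$ coordinates among the $k$ blocks while keeping every block long, which keeps the density below $1/50$. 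The group-theoretic point is that such orders exist for every nonabelian simple group other than $\fA_5$, whose element orders are $1,2,3,5$ with maximal pairwise-coprime product $30$.

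It then remains to treat $L=\fA_5$ directly on its few faithful irreducibles. For the $4$-dimensional module an element of order $5$ is fixed-point-free (its eigenvalues are the four nontrivial fifth roots of unity, summing to $-1$), so the trivial case applies. For a $3$-dimensional module I would combine a long block of involutions (density $\to1/2$), a bounded block consisting of one element from each of the classes $5A$ and $5B$ (whose exponent sets $\{0,\pm1\}$ and $\{0,\pm2\}$ in $\ZZ/5$ meet only at $0$ under $e+e'\equiv0$, so the fixed density is $1/9$), and one element of order $3$ (density $1/3$), for a total density $\tfrac12\cdot\tfrac19\cdot\tfrac13=\tfrac1{54}<\tfrac1{50}$. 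For the $5$-dimensional module no such gain is available, and one is precisely in the situation of Example~\ref{exmp:A5}: the infimum of the density is $1/50$, approached from above, which yields $<1/50+\epsilon$ once $m$ is large.

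The main obstacle is exactly the subcase where every element of $L$ fixes a point on $V$. Here the two ingredients requiring care are the classification input that $\fA_5$ is the \emph{only} nonabelian simple group failing $\prod_i n_i>50$, and the analysis of $\fA_5$ itself, where one must exploit mixed-class bounded blocks (combining the two classes of order-$5$ elements) to beat the naive density $1/5$ on the small modules, while checking that the $5$-dimensional module admits no such improvement. This last point is exactly why the constant $1/50$ is sharp and why Example~\ref{exmp:A5} is the extremal case.
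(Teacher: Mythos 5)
Your proposal is correct and follows essentially the same route as the paper's proof: character-averaged fixed-space densities, long diagonal blocks driving the density of a block of elements of order $n$ to $1/n$, multiplicativity across blocks of pairwise coprime orders (your condition $\prod_i n_i>50$ is equivalent to the paper's condition that the exponent of $L$ exceed $50$, since the maximal pairwise-coprime product of element orders equals the exponent), and an ad hoc analysis of the three faithful irreducibles of the unique exception $\fA_5$, with the $5$-dimensional module as the extremal case that forces the $\epsilon$. One remark: in the $4$-dimensional case your choice of an order-$5$ element (eigenvalues the four nontrivial fifth roots of unity, hence no fixed vector) is the correct one, whereas the paper's text says an element of order $3$ has no fixed points there -- an evident slip, since order-$3$ elements have a $2$-dimensional fixed space on that module.
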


\begin{proof}
Let $\chi$ be the character of $V$. Let $e$ be the exponent of $L$ 
(i.e. the smallest positive integer $e$ such that $x^e=1$ for all $x \in L$).
Choose $g_1, \ldots, g_s \in L$ such that the least common multiple of the
orders of the $g_i$ is $e$. Set $y = (g_1, \ldots, g_s)$ in $G(s)$. Consider
$(y,\ldots, y) \in G(st)$ acting on $V(st)$. We see that for any $0<j<e$ we
have $|\chi(y^j)|/\chi(1)^{st} \rightarrow 0$ as $t$ increases. It follows
that $V(st)$ is very close to a free module for $\langle y \rangle$ and in
particular 
$$
  \lim_{t\rightarrow\infty}\frac{\dim C_{V(st)}(y)}{\dim V(st)} = \frac{1}{e}.
$$
Thus, if $m$ is sufficiently large, we can find $g \in G$ satisfying the
conclusion unless $e \le 50$. An easy inspection shows that $e>50$ is true for
every $L$ aside from $L = \fA_5$.   

So now suppose that $L=\fA_5$ (note that $e=30$). So $\dim V = 3, 4$ or $5$.
If $\dim V=4$, an element of order $3$ has no fixed points and so the same is
true for an element of order $3$ of the form $(x,1, \ldots, 1) \in L(m)$
for any $m$. If $\dim V=5$, then $V$ is a free module for an element of order
$5$.  An element of order $3$ has  a $1$-dimensional fixed space. If $m$ is
large enough, we can choose an involution whose fixed space has dimension
very close to $(1/2) \dim V$.  Arguing as above, we can choose an element
$g \in L(m)$ whose fixed point space has dimension as close as we want to
$(1/50) \dim V(m)$, whence the result holds in this case.

Finally suppose that $\dim V = 3$.  Every element of order $3$ in $L(t)$ has
a fixed space of dimension $3^{t-1}$. Consider $g=(h,h^2)$ with $h$ of
order~$5$ acting on $V(2)$. Then the fixed space of $g$ is $1$ dimensional
and so $(1/9) \dim V(2)$. An involution has a fixed space of dimension $1$
on $V$.  Thus, we see that there is an element of order $30$ acting on $V(m)$
for any $m \ge 4$ with $\dim C_V(g) = (1/81) \dim V(m)$.
\end{proof}

We can extend the previous result to the case of primitive groups.

\begin{thm}
 Let $G$ be a finite group and $W$ an irreducible primitive $\CC G$-module.  
 Let $s$ be any positive number greater than $1/50$. If $\dim W$ is
 sufficiently large, then there exists $g \in G$ such that
 $\dim C_W (g) < s \dim W$.
\end{thm}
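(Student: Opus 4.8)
The plan is to use the structure theory of primitive complex linear groups to reduce to the situation of a direct product of simple groups acting on a tensor product, and then to feed this into the two tools already at hand: Theorem~\ref{thm:main3} when one tensor factor is large, and the preceding lemma on tensor powers when there are many small isomorphic factors. Throughout, the ratio we must control is $\dim C_W(g)/\dim W$.

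First I would make the standard reductions. Replacing $G$ by its image in $\GL(W)$ we may assume $G$ acts faithfully, and over $\CC$ the module $W$ is absolutely irreducible. The key opening observation is that a nontrivial scalar settles everything: if some $\lambda\,\Id\in G$ with $\lambda\ne1$, then $C_W(\lambda\,\Id)=0$ and we are done. So assume $G$ contains no nontrivial scalar. Now invoke the generalized Fitting description of a primitive linear group: $W$ restricted to any normal subgroup is homogeneous, $F^*(G)=F(G)E(G)$, and $Z(F^*(G))=C_G(F^*(G))$ acts on the irreducible constituent of $W|_{F^*(G)}$, hence on all of $W$, by scalars. By our assumption $Z(F^*(G))=1$; consequently $F(G)=1$, so $F^*(G)=E(G)=L_1\times\cdots\times L_t$ is a direct product of non-abelian simple groups, and the $F^*(G)$-constituent of $W$ is a tensor product $U=U_1\otimes\cdots\otimes U_t$ with each $U_i$ a nontrivial irreducible $\CC L_i$-module.

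Writing $W\cong U\otimes M$, where $M$ is the multiplicity space carrying a projective representation of $G/F^*(G)$ on which $F^*(G)$ acts trivially, every $g\in F^*(G)$ acts on $W$ as an operator on $U$ tensored with $\Id_M$, so $\dim C_W(g)/\dim W=\dim C_U(g)/\dim U$. Thus it suffices to find $g\in F^*(G)$ of small fixed-point ratio on $U$, and such $g$ may be taken to act trivially on all but a chosen set of the tensor factors; in particular the permutation action of $G$ on the components never intervenes. Moreover $\dim U$ must itself be large: since $G/Z(F^*(G))$ embeds in $\Aut(F^*(G))$, if $\dim U$ stayed bounded then the $L_i$ and $t$ would range over a finite set, $F^*(G)$ and hence $\Aut(F^*(G))$ would be bounded, so $\dim M\le|G\!:\!F^*(G)|$ and therefore $\dim W$ would be bounded, contrary to hypothesis. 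Now fix $\epsilon:=\tfrac12(s-\tfrac1{50})>0$ and let $N_1=N_1(\epsilon)$ be the bound of Theorem~\ref{thm:main3}. If some $\dim U_i>N_1$, then Theorem~\ref{thm:main3} gives $g_i\in L_i$ all of whose eigenspaces on $U_i$ have dimension at most $\epsilon\dim U_i$; taking $g=g_i$ yields $\dim C_W(g)\le\epsilon\dim W<s\dim W$. Otherwise every $\dim U_i\le N_1$, so $t\to\infty$ with $\dim U$; as there are only finitely many pairs $(L,V)$ with $L$ simple and $\dim V\le N_1$, some such pair occurs among the $(L_i,U_i)$ with multiplicity $m\to\infty$. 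These $m$ factors give a direct product of copies of $L$ acting on $V^{\otimes m}$ inside $F^*(G)$, and the preceding lemma supplies $g_0$ with $\dim C_{V^{\otimes m}}(g_0)<(\tfrac1{50}+\epsilon)\dim V^{\otimes m}$ once $m$ is large; extending $g_0$ by the identity on the remaining factors gives $\dim C_W(g)/\dim W<\tfrac1{50}+\epsilon<s$. Choosing $N$ so that $\dim W>N$ forces one of the two alternatives completes the proof.

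The step I expect to be the main obstacle is assembling the structure-theoretic input in exactly the form used above, rather than the combinatorics: one must cite the primitive (quasiprimitive) linear group classification so as to obtain the tensor decomposition $U=U_1\otimes\cdots\otimes U_t$ together with the facts that $Z(F^*(G))$ acts by scalars, that $F^*(G)$-elements act factorwise, and that the multiplicity space $M$ and the permutation of the components are harmless for an argument confined to $F^*(G)$. The payoff of the scalar reduction is that it forces the components to be genuinely simple, so no quasisimple/central-cover technicalities arise and the preceding lemma (stated for simple groups) and Theorem~\ref{thm:main3} apply verbatim; the threshold $1/50$ then enters solely through the many-small-factors case, exactly as in Example~\ref{exmp:A5}.
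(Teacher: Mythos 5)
Your proposal is correct and takes essentially the same route as the paper's own proof: reduce, via the scalar and Fitting-subgroup argument, to $F^*(G)$ being a direct product of non-abelian simple groups acting by a tensor product, then split into the case of a large tensor factor (handled by Theorem~\ref{thm:main3}) and the case of many small isomorphic factors (handled by the preceding lemma on tensor powers). The paper's write-up is merely terser in the bookkeeping -- it applies Theorem~\ref{thm:main3} with $\epsilon=1/50$ directly and leaves the multiplicity-space and boundedness observations implicit -- but the decomposition, the dichotomy, and the two key ingredients are identical to yours.
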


\begin{proof}
We may assume that $W$ is faithful. Any normal subgroup of $G$ acts
homogeneously on $W$, whence any abelian normal subgroup of $G$ is central.
If $Z(G) \ne 1$, the result is clear.  So we may assume that $F(G)=1$.
So $F^*(G) = L_1 \times \ldots \times L_s$ with $L_i$ simple and every
irreducible $F^*(G)$-submodule of $W$ is isomorphic to
$V_1 \otimes \ldots \otimes  V_s$ where $V_i$ is a nontrivial irreducible
$L_i$-module. If $\dim V_i$ is large enough, it follows by
Theorem~\ref{thm:main3} that there exists $g_i \in L_i$ with fixed space of
dimension less than $(1/50) \dim V_i$, a contradiction. Thus there are only
finitely many possible isomorphism types for the $L_i$.   
We may assume that $s$ is as large as we wish (otherwise $F^*(G)$ has bounded
order, whence so does $|G|$ and so $\dim V$ is bounded). So we may assume
that $L_i\cong  L_1$ for $1\le i\le m$ for $m$ as large as we wish and that
$V_i \cong V_1$ for those $i$ (since there are only a bounded number of
possible isomorphism types of irreducible modules). By the previous lemma,
for $m$ sufficiently large, there exists $(g_1,\ldots,g_m)\in L_1(m)$ with
$\dim C_{V(m)}(g) < s \dim V(m)$ with $V=V_1$.
Setting $g= (g_1, \ldots, g_m,1 \ldots, 1) \in F^*(G)$, we see that
$\dim C_W(g) < s \dim W$, as required.
\end{proof}

\section{Products of classes and powers}   \label{sec:cover}

A conjecture of Thompson asserts that for any finite non-abelian simple group
there is a conjugacy class $C$ such that $G=C\cdot C$, that is, any element of
$G$ is the product of two elements in $C$. A proof of this seems out of reach
at present, although we expect that, in some sense, most classes will do.
We propose the following partial results. First we consider rank $1$ groups.
Here, we set $G^\#:=G\setminus\{1\}$.

\begin{thm}   \label{thm:2B2}
 Let $G$ be a rank $1$ finite simple group of Lie type. Let $C$ be the
 $G$-conjugacy class of an element $x$ of order $o(x)>2$. Assume that one of
 \begin{enumerate}[\rm(1)]
  \item $G=\PSL_2(q)$ with $q$ odd and $x$ is not unipotent;
  \item $G=\PSL_2(q)$ with $q$ even and $o(x)$ does not divide~$q+1$;
  \item $G=\tw2G_2(q^2)$, $q^2\ge 27$, and $o(x)$ is not divisible by~3;
  \item $G=\tw2B_2(q^2)$, $q^2\ge8$; or
  \item $G=\PSU_3(q)$ and $x$ is regular of order $(q^2-q+1)/d$ or
   $(q^2-1)/d$, with $d=\gcd(3,q+1)$.
 \end{enumerate}
 Then $G^\# \subseteq CC$.
\end{thm}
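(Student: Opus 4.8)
The plan is to reduce the statement, in each family, to the positivity of a suitable structure constant. Since $C$ is a single conjugacy class, the product set $CC$ is a normal subset of $G$, hence a union of conjugacy classes; therefore $G^\#\subseteq CC$ is equivalent to the assertion that for every nontrivial conjugacy class $D$ of $G$, with representative $g$, the number
$$N(g)=\frac{|C|^2}{|G|}\sum_{\chi\in\Irr(G)}\frac{\chi(x)^2\,\overline{\chi(g)}}{\chi(1)}$$
of pairs $(a,b)\in C\times C$ with $ab=g$ is strictly positive. Isolating the trivial character, whose contribution is the dominant term $|C|^2/|G|$, so that $N(g)=\tfrac{|C|^2}{|G|}(1+\eps(g))$ with
$$\eps(g):=\sum_{\chi\ne 1_G}\frac{\chi(x)^2\,\overline{\chi(g)}}{\chi(1)},$$
it suffices to prove that $|\eps(g)|<1$ for all $g\ne1$.

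For the two $\PSL_2(q)$ cases I would avoid character theory entirely and argue by a direct trace computation in $\SL_2(q)$, in the spirit of Lemma~\ref{sl2q odd}. Under the hypotheses of (1) and (2) the element $x$ (or a lift to $\SL_2(q)$) lies in the split torus: for $q$ odd it is non-unipotent, hence semisimple, and for $q$ even the conditions $o(x)>2$ and $o(x)\nmid q+1$ force $x$ to be split semisimple. For such a fixed regular semisimple $x$ the trace $\tr(ab)$, as $a,b$ run over the class of $x$, takes every value in $\FF_q$; since in $\SL_2$ the trace determines a semisimple class and the values $\pm2$ account for the unipotent and $(-1)$-unipotent elements, the products $ab$ meet every conjugacy class except a controlled short list. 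One then checks directly that, after passing to $\PSL_2(q)$, every nontrivial class is reached.

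For $\tw2B_2(q^2)$, $\tw2G_2(q^2)$ and $\PSU_3(q)$ I would use the completely explicit generic character tables (available in \Chevie), treating all $q$ in a fixed congruence class simultaneously, just as in Proposition~\ref{prop:excsmall}. Here $x$ is chosen regular semisimple with $C_G(x)$ equal to the relevant maximal torus $T$ of Table~\ref{tab:exctorus} for the rank~$1$ exceptional groups, or one of the two tori of order $(q^2-q+1)/d$, $(q^2-1)/d$ for $\PSU_3$. By Lemma~\ref{lem:Lvalues} the characters with $\chi(x)\ne0$ all lie in Lusztig series attached to some $s\in T^*$, and for these Proposition~\ref{prop:values} gives $|\chi(x)|\le|N_G(T):T|$, a small constant. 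For each of the finitely many generic types of class $D$ one then writes $\eps(g)$ as an explicit expression in $q$ and verifies $|\eps(g)|<1$; the nonprincipal degrees $\chi(1)$ grow like $q^2$ or $q^3$ while only $O(|T|)$ terms are nonzero and their numerators are bounded, so outside of a few small $q^2$ the inequality is immediate, and the remaining groups ($q^2=8,27$, and small $q$ for $\PSU_3$) are settled by direct computation.

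The main obstacle is the uniform bound $|\eps(g)|<1$ in cases (3)–(5) when $g$ itself lies in a conjugate of $T$, or in one of the one or two other tori whose dual also contains $s$: there the naive estimate $|\overline{\chi(g)}|\le\chi(1)$ fails, since there are on the order of $|T|$ nonvanishing characters and column orthogonality gives $\sum_{\chi\ne1}|\chi(x)|^2=|T|-1$, which is far too large. The resolution is that $g$ semisimple forces $\overline{\chi(g)}$ to vanish for every $\chi$ outside the Lusztig series seen by the torus containing $g$, and since $x$ is regular the intersecting series consist of single characters with explicitly known, bounded values; one then uses the exact generic table values of $\chi(g)$ rather than the trivial bound, so that the genuine cancellation among the roots of unity appearing in $\chi(x)^2\overline{\chi(g)}$ keeps $|\eps(g)|$ below~$1$. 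This is precisely the delicate point for which the explicit character tables are indispensable.
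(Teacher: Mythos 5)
Your overall strategy coincides with the paper's: reduce to positivity of structure constants, settle $\PSL_2(q)$ by a direct trace computation in $\SL_2(q)$ (the paper cites Macbeath for exactly this), and settle $\tw2B_2(q^2)$, $\tw2G_2(q^2)$ and $\PSU_3(q)$ by computations with the generic character tables in \Chevie. The one genuine divergence is in how the hard case is handled -- the case you correctly single out as the main obstacle, namely a semisimple target class $g$ lying in a conjugate of the torus $T$ containing $x$, where the number of nonvanishing characters is of order $|T|$ and the naive bound on $|\eps(g)|$ fails (for $\tw2B_2(8)$ it gives roughly $16|T|^2/|G|_{2'}\approx 2>1$). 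The paper does not fight this by exhibiting cancellation in the character sum; instead it invokes Gow's theorem \cite[Thm.~2]{Gow}, which says that in a finite simple group of Lie type the product of any two classes of regular semisimple elements contains every semisimple element. This disposes of \emph{all} semisimple target classes at once, so the \Chevie\ verification is only needed for non-semisimple classes (for the Suzuki groups, only the classes of elements of order~$4$; for the Ree and unitary groups, only the non-semisimple classes), where the relevant Deligne--Lusztig values are Green functions of bounded size and the estimate is easy. Your route -- computing the structure constants generically, with exact values, for semisimple $g$ as well -- is what a full \Chevie\ computation does and would succeed, but it buys nothing over the computer check, whereas Gow's result gives a conceptual, computation-free treatment of precisely the delicate classes.

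One local error: in case (1), $q$ odd, ``$x$ not unipotent'' makes $x$ semisimple but does \emph{not} place it in the split torus; $x$ may lie in the nonsplit torus of order $(q+1)/2$, and the theorem must cover this. The trace argument itself goes through for nonsplit regular semisimple $x$ (the traces $\tr(ab)$, $a,b$ conjugate to a lift of $x$ in $\SL_2(q)$, still exhaust $\FF_q$), so the fix is only to drop the false reduction, but as written your case (1) argument covers too little.
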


\begin{proof}
If $G=\PSL_2(q)$, this is a straightforward matrix computation, see also
Macbeath \cite{Mac}.
For the Suzuki groups $\tw2B_2(q^2)$ the generic character table is available
in \Chevie\ and the claim can be checked by computer. (By \cite[Thm.~2]{Gow} we
actually only need to worry about the classes of elements of order~4.)
Similarly, for the Ree groups $\tw2G_2(q^2)$, $q^2\ge27$, by loc{}. cit{}. and
our restriction on $C$ we only have to check that the square of any class of
semisimple elements of order bigger than~2 meets all non-semisimple classes,
which is immediate using the table in \Chevie.  \par
Finally, the character table of $\PSU_3(q)$ can also be found in \Chevie, and
we only need to verify that squares of regular semisimple classes of order
$(q^2-q+1)/d$ or $(q+1)/d$ contain all non-semisimple classes.
\end{proof}

This has the following immediate consequence:

\begin{thm}   \label{rank1}
 Let $w_1$ and $w_2$ be nontrivial words in the free group on $d$ generators.
  \begin{enumerate}
  \item[\rm(a)] If $G=\tw2B_2(q^2)$, $q^2\ge 8$ and $w_i^2(G^d) \ne 1$ for
   $i=1,2$, then $G=w_1(G^d)w_2(G^d)$.
  \item[\rm(b)] If $G=\tw2G_2(q^2)$, $q^2\ge 27$, and
   $w_i^9(G^d)\ne 1\ne w_i^6(G^d)$ for $i=1,2$, then $G=w_1(G^d)w_2(G^d)$.
 \end{enumerate}
\end{thm}

The main result of \cite{LST}  is a version of this for any pair of words
$w_1$ and $w_2$ and any sufficiently large non-abelian simple group (where
sufficiently large depends on the choice of $w_1$ and $w_2$).

Next we consider some rank $2$-groups. 

\begin{thm}   \label{thm:3D4}
 Let $G$ be a simple group $\PSL_3(q)$ ($q>2$), $\PSp_4(q)$, $G_2(q)$,
 $\tw3D_4(q)$ or $\tw2F_4(q^2)'$. Then there exists a conjugacy class $C$ of
 semisimple elements of $G$ such that $G^\#\subseteq C C$.
\end{thm}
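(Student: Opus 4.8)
The plan is to exhibit, for each of the five families, a class $C$ of regular semisimple elements in a suitable cyclic maximal torus and to prove $G^\#\subseteq CC$ by showing that every $1\ne g\in G$ has positive structure constant. Fixing $c\in C$, the number of pairs $(u,v)\in C\times C$ with $uv=g$ is
$$N(g)=\frac{|C|^2}{|G|}\sum_{\chi\in\Irr(G)}\frac{\chi(c)^2\,\overline{\chi(g)}}{\chi(1)},$$
so it suffices to prove $N(g)>0$ for every nontrivial $g$. Since we only ask for covering and not for generation, no analysis of maximal subgroups enters here; the whole content is the positivity of these structure constants.

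The first step is to invoke Gow \cite[Thm.~2]{Gow}: if $C$ is the class of a regular semisimple element, then $CC$ already contains every semisimple element of $G$. This reduces the claim to the non-semisimple classes, namely the unipotent classes together with the finitely many classes of mixed elements. Concretely I would take $C$ to consist of regular semisimple generators of the cyclic maximal torus $T$ recorded in Table~\ref{tab:exctorus} for $G_2(q)$, $\tw3D_4(q)$ and $\tw2F_4(q^2)$, and in Table~\ref{tab:classtorus} for $\PSL_3(q)$ and $\PSp_4(q)$; these are exactly the groups for which the complete generic character table is known and available in \Chevie\ \cite{Chevie}.

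With $C$ fixed, the verification is then computational, exactly as in the rank~$1$ case of Theorem~\ref{thm:2B2}. Using \Chevie\ I would evaluate $N(g)$ generically, as an expression in $q$ for $q$ in a fixed congruence class modulo a small integer, for each of the (few) non-semisimple target classes $g$, and confirm that the resulting polynomials in $q$ are positive. That such positivity should hold is transparent from Deligne--Lusztig theory: on a regular semisimple $c$ the nonvanishing characters lie in Lusztig series attached to semisimple elements of the dual torus (Lemma~\ref{lem:Lvalues}) and take values of modulus at most $|N_G(T):T|$ by Proposition~\ref{prop:values}, while the relevant degrees $\chi(1)$ are high-degree polynomials in $q$, so the nonlinear characters cannot overturn the trivial-character term $|C|^2/|G|$ once $q$ is large. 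The finitely many small $q$ in each family, together with the Tits group $\tw2F_4(2)'$ (whose table I would read off from the Atlas \cite{Atl} via {\sf GAP}), I would settle by direct computation of the structure constants.

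The hard part will be arranging a single class $C$ whose structure constant stays positive simultaneously for every non-semisimple target: a regular semisimple $C$ in an ill-chosen torus can have character values that cancel the main term against some unipotent or mixed $g$, so the torus---and, if needed, a splitting of $q$ into congruence classes---must be chosen so that no such cancellation occurs. I expect the most delicate targets to be the unipotent classes with the smallest centralizers, where $N(g)$ is smallest relative to $|C|^2/|G|$, and the genuinely sporadic small members of each family.
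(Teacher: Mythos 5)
Your overall route is the same as the paper's: fix a class $C$ of regular semisimple elements in a well-chosen cyclic maximal torus, dispose of semisimple targets by Gow's theorem \cite[Thm.~2]{Gow}, and verify positivity of the structure constants on the remaining classes from generic character tables. However, two of your concrete choices break down. For $\PSp_4(q)$ with $q$ even, the torus you take from Table~\ref{tab:classtorus} has order $q^2+1$, and the square of a \emph{single} class of elements of order $q^2+1$ in $\PSp_4(2^f)$ does not cover $G^\#$: this is exactly why the paper's Table~\ref{tab:excclass1} prescribes elements of order $q^2-1$ when $q$ is even, and why the proof of Theorem~\ref{thm:main5} must resort to the product of two \emph{distinct} classes of elements of order $q^2+1$ for $\PSp_4(2^f)$ --- if one such class squared sufficed, it would have been used both there (its order is prime to~$6$) and in Table~\ref{tab:excclass1}. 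So your planned \Chevie\ computation would return a vanishing structure constant for some class, and the argument halts until the class is changed; you anticipated that an ill-chosen torus could cause cancellation, but your concrete proposal lands on the bad torus for half of this family. (Your choice for $G_2(q)$ with $q\equiv1\pmod 3$, namely the torus of order $q^2-q+1$ from Table~\ref{tab:exctorus}, likewise deviates from the paper's class of elements of order $(q^2+q+1)/3$, and its correctness is untested.)

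The second gap concerns $\tw2F_4(q^2)$: your assertion that its complete generic character table is available in \Chevie\ is false --- only the unipotent part is --- so the verification you describe cannot be carried out for this family. The paper's actual argument there is different in substance: after Gow handles semisimple targets, it shows via the character formula \cite[Prop.~7.5.3]{Ca} that the Deligne--Lusztig characters $R_{T,\theta}$ with $|T|=\Phi_{24}'$ vanish on every non-semisimple class that is not unipotent; for unipotent targets $y$ it uses that $a:=R_{T,\theta}(y)$ is a Green function, independent of $\theta$, bounded by $|a|\le q^{16}+2q^{15}$ thanks to the computation in \cite{Ma90}, and combines this with the orthogonality relations (which bound the sum of $\chi(x)^2$ over this family of characters by $|T|$) and the unipotent data in \Chevie. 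Your fallback heuristic does not fill this hole: Proposition~\ref{prop:values} bounds character values at the regular semisimple element $x$, not at the non-semisimple target $g$, and here the error term is genuinely of constant order (roughly $|T|\cdot|a|/d$ with $|T|\approx q^4$, $|a|\approx q^{16}$ and $d\ge q^{20}$), so positivity is not ``transparent for large $q$'' but hinges on the Green function estimate.
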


\begin{table}[htbp] 
  \caption{Conjugacy classes in some rank-2 groups}
  \label{tab:excclass1}
\[\begin{array}{|l|c|}
\hline
  G& o(x)\\ \hline\hline
 \PSL_3(q)& (q^2+q+1)/(3,q-1)\\ 
          & \text{or }(q^2-1)/(3,q-1)\\ \hline
 \PSp_4(q),\ q\text{ even}& q^2-1\\ \hline
 \PSp_4(q),\ q\text{ odd}& (q^2+1)/2\\ \hline
 G_2(q),\ q\ge3& (q^2+q+1)/(3,q-1)\\ \hline
 \tw3D_4(q)& q^4-q^2+1\\ \hline
 \tw2F_4(q^2)& \Phi_{24}'\\ \hline
\end{array}\]
\end{table}

\begin{proof}
Let $C$ be the conjugacy class of a regular semisimple element $x$ of order
as given in Table~\ref{tab:excclass1}. We claim that the $(C,C,C')$ structure
constant is non-zero for any conjugacy class $C'$ of $G$. For the first four
families of groups, the complete generic character table is available in the
\Chevie-system, and the claim can be checked by computer. \par
The group $\tw2F_4(2)'$ is covered by the square of class 13a, so it remains to
consider $\tw2F_4(q^2)$ for $q^2\ge8$. Here, the complete character table is
known in principle, but is only partly contained in \Chevie. Now note
that for any class $C'$ of semisimple elements, our claim follows by the
elementary observation \cite[Thm.~2]{Gow}. By Lemma~\ref{lem:Lvalues} the
irreducible characters not vanishing on $C$ are the irreducible
Deligne-Lusztig characters $R_{T,\theta}$, with $|T|=\Phi_{24}'$, and some
of the unipotent characters.
\par
Let $y$ be a non-semisimple element of $G$. By \cite[Prop.~7.5.3]{Ca} we
have $R_{T,\theta}(y)=0$ unless the semisimple part of $y$ is conjugate to
an element of $T$. Since all non-identity elements of $T$ are regular, this
implies that $R_{T,\theta}(y)=0$ unless $y$ is unipotent. In the latter case,
$R_{T,\theta}(y)$ does not depend on $\theta$ by \cite[Cor.~7.2.9]{Ca}.
Let $I:=\{\pm R_{T,\theta}\mid1\ne\theta\in\Irr(T)\}\subset\Irr(G)$ and
$I':=\Irr(G)\setminus I$. By the orthogonality relations we have
$|\sum_{\chi\in I}\chi(x)^2|<|T|$. Writing $a:=R_{T,\theta}(y)$,
$d:=R_{T,\theta}(1)$ we find
$$\begin{aligned}
  |n(C,C,C')|
  &=\frac{|C|^2}{|G|}\left|\sum_{\chi\in I}\frac{\chi(x)^2\chi(y)}{\chi(1)}
   +\sum_{\chi\in I'}\frac{\chi(x)^2\chi(y)}{\chi(1)}\right|\cr
  &\ge\frac{|C|^2}{|G|}\left(\Big|\sum_{\chi\in I'}
   \frac{\chi(x)^2\chi(y)}{\chi(1)}\Big| -\frac{|a|}{d}|T|\right).\\
\end{aligned}$$
Now $a=R_{T,\theta}(y)$ is the value of a Green function and these have been
determined by the second author \cite{Ma90}, from which we get
$|a|\le q^{16}+2q^{15}$. Since
$|T|\le q^4+2q^3$ and $d=|G|_{2'}/|T|\ge q^{20}$ we have $|a|\cdot|T|/d<1/2$.
It can now be computed from the unipotent part of the character table in
\Chevie\ that $n(C,C,C')>0$ for all unipotent classes $C'$.
\end{proof}

\begin{rem}
It's easily seen that for all simple groups in Theorems~\ref{thm:2B2}
and~\ref{thm:3D4} at least one of the classes is real, so Thompson's
conjecture holds for all these groups.
\end{rem}

For the other types of simple groups, we give an approximation by showing that 
$G^\#$ is covered by a product of two classes.
\par
For most families of simple classical groups of Lie type this was already
shown in \cite[Thm.~2.1--2.6]{MSW}:

\begin{thm}[Malle--Saxl--Weigel (1994)]   \label{thm:MSW}
 Let $G$ be a simple classical group of Lie type not of type $\OO_{4n}^+(q)$
 and different from $\PSU_3(3)$ and $\OO_8^-(2)$. Then there are two conjugacy
 classes of $G$ whose product covers $G^\#$.
\end{thm}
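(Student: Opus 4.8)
The plan is to run the structure-constant method of Section~\ref{sec:Lie type}, but now for a product of two classes rather than a triple with product~$1$. For conjugacy classes $C_1,C_2,C_3$ of a finite group $G$ and a fixed $g\in C_3$, the number of factorisations $g=x_1x_2$ with $x_i\in C_i$ is
$$n(C_1,C_2,C_3)=\frac{|C_1|\,|C_2|}{|G|}\sum_{\chi\in\Irr(G)}
  \frac{\chi(C_1)\,\chi(C_2)\,\overline{\chi(C_3)}}{\chi(1)},$$
and $G^\#\subseteq C_1C_2$ holds as soon as $n(C_1,C_2,C_3)>0$ for every nontrivial class $C_3$. Since the trivial character always contributes~$1$, it suffices to bound the remaining terms below~$1$ in absolute value. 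I would take $C_1=C_2=C$ to be the class of a regular semisimple element $x$ whose centraliser is a cyclic maximal torus $T$ of order divisible by a Zsigmondy prime, namely one of the tori recorded in Table~\ref{tab:classtorus}; the freedom of two distinct classes is then reserved for the handful of borderline cases.

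The first simplification is that semisimple target classes need no estimate at all: by Gow \cite[Thm.~2]{Gow} the square of a regular semisimple class already contains every semisimple element of $G$. It remains to treat a non-semisimple target $y$. By Lemma~\ref{lem:Lvalues} the only characters with $\chi(x)\ne0$ lie in the Lusztig series $\cE(G,s)$ for $s\in T^*$, and these are organised exactly as in Propositions~\ref{prop:2Dn}--\ref{prop:SLn}: according to the $G^*$-centraliser type of $s$ (central, giving the unipotent characters, or of type $\GU_{n/r}(q^r)$ and the like), with $|\chi(x)|$ bounded by the relative Weyl-group index via Proposition~\ref{prop:values} and $\chi(1)$ bounded below by the Tiep--Zalesskii minimal-degree estimates \cite{TZ}. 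All the data on the $x$-side is thus already in place from Section~\ref{sec:Lie type}.

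What is new is the control of $\chi(y)$ on the target. The Deligne--Lusztig formula \cite[Prop.~7.5.3]{Ca} forces $\chi(y)=0$ unless the semisimple part $y_s$ of $y$ is $G$-conjugate into $T$; in particular, when $y_s$ is a \emph{regular} element of $T$ the whole non-principal sum vanishes and positivity is automatic, as in the proof of Theorem~\ref{thm:3D4}. The remaining targets are the unipotent classes and the mixed classes whose semisimple part is one of the finitely many non-regular elements of $T$. For these I would bound $|\chi(y)|$ by Green-function estimates on unipotent elements (of controlled polynomial degree in $q$) together with the crude bound $|\chi(y)|\le|C_G(y)|^{1/2}$, and check that the product with $|\chi(x)|^2/\chi(1)$, summed over the few contributing $\chi$, stays below~$1$ for all $q$ and all ranks outside a short explicit list.

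The principal difficulty will be precisely this uniform bounding of the character values on non-semisimple targets: these are the only terms neither obviously small nor killed by the vanishing argument, and it is their interplay with the degree lower bounds that forces the error sum below~$1$. The type $\OO_{4n}^+(q)$ is genuinely excluded, since the relevant torus does not separate the characters as cleanly and certain structure constants vanish, so a different choice of classes or argument is needed there; this is why the statement omits this type. Finally, the exceptions $\PSU_3(3)$ and $\OO_8^-(2)$, together with the finitely many small-rank and small-$q$ groups left over by the generic estimate, would be settled by direct computation from the known character tables with \Chevie\ and {\sf GAP}, exactly as in the earlier propositions.
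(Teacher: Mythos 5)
Your plan has a genuine gap, and it sits exactly where you locate the "principal difficulty." The actual proof (this theorem is quoted from \cite[Thms.~2.1--2.6]{MSW}; the paper reruns the same mechanism in Theorems~\ref{thm:O4n} and~\ref{thm:E7}) does \emph{not} square a single class: it takes two classes of regular semisimple elements lying in \emph{different} maximal tori, whose orders involve Zsigmondy primes that no proper centraliser in $G^*$ can accommodate simultaneously. By Lemma~\ref{lem:Lvalues} only unipotent characters can then be nonzero on both classes, and among these all but $1_G$ and $\St$ (plus at most a couple of cuspidal characters) have degree divisible by one of the two Zsigmondy primes, hence are of defect zero for it and vanish on the corresponding class. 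The payoff is that non-semisimple targets $y$ cost nothing: $\St(y)=0$ on elements of order divisible by $p$, so the structure constant is $\frac{|C_1||C_2|}{|G|}(1+o(1))>0$ with no estimate needed, while semisimple targets are handled by \cite[Thm.~2]{Gow} or by the bound $|\St(y)|=|C_G(y)|_p\ll\St(1)$. You treat the two-class freedom as a spare tool "for borderline cases," but it is the engine of the whole proof.

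With $C_1=C_2=C$ this mechanism disappears, and your substitute reduction is incorrect. The vanishing statement you invoke --- $\chi(y)=0$ unless $y_s$ is conjugate into $T$ --- holds for the irreducible Deligne--Lusztig characters $\pm R_{T,\theta}$ with $\theta$ in general position (\cite[Prop.~7.5.3]{Ca}), but \emph{not} for the $|N_G(T):T|$-many unipotent characters in the principal Zsigmondy block, nor for the characters in series $\cE(G,s)$ with $s\in T^*$ non-regular; all of these can be nonzero on arbitrary non-semisimple classes, so your list of "remaining targets" (unipotent classes and mixed classes with $y_s\in T$) omits most of what must be estimated. Moreover, for a unipotent target $y$ one has $y_s=1\in T$ trivially, so even the characters $\pm R_{T,\theta}$ do not vanish there: they each contribute the Green-function value $Q_T^G(y)$, which for root elements is a large power of $q$, and there are roughly $|T|/|N_G(T):T|$ such characters. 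The tools you propose cannot close this: the crude bound $|\chi(y)|\le|C_G(y)|^{1/2}$ is useless at long root elements, where $|C_G(y)|^{1/2}$ dwarfs the minimal character degrees of \cite{TZ}, and summing rank-many terms of size $O(1/q)$ does not stay below~$1$ when the rank is large compared with $q$ --- and the theorem includes $q=2,3$ in unbounded rank. The paper does make your kind of single-class, Green-function argument work, but only for groups of bounded rank where the Green functions are explicitly known (see the $\tw2F_4(q^2)$ case of Theorem~\ref{thm:3D4}, where the inequality $|a|\cdot|T|/d<1/2$ is checked from \cite{Ma90}); as a uniform strategy for all classical groups it is not viable as written.
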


It is straightforward to check from their character tables that this continues
to hold for $\PSU_3(3)$ and $\OO_8^-(2)$.

We next handle the remaining family of classical groups -- giving a small
improvement of the argument in \cite[\S7]{LST} for $\OO_{4n}(q), n > 2$.

\begin{thm}   \label{thm:O4n}
 Let $G$ be a simple classical group of Lie type $\OO_{4n}^+(q)$ with
 $n \ge 2$. Then there exist two conjugacy
 classes of $G$ whose product covers $G^\#$.
 \end{thm}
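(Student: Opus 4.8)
The plan is to produce the two classes by the structure-constant method used for $\tw2F_4(q^2)$ in Theorem~\ref{thm:3D4}, now playing off \emph{two} different Zsigmondy primes so as to keep the list of relevant characters extremely short. Write $G=\OO_{4n}^+(q)$ and recall that for classes $C_1,C_2$ with representatives $x_1,x_2$ and a target $g$ the number of factorisations $g=xy$ with $x\in C_1$, $y\in C_2$ is
\[
 n(C_1,C_2,g)=\frac{|C_1|\,|C_2|}{|G|}\sum_{\chi\in\Irr(G)}
   \frac{\chi(x_1)\chi(x_2)\overline{\chi(g)}}{\chi(1)},
\]
so it suffices to choose $C_1,C_2$ for which this sum is positive for every $g\ne1$. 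I would take $C_1$ to be the class of an element of order a Zsigmondy prime $r_1\mid\Phi_{4n-2}^*(q)$ lying in the torus of order $(q^{2n-1}+1)(q+1)$ analysed in Lemma~\ref{lem:torusDn} and Proposition~\ref{prop:Dn}, and $C_2$ the class of an element of order a Zsigmondy prime $r_2\mid\Phi_{2n-1}^*(q)$ in the ``partner'' torus of order $(q^{2n-1}-1)(q-1)$ sitting in a Levi $\GL_{2n-1}(q)\times\SO_2^+(q)$. Both primes exist for all but finitely many $(n,q)$, both lie in cyclotomic factors occurring to multiplicity one in $|G|$, and each is attached to a rank-$(2n-1)$ part of the group.

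The decisive point is that the only irreducible characters supported on \emph{both} classes are unipotent. By Lemma~\ref{lem:Lvalues}, any $\chi$ with $\chi(x_1)\ne0$ lies in a Lusztig series $\cE(G,s)$ with $s$ conjugate into a maximal torus through $x_1$, so $r_1\mid|C_{G^*}(s)|$; likewise $\chi(x_2)\ne0$ forces $r_2\mid|C_{G^*}(s)|$ for the \emph{same} $s$, since the series partition $\Irr(G)$. A rank count now shows $s$ must be central: $r_1$ is a primitive prime divisor of $q^{4n-2}-1$, so $C_{G^*}(s)^\circ$ has a factor of rank $\ge 2n-1$ of $\Phi_{4n-2}$-type ($\GU_{2n-1}$ or $\Spin_{4n-2}^-$), while $r_2$ forces a factor of rank $\ge 2n-1$ of $\Phi_{2n-1}$-type; two such factors would need rank $\ge 4n-2>2n=\operatorname{rank}(G^*)$ for every $n\ge2$, which is impossible in a proper reductive subgroup. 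Hence the common nontrivial support consists of nontrivial \emph{unipotent} characters, namely those of positive $r_1$-defect that also have positive $r_2$-defect.

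Thus everything reduces to the block-theoretic question of whether any nontrivial unipotent character of $D_{2n}$ lies simultaneously in a positive-$r_1$-defect and a positive-$r_2$-defect block. If the answer is ``no'' — which I expect from the symbol combinatorics, since a rank-$2n$ symbol admits at most one removable hook/cohook of size $2n-1$ and the only one removable in both the $\Phi_{4n-2}$- and the $\Phi_{2n-1}$-sense should be the trivial symbol — then the cross term vanishes identically and $n(C_1,C_2,g)=|C_1||C_2|/|G|>0$ for \emph{all} $g\ne1$, giving $G^\#\subseteq C_1C_2$ with no further work. Should a handful of unipotent characters survive in the intersection, I would fall back on the template of Theorem~\ref{thm:3D4}: semisimple targets $g$ are covered by Gow \cite[Thm.~2]{Gow}, and for the non-semisimple $g$ the few surviving terms are bounded by $|\chi(x_i)|\le|N_G(T_i):T_i|$ from Proposition~\ref{prop:values}, together with the minimal-degree estimates of Tiep--Zalesskii \cite[Prop.~7.2]{TZ} and the Green-function bounds, to force the error below $1$ (this is the small improvement over \cite[\S7]{LST}).

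The main obstacle is precisely verifying the unipotent-block disjointness in the previous paragraph: it is a purely combinatorial statement about $e$-cores of $D_{2n}$-symbols for $e=4n-2$ and $e=2n-1$, and it must be checked uniformly in $n$, with the choice of the second torus adjusted if a bad coincidence appears. The remaining loose ends are routine: the finitely many small $q$ for which the Zsigmondy primes $\Phi_{4n-2}^*(q)$ or $\Phi_{2n-1}^*(q)$ fail to exist, and the base case $\OO_8^+(q)$ (where triality permutes the relevant tori and unipotent characters, so the block bookkeeping needs separate attention), would be dispatched by explicit structure-constant computations in \Chevie\ and {\sf GAP}, exactly as in Theorems~\ref{thm:MSW} and~\ref{thm:3D4}.
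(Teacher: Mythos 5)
Your setup (two Zsigmondy classes, Lemma~\ref{lem:Lvalues} plus a rank count in $G^*$ to show that only unipotent characters can be nonzero on both classes) is sound, and indeed mirrors the technique the paper uses elsewhere (Theorem~\ref{thm:E7}). But the step you describe as ``the main obstacle'' is not merely unverified --- it is false, and for a reason no choice of classes can repair. The Steinberg character takes value $\pm1$ on every regular semisimple element \cite[Thm.~6.4.7]{Ca}, so it vanishes on neither of your classes; more structurally, since $x_1$ and $x_2$ are non-conjugate (their orders are distinct primes), the second orthogonality relation gives $\sum_{\chi\in\Irr(G)}\chi(x_1)\overline{\chi(x_2)}=0$, which is incompatible with the trivial character being the only one nonvanishing on both (the sum would be $1$). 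So the cross terms can never ``vanish identically,'' and the clean conclusion $n(C_1,C_2,g)=|C_1|\,|C_2|/|G|>0$ is unattainable; your symbol-combinatorics expectation about $e$-cores cannot come out the way you hope.

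This forces you into your fallback, and the fallback is where all the real content of the theorem lies --- it is essentially the paper's actual proof, but you have not carried it out, and the bounds you cite do not suffice as stated. For $n\ge3$ the paper takes the classes of \cite[\S7]{LST}: a Zsigmondy element for $q^{2n-1}-1$ inside $\GL_{2n-1}(q)$ (your $C_2$), but for the other class an element of order a \emph{product} of Zsigmondy primes for $q^{2n-2}+1$ and $q^2+1$ inside $\Omega_{4n-4}^-(q)\times\Omega_4^-(q)$, precisely because LST already showed that with this pair exactly three nontrivial characters ($\St$, $\gamma$, $\delta$) survive on both classes, with values $\pm1$ there. Then, for a nontrivial target class $C_3$: Gow \cite{Gow} reduces to non-semisimple $C_3$, where $\St$ vanishes; for the small-degree survivor $\gamma$ the trivial estimate $|\gamma(z)|\le|C_G(z)|^{1/2}$ fails badly on classes with huge centralizers (e.g.\ long root elements), and one needs Gluck's bound $|\gamma(C_3)|\le(19/20)\gamma(1)$ \cite{Gl}; only the large-degree survivor $\delta$ can be handled by the trivial centralizer bound, giving $|\delta(C_3)|<(1/20)\delta(1)$. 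Your sketch bounds $|\chi(x_i)|$ via Proposition~\ref{prop:values} but says nothing quantitative about $|\chi(g)|/\chi(1)$ for non-semisimple $g$, which is exactly the delicate point. With your classes you would first have to redo LST's identification of the surviving unipotent characters, and then produce a Gluck-type bound for each small-degree survivor; until that is done, the argument has a genuine gap.
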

 
\begin{proof}
If $n=2$, we choose the two classes as in \cite[Thm.~2.7]{MSW}, so that only
three irreducible characters do not vanish on either class, which moreover
are unipotent. The values of the unipotent characters of a group of type
$D_4$ are contained in \Chevie\ and one computes that the result holds.
 
Now assume that $n \ge 3$.   
Let $C_1$ be the class of an element of
order a Zsigmondy prime divisor of $q^{2n-1}-1$ inside the subgroup
$\GL_{2n-1}(q)$ and for $C_2$ the class of order the product of a
Zsigmondy prime divisor of $q^{2n-2}+1$ with a Zsigmondy prime divisor of
$q^2+1$, inside a subgroup $\Omega_{4n-4}^-(q)\times\Omega_4^-(q)$, as in
\cite[\S7]{LST}. The claim is proved in the last section of that paper aside
from a small number of cases. There are precisely $3$ nontrivial characters
that vanish on neither $C_1$ nor $C_2$ described there (the Steinberg
character, $\gamma$ and $\delta$) which take on the value $\pm 1$ on each
$C_i$. We want to show that any nontrivial class $C_3$ is contained in
$C_1C_2$. By a result of Gow \cite{Gow}, we may assume that $C_3$ consists of
non-semisimple elements and so the Steinberg character vanishes on $C_3$.
By a general bound of Gluck \cite[Thm.~1.11 and~5.3]{Gl},
$|\gamma(C_3)| \le (19/20)\gamma(1)$. We use the trivial estimate that 
$|\delta(C_3)| \le |C_G(z)|^{1/2} < (1/20)\delta(1)$ for $z \in C_3$.     
\end{proof} 

Next, we extend this result to the exceptional groups of Lie type. (For those
of small rank, it has already been shown above).

\begin{thm}   \label{thm:E7}
 Let $G$ be a simple group $F_4(q)$, $E_6(q)$, $\tw2E_6(q)$, $E_7(q)$ or
 $E_8(q)$. There exist two conjugacy classes of semisimple elements of $G$
 whose product covers $G^\#$.
\end{thm}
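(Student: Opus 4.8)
The plan is to follow the template already used for Theorems~\ref{thm:3D4} and~\ref{thm:O4n}: exhibit two classes $C_1,C_2$ of regular semisimple elements for which only very few irreducible characters fail to vanish on both, estimate the structure constant $n(C_1,C_2,C_3)$ from the character formula, and show that it is positive for every nontrivial class $C_3$. Concretely, I would take $C_i$ to be the class of a generator $c_i$ of a self-centralizing cyclic maximal torus $T_i$, choosing the two tori to have \emph{coprime} orders (up to the center): for example the Coxeter torus of order $\Phi_{30}(q)$ together with a $\Phi_{24}$-torus for $E_8(q)$, the $\Phi_{12}$- and $\Phi_8$-tori for $F_4(q)$, and analogously $\Phi_9$/$\Phi_8$ for $E_6(q)$, $\Phi_{18}$/$\Phi_8$ for $\tw2E_6(q)$, and two of the tori $\Phi_{14},\Phi_{18},\Phi_9$ for $E_7(q)$, working in the adjoint group when the center is nontrivial, exactly as in Proposition~\ref{prop:excbig}. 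Each $c_i$ is regular since its order is divisible by a Zsigmondy prime, so $C_G(c_i)=T_i$, and by Proposition~\ref{prop:values} every irreducible character has absolute value at most $N_i:=|N_G(T_i):T_i|$ on $c_i$.

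The first key step is to see which characters contribute. By Lemma~\ref{lem:Lvalues}, a character $\chi$ with $\chi(c_1)\chi(c_2)\ne0$ must lie in a Lusztig series $\cE(G,s)$ with $s$ conjugate into both dual tori $T_1^*$ and $T_2^*$; since $|T_1|$ and $|T_2|$ are coprime such an $s$ is central, so only unipotent characters survive (when the center is nontrivial one gets, as in Proposition~\ref{prop:excbig}, the unipotent characters twisted by the $d=|G_{\mathrm{ad}}:G|$ central characters, which is handled in the same way). Choosing $T_i$ to be the full Sylow subgroup of a Zsigmondy prime $\ell_i\mid\Phi_{d_i}(q)$, the relevant $\ell_i$-blocks have cyclic defect, so each surviving unipotent character takes a value in $\{0,\pm1\}$ on $c_i$; in particular $|\chi(c_1)\chi(c_2)|\le1$ and only the unipotent characters in the principal $\Phi_{d_i}$-series occur, a number bounded independently of $q$. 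Writing $n(C_1,C_2,C_3)=\frac{|C_1||C_2|}{|G|}(1+\eps)$ with $\eps=\sum_{1\ne\chi\in\cE(G,1)}\chi(c_1)\chi(c_2)\overline{\chi(g)}/\chi(1)$, it remains to prove that $|\eps|<1$ for every nontrivial $g\in C_3$.

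To finish I would argue as in Theorem~\ref{thm:O4n}. By Gow \cite[Thm.~2]{Gow} the product of the two regular semisimple classes already contains every semisimple element, so we may assume $C_3$ consists of non-semisimple elements; then the Steinberg character vanishes on $C_3$ and drops out of $\eps$. For the remaining (boundedly many) nontrivial unipotent characters one bounds the ratio $|\chi(g)|/\chi(1)$: for the characters of largest degree, Gluck's estimates \cite[Thm.~1.11 and~5.3]{Gl} give a ratio bounded away from~$1$, while for the smaller-degree ones the values on unipotent and mixed classes are governed by Green functions, which are known for the exceptional groups and available, together with the unipotent part of the character table, in \Chevie\ \cite{Chevie}; combined with the lower bounds $\chi(1)\gg q^{c}$ from L\"ubeck's degree data \cite{Lue} this yields $|\eps|<1$ for all but finitely many $q$. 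The finitely many remaining pairs $(G,q)$, together with $\tw2F_4(2)'$, are then checked directly by computing the structure constants from the (unipotent part of the) character tables in \Chevie.

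The delicate point, and the one I expect to be the main obstacle, is the \emph{uniform} bound $|\eps|<1$ on non-semisimple target classes: a near-central unipotent element has centralizer of order close to $|G|$, so the naive estimate $|\chi(g)|\le|C_G(g)|^{1/2}$ is hopelessly weak against the smallest nontrivial unipotent degrees (only about $q^{29}$ for $E_8(q)$). The argument therefore relies on there being genuinely \emph{few} surviving characters --- which is exactly what the coprime-torus choice together with cyclic-defect block theory buys us --- combined with the sharp character-ratio bounds of Gluck and the explicit Green-function data. Arranging these estimates to hold simultaneously for $E_7(q)$ and $E_8(q)$, which carry by far the largest families of unipotent characters, is where the real work lies.
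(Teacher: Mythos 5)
Your opening moves coincide with the paper's: pick two (almost) coprime self-centralizing tori so that, by Lemma~\ref{lem:Lvalues}, only unipotent characters can be nonzero on both classes, then invoke block theory for the Zsigmondy primes. The gap is in the endgame. The paper's proof rests on one concrete verification that your proposal never carries out: reading off the degree list in \cite[13.9]{Ca}, every unipotent character other than $1_G$, $\St$, and (for $F_4$, $E_8$) the cuspidal pairs $F_4[\pm i]$, $E_8[\pm i]$ has degree divisible by one of the two Zsigmondy primes $p_i$, hence has $p_i$-defect zero and vanishes on the corresponding class. So at most three nontrivial characters survive, and all of them have enormous degree ($\St(1)=q^{120}$, $E_8[\pm i](1)>q^{104}$ in $E_8$). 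Against the largest centralizer order of a nontrivial element ($q^{190}$ for $E_8$), the universal bound $|\chi(g)|\le\sqrt{|C_G(g)|}\le q^{95}$ then gives $|\chi(g)|/\chi(1)<1/3$ for \emph{every} nontrivial class --- semisimple, unipotent or mixed --- with no Gow reduction, no Gluck, no asymptotics in $q$, and no computer. Your own closing worry (smallest unipotent degrees $\approx q^{29}$ for $E_8$ versus $\sqrt{|C_G(g)|}\approx q^{95}$) is exactly the crux: if any small-degree unipotent character survived, the method really would break down; that none does is the content of the \cite[13.9]{Ca} check, which your proposal defers rather than proves (and with your choice $\Phi_{30},\Phi_{24}$ for $E_8$ the surviving set would have to be recomputed; it need not match the paper's).

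Moreover, the tools you propose for the estimate would not close it. Gluck's bound $|\chi(g)|\le(19/20)\chi(1)$ cannot handle two or three surviving characters whose contributions must sum to less than~$1$; this is why Theorem~\ref{thm:O4n} pairs Gluck with a separate $(1/20)$-bound for exactly one further character, a balance unavailable here. The fallback ``Green functions and the unipotent part of the character table in \Chevie, plus computer checks for the finitely many remaining $q$'' does not exist for $E_7(q)$ and $E_8(q)$: \Chevie\ contains generic tables only for the small-rank groups (which is precisely why Proposition~\ref{prop:excbig} resorts to Deligne--Lusztig estimates for the large exceptional groups), and there are no character tables of $E_8(q)$ for individual $q$ to check by machine, so an ``all but finitely many $q$'' argument leaves genuinely unverifiable cases. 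Finally, $\tw2F_4(2)'$ is not among the groups of this statement; it is handled elsewhere in the paper.
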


\begin{table}[htbp] 
  \caption{Conjugacy classes in exceptional groups}
  \label{tab:excclass2}
\[\begin{array}{|l|cc|l|}    
\hline
 G& o(x_1)& o(x_2)& \chi\\
\skipa \hline \hline
       F_4(q)& \Phi_{12}& \Phi_8& 1,F_4[i],F_4[-i],\St\\ \hline
       E_6(q)& \Phi_9/(3,q-1)& \Phi_8& 1,\St\\ \hline
   \tw2E_6(q)& \Phi_{18}/(3,q+1)& \Phi_8& 1,\St\\ \hline
       E_7(q)& (\Phi_2\Phi_{18})_{\{2,3\}'}& \Phi_7& 1,\St\\ \hline
       E_8(q)& \Phi_{20}& \Phi_{24}& 1,E_8[i],E_8[-i],\St\\ \hline
\end{array}\]
\end{table}

\begin{proof}
For each type, we choose $C_i$ to contain semisimple elements $x_i$
of order as indicated in Table~\ref{tab:excclass2}. (Such elements exist since
$G$ contains maximal tori with cyclic subgroups of that order.) \par
We first determine the irreducible characters of $G$ simultaneously not
vanishing on both classes. It is easy to check that no non-trivial element of
$G_\ad$ has centralizer order divisible by Zsigmondy primes $p_1,p_2$ for both
element orders (for example, for $G=F_4(q)$ using that the only semisimple
algebraic group of rank at most four whose order polynomial is divisible by
both $\Phi_8$ and $\Phi_{12}$ is $F_4$). Thus, by Lemma~\ref{lem:Lvalues} only
unipotent characters may possibly take non-zero values on both $C_1,C_2$.
\par
The list in \cite[13.9]{Ca} shows that apart from the two, respectively four
characters $\chi$ listed in Table~\ref{tab:excclass2} (where $1_G,\St$ denote
the trivial and the Steinberg character, and otherwise the notation is as in
loc.~cit.) all other unipotent characters have degree divisible by at least
one of the two Zsigmondy primes $p_i$, hence are of $p_i$-defect~0. Thus,
only the listed characters may potentially not vanish on both $C_1,C_2$.
\par
Now let $C$ be any non-trivial conjugacy class of $G$. We use the
character formula (\cite[Thm.~I.5.8]{MM}) to estimate the structure
constant $n(C_1,C_2,C)$. The value of the Steinberg character $\St$ on
semisimple elements equals the $p$-part of their centralizer order, up to
sign, and is zero on all other elements (see \cite[Thm.~6.4.7]{Ca}). The
elements in both classes $C_1,C_2$ are regular, so $|\St(x_1)|=1$. Also,
the values of the characters $F_4[\pm i]$ and $E_8[\pm i]$ on our regular
semisimple elements have absolute value~1 (for example by block theory for
the Zsigmondy primes $p_i$). Thus we are done if we can show that
$|\chi(C)|<\chi(1)/3$ for the one or three non-trivial unipotent characters.
\par
Now $\St(1)=q^{24},q^{36},q^{36},q^{63},q^{120}$ respectively, and
$F_4[\pm i](1)>q^{20}$, $E_8[\pm i](1)>q^{104}$. On the other hand,
$|\chi(x)|\le \sqrt{|C_G(x)|}$ for $x\in C$. The largest centralizers of
non-trivial elements in $G$ have order at most
$q^{36},q^{56},q^{56},q^{99},q^{190}$ respectively, so we are done.
\end{proof}

Finally, we note that:

\begin{prop}   \label{prop:altspor}
 Let $G$ be a simple sporadic group or $\fA_n$, $n\ge 7$. Then there exist two
 conjugacy classes $C$ of $G$ whose elements have odd coprime order such that
 $G^\#\subseteq CC$.
\end{prop}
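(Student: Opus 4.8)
The plan is to use the ordinary structure-constant formula. For classes $C_1,C_2,C_3$ of $G$ with representatives $x_i$, the number of pairs $(a,b)\in C_1\times C_2$ with $ab=x_3$ equals $\frac{|C_1||C_2|}{|G|}\sum_{\chi\in\Irr(G)}\chi(x_1)\chi(x_2)\overline{\chi(x_3)}/\chi(1)$. Since $G$ is simple the only linear character is trivial, so this is $\frac{|C_1||C_2|}{|G|}(1+E)$ with $E:=\sum_{\chi\ne1}\chi(x_1)\chi(x_2)\overline{\chi(x_3)}/\chi(1)$. Hence $G^\#\subseteq C_1C_2$ as soon as $|E|<1$ for every nontrivial class $C_3$, and the whole problem reduces to choosing two classes $C_1,C_2$ of odd, mutually coprime order for which very few irreducible characters fail to vanish simultaneously on both, and then bounding $E$.

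For the sporadic groups and the Tits group $\tw2F_4(2)'$ I would argue exactly as in Proposition~\ref{prop:spor}: for each group one selects two classes of elements of odd coprime order and computes the relevant structure constants $n(C_1,C_2,C_3)$ directly from the character tables available in {\sf GAP} and in the Atlas \cite{Atl,webAtl}, checking that they are positive for every nontrivial $C_3$. Since only finitely many groups and classes are involved this is a finite verification, entirely parallel to the computations recorded in Table~\ref{tab:sporadic}.

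The interesting case is $\fA_n$. For $n$ odd I would take $C_1$ to be the class of an $n$-cycle and $C_2$ the class of an $(n-2)$-cycle; their orders $n$ and $n-2$ are odd and coprime. By the Murnaghan--Nakayama rule an irreducible character $\chi_\lambda$ of $\fS_n$ is nonzero on an $n$-cycle only when $\lambda$ is a hook, and among hooks only $\lambda=(n-1,1)$ and its conjugate $(2,1^{n-2})$ are nonzero on an $(n-2)$-cycle, since every other hook has no removable $(n-2)$-rim hook leaving a valid two-cell diagram. These two partitions restrict to the single standard character $\psi$ of $\fA_n$ of degree $n-1$, while the self-conjugate hook (whose restriction splits) vanishes on $C_2$. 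Thus $E$ consists of the one term $\psi(x_1)\psi(x_2)\overline{\psi(x_3)}/(n-1)$. Using $\psi(g)=\operatorname{fix}(g)-1$ we get $\psi(x_1)=-1$ and $\psi(x_2)=1$, while $|\psi(x_3)|\le n-3$ for every $1\ne x_3\in\fA_n$, so
\[
  |E|\le \frac{n-3}{n-1}<1,
\]
which settles all odd $n\ge7$ uniformly.

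For $n$ even I would take $C_1$ the class of an $(n-1)$-cycle and $C_2$ that of an $(n-3)$-cycle (orders $n-1$ and $n-3$, again odd and coprime). Here the standard character vanishes on $C_1$, since an $(n-1)$-cycle fixes a single point, but the characters surviving on $C_1$ are no longer only hooks: they are the partitions carrying a removable $(n-1)$-rim hook, a short list of ``near-hook'' shapes such as $(n-2,2)$ and $(n-3,2,1)$. The plan is to intersect this list with the characters surviving on $C_2$, to compute by Murnaghan--Nakayama the (bounded) values of the finitely many lowest-degree survivors on $C_1$ and $C_2$ exactly, and to bound their values on an arbitrary $C_3$ by realizing them inside the permutation modules on $k$-subsets, so that $|\chi_\lambda(x_3)|$ is at most a fixed polynomial in $n$ comparable to $\chi_\lambda(1)$. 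Summing, $|E|<1$ for all large even $n$, and the finitely many remaining small cases (and the small odd $n$) are dispatched by computing the structure constants on a computer. I expect this even case to be the main obstacle: unlike an $n$-cycle, an $(n-1)$-cycle does not annihilate all non-hook characters, so several near-hook characters of moderate degree survive on both classes, and the crude estimate $|\chi(x_3)|\le|C_G(x_3)|^{1/2}$ is worthless for target classes of small support such as $3$-cycles, whose centralizers are enormous. The bound only closes because these survivors have low degree and lie in small permutation modules, which forces their values on \emph{every} class to be small; this must be verified with the exact values and signs, not with magnitudes alone.
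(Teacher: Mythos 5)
You take a genuinely different route from the paper. For $\fA_n$ with $n\ge17$ the paper is combinatorial: it quotes Bertram's theorem that every even permutation is a product of two $\ell$-cycles whenever $3n/4\le\ell\le n$, and chooses two odd coprime values of $\ell$ in $[3n/4,n-2]$, so that \emph{each} of the two classes already satisfies $G^\#\subseteq C\cdot C$; the cases $7\le n\le16$, $n=18$ and the sporadic groups are done by explicit computation. Your Murnaghan--Nakayama analysis for odd $n$ is, in itself, correct and complete: the only irreducible characters of $\fA_n$ nonzero on both the $n$-cycle class and the $(n-2)$-cycle class are the trivial character and the standard character $\psi$ (and you rightly check that the split characters of the self-conjugate hook vanish on the $(n-2)$-cycle class), giving $|E|\le (n-3)/(n-1)<1$ uniformly. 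As a computation-free argument for the mixed product this is attractive.

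The genuine gap is that you prove the wrong reading of the statement. What the paper proves, and what it uses afterwards, is that there are two classes of odd coprime orders \emph{each of whose squares} covers $G^\#$; that is exactly what Bertram's theorem gives (two $\ell$-cycles from one and the same class), and the paper's remark for $n=18$, that ``the square of one of these already covers $\fA_{18}^\#$'', leaves no doubt about the intended meaning. The coprimality is there so that at least one of the two classes has order prime to~3 (hence, being odd, prime to~6, or prime to a given prime $p$), and it is the square of that \emph{single} class that is invoked in the proofs of Theorem~\ref{thm:main5} and of the theorem on products of two $m$th powers. Your mixed product $G^\#\subseteq C_1C_2$ does not substitute: for odd $n$ divisible by~3 your classes have orders $n$ and $n-2$, the first not prime to~6, and nothing in your argument allows one to discard it. (The same issue affects your sporadic-group check, which also verifies only mixed products.) Note that proving $G^\#\subseteq C\cdot C$ for a single $\ell$-cycle class by your method is genuinely harder, since on $C\times C$ all hook characters, and the split characters of absolute value about $\sqrt{n}$, survive simultaneously -- presumably the reason the paper resorts to Bertram's theorem rather than to character estimates.

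Separately, your even-$n$ case is a plan, not a proof, although it does close and more easily than you fear: the partitions with a removable $(n-1)$-rim hook are $(n)$, $(1^n)$ and the near-hooks $(a,2,1^b)$, and among the latter only $(n-3,2,1)$ and its conjugate $(3,2,1^{n-5})$ survive on the class of cycle type $(n-3,1,1,1)$; these are conjugate partitions, restricting to a single irreducible character of $\fA_n$ with values $\pm1$ on both classes, so $|E|=|\chi(x_3)|/\chi(1)<1$ for all $x_3\ne1$ by faithfulness and simplicity alone -- no permutation-module estimates are needed. But one must also examine the split characters of the self-conjugate near-hook $(n/2,2,1^{(n-4)/2})$, whose principal hook lengths are $n-1,1$: these do \emph{not} vanish on the $(n-1)$-cycle classes, and the argument survives only because they vanish on the $(n-3,1,1,1)$ class for $n\ge8$. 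This verification, which you did carry out in the odd case, is missing from your even-case sketch.
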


\begin{proof}
If $G$ is sporadic, this is straightforward using the character tables. In
fact, we can always choose both classes to contain elements of prime order
$p>2$ with this property.  \par
Similarly, this is clear for $\fA_n$, $7\le n\le 16$, with elements of orders
$$(5,7),(3,7),(3,7),(5,7),(5,11),(5,11),(11,13),(11,13),(11,13),(7,13)$$
respectively. If $n \ge 17$, we can take two classes of $\ell$-cycles with
$3n/4 \le \ell \le n-2$ by Bertram \cite[Thm.~1]{Be}, and two such odd
coprime $\ell$ exist unless $n=18$. In the latter case we can also take $C$ to
contain elements of order~17. A random computer search shows that the square
of one of these already covers $\fA_{18}^\#$.
\end{proof}

\begin{proof}[Proof of Theorem \ref{thm:main5}]
For all finite non-abelian simple groups $G$ other than $\PSL_2(q)$, $q=7$ or
$17$, we produce two elements whose
order is prime to~6 such that their classes $C_1,C_2$ satisfy
$G^\#\subseteq C_1C_2$. 
If $G$ is sporadic or an alternating group $\fA_n$ with $n\ge7$, the
claim is an immediate consequence of Proposition~\ref{prop:altspor}. For
$n=5$ we use that the product of the classes $5a$ and $5b$ covers
$\fA_5^\#$, and similarly for $\fA_6$.

So assume that $G$ is a simple group of Lie type. If $G$ has rank $1$, the
result follows by Theorem~\ref{thm:2B2}. Indeed, for $\PSL_2(q)$ with $q$ odd,
we take for $x$ an element of order prime to~6 in one of the two maximal
tori. Such elements will exist unless $q\in\{5,7,17\}$. The latter two cases
have been excluded in the statement of~(b), and the group
$\PSL_2(5)\cong\fA_5$ was already settled.
For $\PSL_2(q)$ with $q\ge8$ even, take for $x$ an element of order
dividing $q-1$ and prime to~3. For $\tw2G_2(q^2)$, take any elements of order
prime to~6, for $\tw2B_2(q^2)$ take any elements of odd order; finally, for
$\PSU_3(q)$, $q>2$, take elements of order $(q^2-q+1)/\gcd(3,q+1)$ (which is
prime to~6). \par
For the groups of rank~2 in Theorem~\ref{thm:3D4}, we may take for $C$ the
class occurring in Table~\ref{tab:excclass1}, except for $\PSp_4(2^f)$
which is covered by the product of any two distinct classes of elements of
order $q^2+1$. The remaining exceptional groups
are covered by the product of two classes as in Table~\ref{tab:excclass2},
which consist of elements of order prime to~6. \par
For the groups occurring in Theorem~\ref{thm:MSW} which have not yet been
discussed, it is straightforward to check from \cite{MSW} that the two
conjugacy classes can be taken to have order a Zsigmondy prime bigger than~3
except possibly for
$$\PSL_6(2),\ \PSL_7(2),\ \PSU_4(2),\ \PSU_6(2),\ \PSU_7(2),\ \PSp_6(2),\
  \PSp_{12}(2).$$
Now $\PSL_6(2)^\#$ is covered by the square of class $31a$, $\PSL_7(2)^\#$ by
the square of $127a$, $\PSU_4(2)$ by the square of $5a$,
$\PSU_6(2)$ and $\PSp_6(2)$ by the square of $7a$, and $\PSp_{12}(2)$ by the
square of class $31a$. For $G=\PSU_7(2)$ the character table is not available
in GAP. We claim that the product of classes of elements of orders~43 and~11
covers $G^\#$. First, by Lemma~\ref{lem:Lvalues} it is easy to see that
only (some) unipotent characters take non-zero values on both classes. These
are contained in \Chevie\ and the claim can then be verified.
The group $\OO_8^-(2)$ excluded in Theorem~\ref{thm:MSW} is covered by the
square of class $17a$.
\par
For the 8-dimensional orthogonal groups of plus type, the classes in
\cite[Thm.~2.7]{MSW} can be chosen to contain elements of order prime to~6,
except for $\OO_8^+(2)$. The latter group is covered by the square of class
$7a$. For $n\ge3$, the two classes for $\OO_{4n}^+(q)$ chosen in
Theorem~\ref{thm:O4n} consist of elements of order prime to~6. \par
Finally, it is straightforward to see that any element of $\PSL_2(q)$ is a
product of two unipotent elements.
\end{proof}

In \cite{LST}, a weaker version of Theorem \ref{thm:main5} was proved.  

We obtain the following consequence:

\begin{thm}
 Let $G$ be a finite non-abelian simple group. Let $m$ be a prime power.
 Then every element of $G$ is a product of two $m$th powers.
\end{thm}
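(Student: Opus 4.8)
The plan is to reduce the statement to a covering assertion and then feed that reduction the results already proved in the paper together with the theorem of Chernousov--Ellers--Gordeev \cite{CEG}. Write $m=\ell^a$ with $\ell$ prime. The basic observation is that every element of order prime to $\ell$ is automatically an $m$th power: if $\gcd(o(x),m)=1$ one solves $km\equiv1\pmod{o(x)}$ and then $x=(x^k)^m$. Since the set of $m$th powers is closed under conjugation and contains $1$, it suffices to produce two conjugacy classes $C_1,C_2$ whose elements have order prime to $\ell$ and for which $G^\#\subseteq C_1C_2$; each nonidentity element is then a product of two $m$th powers, and $1=1\cdot1$.

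For $\ell\in\{2,3\}$ I would invoke Theorem~\ref{thm:main5} directly: the two classes it supplies consist of elements of order prime to~$6$, hence prime to $\ell$, and their product covers $G^\#$. The only groups left out are $\PSL_2(7)$ and $\PSL_2(17)$, and for these I would instead apply Theorem~\ref{thm:2B2}(1), taking $C_1=C_2=C$ to be the class of a noncentral (hence semisimple) element of order coprime to $\ell$ --- of order $3$ or $9$ when $\ell=2$, of order $4$ or $8$ when $\ell=3$ --- so that $G^\#\subseteq CC$.

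For $\ell\ge5$ I would argue by type. If $\ell\nmid|G|$ the map $g\mapsto g^m$ is a bijection and the statement is trivial, so assume $\ell\mid|G|$. For $\fA_n$ I would use Bertram \cite[Thm.~1]{Be}: a class $C$ of $k$-cycles with $3n/4\le k\le n$ already satisfies $\fA_n^\#\subseteq CC$, and for large $n$ one may choose $k$ odd (so that $C\subset\fA_n$) and prime to $\ell$, whence the elements of $C$ have order prime to $\ell$; the finitely many small $n$, and $n=18$ (compare Proposition~\ref{prop:altspor}), are settled by direct computation. For the finitely many sporadic groups one checks from the character tables that for each relevant prime $\ell\mid|G|$ there are two classes of order prime to $\ell$ whose product covers $G^\#$.

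The heart of the matter, and the step I expect to be the main obstacle, is the Lie type case for $\ell\ge5$. Here I would separate two situations. If $\ell=p$, the defining characteristic, then every semisimple class already consists of elements of order prime to $\ell$, so it is enough to find two semisimple classes covering $G^\#$; if $\ell\ne p$ I would instead look for the class of a regular semisimple element whose order happens to be prime to $\ell$. In both situations the required covering is furnished by the main result of \cite{CEG} (supplemented, for the small-rank groups, by Theorems~\ref{thm:2B2}, \ref{thm:3D4}, \ref{thm:MSW}, \ref{thm:O4n} and~\ref{thm:E7}). What makes this delicate is that the specific classes produced in those earlier theorems have order a prescribed Zsigmondy prime, which may itself equal the given $\ell$, so the real work is to exploit the flexibility in \cite{CEG} to choose a covering class whose order avoids the single prime $\ell$, while separately disposing of the finitely many groups over small fields where the relevant Zsigmondy primes fail to exist.
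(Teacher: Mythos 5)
Your reduction (an element of order prime to $\ell$ is automatically an $m$th power, so it suffices to cover $G^\#$ by a product of two classes of elements of order prime to $\ell$), your treatment of $\ell\in\{2,3\}$ via Theorem~\ref{thm:main5} together with the explicit fix for $\PSL_2(7)$ and $\PSL_2(17)$, and your handling of the alternating and sporadic groups all agree with the paper: the latter two cases are exactly what Proposition~\ref{prop:altspor} records, whose proof is your Bertram argument \cite{Be}. The gap is in the step you yourself call the heart of the matter, the groups of Lie type with $\ell\ge5$. There you assert that ``the required covering is furnished by the main result of \cite{CEG}.'' But the Chernousov--Ellers--Gordeev theorem is a Gauss decomposition statement: every element of $G$ is conjugate to an element of $U^-B$, where $B$ is a Borel subgroup and $U^-$ the unipotent radical opposite to it. It produces no coverings of $G^\#$ by semisimple classes, and it offers no ``flexibility'' in choosing class orders; in particular it is useless in defining characteristic $\ell=p$, since the $U^-$-factor of the decomposition is unipotent of order a power of $p$. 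So your plan for $\ell=p$ rests on a result that does not say what you need, and your plan for $\ell\ne p$ explicitly leaves unresolved the one problem that has to be solved, namely that the covering classes of Theorems~\ref{thm:2B2}, \ref{thm:3D4}, \ref{thm:MSW}, \ref{thm:O4n} and~\ref{thm:E7} have orders built from specific Zsigmondy primes which might equal $\ell$.

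The paper closes exactly this gap with a different dichotomy: it splits on whether $\gcd(m,|B|)=1$, not on whether $\ell=p$. If $\gcd(m,|B|)=1$, then \cite{CEG} is used for what it actually says: every element of $G$ is conjugate to a product $u^-b$ with $u^-\in U^-$ and $b\in B$; both factors have order dividing $|B|$, hence prime to $\ell$, hence both are $m$th powers --- no class covering is needed at all in this case. If instead $\ell$ divides $|B|$, then your Zsigmondy worry evaporates: the covering classes produced in the proof of Theorem~\ref{thm:main5} consist of elements whose order is prime to $|B|$ (Zsigmondy primes for $q^e-1$ with $e\ge2$, and orders dividing $q^n+1$ and the like, are coprime to $q$ and to the order of a maximally split torus), so their orders are automatically prime to $\ell$; the few small-field groups where the relevant Zsigmondy primes fail are settled by the ad hoc classes listed in that proof, together with the separate observations that every element of $\PSL_2(2^f)$ is a product of two elements of order dividing $q+1$ and every element of $\PSp_4(2^f)$ a product of two elements of order dividing $q^2+1$. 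Without this $|B|$-dichotomy your outline does not close, for instance when $\ell\mid q-1$, and no appeal to \cite{CEG} can repair it.
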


\begin{proof}
By Theorem~\ref{thm:main5} we may assume that $\gcd(m,6)=1$.
Now for $G$ sporadic or $G=\fA_n$, $n\ge7$, our claim follows from
Proposition~\ref{prop:altspor}. The groups $\fA_5$ and $\fA_6$ are both
covered by products of two 3-elements or two 5-cycles.

So assume that $G$ is a group of Lie type. Let $B$ denote a Borel subgroup of
$G$ and $U^-$ the opposite of the unipotent radical of $B$.
If $\gcd(m,|B|)=1$, we use Chernousov--Ellers--Gordeev \cite[Thm.~2.1]{CEG}
which asserts that every element of $G$ is conjugate to an element of $U^-B$,
and so a product of two $m$th powers.

So now assume that $\gcd(m,|B|)\ne1$. For all groups $G$ of rank~1 except
possibly $\PSL_2(2^f)$ by Theorem~\ref{thm:2B2} there exists a conjugacy
class of elements of order prime to $|B|$ whose square covers $G^\#$. For
$\PSL_2(2^f)$ it's easily seen that any element is a product of two elements
of order dividing $q+1$. Similarly, for the groups in Theorem~\ref{thm:3D4}
the claim is clear except for $G=\PSp_4(2^f)$. Here, again $G^\#$ is also
covered by products of two elements of order dividing $q^2+1$. For all other
groups, we have already argued in the previous proof that they can be covered
by products of two elements whose order is prime to $|B|$.
\end{proof}

\section{Further generation results}

We use our results on overgroups of suitable cyclic subgroups in simple
groups to derive the following:

\begin{thm}   \label{thm:generation}
 Let $S$ be a finite simple group different from $\OO_8^+(2)$. There exists
 a conjugacy class $C$ of $S$ consisting of elements of order prime to~6 such
 that if $1\ne x\in S$, then $S=\langle x,y\rangle$ for some $y\in C$. 
\end{thm}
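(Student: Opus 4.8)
The plan is to fix, once and for all for each $S$, a single conjugacy class $C$ consisting of elements of order prime to~$6$, and then to verify the generating property: for every $1\ne x\in S$ the set $\{y\in C:\langle x,y\rangle\ne S\}$ is a \emph{proper} subset of $C$. The standard reduction from \cite{BGK} is that $\langle x,y\rangle\ne S$ forces $x$ and $y$ into a common maximal subgroup, so that
$$P(x,C):=\frac{|\{y\in C:\langle x,y\rangle\ne S\}|}{|C|}
  \le\sum_{\substack{M\le S\text{ maximal}\\ x\in M}}\frac{|C\cap M|}{|C|},$$
where $|C\cap M|/|C|=\mathrm{fpr}(g,S/M)$ is the fixed-point ratio of $g\in C$ on the coset space $S/M$. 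It therefore suffices to exhibit a class $C$ of elements of order prime to~$6$ for which this sum is $<1$ for every $x\ne1$.

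For the order condition I would reuse the classes already chosen in Section~\ref{sec:Lie type}: for a simple group of Lie type let $C$ consist of elements whose order is a Zsigmondy prime divisor of $\Phi_e^*(q)$ for the value $e\ge3$ attached to the relevant family in Tables~\ref{tab:classtorus} and~\ref{tab:exctorus}. Since every prime divisor of $\Phi_e^*(q)$ is congruent to $1$ modulo~$e$, for $e\ge3$ such an order is automatically coprime to~$6$, which disposes of the order requirement at no cost. For $\fA_n$ I would take a class of $\ell$-cycles with $\ell$ coprime to~$6$ and close to $n$, using the translation principle (Lemma~\ref{translation}) to move between $\fS_n$ and $\fA_n$, and for the sporadic groups an explicit class read from \cite{Atl,webAtl}. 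The decisive feature of all these choices is that the maximal subgroups $M$ with $C\cap M\ne\emptyset$ are precisely the overgroups already classified in Lemmas~\ref{lem:maxSLn}--\ref{lem:maxSO-} and Proposition~\ref{prop:excelt}: only a short, explicitly known list of subgroup types can contribute to the sum above.

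For each surviving type $M$ I would bound its contribution as a product of two factors. Because $g$ has order divisible by a large prime, $C\cap M$ is governed by the normalizer in $M$ of the cyclic torus $T=C_S(g)$ --- indeed the automizer of $T$ acts semiregularly on the nontrivial powers of $g$ --- so $\mathrm{fpr}(g,S/M)$ is extremely small. The number of conjugates of $M$ that contain the fixed $x$ equals $|S:N_S(M)|\,|x^S\cap M|/|x^S|$, which for the subspace-, subfield- and torus-type stabilizers on the list is controlled by the usual fixed-point-ratio estimates of \cite{BGK}. Multiplying and summing over the bounded list of types, the rapid decay of $\mathrm{fpr}(g,S/M)$ dominates, giving $P(x,C)<1$ uniformly in $x$.

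The main obstacle will be the uniformity over \emph{all} nontrivial $x$, and especially over the unipotent and other small-support elements that lie in very many maximal subgroups, so that the sum has many terms. This is defused by the remark that almost none of those subgroups meet $C$, hence contribute nothing, while the few that do carry a negligible ratio for $g$. What then remains is a finite list of genuinely small cases --- small rank or small $q$, where no suitable Zsigmondy prime exists, together with the small alternating and sporadic groups --- to be settled from the explicit subgroup data of Section~\ref{sec:GPPS} or by direct computation, exactly as in Propositions~\ref{prop:2Dn}--\ref{prop:SLn}. Finally $\OO_8^+(2)$ is a genuine exception: its wealth of maximal subgroups forces the only workable class to consist of elements of order~$15$, which is coprime to~$2$ but not to~$3$, so it must be excluded (as recorded after Theorem~\ref{thm:main4}).
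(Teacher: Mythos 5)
Your framework (fix $C$, bound the failure probability $P(x,C)$ by a sum of fixed-point ratios over maximal subgroups meeting $C$) is the probabilistic method of \cite{BGK}, and it is a legitimate route --- indeed the paper notes that \cite{BGK} already proves the theorem by exactly this method, minus the order condition. But the paper's own proof deliberately runs on two different devices, and your plan is missing the reason why. The generic device is not a union bound at all: one fixes $y\in C$ lying in \emph{at most two} maximal subgroups (this is what the Section~\ref{sec:GPPS} analysis is engineered to produce) and invokes Lemma~\ref{lem:2max}, the exact combinatorial fact that a nontrivial class of a simple group is never covered by two proper subgroups; no estimates are needed. Where you do need estimates, your phrase ``the rapid decay of $\mathrm{fpr}$ dominates'' is not what happens: for subspace stabilizers the number of conjugates containing $x$ grows at the same rate as $\mathrm{fpr}(g,S/M)$ decays. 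For example, for $\PSL_n(2)$ with $C$ regular semisimple of order $2^{n-1}-1$ and $x$ a transvection, the two parabolic classes contribute exactly $(2^n-2)/(2^n-1)$ --- under $1$ with nothing to spare, and only because your class was chosen so that \emph{no other} subgroup class contributes. This also shows why your choice of $C$ is too weak: an element whose order is a \emph{single} Zsigmondy prime need not satisfy the hypothesis $r>2e+1$ of Theorem~\ref{overgroups}, and the paper's exclusion of the unitary, orthogonal and subfield overgroups (e.g.\ in the proof for $\PSL_n(q)$, $q$ a square) uses that the order is the \emph{full} prime-to-$6$ part of the torus order, not one prime divisor of it; with your $C$, extra overgroup families survive and must be added to an already tight sum.

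The more serious missing idea is what the paper calls situation (S2). For several families the class forced on you by the prime-to-$6$ condition is one whose maximal overgroups are classified nowhere in the paper (nor easily in the literature), so your sum $\sum_{M}|C\cap M|/|C|$ cannot even be written down. The clearest case is $E_7(q)$: the torus of Table~\ref{tab:exctorus} has order $\Phi_2\Phi_{18}/(2,q-1)$, which is generally divisible by $2$ or $3$, so the theorem's class must be taken to consist of elements of order $\Phi_7(q)$ --- and the overgroups of such elements (which lie in $A_6$- and $A_7$-type subsystem subgroups, $E_6$-parabolics, etc.) are never determined. The paper sidesteps this entirely: it shows the $(C',C_2,C_3)$-structure constant is nonzero for every nontrivial class $C'$ (Theorem~\ref{thm:E7}), where $C_2$ consists of order-$\Phi_7$ elements and $C_3$ of torus elements whose unique maximal overgroup $\tw2E_6(q)_\SC.D_{q+1}$ has order prime to $\Phi_7$; then in any solution of $xyz=1$ one gets $\langle x,y\rangle=\langle y,z\rangle=S$, so overgroups of $C_2$ itself never need to be known. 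The same trick is what handles $\PSL_n(q)$ with $n$ odd, several unitary groups, and the sporadic groups $HS$, $Suz$, $Fi_{22}$. Without this character-theoretic device (or a substantial new overgroup classification), your plan cannot be completed in these cases; your treatment of $\fA_n$ and of $\OO_8^+(2)$ (order~$15$, hence excluded) is fine.
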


In the one exception, we can instead take $C$ to be a class of elements of
order $15$ (and so in all cases, there is a class $C$ of elements of odd order
satisfying the conclusion).  Moreover,  if $1 \ne x \in S$ has odd order,
then we can choose the class to consist of elements of order $7$.

Before we begin the proof, we point out some easy consequences
including the affirmative answer to a question of Getz \cite{getz}.   
Recall that a quasi-simple group is a perfect central cover of a simple group.

\begin{cor} \label{getz}
 Let $S$ be a finite quasi-simple group. If $T$ is a solvable subgroup of $S$,
 then there exists a solvable subgroup $T_1 \ge T$ and an element $s \in S$ of
 order prime to $6$ such that $S = \langle T_1, s \rangle$.
\end{cor}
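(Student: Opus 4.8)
The plan is to reduce the assertion for the quasi-simple group $S$ to the generation statement of Theorem~\ref{thm:generation} for its simple quotient. Write $Z:=Z(S)$, let $\bar S:=S/Z$ (a non-abelian simple group) and let $\pi\colon S\to\bar S$ denote the quotient map, so $\ker\pi=Z$. The elementary device driving the argument is the following: if $H\le S$ satisfies $\pi(H)=\bar S$ and $Z\le H$, then $H=S$ --- indeed, for $g\in S$ pick $h\in H$ with $\pi(h)=\pi(g)$, so that $gh^{-1}\in\ker\pi=Z\le H$ and hence $g\in H$. Thus it suffices to exhibit a solvable $T_1\ge T$ containing $Z$ together with an element $s\in S$ of order prime to $6$ such that $\pi(\langle T_1,s\rangle)=\bar S$.

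I first record a lifting step. Suppose $\bar y\in\bar S$ has order $n$ with $\gcd(n,6)=1$, and let $y_0\in S$ be any preimage, of order $N$ (a multiple of $n$). Let $s:=y_0^{m}$, where $m$ is the $\{2,3\}$-part of $N$; then $s$ has order the $\{2,3\}'$-part of $N$, which is prime to $6$. Since $n$ is prime to $6$ it divides this $\{2,3\}'$-part, while $m$ is coprime to it and hence to $n$; therefore $\pi(s)=\bar y^{\,m}$ generates $\langle\bar y\rangle$. Thus every element of order prime to $6$ in $\bar S$ lifts to an element of $S$ of order prime to $6$ generating the same cyclic image.

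Now set $\bar T:=\pi(T)$, a proper solvable subgroup of $\bar S$. Choose a nontrivial element $\bar x\in\bar S$ with $\langle\bar T,\bar x\rangle$ solvable: if $\bar T\ne1$ take $\bar x\in\bar T$, and if $\bar T=1$ (so $T\le Z$) take any $\bar x$ of prime order. Fix a preimage $x\in\pi^{-1}(\bar x)$ and put $T_1:=\langle T,x\rangle Z$. Then $\pi(T_1)=\langle\bar T,\bar x\rangle$ is solvable and $T_1\cap Z$ is a central (hence abelian) normal subgroup of $T_1$ with solvable quotient, so $T_1$ is solvable; moreover $T\le T_1$, $Z\le T_1$ and $\bar x\in\pi(T_1)$. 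Provided $\bar S\ne\OO_8^+(2)$, Theorem~\ref{thm:generation} furnishes a class $C$ of elements of order prime to $6$ and an element $\bar y\in C$ with $\langle\bar x,\bar y\rangle=\bar S$; lifting $\bar y$ to $s$ as above gives $\pi(\langle T_1,s\rangle)\supseteq\langle\bar x,\bar y\rangle=\bar S$, whence $\langle T_1,s\rangle=S$ by the device of the first paragraph. This proves the corollary whenever $\bar S\ne\OO_8^+(2)$.

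The one delicate point --- and the main obstacle --- is the quotient $\bar S=\OO_8^+(2)$, for which no single class of prime-to-$6$ elements generates with every nontrivial element. By the remark following Theorem~\ref{thm:generation}, however, a class of elements of order~$7$ (which is prime to~$6$) does generate with every element of odd order. It therefore suffices to arrange that $\pi(T_1)$ contains a nontrivial element $\bar x$ of odd order and then run the argument above with the order-$7$ class. If $\bar T$ already contains such an element we are done; if $\bar T$ is a nontrivial $2$-group I would take $\bar x$ to be a nontrivial odd-order element of $C_{\bar S}(\bar T)$, so that $\langle\bar T,\bar x\rangle=\bar T\langle\bar x\rangle$ is a product of commuting solvable groups and hence solvable; and if $\bar T=1$ any $\bar x$ of order~$3$ works. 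The only residual cases are those finitely many $2$-subgroups $\bar T\le\OO_8^+(2)$ whose centralizer is again a $2$-group (essentially the near-Sylow $2$-subgroups), which I would dispose of by a direct inspection of the solvable overgroups of a Sylow $2$-subgroup of $\OO_8^+(2)$ in the {\sf GAP} library. Completing this finite check is the only computational ingredient; the rest of the proof is the uniform reduction above.
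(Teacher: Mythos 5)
Your reduction to the simple quotient is exactly the paper's (the paper's parenthetical remark about lifting elements of order prime to~$6$ is your lifting step), and your treatment of all simple quotients other than $\OO_8^+(2)$ is correct and matches the paper: place $T$ in a solvable overgroup containing a nontrivial element $\bar x$ and invoke Theorem~\ref{thm:generation}. The gap is in the one case that makes the corollary nontrivial, namely $\bar S=\OO_8^+(2)$. There your argument splits on whether $C_{\bar S}(\bar T)$ contains a nontrivial odd-order element, and the residual subcase --- nontrivial $2$-groups $\bar T$ whose centralizer is again a $2$-group, which includes the Sylow $2$-subgroups themselves --- is not proved but deferred to ``a direct inspection \dots in the {\sf GAP} library.'' As written, the proof is therefore incomplete precisely where the exception lives.

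The missing idea is structural and needs no computation: take $T_1\ge T$ to be a \emph{maximal} solvable subgroup. If $T_1$ is not a $2$-group, it contains a nontrivial element of odd order, and the remark following Theorem~\ref{thm:generation} (a class of elements of order~$7$ generates together with any nontrivial element of odd order) finishes the proof. And $T_1$ cannot be a $2$-group: a maximal solvable subgroup that is a $2$-group would have to be a Sylow $2$-subgroup, but a Sylow $2$-subgroup of $\OO_8^+(2)$ is properly contained in a minimal parabolic subgroup, which is still solvable (its Levi factor is essentially $\SL_2(2)\cong\fS_3$ times a torus). This is exactly how the paper closes the case, and choosing $T_1$ maximal solvable from the start also makes your centralizer case distinction unnecessary. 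Incidentally, the same observation shows that the finite check you postponed would succeed (every $2$-subgroup lies in a Sylow $2$-subgroup, hence in a solvable minimal parabolic containing elements of order~$3$), so your proof is fixable --- but the fix is the very step you omitted.
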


\begin{proof}
It is trivial to reduce to the case $S$ is simple  (elements of order
prime to $6$ in the simple group can be lifted to elements of order prime
to $6$ in a covering group).

The result is now immediate unless $S = \OO_8^+(2)$.
Consider that case. Let $T_1$ be a maximal solvable subgroup of $S$
containing $T$. The result follows by the remarks above unless $T_1$
is a $2$-group.  However, it is clear that a Sylow $2$-subgroup of $S$  is not
a maximal solvable subgroup of $S$ (it is contained in a minimal parabolic 
subgroup which is still solvable).
\end{proof}

\begin{cor}
 Let $S$ be a finite quasi-simple group.  Then $S$ can be generated
 by two conjugate elements of order prime to $6$.
\end{cor}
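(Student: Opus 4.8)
The plan is to deduce this from Theorem~\ref{thm:generation} in two stages: first dispose of the simple groups, then bootstrap to arbitrary quasi-simple groups by a lifting argument.

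For $S$ simple I would invoke Theorem~\ref{thm:generation} directly: together with the remark following it (which covers the sole exception $\OO_8^+(2)$ by a class of elements of order~$15$), it produces a conjugacy class $C$ consisting of elements of order prime to~$6$ such that every nontrivial element of $S$ generates $S$ with some member of $C$. So I fix any $x\in C$; since $x\ne1$, there is $y\in C$ with $S=\langle x,y\rangle$, and then $x,y$ are conjugate of order prime to~$6$, as desired.

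Now let $S$ be quasi-simple with center $Z$ and simple quotient $\bar S:=S/Z$, writing $g\mapsto\bar g$ for the quotient map. By the case just settled there are conjugate $\bar x,\bar y\in\bar S$ of order prime to~$6$ with $\bar S=\langle\bar x,\bar y\rangle$; I choose $\bar t\in\bar S$ with $\bar x^{\,\bar t}=\bar y$. The key idea is to lift only \emph{one} generator freely and to obtain the second as an explicit conjugate, so that both conjugacy and the order condition come for free. Concretely, as already noted in the proof of Corollary~\ref{getz}, any preimage $g_0\in S$ of $\bar x$ has a well-defined $\{2,3\}'$-part $x$; since taking $\{2,3\}'$-parts commutes with the quotient map and $\bar x$ is itself a $\{2,3\}'$-element, $x$ is again a preimage of $\bar x$ and has order prime to~$6$. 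Lifting $\bar t$ to any $t\in S$ and setting $y:=x^t$, I obtain an element conjugate to $x$, hence of order prime to~$6$, whose image is $\bar x^{\,\bar t}=\bar y$.

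It remains to check that $H:=\langle x,y\rangle$ equals $S$. By construction $H$ surjects onto $\bar S=\langle\bar x,\bar y\rangle$, so $HZ=S$; as $Z$ is central, $H^s=H^z=H$ for every $s=hz\in S$, whence $H\triangleleft S$ and $S/H\cong Z/(Z\cap H)$ is abelian. Since $S$ is perfect this forces $H=S$, and $x,y$ are conjugate elements of order prime to~$6$ generating $S$. There is no genuine obstacle here; the only point needing care is to resist lifting $\bar x$ and $\bar y$ independently—in which case conjugacy of the lifts is not transparent—whereas transporting $x$ by a lift of the conjugating element $\bar t$ makes conjugacy and the order condition hold simultaneously and automatically.
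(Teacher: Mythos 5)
Your handling of the one exceptional case is wrong as written. You claim that the remark after Theorem~\ref{thm:generation} covers $\OO_8^+(2)$ ``by a class of elements of order~$15$'' and that this yields a class of elements of order prime to~$6$. It does not: $15=3\cdot 5$ is divisible by~$3$, so elements of order~$15$ are merely of \emph{odd} order, which is exactly (and only) what the parenthetical in that remark asserts. As written, your argument therefore proves the corollary for $\OO_8^+(2)$, and for the quasi-simple groups covering it, only with ``order prime to $6$'' weakened to ``odd order''. The gap is closed by the final sentence of the same remark, which you did not use: if $1\ne x\in\OO_8^+(2)$ has odd order, then the class $C$ in the theorem can be chosen to consist of elements of order~$7$. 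Apply this with $x$ taken inside that order-$7$ class itself (such an $x$ certainly has odd order): the theorem then supplies $y$ in the same class with $S=\langle x,y\rangle$, so $x,y$ are conjugate of order~$7$, which is prime to~$6$. With this substitution your lifting step handles the covers of $\OO_8^+(2)$ as well.

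Everything else in your proposal is correct and is essentially the paper's intended route. For the remaining simple groups, taking $x$ inside the class $C$ of Theorem~\ref{thm:generation} and pairing it with the $y\in C$ the theorem provides is exactly right. Your reduction from quasi-simple to simple groups -- extracting the $\{2,3\}'$-part of one preimage, transporting it by a lift of the conjugating element $\bar t$, and then using $HZ=S$ together with perfectness of $S$ -- is a correct and careful elaboration of the one-line reduction the paper invokes in the proof of Corollary~\ref{getz} (``elements of order prime to $6$ in the simple group can be lifted to elements of order prime to $6$ in a covering group''); in particular, conjugating by a lift of $\bar t$ is a clean way to guarantee that the two lifted generators remain conjugate.
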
  

In the proof of the theorem, we need the following observation (see
Guralnick \cite[2.2]{Gu97} for a slightly different proof):

\begin{lem}   \label{lem:2max}
 Let $C$ be a non-trivial conjugacy class in a finite simple group $G$.
 Then $C$ is not contained in the union of any two proper subgroups.
\end{lem}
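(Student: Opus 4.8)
The plan is to argue by contradiction. Suppose the nontrivial class $C$ is contained in $A\cup B$ for two proper subgroups $A,B$. Replacing $A$ and $B$ by maximal subgroups containing them only enlarges the union $A\cup B$, so I may assume at the outset that $A$ and $B$ are maximal. Since $G$ is simple and $C\neq\{1\}$, the normal closure of any element $1\neq x\in C$ is a nontrivial normal subgroup and hence all of $G$; thus $\langle C\rangle=G$, which forces $C\not\subseteq A$ and $C\not\subseteq B$. Consequently I can pick $x\in(C\cap A)\setminus B$ and $y\in(C\cap B)\setminus A$.

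The crux of the argument is to show that conjugating $x$ \emph{only by elements of $B$} traps the entire orbit inside $A$. Indeed, for $g\in B$ the element $x^g$ again lies in $C\subseteq A\cup B$; but if $x^g\in B$, then $x\in B^{g^{-1}}=B$, contradicting $x\notin B$. Hence $x^B:=\{x^g\mid g\in B\}\subseteq A$. Setting $N:=\langle x^B\rangle\le A$, I observe that conjugation by any $b\in B$ sends $x^g\mapsto x^{gb}$ with $gb$ again ranging over $B$, so $B$ permutes $x^B$ among itself and therefore normalizes $N$.

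From here I would deduce that $N$ is normal in all of $G$. The subgroup $\langle N,B\rangle$ contains $B$ properly, since $x\in N\setminus B$, so maximality of $B$ forces $\langle N,B\rangle=G$. As both $N$ and $B$ normalize $N$, the whole group $G=\langle N,B\rangle$ normalizes $N$, i.e.\ $N\trianglelefteq G$. But $N\neq1$ because $x\in N$, so simplicity of $G$ gives $N=G$, contradicting $N\le A\lneq G$. This contradiction establishes the lemma.

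I expect the only genuinely load-bearing step to be the orbit-trapping observation of the second paragraph; once $x^B$ is confined to $A$, the implication ``a $B$-invariant subgroup of $A$ meeting the complement of $B$ must be normal'' is forced purely by maximality of $B$ and simplicity of $G$. No character-theoretic estimates, counting arguments, or classification input are needed, so the reduction to maximal subgroups plus this single trapping idea should carry the entire proof.
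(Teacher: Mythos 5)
Your proof is correct and follows essentially the same route as the paper's: after reducing to maximal subgroups, both arguments trap the orbit $x^B$ inside $A$, observe that $\langle x^B\rangle$ is a nontrivial proper subgroup normalized by $B$ and by $x$, and then use maximality of $B$ together with simplicity of $G$ to reach a contradiction. The only cosmetic differences are that you invoke $\langle N,B\rangle=G$ where the paper writes $\langle B,x\rangle=G$, and your choice of $y\in(C\cap B)\setminus A$ is never actually used.
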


\begin{proof}
Suppose that $C$ is contained in $X\cup Y$, with $X,Y$ proper subgroups.
Replacing $X$ and $Y$ by maximal subgroups, we still have the hypothesis and
$C$ is not contained in either $X$ or $Y$ since clearly $G=\langle C\rangle$. 
\par
Choose $x\in C \cap (X\setminus Y)$. Note that if $y\in Y$, then $x^y\in X$
(for $x^y\in Y$ implies that $x\in Y$). Thus $\langle x^Y\rangle$ is contained
in $X$ and normalized by $Y$ obviously and also by $x$. Since $Y$ is maximal,
$G=\langle Y,x\rangle$ normalizes the proper subgroup
$\langle x^Y\rangle\le X$, a contradiction.
\end{proof}

\begin{exmp}
Note that one cannot replace 2 by 3 in the lemma: take $G = \SL_n(2)$ or
$\Sp_{2n}(2)$ and
let $X,Y,Z$ be the stabilizers in $G$ of three vectors in a 2-space. Then every
transvection of $G$ fixes a hyperplane and so fixes at least one of the
vectors in this 2-space.
Thus the class of transvections is contained in the union of three subgroups.
A similar argument applies to the class of transpositions in the non-simple
group $\fS_n$.
\end{exmp}

\begin{proof}[Proof of Theorem~\ref{thm:generation}]
We use two types of standard arguments. In the first situation, let's call it
(S1), we let $y$ be an element as in Theorem~\ref{thm:main}(a), of order prime
to~6. If there are no more than two maximal subgroups of $S$ containing $y$,
then by Lemma~\ref{lem:2max} any conjugacy class in $S\setminus\{1\}$ contains
an element $x$ outside of these maximal subgroups, whence
$S=\langle x,y\rangle$. In some of the cases where there are more than
$2$ maximal subgroups, we compute (as in \cite{BGK}) directly
that no conjugacy class is contained in the union of the maximal subgroups
containing $y$. 
\par
In the second situation (S2), we produce two conjugacy
classes $C_2,C_3$ of $S$, at least one of them containing elements of order
prime to~6, such that the $(C,C_2,C_3)$-structure constant is non-zero for
any non-trivial conjugacy class $C$ of $S$ and such that no maximal subgroup
of $S$ can contain elements from both $C_2$ and $C_3$, in which case we're
done again.  
\par
Assume first that $S$ is sporadic. Let $y$ be an element of prime order as
in Table~\ref{tab:sporadic}. Then unless
$$S\in\{M_{12},\ Ti,\ HS,\ McL,\ Suz,\ Fi_{22}\}$$
we are in situation~(S1). It is easily checked from the character tables that
in each of the six exceptions $S$ listed above, for any prime $p$ there are
less elements of order~$p$ in the (disjoint) union of the relevant maximal
subgroups than in any class of elements of order~$p$ in $S$, except for
elements of order~5 in $HS$, elements of order~3 in $Suz$, and elements of
order~2 or 3 in $Fi_{22}$. In $HS$, we reach situation~(S2) with $C_2=11a$
and $C_3=20a$; and in both $Suz$ and $Fi_{22}$ with $C_2=11a$ and $C_3=13a$.
\par
Now let $S$ be of exceptional Lie type. Here, we take $y$ an element generating
a cyclic subgroup as in Table~\ref{tab:exctorus}. Note that the order of $y$
is coprime to~6 unless possibly when $S=E_7(q)$. Again, $y$ is contained in
at most two maximal subgroups, except for $S\in\{G_2(3),\ F_4(2)\}$,
and we may conclude as before. For $G_2(3)$ and $F_4(2)$ we are in situation
(S2) with the classes $9a,13a$, respectively $13a,17a$.
For $S=E_7(q)$, let $C$ denote a non-trivial conjugacy class. We
have shown in Proposition~\ref{thm:E7} that the $(C,C_2,C_3)$-structure
constant is non-zero, where $C_2$ contains elements of order $\Phi_7$ and
$C_3$ contains elements as in Table~\ref{tab:exctorus}, whence there exist
$(x,y,z)\in(C,C_2,C_3)$ with $xyz=1$. Since the order of the maximal subgroup
$\tw2E_6(q).D_{q+1}$ is not divisible by (a Zsigmondy prime divisor of)
$\Phi_7$, we necessarily have $S=\langle y,z\rangle=\langle x,y\rangle$, so
we are in situation (S2).
Clearly, $y$ has order prime to~6.
\par
Now consider  $S=\fA_n$, $n\ge5$. When $\gcd(n,6)=1$ then by
\cite[Prop.~6.9]{BGK} for any $1\ne x\in\fA_n$ there exists an $n$-cycle
$y$ such that $\langle x,y\rangle=\fA_n$.   Suppose that $n=2m$ is even.
Then choose $e$ with $1 \le e \le 6$ such that $\gcd(m+e,m-e)=1$
and $\gcd(m \pm e, 6)=1$.  Let $y \in \fA_n$ be the product of disjoint
cycles of length $m + e$ and $m - e$.   Clearly, $y$ is not contained
in any imprimitive subgroup.  Thus, for $n \ge 16$, $y$ is contained
in a unique maximal subgroup by \cite{will}.   If $n=6,8,10,12$ or $14$, a
computer check shows that one can take $y$ of order $5, 7, 5, 35$ or $13$
respectively.
Now suppose that $n$ is odd and divisible by $3$.   Let $y \in \fA_{n-1}$ be
chosen as above.   If $n > 16$, it follows as above that $y$ is not contained
in any imprimitive subgroup.   Clearly, given $1\ne x \in S$, we can choose a 
conjugate $y'$ of $y$ such that $\langle x,y'\rangle$ is transitive and so
primitive. It again follows by \cite{will} that $S = \langle x, y'\rangle$.
If $n < 16$, then we need only consider $n = 9$ and $15$.  If $n=15$, take
$y$ to be of order $13$ and if $n=9$, take $y$ of order $7$. It is
straightforward to check that the result holds.
\par
For $S$ simple of classical Lie type, we typically take $y$ the smallest power
of an element as in Table~\ref{tab:classtorus} of order prime to~6. Let first
$S=\PSL_n(q)$ with $n\ge4$ even. The possible maximal subgroups containing
such an element $y$ are given in Lemma~\ref{lem:maxSLn-1}. Note that case~(2)
does not arise since $n$ is even. If $q=r^2$ is a square, then $o(y)$, the
prime to~6 part of $(q^{n-1}-1)/\gcd(n,q-1)=(r^{n-1}-1)(r^{n-1}+1)/(n,q-1)$,
is divisible by a Zsigmondy prime divisor of $(r^{n-1}-1)$, so case~(1) is out
as well. Hence there are at most two maximal subgroups containing~$y$, and
the only cases for which the lemma does not apply are $\PSL_4(2)\cong\fA_8$
and $\PSL_4(4)$. For $S=\PSL_4(4)$ the $(C,63a,85a)$-structure constants are
non-zero, and none of the subgroups in Lemma~\ref{lem:maxSLn} contains
elements of orders $63$ and~85. 
\par
Now let $S=\PSL_3(q)$. Then by Lemma~\ref{lem:maxSLn} there are at most two
maximal overgroups of $y$ (viz. cases~(1) and (4) for $f=3$), so we are done
unless $S\in\{\PSL_3(2)\cong\PSL_2(7),\PSL_3(4)\}$. For the first group,
see \cite[Table 5]{BGK}, for $\PSL_3(4)$ the $(C,5a,7a)$-structure constants
are non-zero.
\par
Next let $S=\PSL_n(q)$ with $n\ge5$ odd. Let $C_2$ denote the conjugacy class
of an element of order $o_1:=(q^n-1)/d(q-1)$, and $C_3$ the class of an
element of order $o_2:=((q^{n-1}-1)/d)_{2,3}'$, where $d=(n,q-1)$. By the
argument in the proof of \cite[Thm.~2.1]{MSW} only the trivial and the
Steinberg character of $S$ take non-zero values on both classes, and thus the
$(C,C_2,C_3)$-structure constant does not vanish for any non-trivial class
$C$ of $S$. Now consider
the subgroups in Lemma~\ref{lem:maxSLn}. Cases~(2) and~(3) are out since $n$
is odd, and case~(1) is excluded as before. The extension field subgroups
$\GL_{n/f}(q^f)$ occur for prime divisors $f|n$. Now note that their order is
not divisible
by a Zsigmondy prime divisor of $q^{n-1}-1$, which exists unless $(n,q)=(7,2)$.
But in the latter case, there is just one overgroup, so we are done again.
\par
Next let $S=\PSU_n(q)$ with $n\ge4$ even, $(n,q)\ne(4,2)$. If $(n,q)$ is none
of $(6,2),(10,2)$, $(4,3),(4,5)$ then by Lemma~\ref{lem:maxSUeven} there is only
one maximal subgroup containing $y$ and we are done. For $(n,q)=(10,2)$ the
argument in \cite[Thm.~2.2]{MSW} shows that the $(C,C_2,C_3)$-structure
constant does not vanish for any non-trivial class $C$ of $S$, where $C_2$
contains elements of order~19 and $C_3$ elements of order
$(q^n-1)/(q-1)=1023$. None of the groups in Lemma~\ref{lem:maxSUeven} contains
elements of order~13. The same may be applied to $\PSU_4(5)$, with elements
of order~7 respectively~13. For $\PSU_6(2)$, the $(C,11a,12f)$-structure
constant is non-zero for any $C\ne\{1\}$, and the class $12f$ has trivial
intersection with $\PSU_5(2)$ as well as with $M_{22}$. For $\PSU_4(3)$, the
$(C,7a,9a)$-structure constants are non-zero and none of the relevant maximal
subgroups contains elements of order~9. 
\par
Now let $S=\PSU_n(q)$ with $n\ge 3$ odd. Firstly, if $n$ is prime and
$(n,q)$ is different from $(3,3),(3,5),(5,2),(9,2)$, then by
Lemma~\ref{lem:maxSUodd}
there is a unique maximal subgroup containing our element $y$, and we are done.
The same holds in fact for $\PSU_3(3)$ and $\PSU_5(2)$. 
Else, we let $C_2$ denote a class of elements of order $(q^n+1)/d(q+1)$ and
$C_3$ a class of elements of order $(q^{n-1}-1)/d$, where $d=\gcd(n,q+1)$.
By \cite[Thm.~2.2]{MSW}, the $(C,C_2,C_3)$-structure constants are non-zero
in $S$. The extension field subgroups in Lemma~\ref{lem:maxSOodd}(1) and~(3)
are ruled out by Zsigmondy. For $\PSU_3(5)$ the $(C,7a,8a)$-structure
constants are non-zero, and no maximal subgroup contains elements of orders~7
and~8.
\par
For $S=\OO_{2n+1}(q)$, we are done by Lemma~\ref{lem:maxSOodd} unless
$n=p=3$ or $(2n+1,q)=(7,5)$. Suppose that $n=p=3$.  In this case, we take $y$
to be of order $(q+1)_{2,3}'$. If $q>3$, then Lemma~\ref{lem:maxSOodd} still
implies that $y$ is in a unique maximal subgroup. For $\OO_7(3)$ we may take
$y$ of order~13, for $\OO_7(5)$ of order~31.
\par
Now let $S=\PSp_{2n}(q)$. Let $y$ be an element of the largest possible order
prime to $6$ dividing $q^n+1$. Let $\mathcal{M}$ denote the set of maximal
subgroups of $S$ containing $y$.   First assume that $q$ is even. If $n > 3$,
it follows by Lemma \ref{lem:maxSp} that all maximal subgroups containing
$y$ contain the centralizer of $y$. Let $z$ be a generator for the centralizer
of $y$. It was shown in \cite[5.8]{BGK} that for any $g\in S$, we have that
$S = \langle g, z' \rangle$ for some conjugate $z'$ of $z$.   Thus, the same
is true for $y$. If $n=2$, we may assume that $q\ne 2$. If $q$ is a square,
then $|\mathcal{M}|=2$ and the result follows.  If $q$ is not a square, then
$|\mathcal{M}| = 3$.  If $q  \ge 8$, it follows by \cite{liesaxl} that 
$|s^S \cap M| \le (4/3q)|s^S| \le |s^S|/6$, whence the result.   If $n=3$,
then the same argument applies for $q > 4$.  See \cite[Table 2]{BGK} for
the case $q=4$. For $\PSp_6(2)$ a computer check shows that we can take for $y$
an element of order~7.
\par
Now assume that $q$ is odd.    Let $s$ be a nontrivial element of $S$.  Suppose
that $s^S$ is contained in the union of the maximal subgroups containing $y$. 
Excluding the cases with $(n,q)= (4,3), (6,3), (6,5)$, the only
maximal subgroups
containing $y$ are one  for each prime divisor of $n$ and if $n$ is odd, the 
normalizer of $\SU_n(q)$.    Suppose that $s^S$  intersects  $b \ge 3$
of the  maximal subgroups containing $y$.   It follows that $s$ is contained
in a conjugate of the normalizer $\Sp_{2n/f}(q^f)$ for some prime $f > b$.
By \cite[2.1 and  2.3]{BGK}, $|s^S \cap M| \le (4/3q^{f-1})|s^S|$ for any
proper subgroup $M$ of $S$. Thus,
$$|\bigcup_{M \in \mathcal{M}} (s^S \cap M)|\le (4(f-1)/3q^{f-1}) |s^S|<|s^S|,$$
a contradiction. For $\PSp_4(3),\PSp_6(3),\PSp_6(5)$ a computer check shows
that we may take $y$ of order~5, 13, 31 respectively.
\par
For $S=\OO_{2n}^-(q)$, let $C_2$ denote the class of elements of order
$o_1:=(q^n+1)/(4,q^n+1)$ and $C_3$ a class of semisimple elements of order
$o_2:=(q^{n-1}+1)_{2,3'}$. The arguments in the proof of \cite[Thm.~2.5]{MSW}
show that only the trivial and the Steinberg character do not vanish on both
$C_2,C_3$. Thus, the structure constant $n(C,C_2,C_3)$ is non-zero for any
non-trivial class $C$ of $S$. None of the subgroups listed in
Lemma~\ref{lem:maxSO-} and Table~\ref{tab:o-} contain elements of orders
$o_1,o_2$, except possibly when no Zsigmondy prime for $q^{n-1}+1$ exists,
that is, when $(2n,q)=(8,2)$. For the latter group, the $(C,17a,21a)$-structure
constants are non-zero, and no maximal subgroup contains elements of orders
17 and 21.
\par
Next consider  $S=\OO_{2n}^+(q), n \ge 4$. If $n=4$ and $q > 4$, the result
follows by \cite[Lemma 5.15]{BGK} (the element chosen there has order prime
to $6$).  If $n=4=q$, the result follows by \cite[Table 3]{BGK}.
Similarly, the result follows by \cite[Lemmas 5.13,  5.14]{BGK} if  $n > 4$.
For $S=\OO_8^+(3)$ we may take $y$ of order~13. The group $\OO_8^+(2)$ is
an exception.
\par
For $S=\PSL_2(q)$, let $C_2$ be a class of elements of order~$p$ and $C_3$ a
class of elements of order $(q+1)/(2,q+1)$. Then the $(C,C_2,C_3)$-structure
constant is non-zero. For $q\ge5$ prime to~6, no proper subgroup of $G$
contains elements of orders~$p$ and $(q+1)/(2,q+1)$. For $q=p^f$ with $p=2,3$,
let $C_2$ be a class of elements of order $(q-1)/(2,q-1)$ instead; again the
structure constants do not vanish, one of $C_2,C_3$ contains elements of
order prime to~6, and no proper maximal subgroup does contain elements from
both classes unless $q=9$, so $S=\fA_6$, which was already considered.
\end{proof}

We end this section by demonstrating on the example of $E_8(q)$ how our
methods can also be used to show that simple groups admit a so-called unmixed
Beauville-structure (see for example \cite{shelly}):

\begin{thm}   \label{thm:beau}
 The simple groups $E_8(q)$, where $q$ is any prime power, admit an unmixed
 Beauville structure.
\end{thm}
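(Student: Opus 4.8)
The plan is to produce two generating triples of $G=E_8(q)$ with product~$1$ whose elements have coprime orders; the coprimality makes the Beauville ramification sets meet only in the identity. Recall that an \emph{unmixed Beauville structure} on $G$ is a pair of triples $(x_1,y_1,z_1)$ and $(x_2,y_2,z_2)$, each generating $G$ and with $x_iy_iz_i=1$, such that $\Sigma_1\cap\Sigma_2=\{1\}$, where $\Sigma_i:=\bigcup_{g\in G}\bigcup_{j\ge0}\bigl(\langle x_i^j\rangle\cup\langle y_i^j\rangle\cup\langle z_i^j\rangle\bigr)^g$ is the union of the $G$-conjugates of all powers of the three elements. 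The decisive point is that if all three elements of the first triple have order dividing $\Phi_{30}(q)$ and all three of the second have order dividing $\Phi_{24}(q)$, then every nontrivial element of $\Sigma_1$ has order dividing $\Phi_{30}(q)$ and every nontrivial element of $\Sigma_2$ has order dividing $\Phi_{24}(q)$; as conjugate elements share the same order, the disjointness $\Sigma_1\cap\Sigma_2=\{1\}$ is implied by the numerical identity $\gcd(\Phi_{30}(q),\Phi_{24}(q))=1$.

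For the first triple I take the one furnished by Proposition~\ref{prop:excbig}: three elements of the class $C_1$ of generators of the self-centralizing cyclic torus $T_1$ of order $\Phi_{30}(q)$ (with $|N_G(T_1):T_1|=30$), having product~$1$ and generating $G$. For the second triple I run the same Deligne--Lusztig argument for the cyclic torus $T_2$ of order $\Phi_{24}(q)=q^8-q^4+1$, which is again self-centralizing with $|N_G(T_2):T_2|=24$, since $24$ is a regular number for the Weyl group of $E_8$. Exactly as in the $F_4$- and $E_8$-cases of Proposition~\ref{prop:excbig}: by Lemma~\ref{lem:Lvalues} the only irreducible characters not vanishing on the class $C_2$ of generators of $T_2$ lie in Lusztig series $\cE(G,s)$ for $s$ in the dual torus $T_2^*$, the nontrivial such $s$ are regular, and Proposition~\ref{prop:values} gives $|\chi_s(x_2)|\le 24$. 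Hence the non-unipotent characters contribute at most $24^2|T_2|^2/|G|_{q'}$ to $|\eps_a(C_2)|$, while the unipotent characters different from $1_G$ contribute at most $q^{211}/|G:T_2|$, so that $n_a(C_2)\ge\frac{|G|}{|T_2|^2}\bigl(1-24^2|T_2|^2/|G|_{q'}-q^{211}/|G:T_2|\bigr)>0$. To force generation one checks, as in Proposition~\ref{prop:excelt} via Weigel~\cite{We92} and the Liebeck--Seitz classification, that the only maximal subgroup of $E_8(q)$ containing a Zsigmondy prime divisor of $\Phi_{24}(q)$ is $N_G(T_2)$; since that subgroup accounts for at most $|N_G(T_2):T_2|=24$ triples, far fewer than $n_a(C_2)$, a generating triple of elements of order $\Phi_{24}(q)$ with product~$1$ exists. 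Such elements exist for every $q$ because $\Phi_{24}^*(q)>1$ by Zsigmondy, and Proposition~\ref{prop:excbig} supplies the first triple for every $q$.

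Finally I verify $\gcd(\Phi_{30}(q),\Phi_{24}(q))=1$. A common prime $r$ satisfies $r\mid q^{\gcd(30,24)}-1=q^6-1$, so $d:=\operatorname{ord}_r(q)$ divides~$6$; as $d<24,30$, the element $r$ is a non-primitive divisor of both $\Phi_{30}(q)$ and $\Phi_{24}(q)$, which forces $30/d$ and $24/d$ to be powers of $r$. The only $d\in\{1,2,3,6\}$ making $30/d$ a prime power is $d=6$, giving $30/6=5$ and hence $r=5$; but then $\operatorname{ord}_5(q)=6$, impossible since $\operatorname{ord}_5(q)$ divides~$4$. Thus $\gcd(\Phi_{30}(q),\Phi_{24}(q))=1$, and the two triples constitute an unmixed Beauville structure on $E_8(q)$ for every prime power $q$. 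I expect the real obstacle to be precisely the overgroup analysis for $T_2$: Table~\ref{tab:exctorus} records only the $\Phi_{30}$-torus of $E_8$, so one must confirm separately---through \cite{We92} and the Liebeck--Seitz theory of maximal subgroups---that no proper subgroup of $E_8(q)$ other than $N_G(T_2)$ has order divisible by a large Zsigmondy prime of $\Phi_{24}(q)$; the structure-constant estimate itself is routine given Propositions~\ref{prop:values} and~\ref{prop:excbig}.
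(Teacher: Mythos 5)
Your framework is fine (two triples whose element orders are coprime; the first triple from Proposition~\ref{prop:excbig}; the verification that $\gcd(\Phi_{30}(q),\Phi_{24}(q))=1$ is correct), but the step you yourself flagged as ``the real obstacle'' is where the proof breaks: it is \emph{false} that $N_G(T_2)$ is the only maximal subgroup of $E_8(q)$ whose order is divisible by a Zsigmondy prime of $q^{24}-1$ --- indeed $N_G(T_2)$ is not even maximal. The identity $\Phi_{12}(q^2)=\Phi_{24}(q)$ is the culprit: the $D_4D_4$ subsystem of $E_8$, twisted by the product of the factor swap with a triality, yields a maximal-rank subgroup of type $\tw3D_4(q^2)$ (its normalizer, of shape $\tw3D_4(q^2).6$, occurs in the Liebeck--Saxl--Seitz classification of maximal-rank subgroups), and this subgroup contains a maximal torus of order $\Phi_{12}(q^2)=\Phi_{24}(q)$. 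Since all maximal tori of $E_8(q)$ of order $\Phi_{24}(q)$ are conjugate, $T_2$ itself lies in a conjugate of $\tw3D_4(q^2)$, and in fact $N_G(T_2)=T_2.24$ sits inside $\tw3D_4(q^2).6$. This is exactly the phenomenon recorded in the paper's own Table~\ref{tab:exctorus} one level down: in $F_4(q)$ the $\Phi_{12}$-torus normalizer is not maximal, being contained in $\tw3D_4(q).3$; your torus is that picture read over $\FF_{q^2}$. (The $A_2^4$ subsystem causes a similar problem, producing subgroups of unitary type $\PSU_3(q^4)$ whose order is divisible by $q^{12}+1=\Phi_8(q)\Phi_{24}(q)$.) So the ``routine confirmation'' you defer to Weigel and Liebeck--Seitz would in fact fail, and your generation argument for the second triple collapses as written.

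This is precisely why the paper does something different for the second system: instead of three conjugate elements of order $\Phi_{24}(q)$, it takes $C_1$ of order $\Phi_{24}(q)$, $C_2$ of order $\Phi_{20}(q)$, and $C_3$ the class of a long root element times a commuting semisimple element of order $\Phi_{14}$, proves $n(C_1,C_2,C_3)\ne0$ by a short character estimate (only four unipotent characters survive the vanishing argument), and then gets generation for free from Cooperstein's theorem \cite{Co2}: no proper subgroup containing long root elements has order divisible by both $\Phi_{20}(q)$ and $\Phi_{24}(q)$ --- any triple with product $1$ from these classes therefore generates, no list of maximal overgroups needed. Your approach is probably salvageable, since for fixed $x\in C_2$ the number of triples lying in conjugates of $\tw3D_4(q^2).6$ through $x$ is at most roughly $|\tw3D_4(q^2).6|<q^{60}$, negligible against $n_a(C_2)\approx|G|/|T_2|^2>q^{230}$; but making that rigorous still requires enumerating \emph{all} maximal overgroups of $T_2$ (the maximal-rank ones above plus possible almost simple ones), which is the genuinely hard content your proof asserts away in one sentence, and which the paper's choice of classes is specifically designed to avoid.
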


\begin{proof}
By \cite[Def.~1.1]{shelly} it suffices to show that $E_8(q)$ has two
generating systems $(x_1,y_1)$, $(x_2,y_2)$ such that the orders of
$x_1,y_1,x_1y_1$ are pairwise prime to those of $x_2,y_2,x_2y_2$.
\par
By Proposition~\ref{prop:excbig} we already know that there exists a conjugacy
class $C$ of $G$ such that $G$ is generated by $x_1,y_1\in C$ with
$(x_1y_1)^{-1}\in C$, containing elements of order $n:=\Phi_{30}(q)$. Now let
$C_1$ be a class of elements of order $\Phi_{24}(q)$, $C_2$ a class of
elements of order $\Phi_{20}(q)$, and $C_3$ a class containing the product
of a (long) root element $x$ with a semisimple element of order $\Phi_{14}$
(from the subgroup $E_7(q)$ lying in $C_G(x)$). Note that all three classes
contain elements of order prime to~$n$. Using Lemma~\ref{lem:Lvalues} it is
immediate to see that at most unipotent characters might take non-zero values
on the two semisimple classes simultaneously. Moreover, from the tables in
\cite[13.9]{Ca} one sees that in fact only four unipotent characters have
this property. Apart from the trivial character, this is the Steinberg
character (which vanishes on the non-semisimple class $C_3$) and two
further characters of degree $>q^{100}$. Since the elements in all three
classes have centralizer order less than $q^{10}$, we conclude that
$n(C_1,C_2,C_3)\ne0$. \par
The result of Cooperstein \cite{Co2} shows that no proper subgroup of $G$
containing long root elements has order divisible by $\Phi_{20}(q)$ and
$\Phi_{24}(q)$.
\end{proof}

\vskip 1pc
\noindent
\textbf{Acknowledgements:} We wish to thank Thomas Breuer for checking some
of the maximal subgroups of classical groups, Frank L\"ubeck for help in
dealing with $F_4(3)$, Klaus Lux for determining the degrees of irreducible
representations of $\SU_3(8)$ over $\FF_2$ and Pham Huu Tiep for pointing
out two results on the alternating groups and their covers.


\end{document}